\definecolor{l-gray}{gray}{0.5}
\def\pullback{
 \ar@{-}[]+R+<3pt,-1pt>;[]+RD+<3pt,-3pt>%
 \ar@{-}[]+D+<1pt,-3pt>;[]+RD+<3pt,-3pt>}
 \def\lowpullback{
 \ar@{-}[]+R+<6pt,-6pt>;[]+D+<15pt,-16pt>;[]+RD+<6pt,-6pt>%
 \ar@{-}[]+D+<4pt,-10pt>;[]+RD+<6pt,-16pt>}
\def\pullbackdots{%
 \ar@{.}[]+R+<6pt,-1pt>;[]+RD+<6pt,-6pt>%
 \ar@{.}[]+D+<1pt,-6pt>;[]+RD+<6pt,-6pt>}
\def\pushout{%
 \ar@{-}[]+L+<-5pt,1pt>;[]+LU+<-5pt,5pt>%
 \ar@{-}[]+U+<-1pt,5pt>;[]+LU+<-5pt,5pt>}
\def\uppushout{%
 \ar@[Gray]@{-}[]+L+<-5pt,-1pt>;[]+LD+<-5pt,-5pt>%
 \ar@[Gray]@{-}[]+D+<-1pt,-5pt>;[]+LD+<-5pt,-5pt>}
 \def\lowuppushout{%
 \ar@{-}[]+L+<-10pt,-2pt>;[]+LD+<-10pt,-10pt>%
 \ar@{-}[]+D+<-2pt,-10pt>;[]+LD+<-10pt,-10pt>}
\def\splitpullback{%
 \ar@{-}[]+R+<6pt,-.51ex>;[]+RD+<6pt,-6pt>%
 \ar@{-}[]+D+<.51ex,-6pt>;[]+RD+<6pt,-6pt>}
\def\skewpullback{%
 \ar@{-}[]+R+<6pt,-1pt>;[]+RD+<-7pt,-8pt>%
 \ar@{-}[]+D+<-12pt,-8pt>;[]+RD+<-7pt,-8pt>}
\def\dottedpullback{%
 \ar@{.}[]+R+<6pt,-1pt>;[]+RD+<6pt,-6pt>%
 \ar@{.}[]+D+<1pt,-6pt>;[]+RD+<6pt,-6pt>}
\let\origappendix\appendix 
\renewcommand\appendix{\clearpage\pagenumbering{roman}\origappendix}
\newtheorem{theorem}[subsubsection]{Theorem}
\newtheorem{observation}[subsubsection]{Observation}	
\newtheorem{corollary}[subsubsection]{Corollary}	
\newtheorem{proposition}[subsubsection]{Proposition}	
\newtheorem{notation}[subsubsection]{Notation}	
\newtheorem{lemma}[subsubsection]{Lemma}
\newtheorem{definition}[subsubsection]{Definition}
\newtheorem{example}[subsubsection]{Example}
\newtheorem{remark}[subsubsection]{Remark}
\newtheorem{convention}[subsubsection]{Convention}
\newcommand{\Z}{\mathbb{Z}}
\newcommand{\N}{\mathbb{N}}
\newcommand{\EE}{\mathcal{E}}
\newcommand{\C}{\mathcal{C}}
\newcommand{\X}{\mathcal{X}}
\newcommand{\SET}{\mathsf{Set}}
\newcommand{\QND}{\mathsf{Qnd}}
\newcommand{\RCK}{\mathsf{Rck}}
\newcommand{\EXT}{\mathsf{Ext}}
\newcommand{\CEXT}{\mathsf{CExt}}
\newcommand{\ARR}{\mathsf{Arr}}
\newcommand{\GRP}{\mathsf{Grp}}
\newcommand{\AB}{\mathsf{Ab}}
\newcommand{\TOP}{\mathsf{Top}}
\newcommand*{\defeq}{\mathrel{\vcenter{\baselineskip0.5ex \lineskiplimit0pt
                     \hbox{\scriptsize.}\hbox{\scriptsize.}}}%
                     =}
\newcommand*{\eqdef}{\mathrel{=\vcenter{\baselineskip0.5ex \lineskiplimit0pt
                     \hbox{\scriptsize.}\hbox{\scriptsize.}}}%
                     }
\newcommand*{\qndiop}{\triangleleft^{-1}}
\newcommand*{\qndop}{\triangleleft}
\newcommand\ttop{\scaleobj{0.6}{\top}}
\newcommand\pperp{\scaleobj{0.6}{\perp}}
\DeclareMathOperator{\Co}{C_{0}}
\DeclareMathOperator{\Ci}{C_{1}}
\DeclareMathOperator{\Cii}{C_{2}}
\DeclareMathOperator{\Ii}{I_{1}}
\DeclareMathOperator{\Fi}{F_{1}}
\DeclareMathOperator{\Fii}{F_{1}^i}
\DeclareMathOperator{\FIi}{F_{2}}
\DeclareMathOperator{\FIil}{F_{\hat{2}}}
\DeclareMathOperator{\FII}{F_{2}^i}
\DeclareMathOperator{\Fa}{F_{ab}}
\DeclareMathOperator{\Fg}{F_g}
\DeclareMathOperator{\Fq}{F_q}
\DeclareMathOperator{\Fr}{F_r}
\DeclareMathOperator{\id}{id}
\DeclareMathOperator{\ab}{ab}
\DeclareMathOperator{\ER}{ER}
\DeclareMathOperator{\Eq}{Eq}
\DeclareMathOperator{\Adj}{Adj} 
\DeclareMathOperator{\Conj}{Conj}
\DeclareMathOperator{\Pth}{Pth}
\DeclareMathOperator{\pth}{pth}
\DeclareMathOperator{\U}{U}
\DeclareMathOperator{\I}{I}
\DeclareMathOperator{\Ker}{Ker}
\definecolor{britishracinggreen}{rgb}{0.0, 0.26, 0.15}
\definecolor{darkpink}{rgb}{0.91, 0.33, 0.5}
\newcommand{\fr}[1]{{\color{red}#1}}
\newcommand{\db}[1]{{\color{Blue}#1}}
\newcommand{\gr}[1]{{\color{darkpink}{\underline{#1}}}}
\newcommand{\grl}[1]{{\color{darkpink}{#1}}}
\newcommand{\red}[1]{{\color{Red}#1}}
\newcommand{\R}[1]{{\rm(R#1)}}
\newcommand{\K}[1]{{\rm K}_{#1}}
\begin{document}

\title[Higher coverings of racks and quandles -- Part II]{Higher coverings of racks and quandles -- Part II}
\author{Fran\c{c}ois Renaud}

\email[Fran\c{c}ois Renaud]{francois.renaud@uclouvain.be}

\address{Institut de Recherche en Math\'ematique et Physique, Universit\'e catholique de Louvain, che\-min du cyclotron~2 bte~L7.01.02, B--1348 Louvain-la-Neuve, Belgium}

\subjclass[2020]{18E50; 57K12; 08C05; 55Q05; 18A20; 18B40; 20L05}
\keywords{Two-dimensional covering theory of racks and quandles, categorical Galois theory, double central extension, commutator theory, centralization}
\thanks{The author is a Ph.D.\ student funded by \textit{Formation à la Recherche dans l'Industrie et dans l'Agriculture} (\textit{FRIA}) as part of \textit{Fonds de la Recherche Scientifique - FNRS}}

\begin{abstract}
This article is the second part of a series of three articles, in which we develop a higher covering theory of racks and quandles. This project is rooted in M.~Eisermann's work on quandle coverings, and the categorical perspective brought to the subject by V.~Even, who characterizes coverings as those surjections which are categorically central, relatively to trivial quandles. We extend this work by applying the techniques from higher categorical Galois theory, in the sense of G.~Janelidze, and in particular we identify meaningful higher-dimensional centrality conditions defining our higher coverings of racks and quandles.

In this second article (Part II), we show that categorical Galois theory applies to the inclusion of the category of coverings into the category of surjective morphisms of racks and quandles. We characterise the induced Galois theoretic concepts of trivial covering, normal covering and covering in this two-dimensional context. The latter is described via our definition and study of double coverings, also called algebraically central double extensions of racks and quandles. We define a suitable and well-behaved commutator which captures the zero, one and two-dimensional concepts of centralization in the category of quandles. We keep track of the links with the corresponding concepts in the category of groups.
\end{abstract}
\maketitle
\section{Introduction}
This article (\emph{Part II}) is the continuation of \cite{Ren2020} which we refer to as \emph{Part I}. In the following paragraphs we recall enough material from Part I, and the first order covering theory of racks and quandles, in order to contextualize and motivate the content of the present paper, in which, based on the firm theoretical groundings of categorical Galois theory, we identify the \emph{second order coverings} of racks and quandles, and the \emph{relative} concept of \emph{centralization}, together with the definition of a suitable \emph{commutator} in this context. We refer to Part I for more details, further motivations, references, and historical remarks about the material in this introduction.
\subsection{Recalls and notations}
We like to describe racks as sets equipped with a self-distributive system of symmetries, attached to each point. More precisely, a \emph{rack} is a set $A$ equipped with a binary operation $\qndop\colon A \times A \to A$ such that for each $a\in A$, the function $- \qndop a \colon A \to A$ (which is called the \emph{symmetry attached to $a$}) admits an inverse (denoted $- \qndiop a \colon A \to A$) and is \emph{compatible} with the operation $\qndop$ (self-distributivity), i.e. for all $x$, $a$ and $b$ in $A$:
\begin{enumerate}
\item[\R1] $ (x \qndop a) \qndiop a = x = (x \qndiop a) \qndop a$; 
\item[\R2]$(x \qndop a) \qndop b =(x \qndop b) \qndop (a \qndop b)$.
\end{enumerate}
A \emph{morphism of racks} is a function between racks which preserves the operation $\qndop$. The thus defined \emph{category of racks} ($\RCK$) is a variety of algebras in which the \emph{subvariety of quandles} ($\QND$) is the full subcategory of $\RCK$ whose objects $Q$ are such that $a \qndop a = a$ for each $a \in Q$. It is the category of those racks whose symmetries are required to fix the point they are attached to. We write $\Fr$ (\emph{free rack functor}) and $\Fq$ (\emph{free quandle functor}) for the left adjoints to the forgetful functors $\U\colon \RCK \to \SET$ and $\U \colon \QND \to \SET$ to the category of sets. The units of the corresponding adjunctions are denoted by $\eta^r$ and $\eta^q$ respectively. Similarly $\Fg\colon \SET \to \GRP$ is the \emph{free group functor} and the unit of the adjunction $\Fg \dashv \U$ is denoted $\eta^g$.

One of the earliest motivations for the study of racks and quandles arises from their link with group conjugation (Paragraph \ref{ParagraphTightRelationshipWithGroups}). Main applications are to be found in knot theory and subsequently in physics or even computer science -- see for instance \cite{Joy1982,Bri1988,FenRou1992,Dri1992,FeRoSa1995,CaKaSa2001,CJKLS2003,Kau2012,Eis2014} and references there. Another important line of work concerns the concept of \emph{symmetric space} -- see \cite{Loos1969,Tak1943,Ber2008,Hel2012}. The categories of racks and quandles are also an interesting context for applying and developing categorical algebra \cite{Eve2014,EveGr2014,EveGr2016,EveGrMo2016,Bou2015,BouMon2018,Bou2020}.

\subsubsection{The connected component adjunction} 
In topology, it is convenient to view the category of sets ($\SET$) as the category of \emph{discrete topological spaces}. The functor sending a topological space to its set of connected components is then obtained as the left adjoint to the inclusion of $\SET$ in the category of topological spaces ($\TOP$).

For our purposes, we observe that $\SET$ is isomorphic to the subvariety of  \emph{trivial quandles} defined by the additional axiom ``$x \qndop a = x$''. In other words, each set $X$ can be viewed in a unique way as a quandle whose ``system of symmetries'' is obtained by attaching the identity function $ - \qndop a \defeq \id_X$ to each element $a$ in $X$. The left adjoint of the inclusion $\I \colon \SET \to \RCK$ (or $\I \colon \SET \to \QND$) defines the \emph{connected component functor} $\pi_0\colon \RCK \to \SET$ (respectively $\pi_0\colon \QND \to \SET$), with unit $\eta \colon A \to \I (\pi_0(A))$, where $\pi_0(A)$ is the set of ($\Co A$)-equivalence classes of elements in $A$. Here $\Co A$ is the congruence describing when two elements of $A$ are considered to be \emph{connected} \cite{Joy1979}. Two elements $x$ and $y$ of a rack (or quandle) $A$ are connected if one, e.g. $y$, is the result of the successive action of certain symmetries (or their inverses) on the other, here $x$. We define (see \cite{Joy1982}) that $(x,y) \in \Co A$ if and only if there exists $n \in \N$ and elements $a_1$, $a_2$, $\ldots$, $a_n$ in $A$ such that 
\begin{equation}\label{EquationEndpoint}
y= x \qndop^{\delta_1}  a_1 \qndop^{\delta_2} a_2  \cdots\qndop^{\delta_n} a_n,
\end{equation}
for some $\delta_i \in \lbrace -1,\, 1 \rbrace$ for $1 \leq i \leq n$; where by convention the left-most operation is computed first. The data of such a formal sequence $(a_i^{\delta_i})_{1\leq i \leq n}$ is called a \emph{primitive path}. Together with the data of a \emph{head} $x$, these describe a \emph{primitive trail}, whose \emph{endpoint} is then given by $y$. Equation \eqref{EquationEndpoint} describes how the primitive path $(a_i^{\delta_i})_{1\leq i \leq n}$ \emph{acts} on a head $x$ in order to produce the endpoint $y$. Note that a same primitive path $(a_i^{\delta_i})_{1\leq i \leq n}$ can act on different heads, yielding different endpoints.

This connected component adjunction is the \emph{base} of the covering theory of interest, in the same way that, from the perspective of categorical Galois theory, the connected component adjunction in topology is at the base of the classical covering theory of topological spaces. The key definitions and developments of this article (e.g. double coverings of racks and quandles, our commutator and the relative concept of centrality in this context) merely require a minimalist introduction to the relevant concepts from categorical Galois theory, such as provided below, or in Part I (see for instance \cite{JK1994} for more details).

\subsubsection{Basic categorical Galois theory}\label{ParagraphBasicCategoryTheory}
In summary, we consider a convenient particular instance of the general theory which was developed in \cite{Jan1990}. The axiomatic framework in which categorical Galois theory takes place is that of a \emph{Galois structure}. For our purposes, a Galois structure, say $\Gamma$, mainly consists in the data of a category $\C$ (for instance $\QND$), a subcategory $\X$ in $\C$ (for instance $\SET$), together with a reflection of $\C$ on $\X$, i.e.~the data of a left adjoint $F\colon \C \to \X$ to the inclusion $\I\colon \X \to \C$ (e.g. $\pi_0 \colon \QND \to \SET$; note that we often omit $\I$ from our notations in what follows). The ``bigger'' context $\C$ is understood to be more ``sophisticated'', more difficult to study, whereas the ``smaller'' context $\X$ is supposedly more ``primitive'', or merely better understood. In order to obtain a Galois structure from such a reflection, we also need to specify a class of morphisms in $\C$, whose ``elements'' will be called \emph{extensions} (which are the surjective morphisms of quandles in our example).

The purpose of Galois theory is then to study special classes of extensions in $\C$ which are naturally associated to those extensions which lie in the subcategory $\X$. In this work, we call an extension which is a morphisms in $\X$ a \emph{primitive extension}. These special classes of extensions in $\C$ which are associated to primitive extensions then measure a sphere of influence of $\X$ in $\C$ (with respect to the chosen concept of extension). In particular, the most important special class of extensions is the class of \emph{$\Gamma$-coverings}, or simply \emph{coverings}, also called \emph{central extensions}, defined below. In our example with $\QND$, the induced concept of $\Gamma$-covering coincides with the concept of \emph{covering} defined by M.~Eisermann in \cite{Eis2014}, as it was first proved in \cite{Eve2014}. A covering, or central extension of quandles (or racks), is a surjective morphism of quandles (or racks) $f\colon A \to B$ such that $x \qndop a = x \qndop b$ whenever $a$, $b$ and $x$ in $A$ are such that $(a,b)$ is in the kernel pair $\Eq(f)$ of $f$, i.e.~such that $f(a) = f(b)$. A surjective morphism of racks or quandles $f\colon A \to B$ can be \emph{centralized}, using a quotient of its domain, given by the \emph{centralization congruence} $\Ci A$ over $A$ generated by the pairs $(x,y)$ such that there exists $(a,b) \in \Eq(f)$ such that $y = x \qndop a \qndiop b$ (see \cite{DuEvMo2017} and Part I).

In general, given a suitable Galois structure $\Gamma$ one defines three different special classes of extensions, the simplest of which is the class of \emph{trivial $\Gamma$-coverings}, or simply \emph{trivial coverings} (also referred to as \emph{trivial extensions}). A trivial covering is defined (in a suitable Galois structure) as an extension $t\colon T \to E$ which is the pullback of a primitive extension $p\colon X \to F(E)$ in $\X$, along the unit morphism $\eta_{E}\colon E \to F(E)$ (left-hand square in Diagram \eqref{DiagramDefinitionTrivialCovering}). In a suitable Galois structure, the category of trivial coverings above an object $E \in \C$ is equivalent to the category of primitive extensions above $F(E)$ in $\X$. In topology, this Galois-theoretic definition of trivial covering coincides with the classical definition of trivial covering \cite[Section 1.3]{Hat2001} (for a suitable choice of Galois structure $\Gamma_T$, whose reflection is the connected component functor to $\SET$ \cite[Section 6.3]{BorJan1994}). Note that both in the example from topology, and in the example in $\QND$, a primitive extension is just a surjective function in $\SET$.
\begin{equation}\label{DiagramDefinitionTrivialCovering}
\vcenter{\xymatrix@C=13pt@R=8pt{ X \ar[d]_-{p} & T \ar[d]^-{t} \ar[l] \ar[r] & A \ar[d]^-{c} \\
F(E) & E \ar[l]^-{\eta_{E}} \ar[r]_-{e} & B
}}
\end{equation}
The class of \emph{$\Gamma$-coverings}, or simply \emph{coverings} is then defined (in a suitable Galois structure $\Gamma$) as the class of those extensions $c\colon A \to B$ in $\C$, for which there exists another extension $e\colon E \to B$, which is said to \emph{split} $c$, i.e.~such that the pullback $t$ of $c$ along $e$ is a trivial extension (right-hand square in Diagram \eqref{DiagramDefinitionTrivialCovering}). In certain contexts, coverings are also refferred to as \emph{central extensions}, such as in \cite{JK1994}, in reference to the example from group theory described below. Again, the classical concept of a topological covering arises as the concept of a $\Gamma_T$-covering defined in that same suitable Galois structure for topological spaces. The remaining special class of extensions is the class of \emph{normal $\Gamma$-coverings}, or simply \emph{normal covering} (or \emph{normal extension}), which are those extensions which are split by themselves.

As it is the case in topology (under some suitable assumptions) the coverings above a given object of $\C$ can be classified using data which is \emph{internal} to $\X$. Informally we say that the ``behaviour of coverings is tractable using information which remains in the simpler context described by $\X$''. The fundamental theorem of categorical Galois theory \cite[Theorem 3.7]{Jan1990} formalises this idea in such a way that the classification of topological coverings and other examples of so-called ``Galois correspondences'' appear as particular instances of it \cite{BorJan1994}. One of these particular instances is the classical Galois theory of field extensions, and both of its generalizations by A.~Grothendieck and A.~R.~Magid. Besides the example from topology, we are also interested in the theory of central extensions from group theory, which is another instance of categorical Galois theory \cite{Jan1990,JK1994}, using the Galois structure (say $\Gamma_G$) obtained from the \emph{abelianization functor} and the class of surjective group homomorphisms (see Paragraph \ref{ParagraphTightRelationshipWithGroups}). The $\Gamma_G$-coverings are the classically called \emph{central extensions of groups}. As mentioned before, and generalizing from this example, the terminology \emph{central extension} is used in certain contexts, such as in \cite{JK1994}, to describe what is more generally called a $\Gamma$-covering. Finally, the classification results for quandle coverings obtained in \cite{Eis2014} also derive from this fundamental theorem of categorical Galois theory, as we partially explained in Part I.

Note that in this article, we further develop the theory of quandle and rack coverings by using higher dimensional categorical Galois theory. In particular we identify a suitable commutator for the study of coverings and \emph{double coverings} in this context, together with a \emph{relative higher centrality condition} which is \emph{compatible} with the zero-dimensional centrality $\Co$ and one-dimensional centrality $\Ci$ described before. The analogy is to be made with the corresponding developments in group theory as we describe in Part I and below. Because of the similarities between the corresponding Galois structures, many aspects of the covering theory of racks and quandles can also be interpreted using the covering theory of topological spaces (Part I and \cite{Eis2014}).

We precisely define what a ``suitable'' Galois structure is for our purposes in Convention \ref{ConventionGaloisStructure}. Again, we describe and explain the concepts at play to the extend which is necessary for appreciating the content of this article. As it is stated in \cite{EveGr2014} and Part I, the adjunction $\pi_0 \dashv \I$ between the categories $\RCK$ (or similarly for $\QND$) and $\SET$ is part of a \emph{strongly Birkhoff} \emph{Galois structure} denoted $\Gamma \defeq (\RCK,\SET,\pi_0,\I,\eta,\epsilon,\EE)$ (see Convention \ref{ConventionGaloisStructure} and Section \ref{SectionAdmissibility}), where the \emph{class of extensions} $\EE$ is the class of surjective morphisms of racks.

\begin{convention}\label{ConventionGaloisStructure} For our purposes, a \emph{Galois structure} $\Gamma \defeq (\C,\X,F,\I,\eta,\epsilon,\EE)$ (see \cite{JanComo1991}), is the data of an inclusion $\I$, of a \emph{full} (\emph{replete}) subcategory $\X$ in a category $\C$, with left adjoint $F\colon{\C \to \X}$, unit $\eta$, counit $\epsilon$ and a chosen class of \emph{extensions} $\EE$ within the morphisms of $\C$. The class $\EE$ is subject to the following conditions:
\begin{enumerate}
\item $\EE$ contains all isomorphisms, and $\EE$ is closed under composition;
\item the reflection (by $F$) of an extension yields an extension;
\item pullbacks along extensions exist, and the pullback of an extension is an extension.
\end{enumerate} 
For our purposes, $\EE$ will always be a class of \emph{regular epimorphisms}. Moreover, we require the components of the unit $\eta$ to be extensions, i.e.~for each object $X$ in $\C$, $\eta_X\colon X \to \I F X$ is an extension. Such a category $\X$ is said to be $\EE$-reflective in $\C$. Finally, taking pullbacks along extensions should be a ``well behaved'' operation i.e.~we require our extensions to be of \emph{effective $\EE$-descent} in $\C$ (see Section \ref{SectionBeyondBarrExactness} below).

As mentioned before, we call \emph{primitive extensions}, those extensions $p$ which lie in $\X$. A \emph{trivial $\Gamma$-covering} or \emph{trivial covering} is an extension $t$ which is the pullback of a primitive extension $p$ along a unit morphism $\eta_X$ (provided that $\Gamma$ is \emph{admissible} or \emph{strongly Birkhoff}, see Section \ref{SectionAdmissibility}). A \emph{normal $\Gamma$-covering} or \emph{normal covering} is such that the projections of its kernel pair are trivial coverings. A \emph{$\Gamma$-covering} or \emph{covering}, sometimes called \emph{central extension}, is an extension $c\colon A \to B$ such that there is another extension $e\colon E \to B$ such that the pullback $t$ of $c$ along $e$ is a trivial covering.
\end{convention}

\subsubsection{Strong connections with groups}\label{ParagraphTightRelationshipWithGroups}
There is an enlightening parallel to be made between our work and the corresponding developments in group theory. Firstly because the algebraic structures of racks and quandles are intimately related to group conjugation, and secondly because such a parallel helps to navigate the possible outcomes for the development of a higher order covering theory of racks and quandles (see Part I).

The term \emph{wrack} was first used by J.C.~Conway and G.C.~Wraith, in an unpublished correspondence of 1959, to describe the ``wreckage'' of a group, whose multiplication operation has been forgotten, and only the operation of conjugation remains. The functor $\Conj\colon \GRP \to \RCK$ (or its restriction $\Conj\colon \GRP \to \QND$) sends a group $G$ to the quandle with underlying set $G$ and operation defined by conjugation $x \qndop a \defeq a^{-1} x a$. Interestingly, the left adjoint of $\Conj$ plays an important role in the covering theory of racks and quandles, as it was first noticed by M.~Eisermann, and further investigated in Part I. The functor $\Conj$ is right adjoint to the functor $\Pth\colon \RCK \to \GRP$ (first defined by D.E.~Joyce as $Adconj$, see also $\Adj$ in \cite{Eis2014}) which sends a rack $A$ to the group $\Pth(A)$ obtained as the following quotient in $\GRP$: \[ \xymatrix{ \Fg(\U(A)) \ar[r]^-{q_A} & \Pth(A) \defeq  \Fg(\U(A))/\langle\langle \gr{c}^{-1}\gr{a}^{-1}\gr{x}\,\gr{a} \mid a,x,c \in A \text{ and } c= x \qndop a \rangle\rangle,}\] 
where $\langle\langle z \mid [...] \rangle\rangle$ stands for the normal subgroup generated by the elements $z$ such that $[...]$. The unit $\pth$ of this adjunction $\Pth \dashv \Conj$ is given by $\pth_A \defeq q_A \eta^g_{\U(A)} \colon A \to \Conj(\Pth(A)) \  \colon \ a \mapsto \gr a$. We write $\vec{f} \defeq \Pth(f)$ for the image by $\Pth$ of a morphism of racks or quandles. We often omit $\Conj$, $\U$ and $\I$ from our notations.

We explained in Part I how to derive the definition of $\Pth$ from looking at equivalence classes of terms in the ``language of racks''. Subsequently we showed that an element $g$ in $\Pth(A)$, which we call a \emph{path}, represents an equivalence class of \emph{homotopically equivalent} primitive paths -- in the sense of the covering theory of racks. We recall that the \emph{action} $x \cdot g$ of such a path $g \in \Pth(A)$ on an element $x \in A$ is defined using the action of generators $\gr a \in \Pth(A)$ for which $x \cdot \gr a \defeq x \qndop a$ and $x \cdot \gr a^{-1} \defeq x \qndiop a$ (as we saw for primitive paths). A \emph{trail} $(x,g)$ in $A$ is the data of a \emph{head} $x \in A$ and a \emph{path} $g\in \Pth(A)$, and the \emph{endpoint} of $(x,g)$ is defined by $x \cdot g$. For each $g,h \in \Pth(A)$ and $x\in A$ we have that: if $e$ is the neutral element in $\Pth(A)$, then $x \cdot e = x$; moreover, $x \cdot (gh) = (x \cdot g) \cdot h$, and  $(x\qndop y) \cdot g = (x \cdot g) \qndop (y \cdot g)$; finally $\gr{(x \cdot g)} \defeq \pth_A(x \cdot g) = g^{-1} \gr{x}  g$ (see \emph{augmented quandle} \cite{Joy1979}).

Given a morphism of racks $f\colon A \to B$, we have $f(x \cdot g) = f(x) \cdot \vec{f}(g)$. If $A = \Fr(X)$ for some set $X$, then $\Pth(A) = \Fg(X)$ acts \emph{freely} on $A$, which plays an important role in the characterization of coverings (see Section \ref{SectionTowardsHigherCoveringTheory}). For the covering theory of quandles, homotopy-equivalence classes of primitive paths are represented by the elements of $\Pth\degree(Q)$, which is the normal subgroup of $\Pth(Q)$ generated by pairs of generators $\gr a\, \gr b^{-1}$ for $a$ and $b$ in $Q$. This divergence is easily understood via the comparison of free racks and free quandles, as a consequence of the additional \emph{idempotency axiom} in $\QND$ (see Part I). It is also the case that if $Q = \Fq(X)$ for some set $X$, then $\Pth\degree(Q) = \Fg\degree(X)$ acts \emph{freely} on $Q$, where $\Fg\degree(X)$ is $\langle \langle ab^{-1} \mid a,b \in \Fg(X)\rangle \rangle$. Moreover, the foundational concepts of interest for the covering theory (relative centrality) in $\RCK$ and $\QND$ coincide in the appropriate sense (and in every dimension); see Part I for more explanations.

If the abelianization functor $\ab \colon \GRP \to \AB$, into the category of abelian groups $\AB$, is the left adjoint to the inclusion of $\AB$ in $\GRP$, and $\Fa$ is the \emph{free abelian group functor}; then we have the following square of adjunctions (Diagram \eqref{DiagramSquareAdjD1}) describing the relationship between $\pi_0 \dashv \I$ in $\RCK$ (or $\QND$) and the \emph{abelianization} in $\GRP$. All but one of the four possible squares of functors below commute -- ($\pi_0$, $\Conj$, $\U$, $\ab$) doesn't. Moreover, a group is abelian if and only if its conjugation operation is trivial. Similarly, a surjective group homomorphism ($f\colon G \to H$) is central (in the sense of group theory, i.e., its \emph{kernel} $\Ker(f) \leq Z(G)$ is in the \emph{center} of $G$) if and only if $\Conj(f)$ is a covering of quandles. The image by $\Pth$ of a covering in $\RCK$ is a central extension of groups.
\begin{equation}\label{DiagramSquareAdjD1}
\vcenter{\xymatrix@C=22pt@R=18pt{ \RCK \ar@{{}{}{}}[dd]|-{\dashv}  \ar@<-3pt>@/_7pt/[rr]_-{\pi_0}  \ar@<-3pt>@/_7pt/[dd]_-{\Pth} \ar@{{}{}{}}[rr]|-{\top}  & & \SET \ar@{{}{}{}}[dd]|-{\dashv}   \ar@<-3pt>@/_7pt/[ll]_-{\I} \ar@<-3pt>@/_7pt/[dd]_-{\Fa} \\
\\
 \GRP  \ar@{{}{}{}}[rr]|-{\top}  \ar@<-3pt>@/_7pt/[uu]_-{\Conj} \ar@<-3pt>@/_7pt/[rr]_-{\ab}   & &  \AB  \ar@<-3pt>@/_7pt/[ll]_-{\I} \ar@<-3pt>@/_7pt/[uu]_-{\U}
}}
\end{equation}

\subsection{Towards higher dimensions}\label{SectionTowardsHigherExtensions}
In order to extend the covering theory of racks and quandles to higher dimensions, we first look at the arrow category $\EXT\RCK$ (or $\EXT\QND$). Given any category $\C$ with a chosen class of extensions $\EE$ (Convention \ref{ConventionGaloisStructure}), $\EXT\C$ refers to the full subcategory of extensions within the category of morphisms (arrow category) $\ARR\C$. A morphism $\alpha\colon{f_A \to f_B}$ in such a category of morphisms is given by a pair of morphisms in $\C$, which we denote $\alpha = (\alpha_{\ttop},\alpha_{\pperp})$ (the \emph{top} and \emph{bottom components} of $\alpha$) as in the commutative Diagram~\eqref{EquationDoubleExtension}. 
\begin{equation}\label{EquationDoubleExtension}
\vcenter{\xymatrix @R=0.6pt @ C=8pt{
A_{\ttop} \ar[rrr]^-{\alpha_{\ttop}} \ar[rd] |-{p}  \ar[dddd]_{f_A} & & &  B_{\ttop}  \ar[dddd]^-{f_B} \\
 & A_{\pperp} \times_{B_{\pperp}} B_{\ttop}  \ar[rru]|-{\pi_2} \ar[lddd]|-{\pi_1}  \\
\\
\\
A_{\pperp}  \ar[rrr]_-{\alpha_{\pperp}} &  & & B_{\pperp}
}} 
\end{equation} 
If all morphisms in this commutative diagram are in $\EE$, including $\alpha$'s so-called \emph{comparison map} $p$, then $\alpha$ is said to be a \emph{double extension} \cite{Jan1991}. For our purposes, the class of double extensions $\EE^1$ is the appropriate induced class of (two-dimensional) extensions in $\EXT\C$.


The inclusion $\I \colon \CEXT\RCK \to \EXT\RCK$ (and similarly for $\I \colon \CEXT\QND \to \EXT\QND$), of the full subcategory of coverings in the category of extensions, admits a left adjoint $\Fi \colon \EXT\RCK \to \CEXT\RCK$. The functor $\Fi$ universally makes an extension into a covering (or central extension). It is said to \emph{universally centralize} an extension (one-dimensional centralization) in the same way that $\pi_0$ \emph{universally trivializes} objects (zero-dimensional centralization). The unit $\eta^1$ of the adjunction $\Fi \dashv \I$ is defined for an extension $f\colon A \to B$ by $\eta^1_f \defeq (\eta^1_A, \id_B)$, where the kernel pair $\Eq(\eta^1_A)$ of the quotient $\eta^1_A$ is denoted $\Ci f$. As mentioned before, it is generated by the pairs $(x\qndop a \qndiop b, x)$ for $x$, $a$, and $b$ in $A$ such that $f(a) = f(b)$. Then $\eta^1_f \in \EE^1$ is a double extension making $\CEXT\RCK$ into an $\EE^1$-reflective subcategory of $\EXT\RCK$ (Convention \ref{ConventionGaloisStructure}). This data then fits into the square of adjunctions
\begin{equation}\label{DiagramSquareAdmAdjD2}
\vcenter{\xymatrix@C=20pt@R=15pt{ \EXT\RCK \ar@{{}{}{}}[dd]|-{\dashv}  \ar@<-3pt>@/_7pt/[rr]_-{\Fi}  \ar@<-3pt>@/_7pt/[dd]_-{\Pth^1} \ar@{{}{}{}}[rr]|-{\top}  & & \CEXT\RCK \ar@{{}{}{}}[dd]|-{\dashv}   \ar@<-3pt>@/_7pt/[ll]_-{\I} \ar@<-3pt>@/_7pt/[dd]_-{\Pth^1} \\
\\
 \EXT\GRP  \ar@{{}{}{}}[rr]|-{\top}  \ar@<-3pt>@/_7pt/[uu]_-{\Conj^1} \ar@<-3pt>@/_7pt/[rr]_-{\ab^1}   & &  \CEXT\GRP \ar@<-3pt>@/_7pt/[ll]_-{\I} \ar@<-3pt>@/_7pt/[uu]_-{\Conj^1}
}}
\end{equation}
where the functors $\Pth^1$ and $\Conj^1$ are the appropriate restrictions of the adjoint pairs induced by $\Pth \dashv \Conj$ between the categories of morphisms above $\RCK$ and $\GRP$. The functor $\ab^1$ is the \emph{centralization functor} in $\GRP$ sending a surjective group homomorphism $f\colon G \to H$ to the central extension of groups $\ab^1(f)\colon G/[\Ker f,G]_{\GRP} \to H$ obtained from the quotient of the domain of $f$: $G/[\Ker f,G]_{\GRP}$ where $\Ker f$ is the kernel of $f$ and for any subgroups $X$ and $Y \leq G$, the subgroup $[X,Y]_{\GRP} \defeq \langle xyx^{-1}y^{-1} \mid x \in X,\ y\in Y \rangle \leq X \cap Y \leq G$ defines the classical commutator from group theory. As before, all squares of functors in Diagram \eqref{DiagramSquareAdmAdjD2} commute, but for the square ($\Fi$, $\Conj^1$, $\ab^1$, $\U$) which doesn't (see Example \ref{ExampleCentralizationsDontCoincide}).

\subsection{Content}
In Section \ref{SectionGaloisStructureInDimTwo}, we first show that categorical Galois theory applies to the adjunction $\Fi \dashv \I$ on the top line of Diagram \eqref{DiagramSquareAdmAdjD2}, which we fit into a \emph{strongly Birkhoff Galois structure} $\Gamma^1$ (Section \ref{SectionAdmissibility}). Alongside the results of Part I, this mainly consists in the recollection of classical properties of double extensions, including a bit of \emph{descent theory} (see \cite{JanTho1994,JanSoTho2004} and references therein). The rest of the article is then aimed at the characterization and ``visualization'' of the induced notion of \emph{$\Gamma^1$-covering}, or \emph{double central extension of racks and quandles}, as it was previously done for groups \cite{Jan1991}, leading to the developments of \cite{EvGrVd2008,Ever2010}. Note that the study of the more technical categorical aspects of Section \ref{SectionGaloisStructureInDimTwo} is not necessary for the readers' understanding of what follows. Section \ref{SectionTowardsHigherCoveringTheory} provides a useful transition to the rest of the article, as we recall our general method for the characterization of coverings, and produce a first visual representation of trivial $\Gamma^1$-coverings. We define and study the concept of \emph{double covering}, also called \emph{algebraically central double extension of racks and quandles} in Section \ref{SectionDoubleCoverings}. We provide examples, and the definition of a meaningful and well-behaved notion of commutator, which captures (in the usual sense) the centralization congruence for objects, extensions and double extensions in the category of quandles. We illustrate our methods and definitions via the characterization of normal $\Gamma^1$-coverings, leading to a better understanding of two-dimensional centrality. The concept of double covering (or algebraically central double extension of racks and quandles) and the concept of $\Gamma^1$-covering (or double central extension of racks and quandles) are then shown to coincide in Section \ref{SectionCharacterization}. Section \ref{SectionCentralization} is dedicated to the centralization of double extensions (i.e. the reflection of the category of double extensions on the category of double coverings) leading to the next step of the covering theory. Finally, in Section \ref{SectionFurtherDevelopments} we hint at further research and we adapt the concept of \emph{Galois structure with (abstract) commutators} in such a way that fits to our context and remains compatible with the developments in \cite{Jan2008}.

\section{An admissible Galois structure in dimension 2}\label{SectionGaloisStructureInDimTwo}
In order to apply categorical Galois theory (see Paragraph \ref{ParagraphBasicCategoryTheory}) to the inclusion $\I\colon \CEXT\C \to \EXT\C$ (where $\C$ stands for $\RCK$ or $\QND$) of the category of coverings of racks (or quandles) in the category of extensions, we first fit the reflection $\Fi \dashv \I$ into a Galois structure $\Gamma^1 \defeq (\EXT\C, \CEXT\C, \Fi, \I, \eta^1, \epsilon^1, \EE^1)$, satisfying the conditions of Convention \ref{ConventionGaloisStructure}. In order to do so, we need an appropriate class of extensions in dimension 2. In dimension 1, the base category $\C = \RCK$ or $\C = \QND$ is finitely \emph{cocomplete} and \emph{Barr-exact} \cite{Bar1971}, like any variety of algebras. In short this means that $\C$ has finite limits and colimits, it is \emph{regular} (i.e.~every morphism factors uniquely, up to isomorphism, into a regular epimorphism, followed by a monomorphism, and these factorizations are stable under pullbacks) and, moreover, every \emph{equivalence relation} is the kernel pair of its \emph{coequalizer}. In such a context, a fruitful class of extensions is given by the class of \emph{regular epimorphisms}. However, for a general Barr-exact $\C$, the category $\EXT\C$ is not necessarily Barr-exact (see comment preceding Definition 3.4 \cite{EvGrVd2008}); $\EXT\RCK$ and $\EXT\QND$ even fail to be regular categories (see below). The class of regular epimorphisms is then not appropriate for applying Galois theory. As mentioned before, the appropriate class of extensions (in the category of extensions) is given by the class of double extensions ($\EE^1$).


Given $\C$ and $\EE$ as above, let us briefly recall some well known basic properties of the category $\EXT\C$, full subcategory of extensions within $\ARR\C$. Limits in $\ARR\C$ are \emph{computed component-wise}. Given a diagram $D$ in $\ARR\C$, compute the limits $L_{\ttop}$ and $L_{\pperp}$ in $\C$ of the diagrams obtained as the top component of $D$ and the bottom component of $D$ respectively. The limit $l\colon L_{\ttop} \to L_{\pperp}$ of $D$ in $\ARR\C$ is given by the induced comparison map between $L_{\ttop}$ and $L_{\pperp}$. Using the regularity of $\C$, limits can be computed in $\EXT\C$ as the regular epic part $e$ of the regular epi-mono factorization $l=me$ of the limit in $\ARR\C$ (precompose the legs of the limit $L_{\pperp}$ with the mono part $m$ to obtain the bottom legs of the limit $e$ in $\EXT\C$). Pushouts in $\EXT\C$ are computed component-wise in $\C$. The initial object is the identity on the initial object of $\C$. The coequalizer of a parallel pair of morphisms in $\EXT\C$ is computed component-wise in $\C$, and the resulting commutative square is a pushout square of regular epimorphisms. Given a morphism $\alpha$ in $\EXT\C$ which is a pushout-square of regular epimorphisms in $\C$, it is the coequalizer of its kernel pair computed in $\EXT\C$. Regular epimorphisms in $\EXT\C$ are thus the same as (oriented) pushout squares of regular epimorphisms in $\C$. Monomorphisms are morphisms for which the top component is a monomorphism in $\C$. Regular epi-mono factorizations exist, and are unique in $\EXT\C$, however these might not be pullback stable in general.

\begin{remark} When $\C$ is $\RCK$ or $\QND$, regularity of $\EXT\C$ would imply that the category of surjective functions $\EXT\SET$ is regular (since $\EXT\SET$ is closed under regular quotients and limits in $\EXT\C$). We recall that not all regular epimorphisms in $\EXT\SET$ are pullback stable (see also \cite[Remark 3.1]{JanSo2011} and Remark \ref{RemarkEffectiveDescentRegC}). Since $\C$ is Barr exact, $\EXT\SET$ is equivalent to the category $\mathsf{ERSet}$ of (internal) equivalence relations over $\SET$. Using the arguments from \cite[Section 2]{JanSo2002}, a regular epimorphism in $\mathsf{ERSet}$ is given by a morphism $\bar{\alpha}\colon \Eq(f_A) \to \Eq(f_B)$ and a surjective morphism $\alpha_{\ttop}\colon A_{\ttop} \to B_{\ttop}$ that commute with the projections of the equivalence relations $\Eq(f_A) \rightrightarrows A_{\ttop}$ and $\Eq(f_B) \rightrightarrows B_{\ttop}$ (as in the top-right corner of the commutative Diagram \eqref{Diagram3by3}) and such that $(b,b') \in \Eq(f_B)$ if and only if there exists a finite sequence $(a_1,a_1')$, $\ldots$, $(a_n,a_n')\in \Eq(f_A)$ with $b = \alpha_{\ttop} (a_1)$, $\alpha_{\ttop} (a_i')=\alpha_{\ttop} (a_{i+1})$ for $i \in \lbrace 1, \ldots, n-1 \rbrace$ and $\alpha_{\ttop} (a_n')=b'$ \cite[Proposition 2.2]{JanSo2002}. Such a morphism is a pullback stable regular epimorphism if and only if it is a regular epimorphism such that $(b,b') \in \Eq(f_B)$ if and only if there exists $(a,a')\in \Eq(f_A)$ with $b = \alpha_{\ttop} (a_1)$ and $\alpha_{\ttop} (a')=b'$ \cite[Proposition 2.3(b)]{JanSo2002}. We adapt \cite[Example 2.4]{Lor2020} to this context: define $A_{\ttop} = \lbrace (0,a), (0,b),(1,a),(1,b) \rbrace$, $B_{\ttop} = \lbrace 0, 1, 2 \rbrace$ and $\alpha_{\ttop}$ such that $\alpha_{\ttop}(0,a) = 0$, $\alpha_{\ttop}(1,b) = 2$ and $\alpha_{\ttop}(0,b)= \alpha_{\ttop}(1,b)=1 \in B_{\ttop}$. If $\Eq(f_A)$ is the equivalence relation generated by the pairs $((0,a),(1,a))$ and $((0,b),(1,b))$; and $\Eq(f_B)$ is $B_{\ttop} \times B_{\ttop}$, then the pair $(\bar{\alpha},\alpha_{\ttop})$ defines a regular epimorphism in $\mathsf{ERSet}$, but it is not pullback stable. Indeed, its pullback along the inclusion of $ \lbrace 0, 2 \rbrace \times  \lbrace 0, 2 \rbrace \rightrightarrows \lbrace 0, 2 \rbrace$ in $B_{\ttop} \times B_{\ttop} \rightrightarrows B_{\ttop}$ is not a regular epimorphism.
\end{remark}

It is convenient to bring back a problem or computation in $\EXT\C$ to a couple of problems and computations in $\C$, using the projections on the top and bottom components (this component-wise decrease in dimension is essential for the inductive approach to higher covering theory \cite{Ever2010}). ``From an engineering perspective'', our interest in the concept of a double extension lies in the fact that pullbacks of such, and subsequently many other constructions involving double extensions, can be computed component-wise in $\C$.

\subsection{Basic properties of double extensions}\label{SectionBasicPropertiesOfDoubleExtensions}

In short, we hope for the class of double extensions to have as many good properties in $\EXT\C$ as the class of regular epimorphisms has in the Barr-exact category $\C$. Note that since double extensions are regular epimorphisms in $\EXT\C$, a double extension which is a monomorphism in $\EXT\C$ is an isomorphism. Then observe that Proposition 3.5 and Lemma 3.8  from \cite{EvGrVd2008} easily generalize to our context as it was observed in \cite{EvGoeVdl2012} (Example 1.11, Proposition 1.6 and Remark 1.7). For any \emph{regular} \cite{Bar1971} category $\C$:
\begin{lemma}\label{LemmaDoubleExtensionsCalculus}
\begin{enumerate}
\item If a morphism $\alpha = (\alpha_{\ttop},\alpha_{\pperp})$ in $\EXT\C$ is such that $\alpha_{\ttop}$ is an extension and $\alpha_{\pperp}$ an isomorphism, then $\alpha$ is a double extension.
\item Double extensions are closed under composition.
\item Pullbacks along double extensions (exist in $\EXT\C$ and) are computed \emph{component-wise}. Moreover the pullback of a double extension is a double extension.
\end{enumerate}
\end{lemma}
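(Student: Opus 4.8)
The plan is to handle the three statements in turn, in each case reducing the double-extension condition (that the comparison map $p$ to the relevant pullback be a regular epimorphism) to the pullback-stability of regular epimorphisms, which holds in any regular $\C$. Throughout, ``extension'' means ``regular epimorphism'', and I will repeatedly use that such maps contain the isomorphisms, are closed under composition, and are stable under pullback. For \textbf{(1)}, since $\alpha_{\pperp}$ is an isomorphism, the projection $\pi_2\colon A_{\pperp}\times_{B_{\pperp}}B_{\ttop}\to B_{\ttop}$ is an isomorphism, being the pullback of $\alpha_{\pperp}$ along $f_B$. The comparison map $p\colon A_{\ttop}\to A_{\pperp}\times_{B_{\pperp}}B_{\ttop}$ satisfies $\pi_2\circ p=\alpha_{\ttop}$, so $p=\pi_2^{-1}\circ\alpha_{\ttop}$ is the composite of an isomorphism with the extension $\alpha_{\ttop}$, hence an extension. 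As $\alpha_{\pperp}$ (an isomorphism), $\alpha_{\ttop}$, $f_A$ and $f_B$ are all extensions, $\alpha$ is a double extension.

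For \textbf{(2)}, given double extensions $\alpha\colon f_A\to f_B$ and $\beta\colon f_B\to f_C$, the top and bottom components of $\beta\alpha$ are composites of extensions, hence extensions. It remains to show that the comparison map $p_{\beta\alpha}\colon A_{\ttop}\to A_{\pperp}\times_{C_{\pperp}}C_{\ttop}$ is an extension. Using the pasting of pullbacks $A_{\pperp}\times_{B_{\pperp}}(B_{\pperp}\times_{C_{\pperp}}C_{\ttop})\cong A_{\pperp}\times_{C_{\pperp}}C_{\ttop}$, I would factor $p_{\beta\alpha}$ as $p_{\alpha}$ followed by the map $A_{\pperp}\times_{B_{\pperp}}B_{\ttop}\to A_{\pperp}\times_{C_{\pperp}}C_{\ttop}$ induced by the comparison map $p_{\beta}$ of $\beta$. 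This latter map is the pullback of $p_{\beta}$ along the projection $A_{\pperp}\times_{B_{\pperp}}B_{\ttop}\to B_{\ttop}$, hence an extension by pullback-stability; since $p_{\alpha}$ is an extension too, so is their composite $p_{\beta\alpha}$.

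For \textbf{(3)}, the existence and component-wise computation follow from the facts recalled above, namely that limits in $\ARR\C$ are computed component-wise and that $\EXT\C$ is full in $\ARR\C$: once one checks that the component-wise pullback lies in $\EXT\C$, it is automatically the pullback there. So, given a double extension $\alpha\colon f_A\to f_B$ and an arbitrary $\gamma\colon f_C\to f_B$, I would form $P_{\ttop}=A_{\ttop}\times_{B_{\ttop}}C_{\ttop}$ and $P_{\pperp}=A_{\pperp}\times_{B_{\pperp}}C_{\pperp}$ with induced vertical map $f_P\colon P_{\ttop}\to P_{\pperp}$. The two projections $\pi_{\ttop}$, $\pi_{\pperp}$ onto the $C$-side are pullbacks of $\alpha_{\ttop}$, $\alpha_{\pperp}$, hence extensions. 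The heart of the matter is the comparison map $q\colon P_{\ttop}\to P_{\pperp}\times_{C_{\pperp}}C_{\ttop}$ of the projection $\pi\colon f_P\to f_C$: I would identify $P_{\pperp}\times_{C_{\pperp}}C_{\ttop}\cong A_{\pperp}\times_{B_{\pperp}}C_{\ttop}$ by pasting, and then check that the square with vertical maps $q$ and $p$ and horizontal maps the evident projection $P_{\ttop}\to A_{\ttop}$ and the map $A_{\pperp}\times_{B_{\pperp}}C_{\ttop}\to A_{\pperp}\times_{B_{\pperp}}B_{\ttop}$ induced by $\gamma_{\ttop}$ is a pullback. Then $q$ is a pullback of $p$, hence an extension; and factoring $f_P$ as $q$ followed by the pullback of $f_C$ along $P_{\pperp}\to C_{\pperp}$ shows $f_P$ is an extension too, so the component-wise construction genuinely lies in $\EXT\C$ and $\pi$ is a double extension.

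The main obstacle is the bookkeeping in \textbf{(3)}: correctly setting up the cube of component-wise pullbacks and recognizing, through repeated pasting of pullback squares, that the comparison map $q$ of the pullback is itself a pullback of the comparison map $p$ of $\alpha$. Once that identification is secured, every remaining step is an instance of pullback-stability of regular epimorphisms in $\C$; in particular, no Barr-exactness is required, and regularity of $\C$ suffices throughout.
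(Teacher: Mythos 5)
Your proof is correct and follows essentially the same route as the paper, which does not argue the lemma directly but cites Proposition~3.5 and Lemma~3.8 of Everaert--Gran--Van der Linden as generalized in Everaert--Goedecke--Van der Linden, where the arguments are precisely your pasting-of-pullbacks reductions (in particular, your key observation in (3) that the comparison map $q$ of the pulled-back square is itself a pullback of the comparison map $p$ of $\alpha$). One cosmetic slip: in (2) the induced map $A_{\pperp}\times_{B_{\pperp}}B_{\ttop}\to A_{\pperp}\times_{C_{\pperp}}C_{\ttop}$ is the pullback of $p_{\beta}$ along the projection $A_{\pperp}\times_{C_{\pperp}}C_{\ttop}\to B_{\pperp}\times_{C_{\pperp}}C_{\ttop}$ (the base of the square), not along $A_{\pperp}\times_{B_{\pperp}}B_{\ttop}\to B_{\ttop}$, though the square you set up is the right one and the conclusion stands.
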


Given a pair of morphisms $\alpha = (\alpha_{\ttop},\alpha_{\pperp})\colon{f_A \to f_B}$ and $\gamma = (\gamma_{\ttop},\gamma_{\pperp})\colon{f_C \to f_B}$ in $\EXT\C$, their \emph{component-wise pullback} is given by the following commutative diagram in $\C$ \[\vcenter{\xymatrix@1@!0@=32pt{
& C_{\ttop} \times_{B_{\ttop}} A_{\ttop} \pullback \ar[rr]^-{\pi_{A_{\ttop}}} \ar[dd]|(.3){\pi_{C_{\ttop}}}|-{\hole} \ar@{{}{..}{>}}[ld]_-{f} && A_{\ttop} \ar[dd]^-{\alpha_{\ttop}} \ar[ld]|-{f_A} \\
C_{\pperp} \times_{B_{\pperp}} A_{\pperp} \pullback \ar[rr]^(.75){\pi_{A_{\pperp}}} \ar[dd]_-{\pi_{C_{\pperp}}} && A_{\pperp} \ar[dd]^(.25){\alpha_{\pperp}} \\
& C_{\ttop} \ar[ld]_-{f_C} \ar[rr]^(.25){\gamma_{\ttop}}|-{\hole} && B_{\ttop} \ar[ld]^-{f_B} \\
C_{\pperp} \ar[rr]_-{\gamma_{\pperp}} && B_{\pperp}}} \] where the front and back faces are pullbacks -- i.e.~it is the pullback of $\alpha$ and $\gamma$ in the arrow category $\ARR\C$. Provided that $\alpha$ is a double extension, Lemma \ref{LemmaDoubleExtensionsCalculus} above says that $f$ is an extension and the pullback of $\alpha$ and $\beta$ in $\EXT\C$ is given by $f$ together with the projections $(\pi_{A_{\ttop}},\pi_{A_{\pperp}})$ and $(\pi_{C_{\ttop}},\pi_{C_{\pperp}})$, where the latter is actually a double extension. In particular, the kernel pair of a double extension $\alpha=(\alpha_{\ttop},\alpha_{\pperp})$ exists in $\EXT\C$ and is given by the kernel pairs of $\alpha_{\ttop}$ and $\alpha_{\pperp}$ in each component (together with the induced morphism between those -- see Notation \ref{NotationDoubleExtAndKernelPair}). Moreover, the legs of such kernel pairs are also double extensions (see Lemma \ref{LemmaBarrKock}).

Lemma \ref{LemmaDoubleExtensionsCalculus} is important for what follows, if only because pullbacks along extensions appear everywhere in categorical Galois theory. As we mentioned earlier, if neither $\alpha$ or $\beta$ is known to be a double extension, their pullback in $\EXT\C$ still exists, but it is not necessarily computed component-wise and thus it is badly behaved. As we move to higher dimensions, these general pullbacks are no longer convenient for our purposes.

Note that in the context of Barr-exact \emph{Mal'tsev} categories \cite{CKP1993}, double extensions are the same as pushout squares of extensions, and, as a rule, higher extensions are easier to identify -- primarily using split epimorphisms. Note that the lack of such arguments is a challenge in our more general context where categories are not Mal'tsev categories.

As we may conclude from \cite[Proposition 3.3]{EvGoeVdl2012} and the fact that our categories are not \emph{Mal'tsev}, the axiom (E4) of ``right-cancellation'' considered there (see also  \cite[Lemma 3.8]{EvGrVd2008}) cannot hold in our context. We have the following weaker version: 

\begin{lemma}\label{LemmaWeakRightCancellation}
If the composite $\alpha  \beta$ is a double extension in $\EXT\C$, and $\beta$ is a commutative square of extensions in $\C$, then $\alpha$ is a double extension.
\end{lemma}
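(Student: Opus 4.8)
The plan is to check the three defining conditions of a double extension for $\alpha=(\alpha_{\ttop},\alpha_{\pperp})\colon f_A\to f_B$ one at a time, writing $\beta=(\beta_{\ttop},\beta_{\pperp})\colon f_C\to f_A$ for the first factor, so that $\alpha\beta\colon f_C\to f_B$ has components $\alpha_{\ttop}\beta_{\ttop}$ and $\alpha_{\pperp}\beta_{\pperp}$. The two ``edge'' conditions are immediate. Since $\alpha\beta$ is a double extension, its components $\alpha_{\ttop}\beta_{\ttop}$ and $\alpha_{\pperp}\beta_{\pperp}$ are extensions; as $\beta_{\ttop}$ and $\beta_{\pperp}$ are extensions by hypothesis, right-cancellation of regular epimorphisms in the regular category $\C$ (if $gf$ is a regular epimorphism then so is $g$, as one sees from the regular epi--mono factorization of $g$) gives that $\alpha_{\ttop}$ and $\alpha_{\pperp}$ are extensions. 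The two projections out of the comparison object $A_{\pperp}\times_{B_{\pperp}}B_{\ttop}$ are then extensions, being pullbacks of the extensions $f_B$ and $\alpha_{\pperp}$.

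The real content is to show that the comparison map $p_{\alpha}\colon A_{\ttop}\to A_{\pperp}\times_{B_{\pperp}}B_{\ttop}$ is an extension. First I would record, by the pasting lemma for pullbacks applied to the factorization $\alpha_{\pperp}\beta_{\pperp}=\alpha_{\pperp}\circ\beta_{\pperp}$, the identification of the comparison object of the composite
\[
C_{\pperp}\times_{B_{\pperp}}B_{\ttop}\;\cong\;C_{\pperp}\times_{A_{\pperp}}\bigl(A_{\pperp}\times_{B_{\pperp}}B_{\ttop}\bigr).
\]
Through this identification, a direct check of the universal properties yields a factorization $p_{\alpha\beta}=\widetilde{p_{\alpha}}\circ p_{\beta}$, where $p_{\beta}\colon C_{\ttop}\to C_{\pperp}\times_{A_{\pperp}}A_{\ttop}$ is the comparison map of $\beta$ (which we do \emph{not} assume to be an extension), and $\widetilde{p_{\alpha}}\colon C_{\pperp}\times_{A_{\pperp}}A_{\ttop}\to C_{\pperp}\times_{A_{\pperp}}\bigl(A_{\pperp}\times_{B_{\pperp}}B_{\ttop}\bigr)$ is the pullback of $p_{\alpha}$ along the projection $\rho\colon C_{\pperp}\times_{A_{\pperp}}\bigl(A_{\pperp}\times_{B_{\pperp}}B_{\ttop}\bigr)\to A_{\pperp}\times_{B_{\pperp}}B_{\ttop}$.

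Then I would conclude in two moves. Since $\alpha\beta$ is a double extension, $p_{\alpha\beta}$ is an extension, and right-cancellation applied to $p_{\alpha\beta}=\widetilde{p_{\alpha}}\circ p_{\beta}$ shows that $\widetilde{p_{\alpha}}$ is an extension. Now $\rho$ is the pullback of the extension $\beta_{\pperp}$ along the projection $A_{\pperp}\times_{B_{\pperp}}B_{\ttop}\to A_{\pperp}$, hence is itself an extension, and $\widetilde{p_{\alpha}}$ is the pullback of $p_{\alpha}$ along $\rho$. The descent property of regular epimorphisms in a regular category---if the pullback of $g$ along a regular epimorphism $e$ is a regular epimorphism, then $g$ is one, because $g$ composed with the opposite projection equals $e$ composed with that pullback, a composite of regular epimorphisms, to which right-cancellation applies---then forces $p_{\alpha}$ to be an extension. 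Together with the first paragraph this verifies all defining conditions, so $\alpha$ is a double extension.

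I expect the main obstacle to be the middle paragraph: exhibiting the comparison object of the composite as the iterated pullback and verifying the factorization $p_{\alpha\beta}=\widetilde{p_{\alpha}}\circ p_{\beta}$ with $\widetilde{p_{\alpha}}$ realised as a pullback of $p_{\alpha}$. Once this bookkeeping is in place, the two cancellation/descent arguments for regular epimorphisms are routine. It is worth noting that the hypothesis that $\beta$ is a commutative square of extensions---concretely, that $\beta_{\pperp}$ is an extension---is exactly what makes $\rho$ an extension and thereby licenses the descent step; this is the precise point at which the failure of the full right-cancellation axiom (E4) is circumvented.
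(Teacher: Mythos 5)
Your proof is correct and matches the paper's: the paper's own proof consists exactly of your first step (right-cancellation of regular epimorphisms applied componentwise to conclude that $\alpha$ is a square of extensions) and then dismisses the comparison-map step as ``an easy exercise.'' Your middle and final paragraphs supply that exercise in the intended way --- the pasting identification $C_{\pperp}\times_{B_{\pperp}}B_{\ttop}\cong C_{\pperp}\times_{A_{\pperp}}(A_{\pperp}\times_{B_{\pperp}}B_{\ttop})$, the factorization $p_{\alpha\beta}=\widetilde{p_{\alpha}}\circ p_{\beta}$, and the two cancellation/descent moves (equivalently, the single identity $p_{\alpha}\circ\beta_{\ttop}=\rho\circ p_{\alpha\beta}$ with $\rho$ a pullback of the extension $\beta_{\pperp}$) --- and you correctly identify that the hypothesis on $\beta_{\pperp}$ is precisely what circumvents the failure of the full right-cancellation axiom (E4).
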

\begin{proof}
Since $\alpha_{\ttop} \beta_{\ttop}$ and $\alpha_{\pperp} \beta_{\pperp}$ are regular epimorphisms, $\alpha$ is a square of extensions in $\C$. The rest of the proof is an easy exercise.
\end{proof}

In particular we deduce that pullbacks along double extensions reflect double extensions. 

\begin{corollary}\label{CorollaryPullbacksReflectDExtensions}
Given a morphism $\alpha = (\alpha_{\ttop},\alpha_{\pperp})$ in $\EXT\C$, if its pullback along a double extension $\beta$ yields a double extension $\alpha'$, then $\alpha$ is itself a double extension.
\end{corollary}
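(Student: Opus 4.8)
The plan is to reduce the statement to the weak right-cancellation property of Lemma~\ref{LemmaWeakRightCancellation} by exploiting both legs of the pullback square. Denote by $\beta$ the double extension along which we pull back $\alpha\colon f_A \to f_B$, and form the pullback in $\EXT\C$. Since $\beta$ is a double extension, Lemma~\ref{LemmaDoubleExtensionsCalculus}(3) guarantees that this pullback exists, is computed component-wise, and that the projection $\pi\colon f_{A'} \to f_A$ onto $f_A$ --- being the pullback of $\beta$ along $\alpha$ --- is itself a double extension. Writing $\alpha'\colon f_{A'} \to f_{B'}$ for the other projection (the pullback of $\alpha$ along $\beta$, which is a double extension by hypothesis), the pullback square reads $\alpha \pi = \beta \alpha'$.

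First I would observe that the composite $\beta \alpha'$ is a double extension: by hypothesis $\alpha'$ is a double extension, $\beta$ is a double extension, and double extensions are closed under composition (Lemma~\ref{LemmaDoubleExtensionsCalculus}(2)). Hence the equal composite $\alpha \pi$ is a double extension as well.

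Next I would note that $\pi$, being a double extension, has both its top and bottom components $\pi_{\ttop}$ and $\pi_{\pperp}$ in $\EE$; in other words $\pi$ is a commutative square of extensions in $\C$. This is precisely the hypothesis required to apply Lemma~\ref{LemmaWeakRightCancellation} to the factorization $\alpha \pi$ of a double extension, with $\pi$ playing the role of the right-hand (first-applied) factor. The lemma then yields that $\alpha$ is a double extension, as desired.

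I do not expect any serious obstacle here, as the substantive work is already carried out in Lemma~\ref{LemmaWeakRightCancellation}. The only point worth stressing is the small compatibility observation that a double extension is in particular a commutative square of extensions, which is what makes the hypothesis of the weak cancellation lemma available for the leg $\pi$; everything else is a direct application of the closure properties recorded in Lemma~\ref{LemmaDoubleExtensionsCalculus}.
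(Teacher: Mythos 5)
Your proof is correct and is exactly the argument the paper intends: the corollary is stated as an immediate consequence of Lemma~\ref{LemmaWeakRightCancellation}, obtained by composing the hypothesized double extension $\alpha'$ with $\beta$, identifying this with $\alpha\pi$ where the projection $\pi$ is a double extension by Lemma~\ref{LemmaDoubleExtensionsCalculus}(3), and cancelling $\pi$ on the right. No gaps; your explicit observation that a double extension is in particular a square of extensions, which activates the hypothesis of the cancellation lemma, is the only detail the paper leaves tacit.
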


\begin{observation}\label{ObservationCompositeOfDoubleExtensions} Finally we note that if a composite of two double extensions $\alpha \beta$ is a pullback square, then both $\alpha$ and $\beta$ are easily shown to be pullback squares.
\end{observation}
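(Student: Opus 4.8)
The plan is to translate the statement into one about the comparison maps of the three double extensions at play, and then to use that, for any commuting square, being a pullback square is equivalent to its comparison map being an isomorphism, while the double--extension property guarantees these comparison maps are regular epimorphisms. Write $\beta = (\beta_{\ttop},\beta_{\pperp})\colon f_A \to f_B$ and $\alpha = (\alpha_{\ttop},\alpha_{\pperp})\colon f_B \to f_C$, and denote by
\[
p_\beta\colon A_{\ttop}\to A_{\pperp}\times_{B_{\pperp}} B_{\ttop},\quad p_\alpha\colon B_{\ttop}\to B_{\pperp}\times_{C_{\pperp}} C_{\ttop},\quad p_{\alpha\beta}\colon A_{\ttop}\to A_{\pperp}\times_{C_{\pperp}} C_{\ttop}
\]
the respective comparison maps, as in Diagram~\eqref{EquationDoubleExtension}. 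Since $\alpha$ and $\beta$ are double extensions, $p_\alpha$ and $p_\beta$ are extensions; since double extensions are closed under composition (Lemma~\ref{LemmaDoubleExtensionsCalculus}(2)), $\alpha\beta$ is again a double extension, so $p_{\alpha\beta}$ is an extension too, and the hypothesis that $\alpha\beta$ is a pullback square says precisely that $p_{\alpha\beta}$ is an \emph{isomorphism}. The goal is thus to deduce that $p_\beta$ and $p_\alpha$ are isomorphisms as well.

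First I would record a factorization of $p_{\alpha\beta}$ through $p_\beta$. The pasting law for pullbacks identifies $A_{\pperp}\times_{C_{\pperp}} C_{\ttop}$ with $A_{\pperp}\times_{B_{\pperp}}(B_{\pperp}\times_{C_{\pperp}} C_{\ttop})$, and under this identification an inspection of the universal properties gives $p_{\alpha\beta} = \bar{q}\circ p_\beta$, where $\bar q \defeq 1_{A_{\pperp}}\times_{B_{\pperp}} p_\alpha$ is the pullback of $p_\alpha$ along the projection $\mathrm{pr}\colon A_{\pperp}\times_{B_{\pperp}}(B_{\pperp}\times_{C_{\pperp}} C_{\ttop})\to B_{\pperp}\times_{C_{\pperp}} C_{\ttop}$. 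This projection is the pullback of the extension $\beta_{\pperp}$, hence an extension, and $\bar q$ is the pullback of the extension $p_\alpha$, hence also an extension. Verifying the identity $p_{\alpha\beta} = \bar q\circ p_\beta$ is routine and I would relegate it to a one--line diagram check.

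With this factorization in hand, the two conclusions follow from two standard regular--category facts. Because $p_{\alpha\beta}$ is an isomorphism, $p_\beta$ is a split monomorphism; being also a regular epimorphism, it is an isomorphism, so $\beta$ is a pullback square. Consequently $\bar q = p_{\alpha\beta}\circ p_\beta^{-1}$ is an isomorphism as well. Since $\bar q$ is the pullback of $p_\alpha$ along the extension $\mathrm{pr}$, and pullback along a regular epimorphism reflects isomorphisms in a regular category (conservativity of the pullback functor; this can be checked elementarily, using that in a regular category regular epimorphisms coincide with strong epimorphisms together with the usual diagonal argument for the monomorphism part), it follows that $p_\alpha$ is an isomorphism, i.e.\ $\alpha$ is a pullback square.

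The genuinely load--bearing step -- the only one that is not pure diagram chasing -- will be this last reflection of the isomorphism of $\bar q$ down to $p_\alpha$. This is exactly where the double--extension hypothesis is used essentially: it guarantees that $\beta_{\pperp}$ (hence $\mathrm{pr}$) and $p_\alpha$ are extensions. The naive pasting lemma, which would let one conclude directly once $\alpha_{\ttop}$ is a monomorphism, is unavailable here precisely because all horizontal maps are regular epimorphisms rather than monomorphisms, so the regular--epi conservativity is what replaces it.
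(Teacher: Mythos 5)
Your proof is correct, and every step checks out: the pasting identification $A_{\pperp}\times_{C_{\pperp}}C_{\ttop}\cong A_{\pperp}\times_{B_{\pperp}}(B_{\pperp}\times_{C_{\pperp}}C_{\ttop})$, the factorization $p_{\alpha\beta}=\bar q\, p_\beta$, the split-mono-plus-regular-epi argument for $p_\beta$, and the reflection of isomorphisms along pullback by the regular epimorphism $\mathrm{pr}$ for $p_\alpha$. There is no in-paper argument to compare against --- the Observation is asserted as ``easily shown'' --- so what you have done is fill the gap, and you have done so at the level of generality of the surrounding section (any regular category), which is a genuine plus. Two remarks on economy. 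First, your final conservativity lemma is heavier than needed: since $\alpha$ is a double extension, $p_\alpha$ is already a regular epimorphism, so only the monomorphism half is required; it follows by pulling the kernel pair $\Eq(p_\alpha)$ back along the pullback-stable regular epimorphism $\mathrm{pr}$, observing that the resulting relation is the kernel pair of the monomorphism $\bar q$, hence trivial, and that it covers $\Eq(p_\alpha)$ regular-epimorphically. Second, in the varietal setting where the paper actually lives ($\RCK$, $\QND$), the whole statement admits a two-line element chase that is presumably what the author intended: $p_\beta(a)=p_\beta(a')$ implies $p_{\alpha\beta}(a)=p_{\alpha\beta}(a')$, whence $a=a'$; and given $b,b'\in B_{\ttop}$ with $f_B(b)=f_B(b')$ and $\alpha_{\ttop}(b)=\alpha_{\ttop}(b')$, surjectivity of $p_\beta$ produces $a,a'$ with $f_A(a)=f_A(a')$, $\beta_{\ttop}(a)=b$, $\beta_{\ttop}(a')=b'$, so $p_{\alpha\beta}(a)=p_{\alpha\beta}(a')$ forces $a=a'$ and $b=b'$. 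Note that this chase uses surjectivity of the comparison map $p_\beta$ at exactly the spot where your argument uses that $\mathrm{pr}$ is a regular epimorphism --- in both versions this is where the double-extension hypothesis on $\beta$ is genuinely load-bearing, confirming your closing diagnosis.
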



\subsection{Beyond Barr exactness, effective descent along double extensions}\label{SectionBeyondBarrExactness}
Given a Galois structure $\Gamma$ as in Convention \ref{ConventionGaloisStructure}, $\Gamma$-coverings, which are the key concept of study, are defined as those extensions $c\colon A \to B$ for which there is another extension $e\colon E \to B$ such that the pullback $t$ of $c$ along $e$ is a trivial $\Gamma$-covering. In most references \cite{BorJan1994,GraRo2004,Jan2008}, $e$ is further required to be of \emph{effective descent} or \emph{effective $\EE$-descent} (see \cite{JanTho1994,JanSoTho2004} and references therein). Such extensions are sometimes also called \emph{monadic extensions} \cite{JanComo1991,Ever2014}. In the contexts of interest for this work, we shall always have that all our extensions are of effective $\EE$-descent, which is why we use this simplified definition of covering. The idea is to ask that ``pulling back along $e$ is an \emph{algebraic} operation'', which is necessary for the ``information about coverings to be \emph{tractable} in $\X$'' in the sense of the fundamental theorem of categorical Galois theory (see for instance \cite[Corollary 5.4]{JanComo1991}).  We didn't insist on this requirement for Part I (see also \cite{JK1994}) since the class of effective descent morphisms in a Barr-exact $\C$ is well known to be the class of regular epimorphisms \cite{JanTho1994}.

Given an extension $e\colon{E \to B}$ in $\C$, if we write $\ARR(Y)$ for the category of morphisms with codomain $Y$, then there is an induced pair of adjoint functors: $e_*\colon \ARR(E) \to \ARR(B)$, left adjoint of $e^*\colon\ARR(B) \to \ARR(E)$, where $f_*(k \colon X \to A) \defeq fk$, and $f^*(h\colon Y \to B)$ is given by the pullback of $h$ along $f$. This adjunction also restricts to the categories of extensions above $E$ and $B$: $e_*|\colon \EXT(E) \to \EXT(B)$, left adjoint of $e^*|\colon\EXT(B) \to \EXT(E)$ (defined similarly). We say that $e$ is of effective (global) descent if $e^*$ is \emph{monadic}, and $e$ is of effective $\EE$-descent if $e^*|$ is \emph{monadic} (see \cite{JanTho1997}). Let us add for the interested reader that, in order to prove the fundamental theorem of categorical Galois theory, G.~Janelidze showed that (in an admissible Galois structure) if $e^*|$ is monadic and we write $T$ for the \emph{monad induced by $e_{\ast}| \dashv e^{*}|$} \cite{McLane1997}, then the category of those extensions which are split by $e$ is equivalent to the category of those \emph{Eilenberg-Moore $T$-algebras} \cite{McLane1997} $(f\colon X \to A, \mu \colon Tf \to f)$ such that $f\colon X \to A$ is a trivial covering (see for instance \cite[Proposition 4.2,Theorem 5.3]{JanComo1991}).

In this section we show that double extensions of racks and quandles are of effective global and $\EE^1$-descent in the category of extensions. From Lemma 3.2 \cite{EvGoeVdl2012} and the above, we have what can be understood as local $\EE^1$-Barr exactness:

\begin{lemma}\label{LemmaBarrKock}  Assuming that $\C$ is Barr-exact, and given a commutative square of extensions together with the horizontal kernel pairs and the factorization $f$ between them;
\begin{equation}\label{DiagramLemmaBarrKock}
\vcenter{\xymatrix @R=15pt @C=30pt {
\Eq(\sigma_{\ttop}) \ar[d]_-{f} \ar@{{}{}{}}[rd]|-{(*)} \ar@<2pt>[r] \ar@<-2pt>[r] & E_{\ttop} \ar[r]^-{\sigma_{\ttop}}   \ar[d]^-{f_E} & B_{\ttop} \ar[d]^-{f_B}\\
\Eq(\sigma_{\pperp}) \ar@<2pt>[r] \ar@<-2pt>[r] & E_{\pperp} \ar[r]_-{\sigma_{\pperp}}  &  \ B_{\pperp}
}}
\end{equation} 
then, the right hand square is a double extension if and only if any of the two left hand (commutative) squares is an extension. 

If so, then $\sigma=(\sigma_{\ttop},\sigma_{\pperp})$ is the coequalizer in $\EXT\C$ of the parallel pair $(*)$ on the left, which is in turn the kernel pair of $\sigma$. Such an equivalence relation $f = \Eq(\sigma)$ in $\EXT\C$ is \emph{stably effective} in the sense that it is the kernel pair of its coequalizer, and any pullback of its coequalizers is still a regular epimorphism (see for instance \cite[Section 2.B]{JanSoTho2004}). In particular, double extensions are the coequalizers of their kernel pairs (computed component-wise in $\C$).
\end{lemma}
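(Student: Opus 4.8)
The plan is to reduce the biconditional to a single statement about comparison maps and then to recognise one comparison map as a pullback of the other. Since $\sigma_{\ttop}$, $\sigma_{\pperp}$, $f_E$ and $f_B$ are extensions by hypothesis, the right-hand square $\sigma=(\sigma_{\ttop},\sigma_{\pperp})$ is a double extension precisely when its comparison map $p\colon E_{\ttop}\to E_{\pperp}\times_{B_{\pperp}} B_{\ttop}$ is a regular epimorphism (Diagram \eqref{EquationDoubleExtension}). Fixing one of the two kernel-pair projections, the corresponding left-hand square, viewed as a morphism $f\to f_E$ in $\EXT\C$, has its top and bottom legs split by the diagonals and its right leg $f_E$ an extension; its left leg $f$ is the composite of the comparison map $p_i\colon \Eq(\sigma_{\ttop})\to \Eq(\sigma_{\pperp})\times_{E_{\pperp}} E_{\ttop}$ with a pullback of $f_E$, hence $f$ is a regular epimorphism as soon as $p_i$ is. Thus the left square is a double extension exactly when $p_i$ is a regular epimorphism. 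The two left squares are interchanged by the symmetry isomorphism of the kernel pairs, which is compatible with $f$, so one is a double extension if and only if the other is, and it suffices to treat one projection.

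First I would rewrite the codomain of $p_i$: unwinding $\Eq(\sigma_{\pperp})\times_{E_{\pperp}} E_{\ttop}$ shows it is canonically isomorphic to $E_{\ttop}\times_{B_{\pperp}} E_{\pperp}$. The crux is to exhibit $p_i$ as a pullback of $p$. I would check that the evident square with vertical edges $p_i$ and $p$, top edge a kernel-pair projection $\Eq(\sigma_{\ttop})\to E_{\ttop}$, and bottom edge a canonical map $g\colon E_{\ttop}\times_{B_{\pperp}} E_{\pperp}\to E_{\pperp}\times_{B_{\pperp}} B_{\ttop}$ commutes and is a pullback, where $g$ is itself the pullback of $\sigma_{\ttop}$ along the projection $E_{\pperp}\times_{B_{\pperp}} B_{\ttop}\to B_{\ttop}$ and therefore a regular epimorphism. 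Granting this, pullback-stability of regular epimorphisms in the regular category $\C$ shows that $p$ a regular epimorphism forces $p_i$ to be one; conversely, since $g\,p_i = p\cdot(\text{projection})$ and $g\,p_i$ is a composite of regular epimorphisms, the map $p\cdot(\text{projection})$ is a regular epimorphism, whence right-cancellation of regular epimorphisms forces $p$ to be one whenever $p_i$ is. This is precisely Lemma 3.2 of \cite{EvGoeVdl2012}, valid in any regular category. The main obstacle is that, $\C$ being only regular and not Mal'tsev, every identification of pullbacks here must be extracted from universal properties rather than from element-chasing, so verifying the pullback rectangle abstractly is the substantive step.

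For the second assertion, suppose $\sigma$ is a double extension. By the componentwise description of kernel pairs of double extensions recalled after Lemma \ref{LemmaDoubleExtensionsCalculus}, the kernel pair of $\sigma$ in $\EXT\C$ is given in each component by $\Eq(\sigma_{\ttop})$ and $\Eq(\sigma_{\pperp})$ together with the factorization $f$; that is, it is exactly the parallel pair $(*)$. Now double extensions are regular epimorphisms in $\EXT\C$, and the regular epimorphisms there are precisely the oriented pushout squares of regular epimorphisms of $\C$, which are the coequalizers of their kernel pairs; it follows immediately that $\sigma$ is the coequalizer in $\EXT\C$ of $(*)$, and that $(*)$ is its kernel pair.

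Finally, stable effectiveness is inherited from the calculus of double extensions. The equivalence relation $f=\Eq(\sigma)$ is the kernel pair of its coequalizer $\sigma$ by the previous paragraph, and any pullback of $\sigma$ along a double extension is again a double extension by Lemma \ref{LemmaDoubleExtensionsCalculus}(3), hence again a regular epimorphism in $\EXT\C$; this yields stable effectiveness in the sense of \cite[Section 2.B]{JanSoTho2004}. The closing ``in particular'' is then immediate, since the kernel pair of a double extension is computed componentwise in $\C$ while $\sigma$ is its coequalizer.
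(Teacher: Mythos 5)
Your proposal is correct, and its overall skeleton matches the paper's: the paper disposes of the first equivalence in one line by citing Lemma 3.2 of \cite{EvGoeVdl2012}, and then derives the coequalizer/kernel-pair statements from the component-wise calculus of double extensions, exactly as you do (you establish the kernel pair first and the coequalizer second, the paper does the reverse; immaterial). What you genuinely add is a self-contained proof of the cited lemma, valid in any regular category: reducing each left-hand square to its comparison map $p_i$, identifying $\Eq(\sigma_{\pperp})\times_{E_{\pperp}}E_{\ttop}$ with $E_{\ttop}\times_{B_{\pperp}}E_{\pperp}$, and exhibiting $p_i$ as the pullback of the comparison map $p$ of the right-hand square along the regular epimorphism $g$, after which pullback-stability and right-cancellation of regular epimorphisms give both directions. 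The pullback rectangle you flag as ``the substantive step'' does hold (note the top edge must be the kernel-pair projection \emph{opposite} to the one used in the left-hand square: with $p_1$ built from the first projections, the top edge is the second projection $\Eq(\sigma_{\ttop})\to E_{\ttop}$), and since every object in sight is a finite limit or regular image, the verification goes through with generalized elements. So where the paper delegates, you reconstruct; this buys self-containedness and makes visible that only regularity of $\C$ is needed for the equivalence, Barr-exactness entering only later.

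One point to tighten. For stable effectiveness you verify only that pullbacks of $\sigma$ along \emph{double extensions} are regular epimorphisms, whereas the clause asserts that \emph{any} pullback of the coequalizer is one, and the descent-theoretic use in \cite[Section 2.B]{JanSoTho2004} requires pullbacks along arbitrary morphisms of $\EXT\C$. The repair is immediate from the very lemma you cite, read in its elaborated form: in the discussion following Lemma \ref{LemmaDoubleExtensionsCalculus}, the paper allows $\gamma$ to be an \emph{arbitrary} morphism of $\EXT\C$ and concludes that the pullback with a double extension $\alpha$ is computed component-wise with the projection pulled back from $\alpha$ again a double extension, hence a regular epimorphism of $\EXT\C$. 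With that reading, your argument yields stable effectiveness in full, which is what the paper's terse ``everything is computed component-wise, and $\C$ is Barr-exact'' is pointing at; as written, your restriction to pullbacks along double extensions falls short of the stated claim.
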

\begin{proof}
The first part is a direct consequence of Lemma 3.2 \cite{EvGoeVdl2012}. Since the component-wise coequalizer $\sigma$ of $(*)$ is a pushout square, it coincides with the coequalizer in $\EXT\C$. Then $(*)$ is the kernel pair of $\sigma$ since pullbacks along double extensions are computed component-wise. It is stably effective because everything is computed component-wise, and $\C$ is Barr-exact.
\end{proof}

Note that we also have the classical result, see for instance \cite[Example 6.10]{Bar1971}, which is called the \textit{Barr-Kock} Theorem in \cite[Theorem 2.17]{BouGra2004bis}. From there we easily obtain (as in Remark 4.7 \cite{EvGrVd2008}, or Lemma 3.2 (2) \cite{GraRo2004}):
\begin{lemma}\label{LemmaDoubleExtensionsEffectiveDecent}
Double extensions are of effective (global) descent in $\EXT\C$.
\end{lemma}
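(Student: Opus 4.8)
The plan is to show that the pullback functor $\sigma^{*}$ along a double extension $\sigma = (\sigma_{\ttop},\sigma_{\pperp})\colon f_E \to f_B$, from the category of objects of $\EXT\C$ over $f_B$ to the category of objects of $\EXT\C$ over $f_E$, is monadic, and to deduce this by reducing the question component-wise to $\C$, where Barr-exactness is available. The starting point is the classical fact that in a Barr-exact (indeed already regular) category every regular epimorphism is of effective descent \cite{JanTho1994,Bar1971}. In particular both $\sigma_{\ttop}$ and $\sigma_{\pperp}$, being regular epimorphisms in $\C$, are of effective global descent there, so that the pullback functors $\sigma_{\ttop}^{*}$ and $\sigma_{\pperp}^{*}$ are monadic on the respective slices of $\C$. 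The task is then to lift this information from the two components up to $\EXT\C$.

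First I would invoke Lemma \ref{LemmaDoubleExtensionsCalculus}(3): since $\sigma$ is a double extension, pullbacks along $\sigma$ in $\EXT\C$ are computed component-wise in $\C$. Consequently an object of $\EXT\C$ over $f_E$ equipped with descent data relative to $\sigma$ unpacks, under the top and bottom projections, into a pair consisting of an object over $E_{\ttop}$ carrying descent data relative to $\sigma_{\ttop}$ and an object over $E_{\pperp}$ carrying descent data relative to $\sigma_{\pperp}$, these two being compatible through the underlying arrow structure. Applying effective descent in each component, each of the two pieces is effective and descends to an object over $B_{\ttop}$, respectively over $B_{\pperp}$, together with an induced comparison morphism between them. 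This produces a candidate object of the arrow category over $f_B$ whose pullback along $\sigma$ recovers, component-wise, the original descent datum.

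The hard part will be to ensure that this reassembled datum genuinely lives in $\EXT\C$ over $f_B$, i.e.\ that the reconstructed morphism in $\C$ is an extension and that the resulting comparison square is a legitimate object over $f_B$ in $\EXT\C$. This is exactly where the Barr-Kock Theorem \cite[Theorem 2.17]{BouGra2004bis} (see also \cite[Example 6.10]{Bar1971}) enters, as in \cite[Remark 4.7]{EvGrVd2008} and \cite[Lemma 3.2(2)]{GraRo2004}: it guarantees that the squares built from the kernel-pair and coequalizer data made available by Lemma \ref{LemmaBarrKock} are genuine pullbacks. Combined with Corollary \ref{CorollaryPullbacksReflectDExtensions}, which asserts that pullbacks along double extensions reflect double extensions, the fact that the pullback of the reconstructed object along $\sigma$ is an extension forces the reconstructed object itself to be an extension, hence an object of $\EXT\C$ over $f_B$. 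Tracking these identifications shows that the comparison functor into the descent category is an equivalence, so $\sigma$ is of effective global descent; here the \emph{stable effectiveness} recorded in Lemma \ref{LemmaBarrKock} is precisely what makes all the intervening coequalizers and pullbacks behave well enough for the component-wise reduction to go through.
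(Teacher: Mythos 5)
Your proof is correct and takes essentially the same route as the paper's: both reduce the monadicity of $\sigma^{*}$ to the two components, using that regular epimorphisms are of effective descent in the Barr-exact base together with the component-wise computation of pullbacks, kernel pairs and coequalizers along double extensions (Lemmas \ref{LemmaDoubleExtensionsCalculus} and \ref{LemmaBarrKock}), and both validate the reassembled datum via the Barr--Kock theorem of \cite[Theorem 2.17]{BouGra2004bis} — the paper merely phrases this verification in the equivalent language of discrete fibrations of equivalence relations \cite[Theorem 3.7]{JanSoTho2004} instead of your descent-data formulation. One small repair: Corollary \ref{CorollaryPullbacksReflectDExtensions} reflects \emph{double} extensions (commutative squares), so it is not the right tool for showing that the reconstructed vertical arrow $f_D\colon D_{\ttop} \to D_{\pperp}$ is itself an extension; that instead follows at once from right-cancellation of regular epimorphisms (as at the start of the proof of Lemma \ref{LemmaWeakRightCancellation}), or, as in the paper's construction, automatically, because $f_D$ arises as the codomain of a coequalizer computed in $\EXT\C$ by Lemma \ref{LemmaBarrKock}.
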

\begin{proof}
Let $\sigma\colon f_E \to f_B$ be a double extension. The monadicity of $\sigma^*$ in each component $\sigma_{\ttop}^*$ and $\sigma_{\pperp}^*$ (see \cite{JanTho1994}) easily yields the monadicity of $\sigma^*$ itself. For instance, we use the characterization in terms of \emph{discrete fibrations} \cite[Theorem 3.7]{JanSoTho2004}.

Consider a discrete fibration of equivalence relations $f_R \colon R_{\ttop} \to R_{\pperp}$ above the kernel pair $\Eq(\sigma)\colon\Eq(\sigma_{\ttop}) \to \Eq(\sigma_{\pperp})$ of $\sigma = (\sigma_{\ttop},\sigma_{\pperp})$ as in the commutative diagram of plain arrows below. Then observe that $f_R$ is computed component-wise and consists in a comparison map between a pair of discrete fibrations of equivalence relations $R_{\ttop}$ and $R_{\pperp}$, above the pair of kernel pairs $\Eq(\sigma_{\ttop})$ and $\Eq(\sigma_{\pperp})$ with comparison map $\Eq(\sigma) \colon\Eq(\sigma_{\ttop}) \to \Eq(\sigma_{\pperp})$. The projections of the equivalence relation $f_R$ are also double extensions as the pullback of the projections of $\Eq(\sigma)$ which are themselves double extensions by Lemma \ref{LemmaBarrKock}. We build the square $(*)$ on the right, first by taking the coequalizer $\gamma = (\gamma_{\ttop},\gamma_{\pperp})$ of $f_R$, which is computed component-wise (see \ref{LemmaBarrKock} again). The factorization $(\bar{\beta}_{\ttop}, \bar{\beta}_{\pperp})$ is then obtained by the universal property of $\gamma$. 
\[ \xymatrix @R=15pt @C=30pt {
f_R \pullback \ar[d]_{\hat{\beta}} \ar@<2pt>[r] \ar@<-2pt>[r] & f_C \ar@{{}{}{}}[dr]|-{(*)} \ar@{{}{--}{>}}[r]^{(\gamma_{\ttop},\gamma_{\pperp})}   \ar[d]_{\beta} & f_D \ar@{{}{..}{>}}[d]^{(\bar{\beta}_{\ttop},\bar{\beta}_{\pperp})}\\
\Eq(\sigma) \ar@<2pt>[r] \ar@<-2pt>[r] & f_E \ar[r]_{\sigma}  &  \ f_B
}
\]
By Lemma \ref{LemmaBarrKock}, $f_R$ is the kernel pair of $\gamma$ which is a double extension. Also $(*)$ is a pullback square as it is component-wise by \cite[Theorem 2.17]{BouGra2004bis}. Finally $\gamma$ is pullback stable as a coequalizer, since everything is computed component-wise (see Lemma \ref{LemmaBarrKock}).
\end{proof}
What is exactly needed in our context is not effective global descent but effective $\EE^1$-descent. This derives from Lemma \ref{LemmaDoubleExtensionsEffectiveDecent} because of Corollary \ref{CorollaryPullbacksReflectDExtensions}, as it is explained in \cite[Section 2.7]{JanTho1994}.
\begin{corollary}
Double extensions are of effective $\EE^1$-descent in $\EXT\C$.
\end{corollary}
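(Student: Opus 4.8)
The plan is to derive effective $\EE^1$-descent from the effective global descent of Lemma~\ref{LemmaDoubleExtensionsEffectiveDecent} by the standard principle of \cite[Section~2.7]{JanTho1994}: once the ambient pullback functor along a double extension is monadic, membership in a pullback-stable, pullback-reflective class of extensions is transported across the comparison equivalence, so that the restriction of this functor to extensions is again monadic. Fix a double extension $\sigma\colon f_E \to f_B$ in $\EXT\C$, and write $\EE^1(f_B)$ and $\EE^1(f_E)$ for the full subcategories of double extensions (in $\EXT\C$) with codomain $f_B$ and $f_E$ respectively. By Lemma~\ref{LemmaDoubleExtensionsEffectiveDecent} the pullback functor $\sigma^*\colon \ARR(f_B) \to \ARR(f_E)$ is monadic; denoting by $T$ the induced monad on $\ARR(f_E)$ and by $K\colon \ARR(f_B) \to (\ARR(f_E))^{T}$ the comparison functor, this says precisely that $K$ is an equivalence. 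Effective $\EE^1$-descent of $\sigma$ is the assertion that the restricted functor $\sigma^*|\colon \EE^1(f_B) \to \EE^1(f_E)$ is monadic, and my aim is to show that $K$ restricts to the corresponding comparison equivalence.

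First I would check that the adjunction and the monad restrict to the classes of double extensions. The left adjoint $\sigma_*$ sends $k\colon X \to f_E$ to the composite $\sigma k\colon X \to f_B$; if $k$ is a double extension, then so is $\sigma k$, since double extensions are closed under composition (Lemma~\ref{LemmaDoubleExtensionsCalculus}(2)). Similarly, $\sigma^*$ sends a double extension above $f_B$ to a double extension above $f_E$, as the pullback of a double extension along a double extension is again a double extension (Lemma~\ref{LemmaDoubleExtensionsCalculus}(3)). Hence both $\sigma_*$ and $\sigma^*$ restrict to $\EE^1(f_E)$ and $\EE^1(f_B)$, the induced monad $T$ restricts to an endofunctor $T|$ of $\EE^1(f_E)$, and a $T$-algebra whose underlying object already lies in $\EE^1(f_E)$ is the same datum as a $T|$-algebra.

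The heart of the argument is to match the two full subcategories under $K$. For $g$ with codomain $f_B$, the underlying object of $K(g)$ is exactly $\sigma^*(g)$, so the image $K(\EE^1(f_B))$ consists of those $T$-algebras whose underlying object is $\sigma^*(g)$ for some $g \in \EE^1(f_B)$; to identify this with the category of $T|$-algebras $(\EE^1(f_E))^{T|}$ I must show that $g \in \EE^1$ if and only if $\sigma^*(g) \in \EE^1$. The forward implication is again pullback-stability (Lemma~\ref{LemmaDoubleExtensionsCalculus}(3)); the backward implication is precisely Corollary~\ref{CorollaryPullbacksReflectDExtensions}, applied to the pullback of $g$ along the double extension $\sigma$. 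With both implications in hand, $K$ restricts to an equivalence $\EE^1(f_B) \simeq (\EE^1(f_E))^{T|}$, which is exactly the monadicity of $\sigma^*|$, i.e.\ effective $\EE^1$-descent of $\sigma$. The only genuinely non-formal input is this backward implication --- that being a double extension is reflected by pullback along a double extension --- and it is supplied by Corollary~\ref{CorollaryPullbacksReflectDExtensions}; everything else is the routine restriction of an equivalence of categories to compatible full subcategories, exactly as in \cite[Section~2.7]{JanTho1994}.
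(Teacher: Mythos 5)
Your proposal is correct and follows exactly the paper's route: the paper derives effective $\EE^1$-descent from the effective global descent of Lemma~\ref{LemmaDoubleExtensionsEffectiveDecent} together with Corollary~\ref{CorollaryPullbacksReflectDExtensions}, citing the standard restriction argument of \cite[Section~2.7]{JanTho1994}, which is precisely the argument you spell out. The only difference is that you make explicit the routine steps (restriction of the adjunction and monad via Lemma~\ref{LemmaDoubleExtensionsCalculus}, and the matching of full subcategories under the comparison equivalence) that the paper leaves to the cited reference.
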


\begin{remark}\label{RemarkEffectiveDescentRegC}
It was shown in \cite{Ever2012} that given a regular category $\C$, $\EXT(\C)$ is regular if and only if its effective global descent morphisms are the regular epimorphisms (i.e.~the pushout squares of regular epimorphisms). As far as we know, in the categories of racks and quandles, the classes of effective global and $\EE^1$-descent morphisms contain the class of double extensions and are strictly contained in the class of regular epimorphisms. We do not need to characterize these more precisely for what follows.
\end{remark}

\subsection{Strongly Birkhoff Galois structure}\label{SectionAdmissibility}
In order for categorical Galois theory (and in particular its fundamental theorem) to hold in the context of a Galois structure such as $\Gamma$ from Convention \ref{ConventionGaloisStructure}, $\Gamma$ is further required to be \emph{admissible}, in the sense of \cite{JK1994,BorJan1994}, which implies for instance that pullbacks of unit morphisms along primitive extensions are unit morphisms, or subsequently that coverings, normal coverings and trivial coverings are preserved by pullbacks along extensions. We actually work with a stronger property for our Galois structures, which we require to be \emph{strongly Birkhoff} in the sense of \cite[Proposition 2.6]{EvGrVd2008}, where this condition is shown to imply the admissibility condition. The Galois structure $\Gamma$ is said to be \emph{strongly Birkhoff} if \emph{reflection squares} at extensions are double extensions. Given $f\colon{A\to B}$ in $\C$, \emph{the reflection square at} $f$ (with respect to $\Gamma$) is the morphism $(\eta_A, \eta_B)$ with domain $f$ and codomain $\I F(f)$ in $\ARR(\C)$.
\begin{equation}\label{DiagramReflectionSquare} \vcenter{\xymatrix @R=0.8pt @ C=11pt{
A \ar[rrr]^{\eta_A} \ar[rd] |-{p}  \ar[dddd]_{f} & & &  \I F(A)  \ar[dddd]|{\I F(f)} \\
 & B \times_{\I F(B)} \I F(A)  \ar[rru]|-{\pi_2} \ar[lddd]|-{\pi_1}  \\
\\
\\
B  \ar[rrr]_{\eta_B} &  & & \I F(B)
}}
\end{equation}
Our Galois structure $\Gamma^1$ is strongly Birkhoff if the reflection squares at double extensions (as defined in $\C$, for $\C = \RCK$ or $\C = \QND$) should be \emph{double extensions in $\EXT\C$}, which defines the concept of \emph{$3$-fold extension} (see \cite{EvGrVd2008,Ever2010}).

\begin{definition}\label{DefinitionThreeFoldExtension}
Given any regular category $\C$, define \emph{the category $\EXT^2\C$} whose objects are double extensions (as in Diagram \eqref{EquationDoubleExtension}) and whose morphisms $(\sigma,\beta)\colon{\gamma \to \alpha}$ between two double extensions $\gamma$ and $\alpha$ are given by the data of the (oriented) commutative diagram in $\EXT\C$ (on the left) or equivalently in $\C$ (on the right): 
\begin{equation}\label{EquationThreeFoldExtension} \vcenter{\xymatrix @R=8pt @ C=18pt{\db{f_C} \ar[rrr]^{\red{\sigma = (\sigma_{\ttop},\sigma_{\pperp})}} \ar[rd] ^(0.6){\pi = (\pi_{\ttop},\pi_{\pperp})}  \ar[dddd]|-{\gamma = (\gamma_{\ttop},\gamma_{\pperp})} & & &  \db{f_A} \ar[dddd]|{\alpha=(\alpha_{\ttop},\alpha_{\pperp})} \\
 & f_P  \ar[rru]|-{} \ar[lddd]|-{}  \\
\\
\\
\db{f_D}  \ar[rrr]_{\red{\beta = (\beta_{\ttop},\beta_{\pperp})}} &  & & \db{f_B}
}} \qquad \qquad  \vcenter{\xymatrix@1@!0@=32pt{
& C_{\ttop} \ar[rr]^-{\red{\sigma_{\ttop}}} \ar[dd]|(.25){\gamma_{\ttop}}|-{\hole} \ar[ld]_-{\db{f_C}} && A_{\ttop} \ar[dd]^-{\alpha_{\ttop}} \ar[ld]|-{\db{f_A}} \\
C_{\pperp} \ar[rr]^(.75){\red{\sigma_{\pperp}}} \ar[dd]_-{\gamma_{\pperp}} && A_{\pperp} \ar[dd]^(.25){\alpha_{\pperp}} \\
& D_{\ttop} \ar[ld]_-{\db{f_D}} \ar[rr]^(.25){\red{\beta_{\ttop}}}|-{\hole} && B_{\ttop} \ar[ld]^-{\db{f_B}} \\
D_{\pperp} \ar[rr]_-{\red{\beta_{\pperp}}} && B_{\pperp},}}
\end{equation}
where $f_P$ is the pullback of $\alpha$ and $\beta$. A \emph{$3$-fold extension} $(\sigma,\beta)$ in $\C$ is given by such a morphism in $\EXT^2\C$ such that $\sigma$, $\beta$ and the comparison map $\pi$ are also double extensions i.e.~a $3$-fold extension in $\C$ is the data of a double extension in $\EXT\C$. Note that most results from Sections \ref{SectionBasicPropertiesOfDoubleExtensions} and \ref{SectionBeyondBarrExactness} generalize to $3$-fold extensions and \emph{higher} extensions in our context (see Part III).
\end{definition}

Now since all but the ``very top'' component of the \emph{centralization units} in higher dimension (such as $\eta^1$ above) are identities (see Corollary 5.2 \cite{ImKel1986}), we can break down the strong Birkhoff condition in two steps: first the closure by quotients of ($1$-fold) coverings along double extensions, or equivalently, the fact that reflection squares at double extensions are pushout squares in $\EXT\C$ (Birkhoff condition); and second, a \emph{permutability condition}, in the base category $\C$, on the kernel pair of the non-trivial component of the centralization unit $\eta^1$. From Section 
3.4.5 in Part I, we get:
\begin{theorem}\label{TheoremGaloisStructureD2}
The Galois structure $\Gamma^1 \defeq (\EXT\C,\CEXT\C, \Fi, \eta^1, \epsilon ^1, \EE^1)$, where $\C$ is either $\RCK$ or $\QND$, is strongly $\EE^1$-Birkhoff, i.e.~given a double extension of racks or quandles $\alpha = (\alpha_{\ttop},\alpha_{\pperp})\colon{f_A \to f_B}$ (as in Diagram \eqref{EquationDoubleExtension}), the reflection square at $\alpha$ (with respect to the reflection $\Fi \dashv \Ii$) is a $3$-fold extension, i.e.~the reflection square's comparison map is a double extension, and it defines a cube of double extensions in $\EXT\C$.
\end{theorem}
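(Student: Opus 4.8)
The plan is to exploit, as anticipated above, that the centralization unit $\eta^1_f=(\eta^1_{A_{\ttop}},\id_{A_{\pperp}})$ is an identity on its bottom component, so that the reflection square at a double extension $\alpha=(\alpha_{\ttop},\alpha_{\pperp})\colon f_A\to f_B$ is a cube in $\C$ whose two bottom connecting edges are identities. First I would apply Lemma~\ref{LemmaDoubleExtensionsCalculus}: because $\eta^1_{f_A}$ and $\eta^1_{f_B}$ are double extensions, the pullback $f_P$ of $\Fi(\alpha)$ along $\eta^1_{f_B}$ is computed component-wise, with top object $B_{\ttop}\times_{\bar B_{\ttop}}\bar A_{\ttop}$ and bottom object $A_{\pperp}$, writing $\bar A_{\ttop}=A_{\ttop}/\Ci f_A$ and $\bar B_{\ttop}=B_{\ttop}/\Ci f_B$. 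The reflection square's comparison map $\pi\colon f_A\to f_P$ is then an identity on bottom components, so by Lemma~\ref{LemmaDoubleExtensionsCalculus}(1) it is a double extension as soon as its top component $\pi_{\ttop}\colon A_{\ttop}\to B_{\ttop}\times_{\bar B_{\ttop}}\bar A_{\ttop}$ is an extension. A symmetric component-wise inspection shows that the codomain $\Fi(\alpha)$ is a double extension once $\bar A_{\ttop}\to A_{\pperp}\times_{B_{\pperp}}\bar B_{\ttop}$ is surjective, its remaining faces being extensions as quotients of those of $\alpha$. Since $\eta^1_{f_A}$ and $\eta^1_{f_B}$ are double extensions by construction of the reflection, these two surjectivities are all that remains of the $3$-fold extension condition.

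Both surjectivities reduce to a single lifting problem, which I would resolve with the description of the centralization congruence recalled from Part~I (Section~3.4.5): for an extension $g\colon G\to H$, the congruence $\Ci g$ is the orbit congruence of the normal subgroup $N_g\defeq\langle\langle\,\gr{c}\,\gr{d}^{-1}\mid c,d\in G,\ g(c)=g(d)\,\rangle\rangle\leq\Pth(G)$ (inside $\Pth\degree(G)$ when $\C=\QND$), where $G$ is acted on by $x\cdot(\gr{c}\,\gr{d}^{-1})=x\qndop c\qndiop d$. Thus $x\mathrel{\Ci g}x'$ if and only if $x'=x\cdot g_0$ for some $g_0\in N_g$. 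The crux is then the claim that every element of $N_{f_B}$ lifts: for each $h\in N_{f_B}$ there is $h'\in N_{f_A}$ with $\vec{\alpha_{\ttop}}(h')=h$.

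To prove the claim it suffices to lift a generator $\gr{c}\,\gr{d}^{-1}$ with $f_B(c)=f_B(d)$. Pick $c'\in A_{\ttop}$ with $\alpha_{\ttop}(c')=c$ (possible since $\alpha_{\ttop}$ is an extension); then $\alpha_{\pperp}(f_A(c'))=f_B(c)=f_B(d)$, so $(f_A(c'),d)$ lies in $A_{\pperp}\times_{B_{\pperp}}B_{\ttop}$. As $\alpha$ is a double extension, its comparison map $A_{\ttop}\to A_{\pperp}\times_{B_{\pperp}}B_{\ttop}$ is surjective and yields $d'\in A_{\ttop}$ with $f_A(d')=f_A(c')$ and $\alpha_{\ttop}(d')=d$. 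Since $f_A(c')=f_A(d')$ we get $\gr{c'}\,\gr{d'}^{-1}\in N_{f_A}$ and $\vec{\alpha_{\ttop}}(\gr{c'}\,\gr{d'}^{-1})=\gr{c}\,\gr{d}^{-1}$; lifting each letter of a word in the generators lifts an arbitrary $h\in N_{f_B}$. This step is the permutability input the first paragraph anticipated, with the defining surjectivity of the comparison map of $\alpha$ playing the role of the Mal'tsev hypothesis that is unavailable here.

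The two surjectivities then follow. Given $(b,[a'])\in B_{\ttop}\times_{\bar B_{\ttop}}\bar A_{\ttop}$ one has $\alpha_{\ttop}(a')\mathrel{\Ci f_B}b$, hence $b=\alpha_{\ttop}(a')\cdot h$ for some $h\in N_{f_B}$; lifting $h$ to $h'\in N_{f_A}$ and setting $a\defeq a'\cdot h'$ gives $\alpha_{\ttop}(a)=\alpha_{\ttop}(a')\cdot h=b$ and $a\mathrel{\Ci f_A}a'$, so that $\pi_{\ttop}(a)=(b,[a'])$. For $\Fi(\alpha)$, given $(a_{\pperp},[b])$ one lifts a representative pair $(a_{\pperp},b)\in A_{\pperp}\times_{B_{\pperp}}B_{\ttop}$ through the comparison map of $\alpha$ to some $a\in A_{\ttop}$, whose class then maps onto $(a_{\pperp},[b])$. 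Therefore $\pi$ and $\Fi(\alpha)$ are double extensions, and together with the units $\eta^1_{f_A}$, $\eta^1_{f_B}$ this makes the reflection square at $\alpha$ a $3$-fold extension; as $\alpha$ was an arbitrary double extension, $\Gamma^1$ is strongly $\EE^1$-Birkhoff. I expect the lifting claim of the third paragraph to be the main obstacle: without a Mal'tsev hypothesis the congruences $\Ci f_A$ and $\Eq(\alpha_{\ttop})$ need not permute automatically, and it is exactly the surjectivity built into the definition of a double extension that supplies the simultaneous lift of $c'$ and $d'$ over a prescribed generator.
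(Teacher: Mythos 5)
Your proposal is correct, and it is worth recording how it differs from the paper's own treatment. The paper proves Theorem \ref{TheoremGaloisStructureD2} in two lines: since the bottom component of $\eta^1$ is an identity, the cube is a $3$-fold extension as soon as the square of top components is a double extension, and for that fact it simply cites Corollary 3.4.8 of Part I (whose proof, as the surrounding text indicates, runs through the Birkhoff condition -- reflection squares at double extensions are pushouts -- combined with the permutability of the orbit congruence $\Ci f_A$ with arbitrary congruences, in the style of Proposition 5.4 of Carboni--Kelly--Pedicchio). You make the same opening reduction (identity bottom components, component-wise pullback along the double extension $\eta^1_{f_B}$, and Lemma \ref{LemmaDoubleExtensionsCalculus}(1) to handle $\pi$ and the face $\Fi(\alpha)$), but instead of citing the Part I ingredient you reprove it directly: using the orbit-congruence description $\Ci g = \sim_{N_g}$ with $N_g = \langle\langle\, \gr{c}\,\gr{d}^{-1} \mid g(c)=g(d)\,\rangle\rangle \leq \Pth(G)$, you lift generators of $N_{f_B}$ to $N_{f_A}$ by means of the surjective comparison map of $\alpha$, and deduce the surjectivity of $\pi_{\ttop}\colon A_{\ttop} \to B_{\ttop}\times_{\bar{B}_{\ttop}}\bar{A}_{\ttop}$. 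What your route buys is a self-contained, element-wise argument that isolates exactly where the double-extension hypothesis on $\alpha$ enters (the simultaneous lift of a pair $(c,d)\in\Eq(f_B)$ to $(c',d')\in\Eq(f_A)$), a point the abstract pushout-plus-permutability argument leaves implicit; what the paper's route buys is modularity, since the categorical lemmas it invokes are reused for the higher-dimensional inductive machinery of Part III. One small point to tighten: a general element of the normal closure $N_{f_B}$ is a product of conjugates $w\,\gr{c}\,\gr{d}^{-1}w^{-1}$ rather than a word in the generators themselves, so you should add that $\vec{\alpha_{\ttop}}$ is surjective (letters of $w$ lift along the surjection $\alpha_{\ttop}$) and that $N_{f_A}$ is normal, whence conjugates lift as well; this is immediate and not a gap.
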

\begin{proof}
Since the bottom component of $\eta^1$ is an isomorphism, it suffices to show that the top component is a double extension for the whole cube to be a $3$-fold extension. This was shown in Corollary 
3.4.8 of Part I.
\end{proof}

In particular this justifies the study of $\Gamma^1$-coverings and the relative \emph{second order centrality} in the categories of racks and quandles.

\begin{remark}\label{RemarkDoubleCentralExtensionsAreClosedUnderQuotients}  A consequence of the strong Birkhoff condition is that if $\gamma$ is a morphism of $\EXT\C$, and $\gamma$ factorizes as $\gamma=\alpha \beta$, where $\alpha$ and $\beta$ are double extensions, then if $\gamma$ is a trivial $\Gamma^1$-covering, by Observation \ref{ObservationCompositeOfDoubleExtensions}, both $\beta$ and $\alpha$ are trivial $\Gamma^1$-coverings. Hence if $\gamma$ is a $\Gamma^1$-covering (see Convention \ref{ConventionGaloisStructure} or Section \ref{SectionTowardsHigherCoveringTheory} below), then both $\alpha$ and $\beta$ are $\Gamma^1$-coverings. From there, and by the fact that $\Gamma^1$-covering are reflected by pullbacks along double extensions (see Convention \ref{ConventionGaloisStructure}), it is easy to conclude that $\Gamma^1$-coverings are closed under quotients along $3$-fold extensions in $\EXT^2\C$.
\end{remark}

\subsection{Towards higher covering theory}\label{SectionTowardsHigherCoveringTheory}
The main aim of this article is to describe \emph{what are the double central extensions of racks and quandles} as in the case of groups \cite{Jan1991}. Following the more general terminology for \emph{coverings}, this consists in characterizing the $\Gamma^1$-coverings of racks and quandles. These are defined abstractly in $\EXT\RCK$ (or $\EXT\QND$) as the double extensions $\alpha\colon{f_A \to f_B}$ for which there exists a double extension $\sigma\colon{f_E \to f_B}$, such that $\sigma$ \emph{splits} $\alpha$, i.e.~the pullback of $\alpha$ along $\sigma$ yields a trivial $\Gamma^1$-covering (see Convention \ref{ConventionGaloisStructure}).

\subsubsection{Projective presentations in dimension 2}
In Part I (Section 
1.3.3) we reminded ourselves that a double extension $\alpha\colon{f_A \to f_B}$ is split by some double extension $\sigma\colon{f_E \to f_B}$ if and only if $\alpha$ can be split by a projective presentation of its codomain $f_B$ -- provided such a projective presentation exists. Hence we want to recall that given any Barr-exact category $\C$, if we choose extensions to be the regular epimorphisms in $\C$, extensions in $\C$ with projective domain and projective codomain are projective objects in $\EXT\C$ (with respect to double extensions -- see for instance \cite[Section 5]{Ever2010}). Note that when $\C$ is a variety of algebras, and $F\colon{\SET\to\C}$ is the left adjoint (with counit $\epsilon$) of the forgetful functor $\U\colon \C \to \SET$, the canonical projective presentation of an object $B$ in $\C$ is given by the counit morphism $\epsilon_B\colon{F(B) \to B}$ (where we omit to write $\U$). Given an extension $f_B\colon{B_{\ttop} \to B_{\pperp}}$ in such a $\C$, we define the \emph{canonical projective presentation of $f_B$} to be the double extension $p_{f_B}\colon{p_B \to f_B}$, below, where $P \defeq F(B_{\pperp})\times_{B_{\pperp}} B_{\ttop}$, is the pullback of $f_B$ and $\epsilon_{B_{\pperp}}$.
\begin{equation}\label{EquationProjectivePresentationD2}\vcenter{\xymatrix @R=1pt @ C=12pt{
F(P) \ar[rrr]^-{p^{\ttop}_{f_B}} \ar[rd]_-{\epsilon_P}  \ar[dddd]_-{p_B} & & &  B_{\ttop}  \ar[dddd]^-{f_B} \\
 & P  \ar[rru]|-{\pi_2} \ar[lddd]|-{\pi_1}  \\
\\
\\
F(B_{\pperp})  \ar[rrr]^-{p^{\pperp}_{f_B} \defeq\, \epsilon_{B_{\pperp}}} &  & & B_{\pperp}
}}
\end{equation}

\subsubsection{Trivial $\Gamma^1$-coverings}\label{ParagraphTrivialDoubleExtensions}
Now we want to be able to identify when the pullback of a double extension $\alpha$ is a trivial $\Gamma^1$-covering in $\EXT\C$ (where $\C$ stands for $\RCK$ or $\QND$). As usual, because the Galois structure $\Gamma^1$ is strongly Birkhoff, trivial $\Gamma^1$-coverings are easy to characterize. Remember that trivial $\Gamma^1$-coverings are those double extensions in $\EXT\C$ which ``behave exactly like'' the primitive double extensions, i.e.~those double extensions in $\CEXT\C$ -- see for instance \cite[Section 1.3]{JK1994} and Example \ref{ExamplePrimitiveDExtensionsAreTrivialDExtensions} below.

From Part I we know that trivial ($1$-fold) coverings of racks (or quandles) are characterized as those extensions that reflect \emph{loops}, which are trails $(x,g)$ whose endpoint $y = x \cdot g$ coincide with the head $x$. Further remember from Paragraph 
3.1.9 of Part I, that given a morphism of racks (or quandles) $f\colon A \to B$, an \emph{$f$-membrane} $M = ((a_0,b_0), ((a_i,b_i),\delta_i)_{1\leq i \leq n})$ is the data of a primitive trail in $\Eq(f)$, whose \emph{length} is the natural number $n$, and whose \emph{endpoints} $a_M$ and $b_M$ are the endpoints of the trails in $\C$ obtained via the projections of $\Eq(f)$. An \emph{$f$-horn} is an $f$-membrane $M = ((a_0,b_0), ((a_i,b_i),\delta_i)_{1\leq i \leq n})$ such that $x \defeq a_0 = b_0$. It is said to close (into a disk) if moreover the endpoints coincide $a_M = b_M$. It is said to \emph{retract} if for each $ 1 \leq k  \leq n$, the truncated horn $M_{\leq k} \defeq (x, (a_i,b_i,\delta_i)_{1\leq i \leq k})$ closes. Finally, the \emph{associated $f$-symmetric pair} of the membrane or horn $M$ is given by the paths $g^M_a \defeq \gr{a_1}^{\delta_1} \cdots  \gr{a_n}^{\delta_n}$ and $g^M_b \defeq \gr{b_1}^{\delta_1} \cdots  \gr{b_n}^{\delta_n}$ in $\Pth(A)$; in general, an \emph{$f$-symmetric path} is a path $g \in \Pth\degree(A)$, such that $g = g^M_a (g^M_b)^{-1}$ for some membrane $M$ as above. These definitions were used in Part I to characterize a general element in the aforementioned centralization congruence $\Ci A$ of some extension $f\colon A \to B$. We showed that $(x,y) \in \Ci A$ if and only if $x \cdot g = y$ for some \emph{$f$-symmetric path} $g$. We repeat this approach for the two-dimensional context in Section \ref{SectionDoubleCoverings}. For now we observe that:
\begin{lemma}\label{LemmaDoubleTrivialExtensions}
If $\alpha\colon{f_A \to f_B}$ is a double extension in $\RCK$ (or in $\QND$), then the following conditions are equivalent:
\begin{enumerate}
\item $\alpha$ is a trivial $\Gamma^1$-covering;
\item any $f_A$-horn which is sent by $\alpha_{\ttop}$ to a $f_B$-disk in $B_{\ttop}$, actually closes into a disk in $A_{\ttop}$;
\[\Bigg( \ \vcenter{\xymatrix@C=2pt @R=5pt {& & & \bullet  \ar@{{}{-}{}}[dddlll]|-{\dir{>}} _-{g_a}    \ar@{{}{-}{}}[dddrrr]|{\dir{>}}^-{ g_b}    \\
& & \ar@<0.1ex>@{{}{-}{}}[rr]|-{f_A} & & & & \\
&  \ar@<0.6ex>@{{}{-}{}}[rrrr]|-{f_A} & &&& & \\
\textcolor{Goldenrod}{\bullet} \ar@<1.6ex>@{{}{-}{}}[rrrrrr]|-{f_A}  & & & & &  & \textcolor{RoyalBlue}{\bullet}
}} \longmapsto \quad  \vcenter{\xymatrix@C=3pt @R=4pt {& & \alpha_{\ttop}(\bullet)  \ar@/_6ex/@{{}{-}{}}[ddd]|-{\dir{>}} _-{\alpha_{\ttop}(g_a)}   \ar@/^6ex/@{{}{-}{}}[ddd]|{\dir{>}}^-{\alpha_{\ttop}(g_b)}   \\
& \ar@<0.7ex>@{{}{-}{}}[rr]|-{f_B} & & &  \\
& \ar@<1.5ex>@{{}{-}{}}[rr]|-{f_B} \ar@<-0.4ex>@{{}{-}{}}[rr]|-{f_B} & & & \\
& & \quad \textcolor{ForestGreen}{\bullet} \quad
}}\ \Bigg)\quad \Longrightarrow \quad \vcenter{\xymatrix@C=3pt @R=4pt {& & \bullet  \ar@/_6ex/@{{}{-}{}}[ddd]|-{\dir{>}} _-{g_a}   \ar@/^6ex/@{{}{-}{}}[ddd]|{\dir{>}}^-{g_b}   \\
& \ar@<0.7ex>@{{}{-}{}}[rr]|-{f_A} & & &  \\
& \ar@<1.5ex>@{{}{-}{}}[rr]|-{f_A} \ar@<-0.4ex>@{{}{-}{}}[rr]|-{f_A} & & & \\
& & \textcolor{Goldenrod}{\bullet} = \textcolor{RoyalBlue}{\bullet} 
}}\]
\item $\alpha_{\ttop}$ reflects $f_A$-symmetric loops, in the sense that if the image by $\alpha_{\ttop}$ of an $f_A$-symmetric trail $(x, g)$ loops in $B_{\ttop}$, then the trail was already a loop in $A_{\ttop}$: $x \cdot g = x$.
\end{enumerate}
In what follows, we prefer to call a double extension $\alpha$ which satisfies these conditions a \emph{trivial double covering}. This terminology will be justified by Theorem \ref{TheoremCharacterizationDoubleCentralExtensions} where we characterize $\Gamma^1$-coverings to be the double coverings from Definition \ref{DefinitionDoubleCoverings} below.
\end{lemma}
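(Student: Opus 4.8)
The plan is to prove $(1)\Leftrightarrow(3)$ using the machinery of the admissible Galois structure $\Gamma^1$, and then to obtain $(2)\Leftrightarrow(3)$ as a direct translation between horns and symmetric trails. Throughout I write $q_A \defeq \eta^1_{A_{\ttop}}$ and $q_B \defeq \eta^1_{B_{\ttop}}$ for the centralizing quotients, with kernel pairs $\Ci f_A$ and $\Ci f_B$, and $\bar{A}_{\ttop} \defeq A_{\ttop}/\Ci f_A$, $\bar{B}_{\ttop} \defeq B_{\ttop}/\Ci f_B$.

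For $(1)\Leftrightarrow(3)$, I first reduce to a statement in $\C$. Because $\Gamma^1$ is strongly Birkhoff, hence admissible (Theorem \ref{TheoremGaloisStructureD2}, cf.\ \cite{JK1994,BorJan1994}), $\alpha$ is a trivial $\Gamma^1$-covering if and only if its reflection square, the naturality square of $\eta^1$ at $\alpha$, is a pullback in $\EXT\C$. Since $\eta^1_{f_B}$ is a double extension, this pullback is computed component-wise by Lemma \ref{LemmaDoubleExtensionsCalculus}, and since the bottom component of every centralization unit is an identity, the reflection square is a pullback in $\EXT\C$ exactly when its top component (the commutative square with horizontal maps $q_A$, $q_B$ and vertical maps $\alpha_{\ttop}$, $\Fi(\alpha)_{\ttop}$) is a pullback in $\C$. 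The comparison map $A_{\ttop}\to B_{\ttop}\times_{\bar{B}_{\ttop}}\bar{A}_{\ttop}$, $x\mapsto(\alpha_{\ttop}(x),q_A(x))$, has kernel pair $\Eq(\alpha_{\ttop})\cap\Ci f_A$, so its \emph{injectivity} says precisely that $(x,y)\in\Ci f_A$ together with $\alpha_{\ttop}(x)=\alpha_{\ttop}(y)$ force $x=y$. Invoking the characterization of $\Ci f_A$ from Part~I (namely $(x,y)\in\Ci f_A$ iff $y=x\cdot g$ for an $f_A$-symmetric path $g$), this is exactly condition $(3)$.

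The main point is then that the comparison map is automatically \emph{surjective} once $\alpha$ is a double extension; this is where the hypothesis is genuinely used. Given $(b,\bar{a})$ in the pullback, pick a preimage $a$ of $\bar{a}$ under $q_A$, so that $(b,\alpha_{\ttop}(a))\in\Ci f_B$ and hence $b=\alpha_{\ttop}(a)\cdot h$ for some $f_B$-symmetric path $h$ coming from an $f_B$-membrane $M$. The double extension property makes the induced map $\Eq(f_A)\to\Eq(f_B)$ a regular epimorphism (lift the common $B_{\pperp}$-value of a pair $(c,d)\in\Eq(f_B)$ along the surjection $\alpha_{\pperp}$, then lift $c$ and $d$ to a common fibre along the surjective comparison map $p$ of $\alpha$), so each step of $M$ lifts to a step in $\Eq(f_A)$, producing an $f_A$-membrane $\tilde{M}$ over $M$ and hence an $f_A$-symmetric path $g$ with $\vec{\alpha_{\ttop}}(g)=h$. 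Setting $a'\defeq a\cdot g$ gives $(a,a')\in\Ci f_A$ and $\alpha_{\ttop}(a')=\alpha_{\ttop}(a)\cdot h=b$, witnessing surjectivity. Thus the reflection square is a pullback if and only if $(3)$ holds, which establishes $(1)\Leftrightarrow(3)$.

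Finally, $(2)\Leftrightarrow(3)$ is a translation between the two pictures. To an $f_A$-horn $H$ with head $x$ and associated $f_A$-symmetric pair $(g_a,g_b)$ I attach the $f_A$-symmetric trail $(x,\,g_a g_b^{-1})$, and conversely every $f_A$-symmetric trail $(x,g)$ arises this way from a membrane expressing $g=g^M_a(g^M_b)^{-1}$. The identity $x\cdot(g_a g_b^{-1})=x \Leftrightarrow x\cdot g_a=x\cdot g_b$ shows that the trail loops iff $H$ closes into a disk, and the same computation after applying $\alpha_{\ttop}$ shows that the image trail loops iff $\alpha_{\ttop}(H)$ is an $f_B$-disk. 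Hence ``$H$ sent to a disk implies $H$ closes'' is exactly ``the image trail loops implies the trail loops''; since the two sets of data correspond at the level of the universally quantified statements, $(2)$ and $(3)$ are equivalent. I expect the surjectivity argument of the previous paragraph, i.e.\ the membrane-lifting along the double extension, to be the only non-formal step; the remaining equivalences are a kernel-pair computation and bookkeeping.
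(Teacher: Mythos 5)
Your proposal is correct and follows essentially the same route as the paper: reduce, via admissibility, to the reflection square at $\alpha$ being a pullback, compute it component-wise (the bottom component being an identity), and reduce the top comparison map being an isomorphism to $\Eq(\alpha_{\ttop}) \cap \Ci(f_A) = \Delta_{A_{\ttop}}$, translated into horns and symmetric trails via the Part I description of $\Ci(f_A)$. The only divergence is that where the paper simply cites the strongly Birkhoff property (Corollary 3.4.8 of Part I, via Theorem \ref{TheoremGaloisStructureD2}) to get that the comparison map is already a regular epimorphism, you re-derive that surjectivity by hand with a correct membrane-lifting argument along the double extension --- self-contained but redundant given the cited result.
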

\begin{proof}
Using the material from Section \ref{SectionAdmissibility}, we observe that our definition of trivial $\Gamma$-covering from Convention \ref{ConventionGaloisStructure} coincides, for an admissible or strongly Birkhoff Galois structure $Gamma$, with the more common definition: the extension $t$ is a trivial $\Gamma$-covering if and only if the reflection square at $t$ is a pullback. Hence $\alpha$ is a trivial $\Gamma^1$-covering (ore trivial double covering) if and only if the reflection square at $\alpha$ is a pullback. Since pullbacks along double extensions are computed component-wise, and the bottom component is trivial, it suffices to check that the diagram below, where $P \defeq \Fii(A_{\ttop}) \times_{\Fii(B_{\ttop})} B_{\ttop}$,
\[
\vcenter{\xymatrix @R=1pt @ C=11pt{
A_{\ttop} \ar[rrr]^{\alpha_{\ttop}} \ar[rd] |-{p}  \ar[dddd]_-{\eta^1_{A_{\ttop}}}  & & &  B_{\ttop}  \ar[dddd]^-{\eta^1_{B_{\ttop}}} \\
 & P  \ar[rru]|-{\pi_2} \ar[lddd]|-{\pi_1}  \\
\\
\\
\Fii(A_{\ttop})  \ar[rrr]_{\Fii(\alpha_{\ttop})} &  & & \Fii(B_{\ttop}),
}}\]
is a pullback square, i.e.~the comparison map $p$ should be an isomorphism. Since, $(\alpha_{\ttop},\Fii(\alpha_{\ttop}))$ is already a double extension by Corollary 3.4.8 of Part I, it suffices to check that $\Eq(\alpha_{\ttop}) \cap \Ci(f_A) = \Delta_{A_{\ttop}}$ (the \emph{diagonal relation} on $A_{\ttop}$). Now any element $(a,b) \in \Ci(f_A)$ is either such that $a$ and $b$ are the endpoints of a $f_A$-horn, or equivalently, $a$ and $b$ are respectively the head and endpoint of an \emph{$f_A$-symmetric trail} (i.e.~a trail whose path component is $f_A$-symmetric).
\end{proof}


\begin{example}\label{ExamplePullbackSquaresAreTrivialDExtensions}
As a consequence of the fact that $\FIi \dashv \I$ is $\EE^1$-reflective (using Observation \ref{ObservationCompositeOfDoubleExtensions}) (or simply by Lemma \ref{LemmaDoubleTrivialExtensions} above): if the comparison map $p$ of a double extension $\alpha\colon{f_A \to f_B}$ is an isomorphism (i.e.~if $\alpha$ is a pullback square), then both $\alpha$ and $(f_A,f_B)$ are trivial double coverings (i.e. trivial $\Gamma^1$-coverings).
\end{example}

\begin{example}\label{ExamplePrimitiveDExtensionsAreTrivialDExtensions}
Since any primitive double extension (i.e. a double extensions whose domain and codomain are ($1$-fold) coverings) is a trivial double covering and coverings are closed under quotients along double extensions \cite{Ren2020}, if $\alpha\colon{f_A \to f_B}$ is a double extension and $f_A$ is a covering, then $\alpha\colon{f_A \to f_B}$ is a trivial double covering (i.e. a trivial $\Gamma^1$-covering).
\end{example}

Note that the concept of trivial double covering is not symmetric in the role of $(\alpha_{\ttop},\alpha_{\pperp})$ and $(f_A,f_B)$. It is not true that in general $(\alpha_{\ttop},\alpha_{\pperp})$ is a trivial double covering if and only if the double extension $(f_A,f_B)$ is one.

\begin{example}\label{ExampleDTrivialExtension}
Consider the sets $Q_2 = \lbrace\bullet,\ \star \rbrace$, $Q_3 = \lbrace \bullet, \ \star_1,\ \star_0 \rbrace$ and $Q_4 \defeq \lbrace \star_1,\ \star_0,\ \bullet_1,\ \bullet_0 \rbrace$ as well as the morphisms $t_{\star} \colon{Q_3 \to Q_2}$ and $t\colon Q_4 \to Q_2$, which identify the bullets with $\bullet$ and the stars with $\star$. We write $Q_6 \defeq \lbrace  \star_{11},\ \star_{10},\ \star_{01},\ \star_{00}, \ \bullet_1,\ \bullet_0 \rbrace$ for the pullback of $t_{\star}$ and $t$ such that the first projection $\pi_1\colon Q_6 \to Q_4$ identifies $\star_{11}$ with $\star_{10}$ and $\star_{01}$ with $\star_{00}$, and symmetrically for the second projection, which moreover identifies $\bullet_1$ with $\bullet_0$.
\[ \vcenter{\xymatrix @R=0.5pt @ C=4pt{
Q_6 \pullback \ar@{{}{-}{>>}}[rrr]^-{\pi_2} \ar@{{}{-}{>>}}[dddd]_-{\pi_1} & & & Q_{3} \ar@{{}{-}{>>}}[dddd]^-{t_{\star}} \\
 & \\
\\
\\
Q_4 \ar@{{}{-}{>>}}[rrr]_-{t} &  & & Q_2
}} \qquad \Bigg{|} \qquad \vcenter{\xymatrix @C= 20pt@R=15pt {
\star_{11} \ar@{-}[r]|-{\pi_1}  \ar@{..}[d]|-{\pi_2}  & \star_{10} \ar@{..}[d]|-{\pi_2} \\
\star_{01} \ar@{-}[r]|-{\pi_1}  & \star_{00}
}} \qquad \vcenter{\xymatrix @C= 20pt@R=15pt {
\bullet_{1} \ar@{..}[d]|-{\pi_2}  \\ \bullet_0
}} \]
Define the \emph{involutive} ($\qndiop = \qndop$) quandle $Q \defeq \lbrace  \star_{11},\ \star_{10},\ \star_{01},\ \star_{00}, \ \bullet_1, \bullet_1' \ \bullet_0 \rbrace$ such that $\bullet_1 \qndop \star_{11} = \bullet_1 \qndop \star_{01} = \bullet_1'$, $\bullet_1' \qndop \star_{11} = \bullet_1' \qndop \star_{01} = \bullet_1$ and  $x \qndop y =  x$ for any other choice of $x$ and $y$ in $Q$. The function $p\colon Q \to Q_6$ which identifies $\bullet_1'$ with $\bullet_1$ is a surjective morphism of quandles. Then note that the double extension $(\pi_1 p,\ t_{\star})$ is a trivial double covering since $\pi_2 p$ is a covering. However, the double extension $(\pi_2 p,\ t)$ is not a double trivial covering since $\bullet_1 \qndop \star_{11} \neq \bullet_1 \qndop \star_{10}$ even though their images by $\pi_2 p$ coincide. 
\end{example}

Finally, we give an example of trivial double covering which doesn't arise as an instance of Examples \ref{ExamplePullbackSquaresAreTrivialDExtensions} and \ref{ExamplePrimitiveDExtensionsAreTrivialDExtensions}.

\begin{example}\label{ExampleDihedralQuandle1}
As it is explained in \cite[Example 1.14]{Eis2014}, a \emph{dihedral quandle} $D_n$ is the involutive quandle obtained from the (additive) cyclic group $\Z_n \defeq \Z / n\Z = \lbrace 0,\ \ldots,\ n \rbrace$ by $x \qndop y \defeq 2x - y$, for $x$ and $y$ in $D_n$. Note that $D_1$ and $D_2$ are the trivial quandles (i.e.~sets) with one and two elements respectively. For $n>2$, $D_n$ is the subquandle $\Z_n \rtimes \lbrace 1 \rbrace$ of the conjugation quandle $\Z_n \rtimes \Z_2$, corresponding to the $n$ reflections of the regular $n$-gon. In general it injects into the \emph{circular quandle} $\mathbb{S}_1$ defined on the unit circle in $\mathbb{R}_2$ by the ``central symmetries along $\mathbb{S}_1$'': $x \qndop y \defeq 2\langle x,y \rangle y - x$, for each $x$ and $y$ in $\mathbb{S}_1$, such that $- \qndop y$ defines the unique involution which fixes $y$ and sends $x$ to $-x$ whenever $x$ and $y$ are orthogonal (see \cite[Section 3.6]{Eis2014}). 

Now given two natural numbers $n$ and $m$ we have that $D_{nm}$ is the product of $D_n$ and $D_m$. We consider the following double extension of dihedral quandles in $\QND$ where for $j \in \N$, $\bar{0}\colon D_j \to D_{\pperp}$ is the terminal map to $D_{\pperp}$ and for $i \neq 0$ in $\N$, the morphism $\bar{i}\colon D_{ij} \to D_i$ sends $x \in D_{ij}$ to $x \mod i$ in $D_i$:
\begin{equation}\label{DiagramDihedralQuandles}
\vcenter{\xymatrix @R=1pt @ C=11pt{
D_{2nm} \ar[rrr]^{\bar{m}} \ar[rd]_-{\bar{nm}}  \ar[dddd]_-{\bar{n}}  & & &  D_m \ar[dddd]^-{\bar{0}} \\
 &  D_{nm}   \ar[rru]|-{\bar{m}} \ar[lddd]|-{\bar{n}}  \\
\\
\\
D_n  \ar[rrr]_{\bar{0}} &  & & \lbrace 0 \rbrace,
}}
\end{equation}
Note that this double extension is symmetric in the roles of $m$ and $n$. By Lemma \ref{LemmaTrivialDihedral} below, both $(\bar{n},\bar{0})$ and $(\bar{m},\bar{0})$ are trivial double covering whenever $2$, $m$ and $n$ are coprime. If $m=2$ and $n$ are coprime, then $(\bar{n},\bar{0})$ is a trivial double covering but $(\bar{m},\bar{0})$ is not (indeed $0 \qndop 0 = 0 \neq 2n = 0 \qndop n$). See also Example \ref{ExampleTrivialDoubleExtensionsAreDoubleCoverings} below.
\end{example}
\begin{lemma}\label{LemmaTrivialDihedral}
Using the notations from Example \ref{ExampleDihedralQuandle1}, the double extension $(\bar{n},\bar{0})$ is a trivial double covering if and only if $x = 0 \mod n$ whenever $2mx = 0 \mod n$.
\end{lemma}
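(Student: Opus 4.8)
The plan is to reduce the statement to the criterion extracted inside the proof of Lemma~\ref{LemmaDoubleTrivialExtensions}, and then to compute the two congruences involved explicitly, using the affine structure of the dihedral quandles. First I would settle the bookkeeping: reading the square \eqref{DiagramDihedralQuandles} with its \emph{horizontal} maps as the extensions, the double extension $(\bar n,\bar 0)$ has $f_A=\bar m\colon D_{2nm}\to D_m$ and $f_B=\bar 0\colon D_n\to\lbrace 0\rbrace$, so its top component is $\alpha_{\ttop}=\bar n\colon D_{2nm}\to D_n$. By the proof of Lemma~\ref{LemmaDoubleTrivialExtensions}, $\alpha$ is a trivial double covering if and only if $\Eq(\alpha_{\ttop})\cap\Ci(f_A)=\Delta_{A_{\ttop}}$, which here reads
\[
\Eq(\bar n)\cap\Ci(\bar m)=\Delta_{D_{2nm}}.
\]
Since $\bar n$ and $\bar m$ are reduction modulo $n$ and modulo $m$, one has $\Eq(\bar n)=\lbrace (x,y)\mid n\mid x-y\rbrace$ and $\Eq(\bar m)=\lbrace (a,b)\mid m\mid a-b\rbrace$ in $\Z/2nm\Z$.

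Next I would compute $\Ci(\bar m)$ explicitly. The dihedral operation is involutive, so $\qndiop=\qndop$ and $x\qndop a=2a-x$; hence the generating pairs $(x\qndop a\qndiop b,\,x)$ of $\Ci(\bar m)$, with $(a,b)\in\Eq(\bar m)$, satisfy
\[
x\qndop a\qndiop b=(2a-x)\qndop b=x+2(b-a).
\]
As $(a,b)$ runs over $\Eq(\bar m)$ the difference $b-a$ runs over all multiples of $m$, so $2(b-a)$ runs over the subgroup $\langle 2m\rangle\leq\Z/2nm\Z$. Thus $\Ci(\bar m)$ is generated by the pairs $(x+t,x)$ with $t\in\langle 2m\rangle$. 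I would then note that the coset partition $\lbrace (x,y)\mid x-y\in\langle 2m\rangle\rbrace$ is already a congruence: if $x-x'$ and $y-y'$ lie in $\langle 2m\rangle$ then $(2y-x)-(2y'-x')=2(y-y')-(x-x')\in\langle 2m\rangle$. Since this congruence contains all the generators and is contained in any equivalence relation that does, it equals $\Ci(\bar m)=\lbrace (x,y)\mid 2m\mid x-y\rbrace$.

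Finally I would intersect and translate into arithmetic. A pair $(x,y)$ lies in $\Eq(\bar n)\cap\Ci(\bar m)$ exactly when $d\defeq x-y$ satisfies $n\mid d$ and $2m\mid d$ in $\Z/2nm\Z$; writing $d=2mk$, the first condition becomes $n\mid 2mk$, while $(x,y)\in\Delta_{D_{2nm}}$ means $d\equiv 0\pmod{2nm}$, i.e.\ $n\mid k$. Hence the intersection is the diagonal precisely when, for every $k$, the divisibility $n\mid 2mk$ forces $n\mid k$ --- that is, ``$x\equiv 0\pmod n$ whenever $2mx\equiv 0\pmod n$''. (Equivalently this is $\gcd(n,2m)=1$, which is what is used in Example~\ref{ExampleDihedralQuandle1}.)

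The only delicate point is the identification of $\Ci(\bar m)$: one must check that the generating pairs produce exactly the coset partition and nothing larger, which is the congruence verification above combined with the minimality of the generated congruence. Everything else is routine modular arithmetic, so I expect the proof to be short.
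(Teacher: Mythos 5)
Your proof is correct, but it takes a genuinely different route from the paper's. The paper verifies the horn condition of Lemma~\ref{LemmaDoubleTrivialExtensions} directly: it parametrizes an arbitrary $\bar{m}$-horn in $D_{2nm}$, expands both endpoints through the affine formula $x \qndop a = 2a-x$ (so that the residues of the steps modulo $m$ cancel between the two sides), and reads off that being sent to a loop by $\bar{n}$ means $2m\sum_j(-1)^j(a_j-b_j)\equiv 0 \bmod n$, while closing in $D_{2nm}$ means $\sum_j(-1)^j(a_j-b_j)\equiv 0 \bmod n$; the stated divisibility hypothesis is precisely what lets one pass between the two. You instead invoke the congruence-theoretic criterion $\Eq(\bar{n})\cap\Ci(\bar{m})=\Delta_{D_{2nm}}$ from inside the proof of Lemma~\ref{LemmaDoubleTrivialExtensions} --- legitimately, since the surjectivity half (that the relevant reflection square has a regular epic comparison) is supplied there by Corollary 3.4.8 of Part I --- and then compute $\Ci(\bar{m})$ once and for all in closed form: the generators $(x+2(b-a),x)$ sweep out exactly the coset partition modulo $\langle 2m\rangle$, which your affine check $2(y-y')-(x-x')\in\langle 2m\rangle$ shows is already a congruence, so $\Ci(\bar{m})=\lbrace (x,y)\mid 2m \text{ divides } x-y\rbrace$. (In fact every pair of that coset relation is literally a generator, so even your minimality remark is more than needed.) This eliminates all horn bookkeeping --- no lengths, alternating signs, or induction --- and buys a reusable closed form for the centralization congruence, from which the coprimality assertions of Example~\ref{ExampleDihedralQuandle1} also drop out immediately via your observation that the condition is $\gcd(n,2m)=1$; the paper's computation, by contrast, stays deliberately within the membrane/horn formalism that the surrounding sections are illustrating. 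The arithmetic endgame ($n\mid 2mk\Rightarrow n\mid k$) is identical in both treatments.
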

\begin{proof}
Consider an $\bar{m}$-horn $M$ of length $i>0 \in \N$ which is sent to a loop by $\bar{n}$. Such a horn $M$ is given by the data of $x \in D_{2nm}$ as well as natural numbers $y_j <m$,  $a_j <n$ and $b_j <n$ for each $0 \leq j \leq i$ such that
\begin{equation}\label{EquationModuloN}
x + \sum_{0\leq j \leq i} (-1)^j y_j  + 2m \sum_{0\leq j \leq i} (-1)^j a_j  = x + \sum_{0\leq j \leq i} (-1)^j y_j  + 2m \sum_{0\leq j \leq i} (-1)^j b_j \mod n;
\end{equation} 
and thus also $2m \big(\sum_{0\leq j \leq i} (-1)^j (a_j - b_j) \big) = 0 \mod n$. Now if $\sum_{0\leq j \leq i} (-1)^j (a_j - b_j) = 0 \mod n$, then Equation \ref{EquationModuloN} also holds modulo $2nm$, and the horn $M$ closes in $D_{2nm}$. Conversely if Equation \ref{EquationModuloN} holds modulo $2nm$, we deduce that $\sum_{0\leq j \leq i} (-1)^j (a_j - b_j) = 0 \mod n$.
\end{proof}

\section{Double coverings}\label{SectionDoubleCoverings}

The concepts of covering or relatively the concepts of centrality induced by the Galois theory of racks and quandles are expressed, in each dimension, via the trivial action of certain data. In dimension zero, a rack $A_{\ttop}$ is actually a set if any element $a\in A_{\ttop}$ acts trivially on $A_{\ttop}$. In dimension 1, an extension $f_A \colon{A_{\ttop} \to A_{\pperp}}$ is a covering if given elements $a$ and $b \in A_{\ttop}$, such that $(a,b) \in \Eq(f_A)$ (i.e.~$f_A(a) = f_A(b)$), the action of $\gr{a}\, \gr{b}^{-1}$ is trivial: $x \qndop a \qndiop b = x$ for all $x \in A_{\ttop}$. In dimension 2, we work with double extensions $\alpha = (\alpha_{\ttop},\alpha_{\pperp})\colon{f_A \to f_B}$. The data we are interested in is then given by those $2 \times 2$ matrices with entries in $A_{\ttop}$, whose rows are elements in $\Eq(f_A)$ and whose columns are elements in $\Eq(\alpha_{\ttop})$. \[\text{$0$-dimensional} \colon \ \cdot\  a \quad \qquad \text{$1$-dimensional} \colon \ \xymatrix@C=15pt{a \ar@{{}{-}{}}[r]|-{f_A} & b} \qquad \quad \text{$2$-dimensional}\colon \ \vcenter{\xymatrix @C= 15pt@R=10pt {
a \ar@{-}[r]|-{f_A}  \ar@{..}[d]|-{\alpha_{\ttop}}  & b  \ar@{..}[d]|-{\alpha_{\ttop}} \\
d \ar@{-}[r]|-{f_A}  & c
}} \]
Such $2 \times 2$ matrices characterize the elements of $\Eq(f_A) \square \Eq(\alpha_{\ttop})$, namely the \emph{largest double equivalence relation  above $\Eq(f_A)$ and $\Eq(\alpha_{\ttop})$} \cite{CPP1992, Smth1976, BorBou2004,JanPed2001}, also called \emph{double parallelistic relation} in \cite[Definition 2.1, Proposition 2.1]{Bou2003}. We sometimes write these elements as quadruples $(a,b,c,d) \in \Eq(f_A) \square \Eq(\alpha_{\ttop})$ which encode the entries of the corresponding $2 \times 2$ matrix as above. Their ``trivial action on the elements of $A_{\ttop}$'' is the condition we are interested in. We define \emph{double coverings of racks and quandles} and later show that these coincide with the $\Gamma^1$-coverings.

\begin{definition}\label{DefinitionDoubleCoverings}
A double extension of racks (or quandles) $\alpha\colon{f_A \to f_B}$ (as in Diagram \eqref{EquationDoubleExtension}) is said to be a \emph{double covering} or an \emph{algebraically central double extension} if any of the equivalent conditions $(i)$ - $(iv)$ below are satisfied:
\[ \vcenter{\xymatrix@R=0.5pt{(i)\quad x \qndop a \qndiop b \qndop c \qndiop d = x, \\ 
(ii) \quad x \qndiop a \qndop d \qndiop c \qndop b = x, \\
(iii) \quad x \qndiop a \qndop b \qndiop c \qndop d = x, \\
(iv) \quad x \qndop a \qndiop d \qndop c \qndiop b = x,}} \qquad\text{ for all }\  x\in A_{\ttop} \  \text{ and } \  \vcenter{\xymatrix @C= 15pt@R=10pt {
a \ar@{-}[r]|-{f_A}  \ar@{..}[d]|-{\alpha_{\ttop}}  & b  \ar@{..}[d]|-{\alpha_{\ttop}} \\
d \ar@{-}[r]|-{f_A}  & c
}} \ \in\  \Eq(f_A) \square \Eq(\alpha_{\ttop}).\]
\end{definition}

Note that, by the symmetries of quadruples $(a,b,c,d)$ in $\Eq(f_A) \square \Eq(\alpha_{\ttop})$, one could equivalently use any cyclic permutation of the letters $a$, $b$, $c$, and $d$ in the equalities $(i)$ -- $(iv)$. The equivalence between each of these $(i)$ -- $(iv)$, is shown in Section \ref{SectionThinkingAboutCommutator}. 

\begin{remark}\label{RemarkSymmetricDoubleCoverings} In Definition \ref{DefinitionDoubleCoverings}, the roles of $f_A$ and $\alpha_{\ttop}$ are symmetric. Hence $(\alpha_{\ttop},\alpha_{\pperp})$ is a double covering (or algebraically central) if and only if $(f_A,f_B)$ is a double covering, which can be viewed as a property of the underlying commutative square in $\RCK$ (or $\QND$). Unlike trivial $\Gamma^1$-coverings (also called trivial double coverings), the $\Gamma^1$-coverings are indeed expected to be symmetric in the same sense (see \cite[Section 3]{Ever2010}).
\end{remark}

\begin{example}\label{ExampleTrivialDoubleExtensionsAreDoubleCoverings}
It is easy to show that given a double extension $\alpha\colon{f_A \to f_B}$, if either $\alpha$ or $(f_A,f_B)$ is a trivial double covering, then $\alpha$ is a double covering. Note for instance that given a quadruple $(a,b,c,d) \in \Eq(f_A) \square \Eq(\alpha_{\ttop})$, the $\alpha_{\ttop}$-horn $M$, displayed below, is sent to a disk by $f_A$.
\[M \quad \colon \qquad \vcenter{\xymatrix@C=2pt @R=6pt {& & & x  \ar@{{}{-}{}}[dddlll]|-{\dir{>}} _(0.23){\gr{a}\quad} _(0.47){\gr{b}^{-1}}_(0.63){\gr{c}\quad}_(0.87){\gr{d}^{-1}}  \ar@{{}{-}{}}[dddrrr]|{\dir{>}}^(0.25){\; \gr{d}} ^(0.45){\; \gr{c}^{-1}}^(0.65){\; \gr{c}}^(0.85){\; \gr{d}^{-1}}  \\
& & \ar@<0.6ex>@{{}{-}{}}[rr]|-{\alpha_{\ttop}} \ar@<-1.2ex>@{{}{-}{}}[rr]|-{\alpha_{\ttop}}& & & & \\
&   \ar@<-0.2ex>@{{}{-}{}}[rrrr]|-{\alpha_{\ttop}} & &&& & \\
y\ar@<1.5ex>@{{}{-}{}}[rrrrrr]|-{\alpha_{\ttop}}  & & & & &  & x
}} \qquad y \defeq x\cdot (\gr{a}\, \gr{b}^{-1}\, \gr{c}\, \gr{d}^{-1}) \]
From Example \ref{ExampleDihedralQuandle1}, when $m=2$ and $n$ are coprime, we have that $(\bar{m},\bar{0})$ is not a trivial double covering. However, it still satisfies the conditions of a double covering, which can be deduced from the fact that $(\bar{n},\bar{0})$ is a trivial double covering.
\end{example}

\begin{example}\label{ExampleToyDoubleCovering}
Not all double coverings arise from double trivial coverings. Consider the function $t\colon Q_4 \to Q_2$ from Example \ref{ExampleDTrivialExtension} and its kernel pair $\pi_1,\, \pi_2 \colon Q_8 \rightrightarrows Q_2$ where the elements of $Q_8 = \lbrace  \star_{11},\ \star_{10},\ \star_{01},\ \star_{00}, \ \bullet_{11},\ \bullet_{10},\ \bullet_{01},\ \bullet_{00} \rbrace$ organise as in the Diagram \ref{DiagramToyDoubleExtension} below.  We define the involutive quandle $Q$ with underlying set $Q_8 \cup \lbrace \bullet_{00}' \rbrace$ such that, for $i \in \lbrace 0,\ 1 \rbrace$, $\bullet_{00} \qndop \star_{ii} = \bullet_{00}'$, $\bullet_{00}' \qndop \star_{ii} = \bullet_{00}$ and $x \qndop y = x$ for any other choice of $x$ and $y$ in $Q$. The function $p\colon Q \to Q_8$ defined by $f(\bullet_{00}') = \bullet_{00}$ and $f(x) = x$ for all $x \in Q_8 \subset Q$, is a morphism of quandles such that the double extension below is a double covering.
\begin{equation}\label{DiagramToyDoubleExtension}
\vcenter{\xymatrix @R=0.5pt @ C=10pt{
Q \ar[rrr]^{\pi_2'} \ar[rd] |-{p}  \ar[dddd]_-{\pi_1'}  & & &  Q_4 \ar[dddd]^-{t} \\
 & Q_8  \ar[rru]|-{\pi_2} \ar[lddd]|-{\pi_1}  \\
\\
\\
Q_4  \ar[rrr]_{t} &  & & Q_2,
}} \qquad \Bigg{|} \qquad \vcenter{\xymatrix @C= 20pt@R=15pt {
\star_{11} \ar@{-}[r]|-{\pi_1'}  \ar@{..}[d]|-{\pi_2'}  & \star_{10} \ar@{..}[d]|-{\pi_2'} \\
\star_{01} \ar@{-}[r]|-{\pi_1'}  & \star_{00}
}} \qquad \vcenter{\xymatrix @C= 20pt@R=15pt {
\bullet_{11} \ar@{-}[r]|-{\pi_1'}  \ar@{..}[d]|-{\pi_2'}  & \bullet_{10} \ar@{..}[d]|-{\pi_2'} \\
\bullet_{01} \ar@{-}[r]|-{\pi_1'}  & \bullet_{00}  \ar@{..}@<0.5ex>[r]^-{} \ar@<-0.5ex>@{-}[r]_-{}   & \bullet_{00}'
}} 
\end{equation}
In anticipation of the results of Section \ref{SectionDoubleNormalExtensions}, observe that neither $(\pi_1',t)$ nor $(\pi_2',t)$ are \emph{normal $\Gamma^1$-coverings} since $\bullet_{00} \qndop \star_{00} \neq \bullet_{00} \qndop \star_{01}$ even though $\bullet_{10} \qndop \star_{11} = \bullet_{10} \qndop \star_{10}$; and also $\bullet_{00} \qndop \star_{00} \neq \bullet_{00} \qndop \star_{10}$ even though $\bullet_{01} \qndop \star_{11} = \bullet_{01} \qndop \star_{01}$.
\end{example}


\begin{observation}
Finally we relate our condition (algebraic centrality of double extensions) with the existing concept of \emph{abelian quandle} (or rack) defined in \cite{Joy1979}. If $\alpha\colon f_A \to f_B$ is a double covering of racks (or quandles), then we have that
\begin{equation}\label{EquationAbelianCondition}
(a \qndop d) \qndop (b \qndop c)
= a \qndop a \qndop c \\
= (a \qndop b) \qndop (d \qndop c),
\end{equation}
for each square $\vcenter{\xymatrix @C= 15pt@R=10pt {
a \ar@{-}[r]|-{ }  \ar@{..}[d]|-{ }  & b  \ar@{..}[d]|-{ } \\
d \ar@{-}[r]|-{ }  & c
}}$ in $\Eq(f_A) \square \Eq(\alpha_{\ttop})$ or $\Eq(\alpha_{\ttop}) \square \Eq(f_A)$, and symmetrically in ``each corner'' of this square (i.e.~replace $(a,b,c,d)$ in \eqref{EquationAbelianCondition} by any cyclic permutation of the quadruple). The converse is not true in general.
\end{observation}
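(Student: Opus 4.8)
The plan is to translate the three quandle expressions appearing in \eqref{EquationAbelianCondition} into the language of path-actions and then to recognise the resulting equalities as direct instances of the defining conditions of a double covering. Recall from Paragraph \ref{ParagraphTightRelationshipWithGroups} that $x \qndop y = x \cdot \gr{y}$ and that $\gr{(x \cdot g)} = g^{-1}\gr{x}\,g$, so that $\gr{(b \qndop c)} = \gr{c}^{-1}\gr{b}\,\gr{c}$ and $\gr{(d \qndop c)} = \gr{c}^{-1}\gr{d}\,\gr{c}$. Rewriting each of the three terms of \eqref{EquationAbelianCondition} as an action on the head $a$, I would obtain
\begin{gather*}
(a \qndop d) \qndop (b \qndop c) = a \cdot (\gr{d}\,\gr{c}^{-1}\gr{b}\,\gr{c}), \qquad a \qndop a \qndop c = a \cdot (\gr{a}\,\gr{c}), \\
(a \qndop b) \qndop (d \qndop c) = a \cdot (\gr{b}\,\gr{c}^{-1}\gr{d}\,\gr{c}).
\end{gather*}

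Next I would use that two actions $a \cdot g$ and $a \cdot h$ on the same head $a$ agree precisely when $a \cdot (h g^{-1}) = a$ (act by $g^{-1}$ on both sides, and conversely by $g$). For the left-hand equality of \eqref{EquationAbelianCondition} this amounts, after cancelling the factor $\gr{c}\,\gr{c}^{-1}$, to $a \cdot (\gr{a}\,\gr{b}^{-1}\gr{c}\,\gr{d}^{-1}) = a$, which is exactly condition $(i)$ of Definition \ref{DefinitionDoubleCoverings} evaluated at $x = a$; for the right-hand equality it amounts to $a \cdot (\gr{a}\,\gr{d}^{-1}\gr{c}\,\gr{b}^{-1}) = a$, which is condition $(iv)$ at $x = a$. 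Since $\alpha$ is a double covering, all of $(i)$--$(iv)$ hold for the quadruple $(a,b,c,d) \in \Eq(f_A) \square \Eq(\alpha_{\ttop})$ and every head, so both equalities follow at once (and the common middle term $a \qndop a \qndop c = a \cdot (\gr{a}\,\gr{c})$ simplifies to $a \qndop c$ in the quandle case).

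Finally, the remaining assertions---that the same identities hold with the apex taken at $b$, $c$ or $d$ (the \emph{cyclic permutations}), and for quadruples in $\Eq(\alpha_{\ttop}) \square \Eq(f_A)$ rather than $\Eq(f_A) \square \Eq(\alpha_{\ttop})$---follow formally. A cyclic permutation of $(a,b,c,d)$ is again an admissible quadruple, for the transposed pair of relations, and by Remark \ref{RemarkSymmetricDoubleCoverings} the double covering conditions are insensitive to exchanging the roles of $f_A$ and $\alpha_{\ttop}$; combined with the cyclic invariance of $(i)$--$(iv)$ recorded after Definition \ref{DefinitionDoubleCoverings}, the argument of the previous paragraph applies verbatim in each corner. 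I expect the only genuine care to lie in the bookkeeping of the path words---respecting the left-most-first convention and the placement of inverses---so that the cancellations land exactly on conditions $(i)$ and $(iv)$ and not on some unwanted rearrangement; there is no conceptual obstacle beyond this. (That the converse fails in general, as the Observation asserts, is clear since \eqref{EquationAbelianCondition} only constrains the action on the single head $a$, not on arbitrary $x$.)
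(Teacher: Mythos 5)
Your verification is correct, and since the paper states this Observation without any proof, your computation supplies precisely the intended argument: translating each term into a path acting on the head $a$ (using $\gr{(x\cdot g)}=g^{-1}\gr{x}\,g$) and cancelling reduces the two equalities of \eqref{EquationAbelianCondition} exactly to conditions $(i)$ and $(iv)$ of Definition \ref{DefinitionDoubleCoverings} evaluated at $x=a$, and your handling of the transposed relation $\Eq(\alpha_{\ttop})\square\Eq(f_A)$ and of the cyclic permutations via Remark \ref{RemarkSymmetricDoubleCoverings} and the remark following Definition \ref{DefinitionDoubleCoverings} is sound. The one caveat is your parenthetical on the converse: noting that \eqref{EquationAbelianCondition} only constrains the action on the single head $a$ makes its failure plausible but is not a counterexample --- though the paper likewise asserts ``the converse is not true in general'' without exhibiting one, so this does not put you behind the source.
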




\subsection{Thinking about a commutator}\label{SectionThinkingAboutCommutator}
Let $A$ be a rack (or quandle) and $\ER(A)$ be the lattice of (internal) \emph{equivalence relations} (also called \emph{congruences} -- see for instance \cite{Smth1976}), over $A$. We define the following binary operation on $\ER(A)$.

\begin{definition}\label{DefinitionCommutator}
Given a rack $A$ and a pair of congruences $R$ and $S$ in $\ER(A)$, we define $[R,S]$, element of $\ER(A)$, as the congruence generated by the set of pairs of elements of $A$:
\[ \lbrace (x \qndop a \qndiop b \qndop c \qndiop d , x)\qquad\vert \qquad  x\in A \  \text{ and } \  \vcenter{\xymatrix @C= 15pt@R=10pt {
a \ar@{-}[r]|-{R}  \ar@{..}[d]|-{S}  & b  \ar@{..}[d]|-{S} \\
d \ar@{-}[r]|-{R}  & c
}} \ \in\  R\square S \rbrace.\]
\end{definition}
Note that $[R,S]$ is in particular included in the intersection $R \cap S$. Working towards the Corollaries \ref{CorollaryCommutatorGeneralDescription} and \ref{CorollaryEquivalentConditionsInDefinitionOfDoubleCovering} we have that:
\begin{lemma}
Given a rack $A$ and a pair of congruences $R$ and $S$ in $\ER(A)$, then $[R,S]$ is generated by the set of pairs
\[ \lbrace (x \qndiop a \qndop d \qndiop c \qndop b , x)\qquad\vert \qquad  x\in A \  \text{ and } \  \vcenter{\xymatrix @C= 15pt@R=10pt {
a \ar@{-}[r]|-{R}  \ar@{..}[d]|-{S}  & b  \ar@{..}[d]|-{S} \\
d \ar@{-}[r]|-{R}  & c
}} \ \in\  R\square S \rbrace.\]
\end{lemma}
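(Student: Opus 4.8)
The plan is to reinterpret both generating families through the path action of $\Pth(A)$ and to exhibit the two defining words as conjugates of one another by a single generator. For a quadruple $q=(a,b,c,d)\in R\square S$, set $g_i(q)\defeq\gr{a}\,\gr{b}^{-1}\,\gr{c}\,\gr{d}^{-1}$ and $g_{ii}(q)\defeq\gr{a}^{-1}\,\gr{d}\,\gr{c}^{-1}\,\gr{b}$ in $\Pth(A)$, so that, by the rules $x\cdot\gr{a}=x\qndop a$ and $x\cdot\gr{a}^{-1}=x\qndiop a$, the generators appearing in Definition~\ref{DefinitionCommutator} are exactly the pairs $(x\cdot g_i(q),x)$, while the generators in the present statement are exactly the pairs $(x\cdot g_{ii}(q),x)$, for $x\in A$. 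Writing $T$ for the congruence generated by the latter family, I would establish $[R,S]=T$ by the two inclusions $T\subseteq[R,S]$ and $[R,S]\subseteq T$.

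First I would record two elementary facts. Since $R$ and $S$ are symmetric, reversing a quadruple, $q\mapsto q^{\circ}\defeq(d,c,b,a)$, keeps it in $R\square S$: the matrix $\left(\begin{smallmatrix}d&c\\a&b\end{smallmatrix}\right)$ still has its rows in $R$ and its columns in $S$. A direct computation in $\Pth(A)$ then yields the conjugation identity $\gr{a}\,g_{ii}(q)\,\gr{a}^{-1}=g_i(q^{\circ})$, and, applying it to $q^{\circ}$ (whose leading letter is $\gr{d}$ and for which $(q^{\circ})^{\circ}=q$), the companion identity $\gr{d}\,g_{ii}(q^{\circ})\,\gr{d}^{-1}=g_i(q)$.

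For $T\subseteq[R,S]$: using $x\cdot(gh)=(x\cdot g)\cdot h$ together with the first identity, and setting $y\defeq x\qndiop a$ (so $y\qndop a=x$ by~\R1), one obtains $x\cdot g_{ii}(q)=(y\cdot g_i(q^{\circ}))\qndop a$. Now $(y\cdot g_i(q^{\circ}),y)$ is a generator of $[R,S]$ (head $y$, quadruple $q^{\circ}\in R\square S$), and $[R,S]$, being a congruence, is stable under $-\qndop a$; hence $(x\cdot g_{ii}(q),x)\in[R,S]$, and as this holds for all $q$ and $x$, we get $T\subseteq[R,S]$. The reverse inclusion is entirely symmetric: the second identity gives $x\cdot g_i(q)=\big((x\qndop d)\cdot g_{ii}(q^{\circ})\big)\qndiop d$, and stability of the congruence $T$ under $-\qndiop d$ places $(x\cdot g_i(q),x)$ in $T$.

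The only point requiring care is that these congruences are stable not merely under each $-\qndop a$ but also under each $-\qndiop a$, which is what the converse inclusion uses. This is immediate, however, since a congruence on a rack is compatible with both operations $\qndop$ and $\qndiop$ of the signature: apply compatibility to a generating pair together with the reflexive pair $(a,a)$ (respectively $(d,d)$). Beyond verifying this closure and the two word identities, the argument is pure bookkeeping with the path action, so I expect no genuine obstacle; the one genuinely useful idea is recognising the passage from the $(i)$-form to the $(ii)$-form as conjugation by a single generator on the reversed quadruple $q^{\circ}$.
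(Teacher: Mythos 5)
Your proof is correct and follows essentially the same route as the paper's: both directions amount to shifting the head of a generator (by $\qndiop a$, respectively $\qndop d$) and invoking compatibility of the congruence with the operations $\qndop$ and $\qndiop$. Your conjugation identity via the reversed quadruple $q^{\circ}$ is merely a tidier packaging of the paper's letter-by-letter transfer, which works with the same quadruple $(a,b,c,d)$ throughout and never needs $q^{\circ}$.
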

\begin{proof}
By definition, $[R,S]$ includes the pairs $(x \qndiop a \qndop a \qndiop b \qndop c \qndiop d ,x \qndiop a)$ for all $x$, $a$, $b$, $c$ and $d$ as in the statement. Then by compatibility with the rack operations and reflexivity, $[R,S]$ also includes the pairs 
\[(x , x \qndiop a \qndop d \qndiop c \qndop b),\]
for all such $x$, $a$, $b$, $c$ and $d$. By symmetry this then induces that $[R,S]$ includes the congruence relation generated by the set of pairs from the statement. Now a similar argument shows that such a congruence includes the set of pairs defining $[R,S]$ as in Definition \ref{DefinitionCommutator}.
\end{proof}

\begin{corollary}
Given a rack $A$ and a pair of congruences $R$ and $S$ in $\ER(A)$, the congruence $[S,R]$ is generated by the set of pairs
\[ \lbrace (x \qndiop a \qndop b \qndiop c \qndop d, x) \qquad\vert \qquad  x\in A \  \text{ and } \  \vcenter{\xymatrix @C= 15pt@R=10pt {
a \ar@{-}[r]|-{R}  \ar@{..}[d]|-{S}  & b  \ar@{..}[d]|-{S} \\
d \ar@{-}[r]|-{R}  & c
}} \ \in\  R\square S \rbrace.\]
\end{corollary}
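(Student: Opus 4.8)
The plan is to obtain this immediately from the preceding Lemma, by combining the evident $R\leftrightarrow S$ symmetry of the construction in Definition \ref{DefinitionCommutator} with the transposition symmetry of the quadruples in a double equivalence relation; no new computation should be required.

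First I would apply the preceding Lemma to the ordered pair $(S,R)$ in place of $(R,S)$. Since that Lemma is stated for an arbitrary pair of congruences in $\ER(A)$, it yields that $[S,R]$ is generated by the pairs $(x\qndiop a\qndop d\qndiop c\qndop b,\,x)$ ranging over all squares $\begin{pmatrix} a & b \\ d & c\end{pmatrix}\in S\square R$, that is, with $(a,b),(d,c)\in S$ (the rows) and $(a,d),(b,c)\in R$ (the columns).

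Next I would use that transposition of the $2\times2$ square is an involutive bijection between $S\square R$ and $R\square S$: sending $\begin{pmatrix} a & b \\ d & c\end{pmatrix}$ to $\begin{pmatrix} a & d \\ b & c\end{pmatrix}$ fixes the diagonal and swaps the off-diagonal entries $b$ and $d$, thereby interchanging the roles of rows and columns, hence of the $R$- and $S$-conditions. Rewriting the transposed square in the standard $R\square S$ labelling amounts precisely to the substitution swapping the names $b$ and $d$ (and keeping $a$, $c$). Under this substitution the generator $x\qndiop a\qndop d\qndiop c\qndop b$ becomes exactly $x\qndiop a\qndop b\qndiop c\qndop d$, while the square becomes a generic element of $R\square S$. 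Because transposition is a bijection, as $\begin{pmatrix} a & b \\ d & c\end{pmatrix}$ runs over all of $S\square R$ its transpose runs over all of $R\square S$, so this re-indexed family is the very same generating set as the one furnished by the Lemma, which is the desired description of $[S,R]$.

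The argument carries no analytic content; the only point requiring care is the bookkeeping, namely verifying that transposition genuinely identifies $S\square R$ with $R\square S$ and that the induced relabelling of the four letters sends the word $\qndiop a\qndop d\qndiop c\qndop b$ to $\qndiop a\qndop b\qndiop c\qndop d$. This is where I would slow down and check the conventions explicitly, but it is a routine verification rather than a genuine obstacle.
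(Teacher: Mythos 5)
Your proposal is correct and is exactly the argument the paper intends: the Corollary is stated without proof as an immediate consequence of the preceding Lemma, obtained by applying it to the pair $(S,R)$ and transposing squares, i.e.\ the relabelling $b \leftrightarrow d$ identifying $S\square R$ with $R\square S$, under which $x \qndiop a \qndop d \qndiop c \qndop b$ becomes $x \qndiop a \qndop b \qndiop c \qndop d$. Your bookkeeping with the row/column conventions checks out, so nothing is missing.
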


\begin{corollary}\label{CorollaryCommutatorGeneralDescription}
Given a rack $A$ then for any congruences $R$ and $S$ in $\ER(A)$, the congruence $[R,S] = [S,R]$ is equivalently generated by any of the set of pairs:
\[ \vcenter{\xymatrix@R=0.5pt{(i)\quad (x \qndop a \qndiop b \qndop c \qndiop d, x), \\ 
(ii) \quad (x \qndiop a \qndop d \qndiop c \qndop b, x), \\
(iii) \quad (x \qndiop a \qndop b \qndiop c \qndop d, x), \\
(iv) \quad (x \qndop a \qndiop d \qndop c \qndiop b, x),}} \qquad\text{ for all }\  x\in A \  \text{ and } \  \vcenter{\xymatrix @C= 15pt@R=10pt {
a \ar@{-}[r]|-{}  \ar@{..}[d]|-{}  & b  \ar@{..}[d]|-{} \\
d \ar@{-}[r]|-{}  & c
}} \ \in\  R \square S. \]
\end{corollary}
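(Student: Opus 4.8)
The plan is to assemble the statement from three facts already in hand, to supply the one missing generating description, and finally to establish the symmetry $[R,S]=[S,R]$, which is the only substantial point. For a quadruple $(a,b,c,d)\in R\square S$ I write $g\defeq\gr a\,\gr b^{-1}\,\gr c\,\gr d^{-1}\in\Pth(A)$ for the associated path, so that condition $(i)$ reads $x\cdot g = x$ for all heads $x$; the words in $(ii)$, $(iii)$, $(iv)$ correspond likewise to the paths $\gr a^{-1}\gr d\,\gr c^{-1}\gr b$, $\gr a^{-1}\gr b\,\gr c^{-1}\gr d$ and $\gr a\,\gr d^{-1}\gr c\,\gr b^{-1}$. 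By Definition \ref{DefinitionCommutator}, form $(i)$ generates $[R,S]$; by the preceding Lemma so does form $(ii)$; and by the preceding Corollary form $(iii)$ generates $[S,R]$.

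First I would obtain the remaining description. Transposing the defining square across its main diagonal exchanges its rows and its columns, hence interchanges the roles of $R$ and $S$; concretely $(a,b,c,d)\mapsto(a,d,c,b)$ is a bijection $R\square S\to S\square R$. Feeding this reparametrisation into Definition \ref{DefinitionCommutator} written for $[S,R]$ turns its defining generator $x\qndop a'\qndiop b'\qndop c'\qndiop d'$, with $(a',b',c',d')\in S\square R$, into the word $x\qndop a\qndiop d\qndop c\qndiop b$ of form $(iv)$ ranging over $R\square S$; thus $(iv)$ generates $[S,R]$. The equivalences $(i)\equiv(ii)$ and $(iii)\equiv(iv)$ — that the two forms listed for a single commutator generate the same congruence — are then exactly the mechanism of the preceding Lemma: substitute $x\mapsto x\qndiop a$ in a generator, cancel $\qndiop a\qndop a$ by \R1, apply a compensating symmetry to one side using compatibility of the congruence with the rack operations, and invoke symmetry of the congruence. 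This reduces the whole statement to the single equality $[R,S]=[S,R]$.

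For that equality I would prove mutual inclusion, say $[S,R]\subseteq[R,S]$, the reverse being symmetric. It suffices to place each form-$(iv)$ path $\gr a\,\gr d^{-1}\gr c\,\gr b^{-1}$ inside $[R,S]$. Purely group-theoretically this word is in general not conjugate to $g$, so the point genuinely uses that $R$ and $S$ are congruences. The device is the self-distributivity relation \R2, which in $\Pth(A)$ reads $\gr u\,\gr v=\gr v\,\gr{(u\qndop v)}$ together with its inverse variants: it lets one transpose two adjacent symmetries at the cost of replacing the attached element by its image under the other symmetry. Because $R$ and $S$ are stable under every symmetry $-\qndop t$, such a replacement sends an admissible square to an admissible square; so rewriting $\gr a\,\gr d^{-1}\gr c\,\gr b^{-1}$ in this way produces a product of conjugates of form-$(i)$ paths indexed by squares still lying in $R\square S$, each acting trivially on every head modulo $[R,S]$. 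Since $[R,S]$ is closed under the induced path-action, under inversion, and under conjugation by arbitrary symmetries (again via the $x\mapsto x\qndiop t$ trick and \R1), the rewritten path acts trivially as well, giving the inclusion. I expect the bookkeeping in this rewriting — tracking how the four entries transform under the successive \R2-moves while checking at each stage that one remains in $R\square S$ — to be the main obstacle; it is the rack-theoretic counterpart of the identity $[X,Y]_{\GRP}=[Y,X]_{\GRP}$. Combining $[R,S]=[S,R]$ with the four generating descriptions then yields the statement, the cyclic-permutation invariance noted after Definition \ref{DefinitionDoubleCoverings} being absorbed into this symmetry.
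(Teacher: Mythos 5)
Your reduction scaffolding is exactly the paper's: forms $(i)$ and $(ii)$ generate $[R,S]$ by Definition \ref{DefinitionCommutator} and the preceding Lemma, and the transposition bijection $R\square S \to S\square R$, $(a,b,c,d)\mapsto(a,d,c,b)$, correctly shows that $(iii)$ and $(iv)$ both generate $[S,R]$, so the whole statement does hinge on the single equality $[R,S]=[S,R]$. But precisely there your argument has a genuine gap, in two respects. First, the decisive rewriting is never carried out --- you explicitly defer it as ``bookkeeping I expect to be the main obstacle'' --- and that rewriting is the entire content of the paper's proof; everything before it is formal re-indexing. Second, the mechanism you propose would fail as stated: an \R2-transposition of two adjacent letters of the single word $\gr{a}\,\gr{d}^{-1}\,\gr{c}\,\gr{b}^{-1}$ replaces one entry of the quadruple by its image under a symmetry while leaving the other three entries fixed, and that does \emph{not} send admissible squares to admissible squares. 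A congruence is stable under symmetries only in the simultaneous sense $(u,v)\in R \Rightarrow (u\qndop t,\, v\qndop t)\in R$; moving $a$ to $a\qndop t$ alone destroys the relation of the row $(a,b)$. So your claim that the letter-by-letter rewriting ``produces a product of conjugates of form-$(i)$ paths indexed by squares still lying in $R\square S$'' is unjustified as it stands.

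The repair --- and the paper's actual, one-move proof --- is to transform all four entries coherently: since $R\square S$ is a subalgebra of $A^{4}$, the squares $(b,a,d,c)$ and $(b,b,c,c)$ lie in $R\square S$, hence so does their entrywise product $(b\qndop b,\ a\qndop b,\ d\qndop c,\ c\qndop c)$. The form-$(i)$ generator attached to this new square, $(x\qndop(b\qndop b)\qndiop(a\qndop b)\qndop(d\qndop c)\qndiop(c\qndop c),\,x)$, reduces via $y\qndop(u\qndop v)=y\qndiop v\qndop u\qndop v$ and \R1 to $(x\qndiop a\qndop b\qndiop c\qndop d,\,x)$; in your path-group language this says $\gr{b\qndop b}\,\gr{a\qndop b}^{-1}\,\gr{d\qndop c}\,\gr{c\qndop c}^{-1}=\gr{a}^{-1}\gr{b}\,\gr{c}^{-1}\gr{d}$ already in $\Pth(A)$. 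Thus the form-$(iii)$ word is \emph{literally} a single form-$(i)$ generator of another admissible square --- no product of conjugates, no induction over successive moves --- giving $[S,R]\leq[R,S]$ at once, with the reverse inclusion by exchanging the roles of $R$ and $S$. Your instinct that the proof must exploit closure of $R\square S$ under the rack operations is right; the correct implementation is this simultaneous entrywise action, not adjacent transpositions within one word.
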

\begin{proof}
It suffices to show that $[R,S]$ contains the pairs $(x \qndiop a \qndop b \qndiop c \qndop d, x)$ for any $x \in A$ and $(a,b,c,d) \in R \square S$. Given such data, we compute that
\[ \vcenter{
\xymatrix @C= 15pt@R=8pt {
b  \ar@{-}[r]|-{}  \ar@{..}[d]|-{}  & a  \ar@{..}[d]|-{} \\
c \ar@{-}[r]|-{}  & d
}}\quad  \qndop\quad \vcenter{
\xymatrix @C= 15pt@R=8pt {
b  \ar@{-}[r]|-{}  \ar@{..}[d]|-{}  & b  \ar@{..}[d]|-{} \\
c \ar@{-}[r]|-{}  & c
}} \quad = \quad  \vcenter{
\xymatrix @C= 15pt@R=8pt {
b \qndop b  \ar@{-}[r]|-{}  \ar@{..}[d]|-{}  & a \qndop b  \ar@{..}[d]|-{} \\
c \qndop c \ar@{-}[r]|-{}  & d \qndop c
}} \qquad \in \ R \square S.  \]
Then by definition $[R,S]$ contains the pair $(x \qndop (b \qndop b) \qndiop (a \qndop b) \qndop (d \qndop c) \qndiop (c \qndop c) , x )$, which reduces to $(x \qndop b \qndiop b \qndiop a \qndop b \qndiop c \qndop d \qndop c \qndiop c, x )$. This concludes the proof.
\end{proof}

\begin{corollary}\label{CorollaryEquivalentConditionsInDefinitionOfDoubleCovering}
The conditions $(i)$-$(iv)$ from Definition \ref{DefinitionCommutator} are indeed all equivalent. Moreover, a double extension $\alpha\colon{f_A \to f_B}$ of racks (or quandles) is a double covering (an algebraically central double extension), if and only if $[\Eq(f_A),\Eq(\alpha_{\ttop})]=\Delta_{A_{\ttop}}$ (the \emph{diagonal relation} on $A_{\ttop}$).
\end{corollary}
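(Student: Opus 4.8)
The plan is to reduce both assertions to the single elementary observation that, for any set $G$ of pairs of elements of $A_{\ttop}$, the congruence on $A_{\ttop}$ generated by $G$ coincides with the diagonal $\Delta_{A_{\ttop}}$ if and only if $G \subseteq \Delta_{A_{\ttop}}$. Indeed, congruences are reflexive, so the generated congruence always contains $\Delta_{A_{\ttop}}$; conversely, if every element of $G$ is a diagonal pair, then $\Delta_{A_{\ttop}}$ is itself a congruence containing $G$, hence it contains the congruence generated by $G$, and the two are equal. The point is that every generating family for the commutator recorded in Corollary \ref{CorollaryCommutatorGeneralDescription} consists of pairs whose second coordinate is the ``base point'' $x$, so membership of a generator in $\Delta_{A_{\ttop}}$ is exactly the corresponding equation of Definition \ref{DefinitionDoubleCoverings}.

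First I would apply this to form $(i)$. The condition $(i)$, namely that $x \qndop a \qndiop b \qndop c \qndiop d = x$ for every $x \in A_{\ttop}$ and every quadruple $(a,b,c,d) \in \Eq(f_A)\,\square\,\Eq(\alpha_{\ttop})$, says precisely that each generating pair of $[\Eq(f_A),\Eq(\alpha_{\ttop})]$ in its form-$(i)$ presentation lies on the diagonal. By the observation above this is equivalent to $[\Eq(f_A),\Eq(\alpha_{\ttop})] = \Delta_{A_{\ttop}}$. I would then invoke Corollary \ref{CorollaryCommutatorGeneralDescription}, which guarantees that the same congruence $[\Eq(f_A),\Eq(\alpha_{\ttop})]$ is generated by any one of the four families $(i)$--$(iv)$. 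Running the identical argument with the form-$(ii)$, form-$(iii)$ and form-$(iv)$ generators shows that each of conditions $(ii)$, $(iii)$ and $(iv)$ is likewise equivalent to $[\Eq(f_A),\Eq(\alpha_{\ttop})] = \Delta_{A_{\ttop}}$. Since each of the four conditions is thereby equivalent to one and the same statement about the commutator, they are all equivalent to one another, and this equivalence is precisely the ``moreover'' clause characterising double coverings; both assertions of the corollary are settled simultaneously.

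The hard part has in fact already been discharged: the genuinely nontrivial manipulation is Corollary \ref{CorollaryCommutatorGeneralDescription}, which establishes that the four a priori different generating families produce the same congruence $[\Eq(f_A),\Eq(\alpha_{\ttop})] = [\Eq(\alpha_{\ttop}),\Eq(f_A)]$. Granting that result, the remaining step is the formal observation on generated congruences, whose only delicate aspect is the bookkeeping that each generator is written as a pair $(\,\cdot\,,x)$ with $x$ its second coordinate, so that ``the generator is diagonal'' translates verbatim into the relevant equation holding. There is no obstruction beyond this translation, so the proof is short.
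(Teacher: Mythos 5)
Your proof is correct and matches the paper's (implicit) reasoning exactly: the corollary is stated there without proof precisely because, once Corollary \ref{CorollaryCommutatorGeneralDescription} shows all four families generate the same congruence $[\Eq(f_A),\Eq(\alpha_{\ttop})]$, each condition $(i)$--$(iv)$ amounts to the generators lying on the diagonal, hence to $[\Eq(f_A),\Eq(\alpha_{\ttop})]=\Delta_{A_{\ttop}}$. Your explicit observation that a generated congruence equals $\Delta_{A_{\ttop}}$ if and only if the generating set is contained in it is exactly the right bookkeeping.
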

Based on this result, and in anticipation of Theorems \ref{TheoremCharacterizationDoubleCentralExtensions} and \ref{TheoremEpiReflectivityOfCExt}, we call $[\Eq(f_A),\Eq(\alpha_{\ttop})]$ the \emph{centralization congruence} of the double extension $\alpha\colon{f_A \to f_B}$. Now observe the following:
\begin{lemma}
Given a rack $A$ and a congruence $R$ on $A$, the congruence $[R, A\times A] $ is the congruence generated by the set of pairs $ \lbrace (x \qndop a \qndiop b, x)\ \vert \ x \in A \text{ and } (a,b) \in R \rbrace$.
\end{lemma}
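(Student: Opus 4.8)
The plan is to prove that the two generated congruences coincide by the usual double-inclusion argument: since each side is defined as the congruence generated by an explicit set of pairs, it suffices to show that each generating set is contained in the other congruence. Write $T$ for the congruence generated by the pairs $(x \qndop a \qndiop b, x)$ with $x \in A$ and $(a,b) \in R$, and recall that $[R, A\times A]$ is generated by the pairs $(x \qndop a \qndiop b \qndop c \qndiop d, x)$ indexed by quadruples $(a,b,c,d) \in R \square (A\times A)$. The first thing I would record is the simple description of this index set: since $S = A\times A$ imposes no constraint on the vertical edges of the defining $2\times 2$ matrix, a quadruple lies in $R \square (A\times A)$ precisely when its two rows are in $R$, i.e. when $(a,b)\in R$ and $(d,c)\in R$ (equivalently $(c,d)\in R$, as $R$ is symmetric), the remaining entries being free.

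For the inclusion $T \subseteq [R, A\times A]$, I would take a generator $(x \qndop a \qndiop b, x)$ of $T$ with $(a,b)\in R$ and exhibit it as a generator of $[R, A\times A]$ by choosing $c = d$ equal to any element of $A$. The matrix with rows $(a,b)$ and $(c,c)$ lies in $R\square(A\times A)$ by reflexivity of $R$, and by \R{1} one has $x \qndop a \qndiop b \qndop c \qndiop c = x \qndop a \qndiop b$. Thus every generator of $T$ is itself a generator of $[R, A\times A]$, and since $T$ is the smallest congruence containing these pairs, $T \subseteq [R, A\times A]$.

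For the reverse inclusion $[R, A\times A]\subseteq T$, I would show that each generator $(x \qndop a \qndiop b \qndop c \qndiop d, x)$ of $[R, A\times A]$ already lies in $T$. Setting $y \defeq x \qndop a \qndiop b$, the pair $(y, x)$ is a generator of $T$ because $(a,b)\in R$; likewise the pair $(y \qndop c \qndiop d, y)$ is a generator of $T$ because $(c,d)\in R$. As $T$ is an equivalence relation, transitivity gives $(x \qndop a \qndiop b \qndop c \qndiop d, x) = (y \qndop c \qndiop d, x)\in T$. Since $[R, A\times A]$ is the smallest congruence containing its generators, this yields $[R, A\times A]\subseteq T$ and hence equality.

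There is no genuine obstacle here; the whole argument is a routine verification. Its one point of content is the factorization of the composite operation $\qndop a \qndiop b \qndop c \qndiop d$ into $\qndop a \qndiop b$ followed by $\qndop c \qndiop d$, each of which acts trivially modulo $T$ — which is exactly the collapse one expects when the second argument of the commutator is taken to be the total relation $A\times A$. The only step requiring a moment of care is invoking the symmetry of $R$ to read the bottom-row datum $(d,c)\in R$ as $(c,d)\in R$ when applying the second generator of $T$.
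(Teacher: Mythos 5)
Your proof is correct and takes essentially the same route as the paper's: one inclusion by realizing each generator $(x \qndop a \qndiop b, x)$ as a commutator generator via a degenerate square with equal bottom-row entries (the paper takes $c=d=a$, you take an arbitrary $c=d$), and the other by factoring $\qndop a \qndiop b \qndop c \qndiop d$ through the intermediate point $y = x \qndop a \qndiop b$, using compatibility of the congruence with the operations, symmetry of $R$, and transitivity. No gaps.
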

\begin{proof}
Write $S$ for the congruence generated by the set of pairs from the statement. Observe that given $x$, $a$ and $b$ such that $(a,b) \in R$ we have the square 
\[ 
\vcenter{\xymatrix @C= 20pt @R=12pt {
a  \ar@{-}[r]|-{R}  \ar@{..}[d]|-{A\times A}  & b  \ar@{..}[d]|-{A\times A} \\
a \ar@{-}[r]|-{R}  & a.
}}\qquad \in \quad R \square (A\times A).\]
By definition we then have $S \leq [R, A\times A] $. Now observe that for any $(a,b) \in R$ and $(c,d) \in R$:
\[\vcenter{\xymatrix @C= 20pt @R=1pt {
(x \qndop a \qndiop b)\  \ar@{-}[r]|-{S}   &\ x\  \ar@{-}[r]|-{S} &\ (x \qndop d \qndiop c) 
}}\]
are in relation by $S$, and thus $S$ also contains the generators of $[R, A\times A]$.
\end{proof}

\begin{corollary}
Given a morphism $f\colon{A \to B}$ in $\RCK$ (or $\QND$), the congruence $\Ci(f)$ can be computed as $[\Eq(f), A \times A]$, and in particular $f$ is a covering (in the sense of \cite{Eis2014}) if and only if $[\Eq(f), A \times A] = \Delta_A$.
\end{corollary}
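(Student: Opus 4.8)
The plan is to obtain both assertions directly from the Lemma immediately preceding the statement, combined with the generating description of the centralization congruence recalled in Section \ref{SectionTowardsHigherExtensions}. First I would specialise that Lemma to $R = \Eq(f)$: it then asserts that $[\Eq(f), A \times A]$ is the congruence on $A$ generated by the pairs $(x \qndop a \qndiop b, x)$ with $x \in A$ and $(a,b) \in \Eq(f)$. On the other hand, by definition $\Ci(f) = \Eq(\eta^1_A)$ is the congruence generated by exactly these same pairs $(x \qndop a \qndiop b, x)$ with $f(a) = f(b)$ (the order within each pair being irrelevant, since congruences are symmetric). Two congruences sharing a generating set coincide, whence $\Ci(f) = [\Eq(f), A \times A]$.

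For the characterisation of coverings, I would unwind the definition recalled in Section \ref{ParagraphBasicCategoryTheory}: $f$ is a covering precisely when $x \qndop a = x \qndop b$ for every $x \in A$ and every $(a,b) \in \Eq(f)$. Here the passage to the commutator is the one small point needing care: using \R1 one rewrites $x \qndop a = x \qndop b$ equivalently as $x \qndop a \qndiop b = x$ (apply $- \qndiop b$ to both sides for one direction, and $- \qndop b$ for the converse). Thus $f$ is a covering if and only if each generator $(x \qndop a \qndiop b, x)$ of $[\Eq(f), A\times A]$ already lies on the diagonal, i.e.\ the whole generating set is contained in $\Delta_A$. Since $\Delta_A$ is itself a congruence, this is in turn equivalent to $[\Eq(f), A \times A] = \Delta_A$, which is the claim.

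I do not anticipate a genuine obstacle, as the argument is essentially bookkeeping layered on top of the preceding Lemma. The only step that is not purely formal is the equivalence between the two phrasings of centrality, namely $x \qndop a = x \qndop b$ versus $x \qndop a \qndiop b = x$; this is where I would be careful to justify both implications via \R1 rather than treat the rewriting as automatic.
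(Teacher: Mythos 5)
Your proposal is correct and follows exactly the route the paper intends: the corollary is stated without proof precisely because it is immediate from the preceding Lemma (with $R = \Eq(f)$) together with the generating description of $\Ci(f)$ recalled in the Introduction, plus the \R1-rewriting of $x \qndop a = x \qndop b$ as $x \qndop a \qndiop b = x$. Your explicit care with the \R1 step and with the observation that a congruence generated by diagonal pairs equals $\Delta_A$ fills in the bookkeeping the paper leaves tacit, with no divergence in method.
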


Recall that in the category of groups, we have the classical commutator $[-,-]_{\GRP}$, such that a group $G$ is abelian if and only if its \emph{commutator subgroup} is trivial $[G,G]_{\GRP} = \lbrace e \rbrace$ and a surjective homomorphism $f\colon G \to H$ is a central extension if and only if $[\Ker(f),G]_{\GRP} = \lbrace e \rbrace$. Moreover, a double extension of groups $\gamma\colon{f_G \to f_H}$ is a double central extension of groups \cite{Jan1991} if and only if $[\Ker(f_G),\Ker(\gamma_{\ttop})] = \lbrace e \rbrace$ and $[\Ker(f_G) \cap \Ker(\gamma_{\ttop}), G_{\ttop}]= \lbrace e \rbrace$ are both trivial.

For the zero-dimensional case in our context, the corresponding description of \emph{centrality} in terms of the operation $[-,-]$ only works for quandles. Indeed, if $x$ and $a$ are in the quandle $A$, then $x \qndop a = x \qndiop x \qndop a$, which means that $(x\qndop a, x) \in [A \times A, A \times A]$. If $A$ is a rack though, this trick does not work. In particular we compute that $[\Fr 1 \times \Fr 1, \Fr 1 \times \Fr 1] = \Delta_A \neq \Fr 1 \times \Fr 1 = \Co  (\Fr 1)$.

\begin{corollary}
Given a quandle $A$, the congruence $\Co(A)$ can be computed as $[A \times A, A \times A]$, in particular $A$ is a trivial quandle if and only if $[A \times A, A \times A] = \Delta_A$.
\end{corollary}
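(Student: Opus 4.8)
The plan is to establish the identity $[A\times A, A\times A] = \Co(A)$ by a pair of inclusions, after which the \emph{in particular} clause is immediate. The first thing to record is the simple but crucial observation that, since both arguments are the top congruence, the double parallelistic relation $(A\times A)\square(A\times A)$ puts no constraint on its squares: every quadruple $(a,b,c,d)\in A\times A\times A\times A$ lies in it. By Definition \ref{DefinitionCommutator}, $[A\times A, A\times A]$ is therefore the congruence generated by the pairs $(x\qndop a\qndiop b\qndop c\qndiop d,\, x)$ as $x,a,b,c,d$ range freely over $A$.

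For the inclusion $[A\times A, A\times A]\subseteq \Co(A)$ I would argue that each generator is a pair of \emph{connected} elements: its first component is obtained from $x$ by letting the primitive path $(a^{1},b^{-1},c^{1},d^{-1})$ act, so the pair lies in $\Co(A)$ by Equation \eqref{EquationEndpoint}; since $\Co(A)$ is a congruence containing all these generators, it contains the congruence they generate. For the reverse inclusion $\Co(A)\subseteq [A\times A, A\times A]$ I would reduce to single steps: $\Co(A)$ is the equivalence relation generated by the pairs $(x,\, x\qndop y)$ and $(x,\, x\qndiop y)$, and the commutator is itself a congruence, hence an equivalence relation, so it suffices to place these single-step pairs inside it. This is where the idempotency of quandles enters, exactly as recorded just before the statement via $x\qndop y = x\qndiop x\qndop y$. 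Concretely, taking the head to be $x$ and the quadruple $(a,b,c,d)=(x,y,x,x)$ in the generating form $(iii)$ of Corollary \ref{CorollaryCommutatorGeneralDescription} produces the pair whose first component is $x\qndiop x\qndop y\qndiop x\qndop x = x\qndop y$ by idempotency and \R1; this exhibits $(x\qndop y,\, x)$ as a generator of $[A\times A, A\times A]$. The pair $(x,\, x\qndiop y)$ then follows by symmetry, substituting $x\qndiop y$ for $x$ and using \R1 again. Combining the two inclusions gives $[A\times A, A\times A]=\Co(A)$.

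The remaining clause is then immediate: a quandle $A$ is trivial precisely when $\Co(A)=\Delta_A$ (equivalently, when the reflection unit $\eta_A$ is an isomorphism), so $[A\times A, A\times A]=\Delta_A$ if and only if $A$ is trivial. I expect the only genuinely delicate point to be the second inclusion, and specifically the appeal to idempotency in the padding step: this is exactly what breaks down for a general rack, as witnessed by the computation $[\Fr 1\times \Fr 1,\, \Fr 1\times \Fr 1]=\Delta_{\Fr 1}\neq \Co(\Fr 1)$ in the preceding paragraph, where no such contraction of $x\qndop y$ back to $x$ is available. Everything else is routine manipulation of the axioms \R1 and \R2 together with the fact that both $\Co$ and the commutator are congruences.
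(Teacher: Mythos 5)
Your proof is correct and takes essentially the same route as the paper: the corollary is stated there without a separate proof precisely because its only non-routine step is the idempotency padding recorded in the preceding paragraph ($x \qndop a = x \qndiop x \qndop a$, placing $(x \qndop a, x)$ in $[A \times A, A \times A]$), which is exactly your quadruple $(x,y,x,x)$ in form $(iii)$. Your two inclusions simply spell out the remaining routine steps (generators of the commutator are connected pairs, and both relations are congruences), so there is nothing to correct.
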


Note that in the category of groups, two-dimensional centrality is expressed using two requirements. In our context, one of the corresponding requirements entails the other (Corollary \ref{CorollaryTwoConditionsInOne}). First observe that our commutator is \emph{monotone}.

\begin{lemma}\label{LemmaCommutatorCompatibleWithOrder}
Given a rack $A$, as well as congruences $R$, $S$ and $T$ in $\ER(A)$ such that $S \leq T$, then $[R,S] \leq [R,T]$.
\end{lemma}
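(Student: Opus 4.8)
The plan is to prove monotonicity directly from the generating-set description of the commutator given in Definition \ref{DefinitionCommutator}. Since $[R,S]$ is \emph{generated} (as a congruence) by a certain set of pairs, it suffices to show that every generator of $[R,S]$ already lies in $[R,T]$; because $[R,T]$ is a congruence, it then contains the congruence generated by those generators, which is exactly $[R,S]$.

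So first I would recall that a typical generator of $[R,S]$ is a pair $(x \qndop a \qndiop b \qndop c \qndiop d,\, x)$ where $x \in A$ and the quadruple $(a,b,c,d)$ is an element of $R \square S$; concretely, $(a,b) \in R$, $(d,c) \in R$, $(a,d) \in S$ and $(b,c) \in S$ (the rows lie in $R$, the columns lie in $S$). The key observation is that since $S \leq T$, any square in $R \square S$ is automatically a square in $R \square T$: the rows are unchanged (still in $R$), and the columns, which were in $S$, are now in $T$ because $S \leq T$. Hence the very same quadruple $(a,b,c,d)$ witnesses that $(x \qndop a \qndiop b \qndop c \qndiop d,\, x)$ is a generator of $[R,T]$.

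Therefore every generating pair of $[R,S]$ is a generating pair of $[R,T]$, so the generating set of $[R,S]$ is contained in $[R,T]$. Taking the congruence generated by both sides and using that $[R,T]$ is already a congruence yields $[R,S] \leq [R,T]$, as desired.

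I do not expect a genuine obstacle here; the statement is essentially a formal consequence of the explicit generators. The only point requiring a moment of care is the unwinding of what ``$(a,b,c,d) \in R \square S$'' means in terms of rows and columns, so as to verify cleanly that enlarging $S$ to $T$ relaxes exactly the column conditions and leaves the row conditions intact, guaranteeing the inclusion $R\square S \subseteq R\square T$ at the level of quadruples. Once this is noted, the proof is immediate.
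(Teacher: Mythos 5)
Your proof is correct and is essentially the paper's own argument: the paper disposes of the lemma in one line, noting it is ``a direct consequence of the fact that $R\square S \leq R\square T$'', which is exactly the inclusion you verify at the level of quadruples (rows in $R$ untouched, columns passing from $S$ to $T$). Your write-up merely makes explicit the routine step that generators of $[R,S]$ are then generators of $[R,T]$, so the generated congruences compare as claimed.
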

\begin{proof}
This is a direct consequence of the fact that $R\square S \leq R\square T$.
\end{proof}

\begin{corollary}
If $R$ and $S$ are congruences on $A$ such that $R \leq S$ then $[R,S]=[R,A\times A]$.
\end{corollary}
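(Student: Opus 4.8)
The plan is to prove the two inclusions separately, the bulk of the work being a single degenerate square. Since $S \leq A \times A$, the inclusion $[R,S] \leq [R,A\times A]$ is immediate from the monotonicity of the commutator (Lemma \ref{LemmaCommutatorCompatibleWithOrder}). So the whole content lies in the reverse inclusion $[R,A\times A] \leq [R,S]$. Because $[R,S]$ is a congruence, it is enough to check that each generator of $[R,A\times A]$ lies in $[R,S]$; by the preceding lemma these generators are exactly the pairs $(x \qndop a \qndiop b,\, x)$ with $x \in A$ and $(a,b) \in R$.

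To realise such a pair as a generator of $[R,S]$ in the sense of Definition \ref{DefinitionCommutator}, I would reuse the degenerate square that already served in the computation of $[R,A\times A]$, but now read it inside $R \square S$. Given $(a,b) \in R$, consider
\[ \vcenter{\xymatrix @C= 20pt @R=12pt {
a  \ar@{-}[r]|-{R}  \ar@{..}[d]|-{S}  & b  \ar@{..}[d]|-{S} \\
a \ar@{-}[r]|-{R}  & a.
}}\]
Its two rows, $(a,b)$ and $(a,a)$, lie in $R$ by hypothesis and by reflexivity. The decisive point -- and the only place where the assumption $R \leq S$ enters -- is that its two columns lie in $S$: the left column $(a,a)$ is reflexive, while the right column $(b,a)$ lies in $S$ since $(a,b) \in R \leq S$ and $S$, being a congruence, is symmetric. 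Hence the square is a genuine element of $R \square S$, exactly as the analogous square was an element of $R \square (A \times A)$ in the earlier lemma.

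Reading off the associated generator of $[R,S]$ from Definition \ref{DefinitionCommutator} with $(a,b,c,d) = (a,b,a,a)$ produces the pair $(x \qndop a \qndiop b \qndop a \qndiop a,\, x)$, and the rack axiom \R{1} cancels the trailing $\qndop a \qndiop a$, leaving precisely $(x \qndop a \qndiop b,\, x)$. Thus every generator of $[R,A\times A]$ is a generator of $[R,S]$, which gives $[R,A\times A] \leq [R,S]$ and hence the equality. I do not expect any real obstacle: the only substantive verification is the membership of the chosen square in $R \square S$, which is the hypothesis $R \leq S$ combined with the symmetry of $S$, and the rest is the cancellation \R{1}.
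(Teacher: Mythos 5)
Your proof is correct and matches the paper's own argument: the paper likewise reduces to the easy inclusion via monotonicity and then verifies the reverse inclusion by observing that for any $(a,b)\in R$ the degenerate quadruple $(a,b,a,a)$ lies in $R\square S$ (using $R\leq S$ and symmetry of $S$), whose associated generator collapses by \R{1} to $(x\qndop a\qndiop b,\,x)$. The only cosmetic difference is that the paper phrases the target congruence with generators $(x\qndiop a,\,x\qndiop b)$, an equivalent generating set for $[R,A\times A]$.
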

\begin{proof}
It suffices to show that $[R,S]$ contains $T \defeq \langle (x \qndiop a, x \qndiop b) |aRb \rangle$. As before, observe that for any $aRb$, we have the quadruple $(a,b,a,a) \in R\square S$.
\end{proof}

\begin{corollary}\label{CorollaryTwoConditionsInOne}
If $R$ and $S$ are congruences on $A$ then $[R\cap S, A\times A] = [R\cap S, S] \leq [R, S]$. In particular, the comparison map $p$ of a double covering $\alpha\colon{f_A \to f_B}$ is a covering.
\end{corollary}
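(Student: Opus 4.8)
The plan is to derive everything from the three immediately preceding results together with the computation of $\Ci$ as a commutator. The first equality $[R\cap S, A\times A]=[R\cap S, S]$ is nothing but the preceding corollary (``if $R\leq S$ then $[R,S]=[R,A\times A]$'') applied to the pair $(R\cap S,\, S)$: indeed $R\cap S\leq S$, so the hypothesis is met and the conclusion is exactly the desired equality.

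For the inequality $[R\cap S, S]\leq [R,S]$, I would combine the symmetry $[R,S]=[S,R]$ from Corollary \ref{CorollaryCommutatorGeneralDescription} with the monotonicity of Lemma \ref{LemmaCommutatorCompatibleWithOrder}. Lemma \ref{LemmaCommutatorCompatibleWithOrder} gives monotonicity in the second variable, and symmetry upgrades this to monotonicity in the first variable. Since $R\cap S\leq R$, monotonicity in the first slot yields $[R\cap S, S]\leq [R,S]$, completing the chain.

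Finally, for the ``in particular'' clause, recall that in Diagram \eqref{EquationDoubleExtension} the comparison map is $p\colon A_{\ttop}\to A_{\pperp}\times_{B_{\pperp}}B_{\ttop}$, so that $(a,a')\in\Eq(p)$ exactly when $f_A(a)=f_A(a')$ and $\alpha_{\ttop}(a)=\alpha_{\ttop}(a')$; that is, $\Eq(p)=\Eq(f_A)\cap\Eq(\alpha_{\ttop})$. By the corollary computing $\Ci(f)$ as $[\Eq(f),A\times A]$, the map $p$ is a covering if and only if $[\Eq(p),\, A_{\ttop}\times A_{\ttop}]=\Delta_{A_{\ttop}}$. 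Setting $R=\Eq(f_A)$ and $S=\Eq(\alpha_{\ttop})$, the first part of the statement gives $[\Eq(p),\, A_{\ttop}\times A_{\ttop}]=[R\cap S,\, A_{\ttop}\times A_{\ttop}]\leq [R,S]$, and since $\alpha$ is a double covering, Corollary \ref{CorollaryEquivalentConditionsInDefinitionOfDoubleCovering} gives $[R,S]=\Delta_{A_{\ttop}}$. As any commutator is a congruence and hence contains the diagonal, the inequality $[\Eq(p),\, A_{\ttop}\times A_{\ttop}]\leq\Delta_{A_{\ttop}}$ forces equality, so $p$ is a covering.

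The main obstacle here is not computational but bookkeeping: one must correctly record that the kernel pair of the comparison map is the intersection $\Eq(f_A)\cap\Eq(\alpha_{\ttop})$ of the two horizontal kernel pairs, and one must remember that every commutator contains the diagonal, so that the derived relation ``$\leq\Delta_{A_{\ttop}}$'' collapses to equality. Once these two observations are in place, the argument is a direct assembly of Corollaries \ref{CorollaryEquivalentConditionsInDefinitionOfDoubleCovering}, the $\Ci$-as-commutator corollary, Lemma \ref{LemmaCommutatorCompatibleWithOrder} and the symmetry of the commutator, with no further calculation required.
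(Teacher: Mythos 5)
Your proof is correct and is precisely the assembly the paper intends: the corollary is stated without proof as an immediate consequence of the preceding corollary (applied to the pair $(R\cap S, S)$, since $R\cap S\leq S$), the symmetry $[R,S]=[S,R]$ from Corollary \ref{CorollaryCommutatorGeneralDescription} combined with the monotonicity of Lemma \ref{LemmaCommutatorCompatibleWithOrder}, and the identification $\Eq(p)=\Eq(f_A)\cap\Eq(\alpha_{\ttop})$ together with Corollary \ref{CorollaryEquivalentConditionsInDefinitionOfDoubleCovering}. Your two bookkeeping observations (the kernel pair of the comparison map is the intersection, and a congruence containing only diagonal-related pairs collapses to $\Delta_{A_{\ttop}}$) are exactly the right points to make explicit.
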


Note that the converse of Corollary \ref{CorollaryTwoConditionsInOne} is not true in general. For instance, observe that the double extension from Diagram \eqref{DiagramDihedralQuandles} of Example \ref{ExampleDihedralQuandle1} is such that the comparison map $\bar{mn}\colon D_{2nm} \to D_{nm}$ is always a quandle covering. However when $m=3$ and $n=6$, Diagram \eqref{DiagramDihedralQuandles} is not a double covering since $0 \qndop 0 \qndiop 0 \qndop 0 \qndiop 6 = 12 \neq 0$.

In Section \ref{SectionConjugationQuandles}, where we further investigate the relationship with groups, we shall see that the converse of Corollary \ref{CorollaryTwoConditionsInOne} holds for ``double coverings of conjugation quandles''. More comments and results about our commutator can be found in Section \ref{SectionFurtherDevelopments}.

\subsection{The case of conjugation quandles}\label{SectionConjugationQuandles}
Recall that a \emph{conjugation quandle} is any quandle which is obtained as the image of a group by the functor $\Conj\colon\GRP \to \RCK$. As we reminded ourselves in the Introduction, we use the functors $\Conj$ and its left adjoint $\Pth$ to compare the covering theory of racks and quandles with the theory of central extensions of groups (see \cite{JK1994,Jan1991,Jan2008}). For instance, we mentioned that a surjective group homomorphism is central if and only if its image is a covering in $\RCK$ (or $\QND$ -- \cite[Examples 2.34;1.2]{Eis2014}). However, as the following example shows, the \emph{centralization} (in the sense of $\Fi$) of a morphism between conjugation quandles doesn't coincide with the (image by $\Conj$ of the) centralization (in the sense of $\ab^1$) of a group homomorphism in $\GRP$.
\begin{example}\label{ExampleCentralizationsDontCoincide}
Indeed, consider the quotient map $q \colon S_3 \to S_3 / A_3 = \lbrace -1,1\rbrace$ in $\GRP$, sending the group of permutations of the set of $3$ elements to the (multiplicative) group $\lbrace -1,1\rbrace $ by quotienting $S_3$ by $A_3= \lbrace (),(123),(321)\rbrace$, the alternating subgroup of $S_3$. The morphism $q$ sends $2$-cycles to $-1$. Observe that the (classical group) commutator $[S_3,A_3]_{\GRP} = A_3$. Hence the centralization of $q$ in $\GRP$ is the identity morphism on $\lbrace -1,1\rbrace $. Now observe that $x \qndop x \qndiop y = z$ for any $2$-cycles $x \neq y \neq z$. Hence $2$-cycles are also identified by the centralization of $q$ in $\QND$. However, the action of a $2$-cycle on a $3$-cycle always gives the other $3$-cycle. Hence the successive action of a pair of $2$-cycles on a $3$-cycle does nothing. Similarly since both $3$-cycles are inverse of each-other, $3$-cycles act trivially on each-other. One easily deduces that if $Q_{ab\star}$ is the involutive quandle with $3$ elements whose operation is defined in the table below, then the centralization of the morphism of quandles $q$ is obtained via the quotient $\eta^1_{S_3}\colon S_3 \to (S_3 / \Ci q) = Q_{ab\star}$, such that $\eta^1_{S_3}(123)=a$, $\eta^1_{S_3}(321)=b$ and all other elements of $S_3$ are sent to $\star$. Finally we obtain $\Fi(q)\colon Q_{ab\star} \to S_3 / A_3 = \lbrace -1,1\rbrace $ which takes the values $\Fi(q)[a]=1=\Fi(q)[b]$ and $\Fi(q)[\star]=-1$.
\begin{center}
\begin{tabular}{c | c c c}
$\qndop$    & $a$ & $b$ & $\star$ \\
\hline
$a$ & $a$ & $a$ & $b$ \\
$b$ & $b$ & $b$ & $a$ \\
$\star$ & $\star$ & $\star$ & $\star$ \\
\end{tabular}  $\qquad \vcenter{\xymatrix@R=8pt@C=15pt{
S_3/[S_3,A_3] \ar[drr]^-{\id}  & & \\
S_3 \ar[u]^-{} \ar[d]_-{} \ar[rr]|-{q} && S_3/A_3 = \lbrace -1,1\rbrace\\
S_3/(\Ci q) = Q_{ab\star} \ar[urr]_-{\Fi(q)}  & &
}}$
\end{center}
\end{example}

In this section we further study how our concept of double covering behaves when applied to the image of $\Conj\colon\GRP \to \RCK$. First recall that given a group $G$, and given a path $g = \gr{g_1}^{\delta_1} \cdots \gr{g_n}^{\delta_n} \in \Pth(\Conj(G))$, there is always another path ``of length one'' $\gr{g_0}$, where $g_0 = g_1^{\delta_1} \cdots g_n^{\delta_n} \in G$, such that $x \cdot g = x \cdot {\gr{g_0}}$ for all $x \in \Conj(G)$. A primitive path in $\Conj(G)$ always ``reduces'' (as an inner-automorphism, not as a homotopy class -- see Paragraph 
2.1.8 of Part I) to a one-step primitive path. As a consequence, our notion of double covering simplifies significantly when the quandle operations of interest are derived from the conjugation operation in groups. Note that \emph{connectedness} in \emph{symmetric spaces} also reduces to \emph{strong connectedness} (i.e. connectedness in ``one step'')  -- see \cite[Section 3.7]{Eis2014} and references therein.

\begin{example}
Consider a group $G$ and a pair of surjective group homomorphisms $f$ and $h$ with domain $G$ in $\GRP$. Let us write $R \defeq \Conj(\Eq(f))$ and $S \defeq  \Conj(\Eq(f))$ (note that $\Conj$ preserves limits). In $\QND$ one derives easily that $[R,S] = [R\cap S, G \times G]$ since given a square $(a,b,c,d)\in R\square S$, we have $(d, (ab^{-1}c)) \in  (R\cap S)$ such that moreover $x \qndop (ab^{-1}c) \qndiop d = x \qndop a \qndiop b \qndop c \qndiop d$.
\end{example}

Now observe that the functor $\Pth\colon{\RCK \to \GRP}$ preserves pushouts, and thus the image by $\Pth$ of a double extension $\alpha$ of racks (or quandles), yields a pushout square of extensions in $\GRP$. Since $\GRP$ is a Mal'tsev category, $\Pth(\alpha)$ is a double extension as well (see \cite{CKP1993} and Proposition 5.4 therein). Note however that the comparison map of $\Pth(\alpha)$ in $\GRP$ is not the image of the comparison map of $\alpha$ in $\RCK$ or $\QND$. 

In the other direction, the conjugation functor $\Conj\colon{\GRP \to \RCK}$ preserves pullbacks. Hence it sends a double extension of groups $\gamma\colon{f_G \to f_H}$ to a double extension of quandles $\Conj(\gamma)$, and it sends the comparison map $p$ of $\gamma$ to the comparison map $\Conj(p)$ of $\Conj(\gamma)$.
For a general double extension of racks and quandles $\alpha$, the comparison map of $\alpha$ being a covering is necessary but not sufficient for alpha to be a double covering. However, by the example above and the preceding discussion we have:
\begin{proposition}\label{PropositionImageOfADoubleCentralExtensionOfGroups}
Given a double extension of groups $\gamma\colon{f_G \to f_H}$, its image by $\Conj$ is a double covering of quandles if and only if its comparison map $\Conj(p)$ is a covering in $\QND$ or equivalently if and only if the comparison map $p$ of $\gamma$ is a central extension of groups.
\end{proposition}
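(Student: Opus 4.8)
The plan is to prove the three-fold equivalence by translating each condition into the commutator language of Corollary~\ref{CorollaryEquivalentConditionsInDefinitionOfDoubleCovering} and then exploiting the reduction identity for conjugation quandles recorded in the Example preceding the statement. Write the square $\gamma$ with left leg $f_G\colon G_{\ttop}\to G_{\pperp}$, top leg $\gamma_{\ttop}\colon G_{\ttop}\to H_{\ttop}$, and comparison map $p=\langle f_G,\gamma_{\ttop}\rangle\colon G_{\ttop}\to G_{\pperp}\times_{H_{\pperp}}H_{\ttop}$. Since $\Conj$ preserves limits, $\Conj(\gamma)$ is a double extension of quandles whose top-left object is $\Conj(G_{\ttop})$, and its two relevant kernel-pair congruences are $R\defeq\Conj(\Eq(f_G))=\Eq(\Conj(f_G))$ and $S\defeq\Conj(\Eq(\gamma_{\ttop}))=\Eq(\Conj(\gamma_{\ttop}))$. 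By Corollary~\ref{CorollaryEquivalentConditionsInDefinitionOfDoubleCovering}, $\Conj(\gamma)$ is a double covering if and only if $[R,S]=\Delta_{\Conj(G_{\ttop})}$.

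First I would dispose of the equivalence with the group-theoretic condition. The comparison map $p$ is a surjective group homomorphism, since the comparison map of a double extension is an extension and extensions in $\GRP$ are surjections. Hence the classical correspondence recalled in the introduction---a surjective homomorphism is central if and only if its image under $\Conj$ is a covering of quandles \cite[Examples 2.34;1.2]{Eis2014}---applies to $p$ and gives at once that $\Conj(p)$ is a covering in $\QND$ if and only if $p$ is a central extension of groups.

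The heart of the argument is the equivalence between $\Conj(\gamma)$ being a double covering and $\Conj(p)$ being a covering. Here I would invoke the reduction identity for conjugation quandles established in the Example preceding the statement, namely $[R,S]=[R\cap S,\ \Conj(G_{\ttop})\times\Conj(G_{\ttop})]$; it rests on the computation that in $\Conj(G_{\ttop})$ the four-step action $x\qndop a\qndiop b\qndop c\qndiop d$ agrees with the two-step action $x\qndop(ab^{-1}c)\qndiop d$, with $(d,ab^{-1}c)\in R\cap S$ for every quadruple $(a,b,c,d)\in R\square S$. The key identification is then $R\cap S=\Conj(\Eq(f_G)\cap\Eq(\gamma_{\ttop}))=\Conj(\Eq(p))=\Eq(\Conj(p))$, because the intersection of the two kernel pairs is exactly $\Eq(p)$ and $\Conj$ preserves it. Feeding this into the corollary computing $\Ci(\phi)$ as $[\Eq(\phi),A\times A]$ (with $\phi$ a covering precisely when this congruence is $\Delta$), applied to $\phi=\Conj(p)$, yields $[R,S]=\Ci(\Conj(p))$. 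Thus $[R,S]=\Delta$ if and only if $\Conj(p)$ is a covering, which closes the circle.

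I expect the main obstacle to be the sufficiency direction. By Corollary~\ref{CorollaryTwoConditionsInOne} the comparison map being a covering is necessary for any double covering, but in general it is not sufficient, as noted after that corollary; the gap is bridged only by the identity $[R,S]=[R\cap S,\ \Conj(G_{\ttop})\times\Conj(G_{\ttop})]$, which is special to conjugation quandles. Concretely this amounts to the one-step reduction of paths in $\Pth(\Conj(G))$ observed at the start of the section; once that reduction and the preservation of the relevant limits by $\Conj$ are in hand, the remainder is bookkeeping.
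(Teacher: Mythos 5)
Your proposal is correct and takes essentially the same route as the paper: the paper's proof of this proposition is precisely the combination of the reduction identity $[R,S]=[R\cap S,\,G\times G]$ established in the example immediately preceding the statement (via $x \qndop a \qndiop b \qndop c \qndiop d = x \qndop (ab^{-1}c) \qndiop d$ with $(d,ab^{-1}c)\in R\cap S$) with the preceding discussion that $\Conj$ preserves pullbacks, hence sends the comparison map $p$ to the comparison map $\Conj(p)$, together with Eisermann's correspondence between central extensions of groups and quandle coverings. Your explicit bookkeeping---identifying $R\cap S=\Eq(\Conj(p))$ and routing the argument through Corollaries \ref{CorollaryEquivalentConditionsInDefinitionOfDoubleCovering} and \ref{CorollaryTwoConditionsInOne}---merely spells out what the paper leaves implicit.
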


In particular, the image by $\Conj$ of a double central extension of groups yields a double covering in $\QND$. However, one cannot deduce that $\gamma$ is a double central extension of groups from the fact that $\Conj(\gamma)$ is a double covering. Finally we show that the image by $\Pth$ of a double covering of quandles is not necessarily a double extension of groups.
\begin{example} Consider a double extension of groups $\gamma\colon{f_G \to f_H}$ such that $k_1k_2^{-1} \neq k_2^{-1} k_1$ for some $k_1 \in \Ker(f_G)$ and $k_2 \in \Ker(\gamma_{\ttop})$, but $ka = ak$ for all $k \in \Ker(f_G)\cap \Ker(\gamma_{\ttop})$ and $a \in G_{\ttop}$. 

For instance, define $\gamma_{\pperp}\colon G_{\pperp} \to H_{\pperp}$ as the surjective group homomorphism $\Fg(\lbrace a,c\rbrace) \to \Fg(\lbrace c \rbrace)$ such that $\gamma_{\pperp}(c) = c$ and $\gamma_{\pperp}(a) = e$. Similarly define $f_H\colon H_{\ttop} \to H_{\pperp}$ as $\Fg(\lbrace b,c \rbrace) \to \Fg(\lbrace c \rbrace)$ such that $f_H(c) = c$ and $f_H(b) = e$. Write $P \defeq G_{\pperp} \times_{H_{\pperp}} H_{\ttop}  = \Fg(\lbrace a, b, c \rbrace)$ for their pullback, with projections $\pi_1\colon P \to G_{\pperp}$ and $\pi_2\colon P \to H_{\ttop}$, and take the \emph{canonical projective presentation} $\epsilon^g_{P} \colon \Fg(P) \to P$, obtained from the counit $\epsilon^g$ of free-forgetful adjunction $\Fg \dashv \U$. Compute its centralization $\ab^1(\epsilon^g_{P})\colon \Fg(P)/[\Ker(\epsilon^g_{P}),\Fg(P)]_{\GRP} \to P$, and define $f_G \defeq \pi_1 \ab^1(\epsilon^g_{P})$. Similarly define $\gamma_{\ttop} \defeq \pi_2 \ab^1(\epsilon^g_{P})$. The resulting double extension of groups is as required.

By Proposition \ref{PropositionImageOfADoubleCentralExtensionOfGroups}, the double extension of quandles $\Conj(\gamma)$ is a double covering. However we show that the double extension $\Pth(\Conj(\gamma))$ cannot be a double central extension of groups. First observe that the unit $\pth_{\Conj(G_{\ttop})}\colon \Conj(G_{\ttop}) \to \Conj(\Pth(\Conj(G_{\ttop})))$ is a monomorphism, since the identity morphism on $\Conj(G_{\ttop})$ factors through it. Now, if $e_{\ttop}$ is the neutral element in $G_{\ttop}$, then $\gr{k_1}\, \gr{e_{\ttop}}^{-1} \in \Ker(\vec{f_G})$ and $\gr{e_{\ttop}} \, \gr{k_2}^{-1} \in \Ker(\vec{\alpha_{\ttop}})$. Suppose by contradiction that $\gr{k_1}\, \gr{e_{\ttop}}^{-1} \gr{e_{\ttop}}\, \gr{k_2}^{-1} = \gr{e_{\ttop}}\, \gr{k_2}^{-1}\, \gr{k_1}\, \gr{e_{\ttop}}^{-1}$. We have that $\gr{k_1}\, \gr{k_2}^{-1} = \gr{k_1}\, \gr{e_{\ttop}}^{-1} \gr{e_{\ttop}}\, \gr{k_2}^{-1}$ and, by the compatibility of $\pth_{\Conj(G_{\ttop})}$ with $\qndop$, we have, moreover:
\[ \gr{k_2}^{-1}\, \gr{k_1} = (\gr{k_2 \qndop e_{\ttop}})^{-1}\, \gr{k_1 \qndop e_{\ttop}} = (\gr{k_2}\, \qndop\, \gr{e_{\ttop}})^{-1}\, (\gr{k_1}\, \qndop \, \gr{e_{\ttop}}) = \gr{e_{\ttop}}\, \gr{k_2}^{-1}\, \gr{e_{\ttop}}^{-1} \, \gr{e_{\ttop}} \, \gr{k_1}\, \gr{e_{\ttop}}^{-1} = \gr{e_{\ttop}}\, \gr{k_2}^{-1}\, \gr{k_1}\, \gr{e_{\ttop}}^{-1}.\] 
Hence we must also have that  $\gr{k_1}\, \gr{k_2}^{-1} = \gr{k_2}^{-1}\, \gr{k_1}$ and thus $\gr{k_1} \qndiop \gr{k_2} = \gr{k_1}$, which implies $\gr{k_1 \qndiop k_2} = \gr{k_1}$. Since $\pth_{\Conj(G_{\ttop})}$ is injective, we must have $ k_2 k_1 k_2^{-1} = k_1 \qndiop k_2 = k_1 \in \Conj(G_{\ttop})$ which is in contradiction with the hypothesis $k_1 k_2^{-1} \neq k_2^{-1} k_1$. Hence it must also be that $\gr{k_1}\, \gr{e_{\ttop}}^{-1} \gr{e_{\ttop}}\, \gr{k_2}^{-1} \neq \gr{e_{\ttop}}\, \gr{k_2}^{-1}\, \gr{k_1}\, \gr{e_{\ttop}}^{-1}$ and $\Pth(\gamma)$ cannot be a double central extension of groups.
\end{example}

\begin{remark} By anticipation of Theorems \ref{TheoremCharacterizationDoubleCentralExtensions} and \ref{TheoremEpiReflectivityOfCExt}, we cannot hope for a direct three-dimensional version of the Diagrams \ref{DiagramSquareAdjD1} and \ref{DiagramSquareAdmAdjD2} in which the bottom adjunction's left adjoint would be the centralization of double extensions of groups.
\end{remark}

Now in order to further study double coverings (algebraically central double extensions) for general racks and quandles, and their relation to $\Gamma^1$-coverings, we need a characterization for general elements in the \emph{centralization congruence} $[\Eq(f_A),\Eq(\alpha_{\ttop})]$ of a double extension $\alpha\colon f_A \to f_B$. Think about the transitive closure of the set of pairs from Definition \ref{DefinitionCommutator}. In order to identify these general pairs, we make a detour via the generalized notion of primitive trail and the characterization of normal $\Gamma^1$-coverings.

\subsection{A concept of \emph{primitive trail} in each dimension: from \emph{membranes} to \emph{volumes}.}\label{SectionVolumes} Similarly to what was studied in dimension zero and one, we shall further be interested in the ``action of sequences of two-dimensional data''. Given a rack $A$ in dimension zero, we have the fundamental concept of a primitive path, which is merely a sequence of elements in $A\times\lbrace -1,\, 1\rbrace$, viewed as a formal sequence of \emph{symmetries} (Part I, Paragraph 
2.3.3). Given a rack $A$, its \emph{centralization} (or set of connected components) is obtained by identifying elements which are ``connected by the action of a primitive path in $A$''. In dimension one, the centralization of an extension $f\colon{A \to B}$ is in some sense obtained by the study of elements which are ``linked by the action on $A$ of a primitive path from $\Eq(f)$'', leading to the concept of a membrane (see Paragraph \ref{ParagraphTrivialDoubleExtensions} or Part I). Now given a double extension of racks $\alpha$, we shall be interested in the action on $A_{\ttop}$ of primitive paths from $\Eq(f_A) \square \Eq(\alpha_{\ttop})$. We exhibit the 2-dimensional generalizations of the lower-dimensional concepts of \emph{primitive trail}, \emph{membrane}, and \emph{horn}.

\begin{definition}\label{DefinitionFHMembrane}
Given a pair of morphisms $f\colon {A \to B}$ and $h\colon {A \to C}$ in $\RCK$ (or $\QND$), we define an $\langle f,h\rangle$-volume as the data $V = ((a_0,b_0,c_0,d_0), ((a_i,b_i,c_i,d_i),\delta_i)_{1\leq i \leq n})$ of a primitive trail in $\Eq(f) \square \Eq(h)$. The first quadruple $(a_0,b_0,c_0,d_0)$ is the \emph{head of $V$}. We call such an $\langle f,h\rangle$-volume $V$ an $\langle f,h\rangle$-horn if the head reduces to a point: $a_0=b_0=c_0=d_0 \eqdef x$ which we specify as $V = (x, ((a_i,b_i,c_i,d_i), \delta_i)_{1 \leq i \leq n})$. Let us define $a \defeq (a_i)_{1\leq i \leq n}$ and similarly define $b$, $c$ and $d$.  The \emph{associated $\langle f,h\rangle$-symmetric quadruple} of the volume or horn $V$ is given by the paths 
$ g^V_a \defeq \gr{a_1}^{\delta_1} \cdots  \gr{a_n}^{\delta_n}, \quad g^V_b \defeq \gr{b_1}^{\delta_1} \cdots  \gr{b_n}^{\delta_n}, \quad g^V_c \defeq \gr{c_1}^{\delta_1} \cdots  \gr{c_n}^{\delta_n}\  \text{ and }\  g^V_d \defeq \gr{d_1}^{\delta_1} \cdots  \gr{d_n}^{\delta_n}$ in $\Pth(A)$.
The \emph{endpoints} of the volume or horn are given by $a_V = a_0 \cdot g^V_a$, $b_V = b_0 \cdot g^V_b$, $c_V = c_0 \cdot g^V_c$ and $d_V = d_0 \cdot g^V_d$. Finally we call $(a,b)$-membrane the $f$-membrane defined by $M^V_{(a,b)} \defeq ((a_0,b_0),((a_i,b_i),\delta_i)_{1\leq i \leq n})$. The other $f$-membrane, labelled $(c,d)$, and the two $h$-membranes, labelled by $(a,d)$ and $(b,c)$, are defined similarly.
\[\xymatrix@1@!0@C=10pt@R=18pt{
a_0 \ar@{..}[dd]^-{} \ar@{-}[rd]_-{} &\ \ar@<+1ex>@/^1pt/@{{ }{-}{ }}[rrrrrrrrrrrrrrrrrr]^(.55){g^V_a}|-{>}  & \gr{a_1} \ar@{..}[dd] \ar@{-}[rd]_-{} & & \grl{\cdot}\ar@{..}[dd] \ar@{-}[rd]_-{} & &\grl{\cdot}\ar@{..}[dd] \ar@{-}[rd]_-{} & &\grl{\cdot}\ar@{..}[dd] \ar@{-}[rd]_-{} & & \  \ar@{{}{}{}}[rd]|(.35){\cdots} & & \grl{\cdot}\ar@{..}[dd] \ar@{-}[rd]_-{} & &\grl{\cdot}\ar@{..}[dd] \ar@{-}[rd]_-{} & &\grl{\cdot}\ar@{..}[dd] \ar@{-}[rd]_-{} & & \gr{a_n}\ar@{..}[dd] \ar@{-}[rd]_-{} & & & a_V \ar@{..}[dd]^(.25){} \ar@{-}[rd]_-{} \\
& b_0 \ar@{..}[dd]^-{} &\ \ar@{{ }{-}{ }}@<-0.7ex>@/_1pt/[rrrrrrrrrrrrrrrrrr]^(.45){g^V_b}|-{>} & \grl{\cdot}\ar@{..}[dd]^(.25){} & & \grl{\cdot}\ar@{..}[dd]^(.25){} & & \grl{\cdot}\ar@{..}[dd]^(.25){} & & \grl{\cdot}\ar@{..}[dd]^(.25){} & & \  & & \grl{\cdot}\ar@{..}[dd]^(.25){} & &\grl{\cdot}\ar@{..}[dd]^(.25){} & &\grl{\cdot}\ar@{..}[dd]^(.25){} & & \grl{\cdot}\ar@{..}[dd]^(.25){}  & & & b_V \ar@{..}[dd]^(.25){} \\
d_0 \ar@{-}[rd]_-{} & \ \ar@{{ }{-}{ }}@<+0.7ex>@/^1pt/[rrrrrrrrrrrrrrrrrr]_(.55){g^V_d}|-{>} & \grl{\cdot}\ar@{-}[rd]_-{} & & \grl{\cdot}\ar@{-}[rd]_-{} & &\grl{\cdot}\ar@{-}[rd]_-{} & &\grl{\cdot}\ar@{-}[rd]_-{} & & \ \ar@{{}{}{}}[rd]|(.65){\cdots} & & \grl{\cdot}\ar@{-}[rd]_-{} & &\grl{\cdot}\ar@{-}[rd]_-{} & &\grl{\cdot}\ar@{-}[rd]_-{} & & \grl{\cdot}\ar@{-}[rd]_-{} & & & d_V \ar@{-}[rd]_-{} \\
& c_0  & \ \ar@{{ }{-}{ }}@<-1ex>@/_1pt/[rrrrrrrrrrrrrrrrrr]_(.45){g^V_c}|-{>} & \grl{\cdot} & & \grl{\cdot} & &\grl{\cdot} & &\grl{\cdot} & &\  & &\grl{\cdot} & &\grl{\cdot} & & \grl{\cdot} & & \grl{\cdot} & & & c_V
}\]
\end{definition}

Note that because double parallelistic relations are symmetric, both in the ``vertical'' and in the ``horizontal'' direction, Definition \ref{DefinitionFHMembrane} is ``symmetric'' in the role of opposite membranes. 

\begin{remark}
A morphism of racks $f\colon{A \to B}$ obviously sends a primitive trail $(x, (a_i,\delta_i)_{1\leq i \leq n})$ in $A$ to a primitive trail in $(f(x), (f(a_i),\delta_i)_{1\leq i \leq n})$ in $B$. Similarly, a morphism $\alpha\colon f_A \to f_B$ in $\EXT\RCK$ sends a $f_A$-membrane to a $f_B$-membrane, and a morphism of $\EXT^2\C$, such as $(\sigma,\beta)\colon{\gamma \to \alpha}$ in Definition \ref{DefinitionThreeFoldExtension} sends a $\langle f_C,\gamma_{\ttop} \rangle$-volume to a $\langle f_A,\alpha_{\ttop}\rangle$-volume (via the induced morphism $\square_{(\sigma,\beta)}$ such as in Lemma \ref{LemmaSurjectionBetweenParallelisticDERel}).
\end{remark}

\begin{notation}\label{NotationDoubleExtAndKernelPair} Given a double extension of racks (or quandles) $\alpha \colon{f_A \to f_B}$, we can build its kernel pair in $\EXT\RCK$ component-wise, which we denote:
\[\xymatrix@1@!0@=30pt{
& \Eq(\alpha_{\ttop}) \pullback \ar[rr]^-{\pi_{2}} \ar[dd]|(.25){\pi_{1}}|-{\hole} \ar[ld]_-{\bar{f}} && A_{\ttop} \ar[dd]^-{\alpha_{\ttop}} \ar[ld]|-{f_A} \\
\Eq(\alpha_{\pperp}) \pullback \ar[rr]^(.75){p_{2}} \ar[dd]_-{p_{1}} && A_{\pperp} \ar[dd]^(.25){\alpha_{\pperp}} \\
& A_{\ttop} \ar[ld]_-{f_A} \ar[rr]^(.25){\alpha_{\ttop}}|-{\hole} && B_{\ttop} \ar[ld]^-{f_B} \\
A_{\pperp} \ar[rr]_-{\alpha_{\pperp}} && B_{\pperp}}\]
\end{notation}

\begin{remark}\label{RemarkVolumesAsMembranes}
Using Notation \ref{NotationDoubleExtAndKernelPair}, the $\langle f_A,\alpha_{\ttop}\rangle$-volumes $V$ (see Definition \ref{DefinitionFHMembrane}) correspond bijectively to the $\bar{f}$-membranes $M$ in $\Eq(\alpha_{\ttop})$, since such an $M$ is defined as the data $(((a_0,d_0),(b_0,c_0)), (((a_i,d_i),(b_i,c_i)),\delta_i)_{1\leq i \leq n})$ for a certain sequence of elements $(a_i,b_i,c_i,d_i)$ in $\Eq(f_A) \square \Eq(\alpha_{\ttop})$, where $0 \leq i \leq n$. Under the appropriate bijective correspondence, the $(a,b)$-membrane (and $(c,d)$-membrane) of a $\langle f_A,\alpha_{\ttop}\rangle$-volume are obtained from the corresponding $\bar{f}$-membrane via the projections $\pi_1$ and $\pi_2$ respectively. A $\bar{f}$-horn then corresponds to a $\langle f_A,\alpha_{\ttop}\rangle$-volume whose head $(a_0,b_0,c_0,d_0)$ is such that $a_0=b_0$ and $c_0=d_0$.

Similarly, $\langle f_A,\alpha_{\ttop}\rangle$-volumes correspond bijectively to $\bar{\alpha}$-membranes in $\Eq(f_A)$, where $\bar{\alpha}$ is the kernel pair of $(f_A,f_B)$ in $\EXT\RCK$. A $\bar{\alpha}$-horn then corresponds bijectively to a $\langle f_A,\alpha_{\ttop}\rangle$-volume whose head $(a_0,b_0,c_0,d_0)$ is such that $a_0=d_0$ and $b_0=c_0$.
\end{remark}

\subsection{Normal $\Gamma^1$-coverings and rigid horns}\label{SectionDoubleNormalExtensions} We illustrate these definitions in the characterization of normal $\Gamma^1$-coverings, which we subsequently refer to as \emph{normal double coverings}.

\begin{proposition}\label{PropositionDNormalExtensions}
Given a double extension of racks (or quandles) $\alpha \colon{f_A \to f_B}$, it is a double $\Gamma^1$-covering if and only if, given a $\langle f_A,\alpha_{\ttop}\rangle$-volume $V = ((a_0,b_0,c_0,d_0), ((a_i,b_i,c_i,d_i),\delta_i)_{1\leq i \leq n})$ (as in Definition \ref{DefinitionFHMembrane}), if its $f_A$-membranes are horns (i.e. $a_0=b_0$ and $c_0=d_0$) then \emph{the $\alpha_{\ttop}$-membranes of $V$ are rigid} in the sense that its $(d,c)$-horn closes if and only if its $(a,b)$-horn closes. We call a double extension satisfying this condition a \emph{normal double covering}.
\end{proposition}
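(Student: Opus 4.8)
The plan is to unwind the abstract definition of a normal $\Gamma^1$-covering into the combinatorics of volumes, using the characterisation of trivial double coverings from Lemma \ref{LemmaDoubleTrivialExtensions} together with the dictionary of Remark \ref{RemarkVolumesAsMembranes}. By Convention \ref{ConventionGaloisStructure}, $\alpha$ is a normal $\Gamma^1$-covering precisely when the two projections of its kernel pair in $\EXT\C$ are trivial double coverings. Using Notation \ref{NotationDoubleExtAndKernelPair}, this kernel pair is the double extension $\bar{f}\colon \Eq(\alpha_{\ttop}) \to \Eq(\alpha_{\pperp})$, and its two projections onto $f_A$ are $(\pi_1,p_1)$ and $(\pi_2,p_2)$, which are themselves double extensions by Lemma \ref{LemmaBarrKock}. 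So the first step is to record that $\alpha$ is a normal double covering if and only if both $(\pi_1,p_1)$ and $(\pi_2,p_2)$ are trivial double coverings, so that Lemma \ref{LemmaDoubleTrivialExtensions} becomes applicable to each.

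Second, I would apply Lemma \ref{LemmaDoubleTrivialExtensions}$(2)$ to each projection, reading off its content in the present notation. For $(\pi_1,p_1)\colon \bar{f} \to f_A$ the lemma says: every $\bar{f}$-horn whose image under $\pi_1$ is an $f_A$-disk in $A_{\ttop}$ must itself close into a disk in $\Eq(\alpha_{\ttop})$. Remark \ref{RemarkVolumesAsMembranes} identifies $\bar{f}$-horns with exactly those $\langle f_A,\alpha_{\ttop}\rangle$-volumes $V$ whose head satisfies $a_0=b_0$ and $c_0=d_0$, i.e. whose $f_A$-membranes are horns. A direct computation of the projections on $\Eq(\alpha_{\ttop})$ then shows that $\pi_1$ carries such a $\bar{f}$-horn to the $(a,b)$-horn $M^V_{(a,b)}$, while $\pi_2$ carries it to the $(d,c)$-horn; and that the endpoints of the $\bar{f}$-membrane, computed componentwise, are the pairs $(a_V,d_V)$ and $(b_V,c_V)$ in $\Eq(\alpha_{\ttop})$, so that the $\bar{f}$-horn closes if and only if $a_V=b_V$ \emph{and} $c_V=d_V$, i.e. if and only if both $f_A$-horns close. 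Hence the triviality of $(\pi_1,p_1)$ translates into the implication ``the $(a,b)$-horn closes $\Rightarrow$ the $(d,c)$-horn closes'' (ranging over all such volumes), and the triviality of $(\pi_2,p_2)$ into the reverse implication.

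Third, I would combine the two implications: $\alpha$ is a normal $\Gamma^1$-covering if and only if, for every $\langle f_A,\alpha_{\ttop}\rangle$-volume $V$ with $a_0=b_0$ and $c_0=d_0$, the $(a,b)$-horn closes exactly when the $(d,c)$-horn closes, which is precisely the rigidity condition in the statement. I would also note that, since the kernel pair carries a canonical twist automorphism interchanging $(\pi_1,p_1)$ and $(\pi_2,p_2)$ --- mirrored by the flip $V\mapsto V^{\mathrm{flip}}$ of volumes that swaps the columns $(a,d)$ and $(b,c)$ --- the triviality of one projection already forces that of the other; this is consistent with, and explains, why a single iff (rather than two separately tracked implications) is the right formulation.

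The main obstacle is purely a matter of careful book-keeping through the correspondences of Remark \ref{RemarkVolumesAsMembranes}: one must verify that the first (resp. second) projection $\Eq(\alpha_{\ttop})\to A_{\ttop}$ sends the $\bar{f}$-horn to the $(a,b)$- (resp. $(d,c)$-) $f_A$-horn, that the head condition $a_0=b_0$, $c_0=d_0$ is exactly the condition defining a $\bar{f}$-horn, and that ``the $\bar{f}$-horn closes in $\Eq(\alpha_{\ttop})$'' decomposes into the conjunction of the two $f_A$-horns closing. Once these identifications are in place, the equivalence is immediate and no genuinely new estimate is required.
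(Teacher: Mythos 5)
Your proposal is correct and takes essentially the same route as the paper: identify normality of $\alpha$ with the triviality of the kernel-pair projections $(\pi_1,p_1)$ and $(\pi_2,p_2)$ from Notation \ref{NotationDoubleExtAndKernelPair}, apply Lemma \ref{LemmaDoubleTrivialExtensions} to each, and translate through the dictionary of Remark \ref{RemarkVolumesAsMembranes}, exactly as the paper's proof does. One small slip in your closing side remark: the flip of volumes that mirrors the twist automorphism of $\Eq(\alpha_{\ttop})$ (the one satisfying $\pi_1\tau=\pi_2$) swaps the two $f_A$-membranes, i.e.\ sends $(a,b,c,d)\mapsto(d,c,b,a)$ (a row swap), not ``the columns $(a,d)$ and $(b,c)$'' --- interchanging the two columns is the twist of the \emph{other} kernel pair and leaves both closing conditions $a_V=b_V$ and $c_V=d_V$ unchanged, so it cannot interchange the two implications; the correct symmetry is the one the paper invokes ``in the role of $f_A$-membranes'', and with it your conclusion that triviality of one projection forces that of the other stands.
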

Observe that by the ``symmetries'' of $\langle f_A,\alpha_{\ttop}\rangle$-volumes (in the role of $f_A$-membranes), it suffices to show that in any such volume $V$, a closing $(a,b)$-horn implies a closing $(c,d)$-horn in order to deduce that in any such volume $V$, a closing $(c,d)$-horn implies a closing $(a,b)$-horn (and conversely the latter implies the former). We relate this to the fact that $(\pi_1,p_1)$ (from Notation \ref{NotationDoubleExtAndKernelPair}) is a trivial double covering if and only if $(\pi_2,p_2)$ is one.
\begin{proof}[Proof of Proposition \ref{PropositionDNormalExtensions}]
By definition, $\alpha$ is a normal $\Gamma^1$-covering if and only if, in Notation \ref{NotationDoubleExtAndKernelPair}, the left face $(\pi_1,p_1)$ (or equivalently the top face $(\pi_2,p_2)$) is a trivial double covering. Then by Lemma \ref{LemmaDoubleTrivialExtensions}, the double extension $(\pi_1,p_1)$ is a trivial double covering if and only if given any $\bar{f}$-horn $M$, such that $\pi_1(M)$ closes in $A_{\ttop}$, then $M$ closes in $\Eq(\alpha_{\ttop})$, i.e. $\pi_2(M)$ also has to close. By Remark \ref{RemarkVolumesAsMembranes} the preceding translates into the statement: $(\pi_1,p_1)$ is a trivial double covering if and only if given any $\langle f_A,\alpha_{\ttop}\rangle$-volume $V$ such that $a_0=b_0$ and $c_0=d_0$, if the $(a,b)$-horn of $V$ closes then the $(c,d)$-horn of $V$ has to close. Similarly $(\pi_2,p_2)$ is a trivial double covering if and only if given any volume $V$ such that $a_0=b_0$ and $c_0=d_0$, a closing $(c,d)$-horn implies a closing $(a,b)$-horn.
\end{proof}

Of course trivial $\Gamma^1$-coverings (i.e.~trivial double coverings) are examples of normal $\Gamma^1$-coverings (i.e.~normal double coverings). However, these two concepts do not coincide.

\begin{example}\label{ExampleOfSymmetricDNormalExtension}
Consider the set $\lbrace \star,\ \bullet \rbrace$, seen as a trivial quandle, as well as two copies $f \colon{Q^{\diamond} \to \lbrace \star,\ \bullet \rbrace}$ and $f \colon{Q_{\diamond} \to \lbrace \star,\ \bullet \rbrace}$ of the same morphism where $Q^{\diamond} \defeq \lbrace \star^{\diamond}, \star, \ \bullet^1, \bullet^0 \rbrace$, and $Q_{\diamond} \defeq \lbrace \star, \star_{\diamond}, \ \bullet_1, \bullet_0 \rbrace$ are such that $\star_{\diamond}$ (respectively $\star^{\diamond}$)  acts on $\bullet_1$ and $\bullet_0$ (respectively $\bullet^1$ and $\bullet^0$) by interchanging $1$ and $0$, and all the other actions are trivial (see also Example 2.3.14 in Part I). We then denote the kernel pair of $f$ by $Q^{\diamond}_{\diamond}$ with underlying set $\lbrace \star^{\diamond},\ \star,\ \star^{\diamond}_{\diamond},\ \star_{\diamond}, \bullet^1_1,\ \bullet^1_0,\ \bullet^0_1,\ \bullet^0_0 \rbrace$, such that the element $\star^{\diamond}$ acts on bullets by interchanging the exponents $1$ and $0$ and similarly with $\star_{\diamond}$ for the indices. Then $\star^{\diamond}_{\diamond}$ interchanges both indices and exponents of the bullets, whereas $x \qndop y = x$ for any other choice of $x$ and $y$ in $Q^{\diamond}_{\diamond}$.
\begin{equation}\label{EquationExampleOfSymmetricDNormalExtension} \vcenter{\xymatrix @R=0.5pt @ C=4pt{
Q^{\diamond}_{\diamond} \pullback \ar@{{}{-}{>>}}[rrr]^-{\pi_{\diamond}} \ar@{{}{-}{>>}}[dddd]_-{\pi^{\diamond} } & & & Q_{\diamond}  \ar@{{}{-}{>>}}[dddd]^-{f_{\diamond} } \\
 & \\
\\
\\
Q^{\diamond} \ar@{{}{-}{>>}}[rrr]_-{f^{\diamond}} &  & & \lbrace \star,\ \bullet \rbrace
}} \qquad \Bigg{|} \qquad \vcenter{\xymatrix @C= 20pt@R=15pt {
\bullet^1_1 \ar@{-}[r]|-{\pi^{\diamond}}  \ar@{..}[d]|-{\pi_{\diamond}}  & \bullet^1_0  \ar@{..}[d]|-{\pi_{\diamond}} \\
\bullet^0_1 \ar@{-}[r]|-{\pi^{\diamond}}  & \bullet^0_0
}} \qquad \vcenter{\xymatrix @C= 20pt@R=15pt {
\star^{\diamond}_{\diamond} \ar@{-}[r]|-{\pi^{\diamond}}  \ar@{..}[d]|-{\pi_{\diamond}}  & \star^{\diamond}  \ar@{..}[d]|-{\pi_{\diamond}} \\
\star_{\diamond} \ar@{-}[r]|-{\pi^{\diamond}}  & \star
}}
\end{equation}
The projection $\pi_{\diamond}$ identifies all the elements that have the same indices (including blanks), and similarly $\pi^{\diamond}$ identifies elements with the same exponents.

Observe that none of the morphisms above are quandle coverings. Moreover, both double extensions $(\pi_{\diamond}, f^{\diamond})$ and $(\pi^{\diamond}, f_{\diamond})$ are such that the conditions of Lemma \ref{LemmaDoubleTrivialExtensions} are not satisfied. However, the conditions of Proposition \ref{PropositionDNormalExtensions} are easily seen to be satisfied by both $(\pi_{\diamond}, f^{\diamond})$ and $(\pi^{\diamond}, f_{\diamond})$. In order to check this, observe that the only ``non-trivial'' element in $\Eq(\pi_{\diamond}) \square \Eq(\pi^{\diamond})$ is the square on the right of \eqref{EquationExampleOfSymmetricDNormalExtension} (or any symmetric equivalent) and for any $g$, $h\in \Pth(Q_{\diamond}^{\diamond})$ and for any $i$, $j$, $k$, $l \in \lbrace 0,\ 1\rbrace$, we have that $\bullet^i_j \cdot g = \bullet^i_j \cdot h$ if and only if $\bullet^k_l \cdot g  = \bullet^k_l \cdot h$.
\end{example}

Even if \ref{ExampleOfSymmetricDNormalExtension} is symmetric in the sense that both $(\pi_{\diamond}, f^{\diamond})$ and $(\pi^{\diamond}, f_{\diamond})$ are double normal coverings, Proposition \ref{PropositionDNormalExtensions}, does not seem to be symmetric in the role of $(\alpha_{\ttop},\alpha_{\pperp})$ and $(f_A,f_B)$. Observe that in Example \ref{ExampleDTrivialExtension}, the double extension $(\pi_1 p, t_{\star})$ is a trivial double covering and thus also a normal double covering. However, the double extension $(\pi_2 p, t)$ is neither a trivial double covering nor a normal double covering since $\bullet_1 \qndop \star_{11} \neq \bullet_1 \qndop \star_{10}$ even though $\bullet_0 \qndop \star_{01} = \bullet_0 \qndop \star_{00}$.


Recall that any normal $\Gamma^1$-covering (normal double covering) $\alpha$ is in particular a $\Gamma^1$-covering, since $\alpha$ is split by $\alpha$. Now unlike trivial double coverings and normal double coverings, $\Gamma^1$-coverings are expected to be symmetric in the same way that double coverings are (see Remark \ref{RemarkSymmetricDoubleCoverings}). If we were to weaken the condition characterizing normal $\Gamma^1$-coverings to obtain a candidate condition for the characterization of $\Gamma^1$-coverings, we would look for a way to make it symmetric in the roles of $f_A$ and $\alpha_{\ttop}$.

Now observe that an obvious asymmetrical feature of the characterization in Proposition \ref{PropositionDNormalExtensions} is the fact that we look at properties of $\bar{f}$-horns in $\Eq(\alpha_{\ttop})$, some of which cannot be expressed as $\bar{\alpha}$-horns in $\Eq(f_A)$. In the spirit of the discussions at page \pageref{SectionDoubleCoverings} and \pageref{SectionVolumes}, we are looking at the ``successive action'' of ``two-dimensional data'' on some ``one-dimensional data'' (in a fixed privileged direction). What we are aiming for is the ``successive action'' of ``two-dimensional data'' on some ``zero-dimensional data''.

We get rid of the asymmetry in Proposition \ref{PropositionDNormalExtensions} by collapsing the one-dimensional head of the volumes we study. Looking at $\langle f_A,\alpha_{\ttop}\rangle$-horns in $A_{\ttop}$, these can be described both as $\bar{f}$-horns in $\Eq(\alpha_{\ttop})$ and as $\bar{\alpha}$-horns in $\Eq(f_A)$. From Proposition \ref{PropositionDNormalExtensions} we produce the concept of a double extension with \emph{rigid horns}.

\begin{definition}\label{DefinitionRigidHorns}
A double extension of racks (or quandle) $\alpha$ is said to \emph{have rigid horns} if any $\langle f_A,\alpha_{\ttop}\rangle$-horn $V$ in $A_{\ttop}$ has rigid $\alpha_{\ttop}$-membranes in the sense of Proposition \ref{PropositionDNormalExtensions}: if $V = ((a_0,b_0,c_0,d_0), ((a_i,b_i,c_i,d_i),\delta_i)_{1\leq i \leq n})$, as in Definition \ref{DefinitionFHMembrane}, its $(d,c)$-horn closes if and only if its $(a,b)$-horn closes.
\end{definition}

Even though Definition \ref{DefinitionRigidHorns} still seems asymmetric at first, it is actually not so anymore. Indeed we use the terminology \emph{rigid horns} because we may show that given a double extension $\alpha\colon{f_A \to f_B}$, any $\langle f_A,\alpha_{\ttop}\rangle$-horn $V$ in $A_{\ttop}$ has rigid $\alpha_{\ttop}$-membranes if and only if any $\langle f_A,\alpha_{\ttop}\rangle$-horn $V$ in $A_{\ttop}$ \emph{has rigid $f_A$-membranes} (its $(a,d)$-horn closes if and only if its $(b,c)$-horn closes). Observe that by Definition \ref{DefinitionRigidHorns}, the double extension $(f_A,f_B)$ has rigid horns if and only if any $\langle f_A,\alpha_{\ttop}\rangle$-horn has rigid $f_A$-membranes. Again we may show that $(\alpha_{\ttop},\alpha_{\pperp})$ has rigid horns (in the sense of Definition \ref{DefinitionRigidHorns}) if and only if the double extension $(f_A,f_B)$ has rigid horns, as it is the case for double coverings. We skip this (rather elementary) step as it can be deduced from the fact that the concepts of double covering and double extension with rigid horns coincide.

\begin{proposition}\label{PropositionDoubleCoveringsRigidHorns}
A double extension of racks $\alpha\colon{f_A \to f_B}$ is a double covering if and only if $\alpha$ has rigid horns (Definition \ref{DefinitionRigidHorns}).
\end{proposition}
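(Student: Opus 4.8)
The plan is to pass to the commutator reformulation of double coverings and treat the two implications separately: the converse admits a short, self-contained argument using one explicitly chosen horn, whereas the forward implication carries the real weight. By Corollary~\ref{CorollaryEquivalentConditionsInDefinitionOfDoubleCovering}, $\alpha$ is a double covering if and only if $[\Eq(f_A),\Eq(\alpha_{\ttop})]=\Delta_{A_{\ttop}}$, so throughout I would prove instead that this centralization congruence is trivial exactly when $\alpha$ has rigid horns.

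For the implication ``rigid horns $\Rightarrow$ double covering'', recall from Definition~\ref{DefinitionCommutator} that $[\Eq(f_A),\Eq(\alpha_{\ttop})]$ is the congruence generated by the pairs $(x\qndop a\qndiop b\qndop c\qndiop d,x)$; hence it suffices to show that each such generator is already a loop, i.e.\ that condition $(iv)$ of Corollary~\ref{CorollaryCommutatorGeneralDescription} holds for all $x$ and all $(a,b,c,d)\in\Eq(f_A)\square\Eq(\alpha_{\ttop})$. Fixing such data, I would form the length-two $\langle f_A,\alpha_{\ttop}\rangle$-horn $V$ with head $x$ and steps $((a,b,c,d),+1)$ and $((d,c,c,d),-1)$; the second square lies in $\Eq(f_A)\square\Eq(\alpha_{\ttop})$ since its two columns are diagonal. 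By construction the $c$- and $d$-paths of $V$ are $\gr{c}\,\gr{c}^{-1}$ and $\gr{d}\,\gr{d}^{-1}$, so $c_V=d_V=x$ and the $(c,d)$-horn of $V$ closes. Rigidity then forces the $(a,b)$-horn to close, that is $x\qndop a\qndiop d=x\qndop b\qndiop c$; applying $-\qndop c$ and then $-\qndiop b$ rewrites this as $x\qndop a\qndiop d\qndop c\qndiop b=x$, which is exactly $(iv)$. As this holds for every generator, the commutator is trivial.

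For the implication ``double covering $\Rightarrow$ rigid horns'', the key claim is that for any $\langle f_A,\alpha_{\ttop}\rangle$-horn $V$ with head $x$, if its $(c,d)$-horn closes then $(a_V,b_V)\in[\Eq(f_A),\Eq(\alpha_{\ttop})]$. Granting this, triviality of the commutator gives $a_V=b_V$, i.e.\ the $(a,b)$-horn closes; the statement obtained by exchanging the two $f_A$-membranes (legitimate by the symmetry of volumes noted after Definition~\ref{DefinitionFHMembrane}) supplies the reverse implication, so $V$ has rigid horns. For a one-step horn the claim is a direct rewriting: from the generator $(x\qndop a\qndiop b\qndop c\qndiop d,\,x)$, compatibility with $-\qndop d\qndiop c\qndop b$ yields $(x\qndop a,\,x\qndop d\qndiop c\qndop b)\in[\Eq(f_A),\Eq(\alpha_{\ttop})]$, and the closing hypothesis $x\qndop c=x\qndop d$ collapses the second entry to $x\qndop b$, so that $(a_V,b_V)=(x\qndop a,x\qndop b)$ lies in the commutator.

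The hard part is promoting this one-step computation to horns of arbitrary length: since the four membranes of a volume are built by interleaved successive actions, one cannot simply peel off the last step, as the intermediate $(c,d)$-membranes need not close. I would resolve this through the membrane correspondence of Remark~\ref{RemarkVolumesAsMembranes}, viewing $V$ as a $\bar{f}$-horn in $\Eq(\alpha_{\ttop})$ with head on the diagonal and invoking the one-dimensional covering theory of Part~I applied to the extension $\bar{f}\colon\Eq(\alpha_{\ttop})\to\Eq(\alpha_{\pperp})$ of Notation~\ref{NotationDoubleExtAndKernelPair}: the $\bar{f}$-symmetric path of $V$ sends $(x,x)$ to $(x\cdot g^V_a(g^V_b)^{-1},\,x\cdot g^V_d(g^V_c)^{-1})$, whose two coordinates equal $x$ precisely when the $(a,b)$- respectively $(c,d)$-horns close. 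It then remains to establish the bridging fact that two $\Ci(\bar{f})$-related elements of $\Eq(\alpha_{\ttop})$ sharing their second coordinate project to a pair lying in $[\Eq(f_A),\Eq(\alpha_{\ttop})]$ --- equivalently, that the orbit relation of the doubly-symmetric paths $g^V_a(g^V_b)^{-1}g^V_c(g^V_d)^{-1}$ is a congruence coinciding with the generated commutator. This is the genuine two-dimensional analogue of the symmetric-path characterisation of Part~I, and is the step I expect to require the most care.
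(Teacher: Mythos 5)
Your forward implication is correct and is, in compressed form, exactly the paper's argument: the paper builds a single horn by superposing two length-four membranes (squares $(a,b,c,d)$, $(b,b,c,c)$, $(c,c,c,c)$, $(d,c,c,d)$) whose $(c,d)$-horn visibly closes, and rigidity forces condition $(i)$; your length-two horn with steps $((a,b,c,d),+1)$ and $((d,c,c,d),-1)$ accomplishes the same thing and lands on the equivalent condition $(iv)$. That half is fine.

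The converse, however, contains a genuine gap, and you have located it yourself: you prove the key claim (``$(c,d)$-horn closes $\Rightarrow (a_V,b_V)\in[\Eq(f_A),\Eq(\alpha_{\ttop})]$'') only for horns of length one, and defer all longer horns to an unestablished ``bridging fact''. That bridging fact is essentially Corollary \ref{CorollaryCharacterizingC2}, and in the paper that corollary is a \emph{consequence} of this proposition -- the lemma identifying $X_{\alpha}$ with the congruence $\sim_{\K{\langle f_A,\alpha_{\ttop}\rangle}}$ explicitly reuses the horn construction from this proof -- so it cannot be invoked here, and proving it independently would amount to redoing the missing work. Moreover, the obstruction you fear (``one cannot simply peel off the last step, as the intermediate $(c,d)$-membranes need not close'') is precisely what the paper's induction circumvents, with no symmetric-path machinery at all. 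Writing $\approx$ for $[\Eq(f_A),\Eq(\alpha_{\ttop})]$: the last square gives $z \qndop^{-\delta_n} d_n \qndop^{\delta_n} a_n \approx z \qndop^{-\delta_n} c_n \qndop^{\delta_n} b_n$ for every $z$, and taking $z \defeq d_V = c_V$ (the closing hypothesis, used exactly once) yields the mixed relation $x \qndop^{\delta_1} d_1 \cdots \qndop^{\delta_{n-1}} d_{n-1} \qndop^{\delta_n} a_n \approx x \qndop^{\delta_1} c_1 \cdots \qndop^{\delta_{n-1}} c_{n-1} \qndop^{\delta_n} b_n$. One then replaces the $(n-1)$-st square by its translate $\bigl(a_{n-1}\qndop^{\delta_n}a_n,\ b_{n-1}\qndop^{\delta_n}b_n,\ c_{n-1}\qndop^{\delta_n}b_n,\ d_{n-1}\qndop^{\delta_n}a_n\bigr)$, which again lies in $\Eq(f_A)\square\Eq(\alpha_{\ttop})$ because the double parallelistic relation is a subalgebra closed under $\qndop$, and iterates: each step trades one $d_i$ for $a_i$ and one $c_i$ for $b_i$, with no intermediate closing ever required, until $a_V \approx b_V$; triviality of $\approx$ for a double covering then closes the $(a,b)$-horn, and the symmetry exchange of the two $f_A$-membranes (legitimate, as you say) gives the other direction of rigidity. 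Without this translated-squares induction, or an independent proof of your bridging fact, your argument establishes rigidity only for length-one horns.
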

\begin{proof}
Suppose that $\alpha$ has rigid horns in the sense of Definition \ref{DefinitionRigidHorns}. Then given an element $(a,b,c,d) \in \Eq(f_A) \square \Eq(\alpha_{\ttop})$ and an element $x\in X$ we build an $\langle f_A,\alpha_{\ttop}\rangle$-horn $V$ described by superposition of the two $f_A$-membranes $M_1$ and $M_0$ below (the so-obtained ``left-hand side'' $\alpha_{\ttop}$-membrane of $V$ is as in Example \ref{ExampleTrivialDoubleExtensionsAreDoubleCoverings}). Since the $\alpha_{\ttop}$-membranes of $V$ are rigid and $M_0$ closes into a disk, we conclude that $y \defeq x\cdot (\gr{a}\, \gr{b}^{-1}\, \gr{c}\, \gr{d}^{-1})$ is equal to $x$.
\begin{equation}\label{EquationDoubleCoveringsRigidHorns}
M_1 \ \colon \quad \vcenter{\xymatrix@C=2pt @R=6pt {& & & x  \ar@{{}{-}{}}[dddlll]|-{\dir{>}} _(0.23){\gr{a}\quad} _(0.47){\gr{b}^{-1}}_(0.63){\gr{c}\quad}_(0.87){\gr{d}^{-1}}  \ar@{{}{-}{}}[dddrrr]|{\dir{>}}^(0.25){\; \gr{b}} ^(0.45){\; \gr{b}^{-1}}^(0.65){\; \gr{c}}^(0.85){\; \gr{c}^{-1}}  \\
& & \ar@<0.6ex>@{{}{-}{}}[rr]|-{f_A} \ar@<-1.2ex>@{{}{-}{}}[rr]|-{f_A}& & & & \\
&   \ar@<-0.2ex>@{{}{-}{}}[rrrr]|-{f_A} & &&& & \\
y\ar@<1.5ex>@{{}{-}{}}[rrrrrr]|-{f_A}  & & & & &  & x
}} \qquad \qquad M_0 \ \colon \quad \vcenter{\xymatrix@C=2pt @R=6pt {& & & x  \ar@{{}{-}{}}[dddlll]|-{\dir{>}} _(0.23){\gr{d}\quad} _(0.47){\gr{c}^{-1}}_(0.63){\gr{c}\quad}_(0.87){\gr{d}^{-1}}  \ar@{{}{-}{}}[dddrrr]|{\dir{>}}^(0.25){\; \gr{c}} ^(0.45){\; \gr{c}^{-1}}^(0.65){\; \gr{c}}^(0.85){\; \gr{c}^{-1}}  \\
& & \ar@<0.6ex>@{{}{-}{}}[rr]|-{f_A} \ar@<-1.2ex>@{{}{-}{}}[rr]|-{f_A}& & & & \\
&   \ar@<-0.2ex>@{{}{-}{}}[rrrr]|-{f_A} & &&& & \\
x \ar@<1.5ex>@{{}{-}{}}[rrrrrr]|-{f_A}  & & & & &  & x
}} 
\end{equation}

Conversely suppose that $\alpha$ is a double covering and consider an $\langle f_A,\alpha_{\ttop}\rangle$-horn $V$ given by $V = (x, ((a_i,b_i,c_i,d_i), \delta_i)_{1 \leq i \leq n})$ as in Definition \ref{DefinitionFHMembrane}. Suppose that the $(c,d)$-membrane of $V$ closes into a disk, we have to show that the $(a,b)$-membrane closes into a disk (the converse is then given by symmetry of $V$ in the role of the $f_A$-membrane). 

More generally, and without assumption on the double extension $\alpha$, we show that the endpoints $a_V$ and $b_V$ of such a horn $V$ are in relation by $[\Eq(f_A),\Eq(\alpha_{\ttop})]$, which we temporarily denote by $\approx$. Observe that for all $z \in A_{\ttop}$ we have that $z \qndop^{-\delta_n} d_n \qndop^{\delta_n} a_n \approx z \qndop^{-\delta_n} c_n \qndop^{\delta_n} b_n$ (replace $\approx$ by $=$ when $\alpha$ is a double covering).
By taking $z= d_V \defeq x  \qndop^{\delta_1} d_1 \cdots \qndop^{\delta_n} d_n$ (and by reflexivity of $\approx$ and compatibility with the operation $\qndop$) we derive
\begin{equation}\label{Equation1DoubleCharactProof}
x  \qndop^{\delta_1} d_1 \cdots \qndop^{\delta_{n-1}} d_{n-1} \qndop^{\delta_n} a_n \approx x  \qndop^{\delta_1} c_1 \cdots \qndop^{\delta_{n-1}} c_{n-1} \qndop^{\delta_n} b_n.
\end{equation}
Then consider the square
\[\vcenter{\xymatrix @C= 20pt @R= 10pt {
a_{n-1} \qndop^{\delta_n} a_n  \ar@{-}[r]|-{}  \ar@{..}[d]|-{}  & d_{n-1} \qndop^{\delta_n} a_n   \ar@{..}[d]|-{} \\
b_{n-1} \qndop^{\delta_n} b_n  \ar@{-}[r]|-{}  & c_{n-1} \qndop^{\delta_n} b_n
}} \quad \in\ \Eq(f_A) \square \Eq(\alpha_{\ttop}), \]
and derive that for each $z \in A_{\ttop}$:
\begin{align*}
z  \qndop^{-\delta_{n-1}} (d_{n-1} \qndop^{\delta_n} a_n) \qndop^{\delta_{n-1}} (a_{n-1} \qndop^{\delta_n} a_n) & \approx z  \qndop^{-\delta_{n-1}} (c_{n-1} \qndop^{\delta_n} b_n) \qndop^{\delta_{n-1}} (b_{n-1} \qndop^{\delta_n} b_n); \\
z \qndop^{-\delta_n} a_n \qndop^{-\delta_{n-1}} d_{n-1} \qndop^{\delta_{n-1}} a_{n-1} \qndop^{\delta_n} a_n  & \approx z \qndop^{-\delta_n} b_n \qndop^{-\delta_{n-1}} c_{n-1} \qndop^{\delta_{n-1}} b_{n-1} \qndop^{\delta_n} b_n.
\end{align*} 
Applying this to Equation~\eqref{Equation1DoubleCharactProof} we obtain
\[
x  \qndop^{\delta_1}  d_1 \cdots \qndop^{\delta_{n-2}} d_{n-2} \qndop^{\delta_{n-1}}  a_{n-1} \qndop^{\delta_n} a_n \approx x  \qndop^{\delta_1}  c_1 \cdots \qndop^{\delta_{n-2}} c_{n-2} \qndop^{\delta_{n-1}}  b_{n-1} \qndop^{\delta_n} b_n.\]
We repeat the argument with 
\[\vcenter{\xymatrix @C= 20pt @R= 10pt {
a_{n-2} \qndop^{\delta_{n-1}} a_{n-1} \qndop^{\delta_n} a_n   \ar@{-}[r]|-{}  \ar@{..}[d]|-{}  & d_{n-2} \qndop^{\delta_{n-1}} a_{n-1}   \qndop^{\delta_n} a_n  \ar@{..}[d]|-{} \\
b_{n-2} \qndop^{\delta_{n-1}} b_{n-1} \qndop^{\delta_n} b_n  \ar@{-}[r]|-{}  & c_{n-2} \qndop^{\delta_{n-1}} b_{n-1}   \qndop^{\delta_n} b_n
}} \quad \in\ \Eq(f_A) \square \Eq(\alpha_{\ttop}),\]
and conclude by induction that also $x \qndop^{\delta_1} a_1  \cdots  \qndop^{\delta_n} a_n  \approx x \qndop^{\delta_1} b_1  \cdots  \qndop^{\delta_n} b_n$.
\end{proof}

Given a double extension $\alpha\colon{f_A \to f_B}$, the \emph{rigid horns} condition from Definition \ref{DefinitionRigidHorns}, or more precisely Definition \ref{DefinitionXAlpha} below, make sense of what it means for two elements of $A_{\ttop}$ to be ``linked under the action of a primitive path from $\Eq(f_A) \square \Eq(\alpha_{\ttop})$'' (see page \pageref{SectionVolumes}).
\begin{definition}\label{DefinitionXAlpha}
Given a double extension $\alpha\colon{f_A \to f_B}$, we define the set $X_{\alpha}$ to be the set of those pairs $(x,y)$ in $A_{\ttop} \times A_{\ttop}$ such that there exists a $\langle f_A,\alpha_{\ttop}\rangle$-horn $V$ as in Definition \ref{DefinitionFHMembrane} such that $x$ and $y$ are the endpoints of one of the membranes $M_1^V$ of $V$, such that moreover the membrane $M^V_0$, which is opposite to $M_1^V$, closes into a disk.
\end{definition}
These pairs in $X_{\alpha}$ are the pairs of elements which \emph{would be identified if $\alpha$ had rigid horns}. We just saw that $X_{\alpha}$ contains the generators of $[\Eq(f_A),\Eq(\alpha_{\ttop})]$ and moreover $X_{\alpha} \subseteq [\Eq(f_A),\Eq(\alpha_{\ttop})]$. Hence if we can show that $X_{\alpha}$ defines a congruence on $A_{\ttop}$, we can deduce that $X_{\alpha}$ is the centralizing congruence $[\Eq(f_A),\Eq(\alpha_{\ttop})]$ (see Corollary \ref{CorollaryCharacterizingC2} below).

Now recall from Part I that coverings are equivalently described via membranes or via symmetric paths. Proposition \ref{PropositionDoubleCoveringsRigidHorns} corresponds to the description via membranes. In the following section, we adapt the idea of a symmetric path to the two-dimensional context. Equipped with this concept and that of a rigid horn, we provide a full description of a general element in $[\Eq(f_A),\Eq(\alpha_{\ttop})]$.

\subsection{Symmetric paths for double extensions}
We describe \emph{symmetric paths} in a slightly more general context than expected, because of Lemma \ref{LemmaKernelOfFgFFgH} below.
\begin{definition}\label{DefinitionK2}
Given a pair of morphisms $f \colon{G \to H}$, $h \colon{G \to K}$ in $\GRP$, and a generating set $A \subseteq G$ (i.e.~such that $G = \langle a \, \mid\, a \in A \rangle_G$), we define (implicitly \emph{with respect to $A$}):
\begin{enumerate}[label=(\roman*)]
\item four elements $g_a$, $g_b$, $g_c$ and $g_d$ in $G$ to be \emph{$\langle f,h\rangle$-symmetric (to each other)} if there exists $n\in \N$ and a sequence of quadruples $(a_1,b_1,c_1,d_1)$, $\ldots$, $(a_n,b_n,c_n,d_n)$ in the set $A^4\cap (\Eq(f)\square \Eq(h))$, and finally, if for each $1\leq i \leq n$, there is $\delta_i \in \lbrace -1,\, 1\rbrace$ such that: 
\begin{equation}\label{EquationFHSymmetricQuadruple}
g_a = a_1^{\delta_1} \cdots  a_n^{\delta_n}, \quad g_b = b_1^{\delta_1} \cdots  b_n^{\delta_n}, \quad g_c = c_1^{\delta_1} \cdots  c_n^{\delta_n}, \quad g_d = d_1^{\delta_1} \cdots  d_n^{\delta_n}.
\end{equation}
We often call such $g_a$, $g_b$, $g_c$ and $g_d$ an \emph{$\langle f,h\rangle$-symmetric quadruple}.
\item $\K{\langle f,h\rangle}$ to be the set of \emph{$\langle f,h\rangle$-symmetric paths}, i.e.~the elements $g \in G$ such that $g = g_a g_b^{-1} g_c g_d^{-1}$ for some $\langle f,h\rangle$-symmetric quadruple $g_a$, $g_b$, $g_c$ and $g_d \in G$.
\end{enumerate}
\end{definition}

\begin{lemma}\label{LemmaGroupCharacterization2}
Given the hypotheses of Definition \ref{DefinitionK2}, the set of $\langle f,h\rangle$-symmetric paths $\K{\langle f,h\rangle}$ defines a normal subgroup of $G$.
\end{lemma}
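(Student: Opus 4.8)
The plan is to realise $\K{\langle f,h\rangle}$ as the image of a single map defined on a subgroup of $G\times G\times G\times G$, and then to check the subgroup axioms together with normality. Write $Q$ for the subgroup of $G^{4}$ generated by the set $A^{4}\cap(\Eq(f)\square\Eq(h))$ of admissible quadruples, and let $\phi\colon Q\to G$ be the (set-theoretic) map $\phi(g_a,g_b,g_c,g_d)\defeq g_a g_b^{-1} g_c g_d^{-1}$. By Definition \ref{DefinitionK2} we have $\K{\langle f,h\rangle}=\phi(Q)$, and the point is that $\phi$ is \emph{not} a group homomorphism, so being a subgroup is not automatic.

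First I would record two structural identities. Since each reflexive quadruple $(a,a,a,a)$ with $a\in A$ lies in $A^{4}\cap(\Eq(f)\square\Eq(h))$ and $A$ generates $G$, the full diagonal $\Delta\defeq\{(t,t,t,t)\mid t\in G\}$ is contained in $Q$. A direct computation gives, for $\delta_t\defeq(t,t,t,t)$ and $\vec g\in Q$,
\[ \phi(\delta_t\cdot\vec g)=t\,\phi(\vec g)\,t^{-1},\qquad \phi(\vec g\cdot\delta_t)=\phi(\vec g). \]
As $\Delta\subseteq Q$ and $Q$ is a subgroup, $\delta_t\vec g\in Q$, so the first identity shows immediately that $\phi(Q)$ is closed under conjugation by every $t\in G$; this yields normality once subgrouphood is established. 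The identity element is $\phi(e,e,e,e)=e$. For inverses I would use that $\Eq(f)\square\Eq(h)$ is invariant under the vertical flip $r\colon(a,b,c,d)\mapsto(d,c,b,a)$, which merely exchanges its two $\Eq(f)$-rows and reverses its two $\Eq(h)$-columns (both symmetric relations); hence $r$ carries $Q$ into $Q$ and $\phi(r\vec g)=g_d g_c^{-1} g_b g_a^{-1}=\phi(\vec g)^{-1}$. Thus $\phi(Q)$ contains $e$ and is closed under inverses and under conjugation.

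The main obstacle is closure under products: given $\vec g,\vec g'\in Q$ I must produce $\vec h\in Q$ with $\phi(\vec h)=\phi(\vec g)\phi(\vec g')$, the difficulty being that the four-slot alternating word $g_a g_b^{-1}g_c g_d^{-1}$ interleaves the two contributions, so a naive concatenation of the defining sequences fails. My plan is to mimic the one-dimensional concatenation argument of Part I. The key preparatory observation is that, for any $\vec g\in Q$, the top row $(g_a,g_b)$ and the bottom row $(g_c,g_d)$ both lie in $Q_f\defeq\langle A^{2}\cap\Eq(f)\rangle$, because each square of $\vec g$ has both of its rows in $\Eq(f)$. Since the one-dimensional symmetric paths $\phi_f(Q_f)$ form a subgroup contained in $\Ker(f)$ (the case treated in Part I), it follows that $\phi(\vec g)=(g_a g_b^{-1})(g_c g_d^{-1})\in\Ker(f)$, and symmetrically $\phi(\vec g)\in\Ker(h)$; thus $\K{\langle f,h\rangle}\subseteq\Ker(f)\cap\Ker(h)$.

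Using this, I would obtain $\phi(\vec g)\phi(\vec g')=\phi(\vec h)$ by the two-block strategy: it suffices to build $\vec h\in Q$ whose top row $(h_a,h_b)$ realises $\phi(\vec g)$ (that is, $h_a h_b^{-1}=\phi(\vec g)$) and whose bottom row $(h_c,h_d)$ realises $\phi(\vec g')$, for then $\phi(\vec h)=(h_a h_b^{-1})(h_c h_d^{-1})=\phi(\vec g)\phi(\vec g')$. A prescribed pair of rows $((h_a,h_b),(h_c,h_d))\in Q_f\times Q_f$ arises from some $\vec h\in Q$ precisely when the column ($\Eq(h)$) matching conditions $h(h_a)=h(h_d)$ and $h(h_b)=h(h_c)$ hold, after which the two row-sequences can be spliced into one sequence of admissible quadruples. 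Since $\phi(\vec g),\phi(\vec g')\in\Ker(h)$, I can renormalise the chosen realisations inside $Q_f$ — right multiplication of a row by a diagonal pair leaves its value $h_a h_b^{-1}$ unchanged while shifting $h(h_a)=h(h_b)$ by any element of the image of $h$ — so that all four entries lie in $\Ker(h)$; the matching conditions then reduce to $e=e$ and the splice is legitimate. This splicing step, together with the verification that the matching conditions are not merely necessary but sufficient for realisability in $Q$, is where the genuine work lies and parallels the membrane-concatenation of the one-dimensional theory; its completion shows $\phi(Q)$ is closed under products and hence that $\K{\langle f,h\rangle}$ is a normal subgroup of $G$.
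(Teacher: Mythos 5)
Your framing of $\K{\langle f,h\rangle}$ as $\phi(Q)$ for the subgroup $Q\leq G^{4}$ generated by the admissible quadruples is sound, and three of your four verifications are correct and essentially coincide with the paper's proof: the identity element, closure under inverses via the vertical flip $(a,b,c,d)\mapsto(d,c,b,a)$ (your version is in fact cleaner than the paper's, whose displayed quadruple of componentwise inverses yields the path $g_d^{-1}g_cg_b^{-1}g_a$ rather than $g^{-1}=g_dg_c^{-1}g_bg_a^{-1}$), and normality via the diagonal $\Delta\subseteq Q$, which is exactly the paper's observation that $(kg_a,kg_b,kg_c,kg_d)$ is again an $\langle f,h\rangle$-symmetric quadruple. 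Your inclusion $\K{\langle f,h\rangle}\subseteq\Ker(f)\cap\Ker(h)$ is also correct.

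The gap is precisely where you yourself locate "the genuine work": closure under products, and your route to it fails. Your criterion -- that a pair of rows $((h_a,h_b),(h_c,h_d))\in Q_f\times Q_f$ arises from an element of $Q$ \emph{precisely when} $h(h_a)=h(h_d)$ and $h(h_b)=h(h_c)$ -- is false, because membership in $Q$ imposes the $\Eq(h)$-column condition \emph{letterwise along a common realizing sequence of quadruples}, which is not implied by conditions on the end products; renormalizing all four entries into $\Ker(h)$ does nothing to synchronize the individual letters of two unrelated sequences. Concretely, let $G=\Fg(\lbrace a,b\rbrace)$ with $A=\lbrace a,b\rbrace$, let $f$ be the morphism to the trivial group, and let $h\colon G\to\Z$ send $a\mapsto 0$ and $b\mapsto 1$. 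Then the admissible quadruples are exactly those of the form $(x,y,y,x)$, so every element of $Q$ has equal first and fourth coordinates; yet the rows $(aba^{-1}b^{-1},e)$ and $(e,e)$ both lie in $Q_f$ and satisfy your matching conditions, while no element of $Q$ has them as rows. A corrected splicing statement of the kind you need is essentially the hard combinatorial content the paper isolates in Lemma \ref{LemmaKernelOfFgFFgH}, which requires extra hypotheses (surjectivity and $\Eq(f)\circ\Eq(h)=\Eq(h)\circ\Eq(f)$) and whose proof is deferred to a separate publication, so the present lemma cannot rest on it. The paper avoids realizability questions entirely by one explicit computation: for symmetric quadruples $(u_a,u_b,u_c,u_d)$ and $(v_a,v_b,v_c,v_d)$ with associated paths $u$ and $v$, it writes $uv=k_ak_b^{-1}k_ck_d^{-1}$ with $k_a=v_av_b^{-1}v_bv_a^{-1}u_a$, $k_b=v_av_a^{-1}v_bv_a^{-1}u_b$, $k_c=v_dv_d^{-1}v_bv_a^{-1}u_c$ and $k_d=v_dv_c^{-1}v_bv_a^{-1}u_d$; if $(a_i,b_i,c_i,d_i)_i$ realizes the $v$-quadruple, then $(k_a,k_b,k_c,k_d)$ is realized by concatenating the blocks $(a_i,a_i,d_i,d_i)_i$, the horizontally flipped $(b_i,a_i,d_i,c_i)_i$ reversed with inverted exponents, two diagonal blocks for $v_b$ and $v_a^{-1}$, and the sequence realizing $(u_a,u_b,u_c,u_d)$ -- each block admissible by reflexivity and symmetry of $\Eq(f)$ and $\Eq(h)$. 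Substituting this concatenation trick for your splicing step would complete your argument.
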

\begin{proof}
Let $g_a$, $g_b$, $g_c$ and $g_d$ be $\langle f,h\rangle$-symmetric (to each other). Observe that $g_d^{-1}$, $g_c^{-1}$, $g_b^{-1}$ and $g_a^{-1}$ are also $\langle f,h\rangle$-symmetric, and thus $\K{\langle f,h\rangle}$ is closed under inverses. Moreover, if $h_a$, $h_b$, $h_c$ and $h_d$ are $\langle f,h\rangle$-symmetric, and $g=g_ag_b^{-1}g_cg_d^{-1}$, $h=h_ah_b^{-1}h_ch_d^{-1}$, then
\[gh=k_a k_b^{-1}k_ck_d^{-1},\]
with $k_a = h_ah_b^{-1}h_bh_a^{-1}g_a$, $k_b =h_ah_a^{-1}h_bh_a^{-1} g_b$, $k_c =h_dh_d^{-1}h_bh_a^{-1} g_c$ and $k_d =h_dh_c^{-1}h_bh_a^{-1} g_d$ which are $\langle f,h\rangle$-symmetric. Finally since $A$ generates $G$, for any $k \in G$, $kg_a$, $kg_b$, $kg_c$ and $kg_d$ are $\langle f,h\rangle$-symmetric to each other, and thus $kgk^{-1}= kg_ag_b^{-1}k^{-1}kg_cg_d^{-1}k^{-1}  \in \K{\langle f,h\rangle}$ is an $\langle f,h\rangle$-symmetric path.
\end{proof}

\begin{notation}
For a double extension of racks (or quandles) $\alpha\colon{f_A \to f_B}$, we often write \emph{$\langle f_A,\alpha_{\ttop}\rangle$-symmetric} (quadruple or path) instead of $\langle \vec{f},\vec{\alpha_{\ttop}}\rangle$-symmetric (quadruple or path -- see for instance Definition \ref{DefinitionFHMembrane}). An \emph{$\langle f_A,\alpha_{\ttop}\rangle$-symmetric trail} $(x,g)$ in $A_{\ttop}$ is a trail where $g$ is an $\langle f_A,\alpha_{\ttop}\rangle$-symmetric path. 
\end{notation}

\begin{lemma}
Given a double extension in $\RCK$ (or $\QND$) $\alpha\colon{f_A \to f_B}$, the set $X_{\alpha}$ (Defintion \ref{DefinitionXAlpha}) is the underlying set of the congruence $\sim_{\K{\langle f_A,\alpha_{\ttop}\rangle}}$ induced by the action of $\langle f_A,\alpha_{\ttop}\rangle$-symmetric paths on $A_{\ttop}$.
\end{lemma}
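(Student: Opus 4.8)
The plan is to prove the two inclusions $\sim_{\K{\langle f_A,\alpha_{\ttop}\rangle}}\subseteq X_{\alpha}$ and $X_{\alpha}\subseteq\,\sim_{\K{\langle f_A,\alpha_{\ttop}\rangle}}$. Throughout I abbreviate $\K\defeq\K{\langle f_A,\alpha_{\ttop}\rangle}$, a normal subgroup of $\Pth(A_{\ttop})$ by Lemma \ref{LemmaGroupCharacterization2}, and write $\sim_{\K}$ for the relation $x\sim_{\K}y\iff y=x\cdot g$ for some $g\in\K$; this is a congruence because $\K$ is normal and $\gr{(x\cdot g)}=g^{-1}\gr{x}\,g$, exactly as for $\Ci$ in Part~I. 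I will also use that, by the proof of Lemma \ref{LemmaGroupCharacterization2}, every element of $\K$ is \emph{already} a single symmetric path $g_a g_b^{-1}g_c g_d^{-1}$ attached to one $\langle f_A,\alpha_{\ttop}\rangle$-symmetric quadruple $(g_a,g_b,g_c,g_d)$, arising from a sequence of quadruples $(a_i,b_i,c_i,d_i)\in\Eq(f_A)\square\Eq(\alpha_{\ttop})$ with signs $\delta_i$.

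For $X_{\alpha}\subseteq\,\sim_{\K}$ I would route through the commutator, which avoids any case split on which membrane bounds a given pair of $X_{\alpha}$. The discussion preceding the statement already records that $X_{\alpha}\subseteq[\Eq(f_A),\Eq(\alpha_{\ttop})]$, so it suffices to prove $[\Eq(f_A),\Eq(\alpha_{\ttop})]\subseteq\,\sim_{\K}$. By Definition \ref{DefinitionCommutator} this commutator is the congruence generated by the pairs $(x\qndop a\qndiop b\qndop c\qndiop d,\,x)=(x\cdot(\gr{a}\gr{b}^{-1}\gr{c}\gr{d}^{-1}),\,x)$ with $(a,b,c,d)\in\Eq(f_A)\square\Eq(\alpha_{\ttop})$; each $\gr{a}\gr{b}^{-1}\gr{c}\gr{d}^{-1}$ is a length-one $\langle f_A,\alpha_{\ttop}\rangle$-symmetric path, hence lies in $\K$, so every generator is a $\sim_{\K}$-pair. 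As $\sim_{\K}$ is a congruence and the commutator is the smallest congruence containing these generators, $[\Eq(f_A),\Eq(\alpha_{\ttop})]\subseteq\,\sim_{\K}$. This step also quietly uses the symmetry $[\Eq(f_A),\Eq(\alpha_{\ttop})]=[\Eq(\alpha_{\ttop}),\Eq(f_A)]$ of Corollary \ref{CorollaryCommutatorGeneralDescription}, which is precisely what lets me treat the $\alpha_{\ttop}$-membrane pairs of $X_{\alpha}$ on the same footing as the $f_A$-membrane pairs.

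For the reverse inclusion $\sim_{\K}\subseteq X_{\alpha}$ I construct, for a pair $y=x\cdot g$ with $g=g_a g_b^{-1}g_c g_d^{-1}\in\K$, an explicit $\langle f_A,\alpha_{\ttop}\rangle$-horn $V$ with head $x$ witnessing $(x,y)\in X_{\alpha}$. Generalizing the four-step horn of Example \ref{ExampleTrivialDoubleExtensionsAreDoubleCoverings}, I take $V$ of length $4n$ obtained by concatenating four blocks built from the steps $(a_i,b_i,c_i,d_i),\delta_i$: block A uses the quadruples $(a_i,b_i,c_i,d_i)$; block B uses $(b_i,b_i,c_i,c_i)$; block C uses $(c_i,c_i,c_i,c_i)$; block D uses $(d_i,c_i,c_i,d_i)$, where blocks B and D are read in reverse order with negated signs. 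Each block lies in $\Eq(f_A)\square\Eq(\alpha_{\ttop})$: A and C are immediate, B is valid because the right column $(b_i,c_i)$ of the original square is in $\Eq(\alpha_{\ttop})$, and D because the bottom row $(d_i,c_i)$ is in $\Eq(f_A)$. A direct path computation then shows that along $V$ the $a$-row realizes $g^V_a=g_a g_b^{-1}g_c g_d^{-1}=g$ while $g^V_b,g^V_c,g^V_d$ all reduce to the identity, so the $(a,b)$-membrane has endpoints $\{a_V,b_V\}=\{y,x\}$ and the opposite $(d,c)$-membrane closes into a disk ($c_V=d_V=x$). Hence $(x,y)\in X_{\alpha}$. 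Together the two inclusions give $X_{\alpha}=\,\sim_{\K}$, and incidentally $=[\Eq(f_A),\Eq(\alpha_{\ttop})]$, which is what is wanted for Corollary \ref{CorollaryCharacterizingC2}.

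The bulk of the work, and the place I expect the only real friction, is this second inclusion: the bookkeeping that the four concatenated blocks are genuine quadruples and that it is exactly the opposite membrane which closes. The genuinely delicate point is the passage between the element-level quadruples $(a_i,b_i,c_i,d_i)\in\Eq(f_A)\square\Eq(\alpha_{\ttop})$ needed to assemble a horn and the path-level quadruples $(\gr{a_i},\dots)\in\Eq(\vec{f_A})\square\Eq(\vec{\alpha_{\ttop}})$ that define a symmetric path in Definition \ref{DefinitionK2}: the implication element-level $\Rightarrow$ path-level (used in the commutator inclusion above) is automatic from $\vec{f_A}(\gr{a})=\gr{f_A(a)}$, whereas the converse, used to turn an arbitrary $g\in\K$ into a horn, requires the compatibility available in the present setting. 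This is exactly the extra generality flagged around Lemma \ref{LemmaKernelOfFgFFgH}, and it is the step I would justify most carefully before declaring the construction complete.
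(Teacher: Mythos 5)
Your proof is correct, and its harder half is essentially the paper's own argument made explicit: for $\sim_{\K{\langle f_A,\alpha_{\ttop}\rangle}}\subseteq X_{\alpha}$ the paper also takes the horn of Equation \eqref{EquationDoubleCoveringsRigidHorns} and ``replaces letters by words''; your four blocks (with B and D read backwards with negated signs) are exactly the quadruple-by-quadruple unpacking of that substitution, and your checks that each block lies in $\Eq(f_A)\square\Eq(\alpha_{\ttop})$ and that $g^V_b$, $g^V_c$, $g^V_d$ collapse to the identity while $g^V_a=g$ are more detailed than the published text. Where you genuinely diverge is the other inclusion. The paper argues directly: for a horn with $c_V=d_V$ it writes $b_V=a_V\cdot\bigl((g^V_a)^{-1}\,g^V_d\,(g^V_c)^{-1}\,g^V_b\bigr)$ and uses normality of $\K{\langle f_A,\alpha_{\ttop}\rangle}$ (Lemma \ref{LemmaGroupCharacterization2}) to recognise this path as symmetric, treating the remaining membrane choices ``without loss of generality''. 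You instead route through $X_{\alpha}\subseteq[\Eq(f_A),\Eq(\alpha_{\ttop})]\subseteq\,\sim_{\K{\langle f_A,\alpha_{\ttop}\rangle}}$: the first inclusion is indeed on record in the discussion just before the lemma (it is the induction inside Proposition \ref{PropositionDoubleCoveringsRigidHorns}), and the second holds because the commutator's generators are actions of length-one symmetric paths and the orbit relation of a normal subgroup of $\Pth(A_{\ttop})$ is a congruence. Your route buys uniformity --- the four membrane choices in Definition \ref{DefinitionXAlpha} are handled at once via $[R,S]=[S,R]$ from Corollary \ref{CorollaryCommutatorGeneralDescription}, with no case split --- and it delivers the commutator identification of Corollary \ref{CorollaryCharacterizingC2} en route; the paper's direct identity is self-contained within the proof but leans on an unspelled WLOG. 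There is no circularity in your citations: everything you invoke precedes the lemma.

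On the caveat in your last paragraph: the subtlety is real, and the paper does not argue it either. Read strictly, Definition \ref{DefinitionK2} with $G=\Pth(A_{\ttop})$ and generating set $\pth(A_{\ttop})$ imposes only path-level conditions $(\gr{a_i},\gr{b_i})\in\Eq(\vec{f_A})$, which do not force $(a_i,b_i)\in\Eq(f_A)$, since $\pth$ need not be injective (e.g.\ $\gr{a}=\gr{b}$ in $\Pth(Q_{ab\star})$ of Example \ref{ExampleCentralizationsDontCoincide}); your horn needs the element-level quadruples. The intended reading, however, is element-level: the Notation following Lemma \ref{LemmaGroupCharacterization2} points back to Definition \ref{DefinitionFHMembrane}, and the paper's own proof builds the horn straight from the quadruples $(a_i,b_i,c_i,d_i)$, implicitly taking them in $\Eq(f_A)\square\Eq(\alpha_{\ttop})$ --- the strictly path-level reading is only needed for the free-group setting of Lemma \ref{LemmaKernelOfFgFFgH}, where the two readings agree. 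Under that convention your construction is complete and no further argument is owed; under the strict reading the paper's proof would have the identical defect. So your hedge is well placed but does not leave you short of the paper.
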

\begin{proof}
Given $x$ and $y \in A_{\ttop}$ such that $x \sim_{\K{\langle f_A,\alpha_{\ttop}\rangle}} y$, i.e. such that $y = x \cdot (g_a g_b^{-1} g_c g_d^{-1})$ for some $\langle f_A,\alpha_{\ttop}\rangle$-symmetric quadruple as in Definition \ref{DefinitionK2}. The pair $(x,y)$ is in $X_{\alpha}$ as one can deduce from the construction of $V$ as in Equation \eqref{EquationDoubleCoveringsRigidHorns} from the proof of Proposition \ref{PropositionDoubleCoveringsRigidHorns}, where one replaces every occurrence of $\gr{a}$ by $\gr{a_1}^{\delta_1} \cdots  \gr{a_n}^{\delta_n}$ and also for $\gr{b}$ by $\gr{b_1}^{\delta_1} \cdots  \gr{b_n}^{\delta_n}$, and similarly $\gr{c}$ by $\gr{c_1}^{\delta_1} \cdots  \gr{c_n}^{\delta_n}$ and $\gr{d}$ by $\gr{d_1}^{\delta_1} \cdots  \gr{d_n}^{\delta_n}$.

Conversely, and without loss of generality, consider an $\langle f_A,\alpha_{\ttop}\rangle$-horn $V$ given by the data $V = (x, ((a_i,b_i,c_i,d_i), \delta_i)_{1 \leq i \leq n})$ as in Definition \ref{DefinitionFHMembrane}, such that moreover the endpoints $c_V = d_V$. Observe that the endpoint $b_V = a_V \cdot \big( (g^V_a)^{-1} \, g^V_d\, (g^V_c)^{-1}\, g^V_b \big)$ is obtained from the endpoint $a_V$ by the action of an $\langle f_A,\alpha_{\ttop}\rangle$-symmetric path.
\end{proof}

As a conclusion to the discussion below Definition \ref{DefinitionXAlpha}, we give a characterization of a general element in $[\Eq(f_A),\Eq(\alpha_{\ttop})]$ which we show to be an \emph{orbit congruence} (see Part I and reference therein).

\begin{corollary}\label{CorollaryCharacterizingC2}
Given a double extension of racks (or quandles) $\alpha\colon{f_A \to f_B}$, the centralization congruence $[\Eq(f_A),\Eq(\alpha_{\ttop})]$ coincides with the congruence $\sim_{\K{\langle f_A,\alpha_{\ttop}\rangle}}$ generated by the action of $\langle f_A,\alpha_{\ttop}\rangle$-symmetric paths, also described by the set of pairs in $X_{\alpha}$ (Definition \ref{DefinitionXAlpha}, i.e. those pairs of elements of $A_{\ttop}$ which would be identified if $\alpha$ had rigid horns).
\end{corollary}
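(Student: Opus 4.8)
The plan is to obtain the equality by squeezing $X_{\alpha}$ between the generating set of $[\Eq(f_A),\Eq(\alpha_{\ttop})]$ and the congruence $[\Eq(f_A),\Eq(\alpha_{\ttop})]$ itself, once $X_{\alpha}$ is known to be a congruence. The preceding Lemma already identifies $X_{\alpha}$ with the orbit relation $\sim_{\K{\langle f_A,\alpha_{\ttop}\rangle}}$ attached to the action of $\langle f_A,\alpha_{\ttop}\rangle$-symmetric paths on $A_{\ttop}$, so the argument reduces to assembling three ingredients: that $\sim_{\K{\langle f_A,\alpha_{\ttop}\rangle}}$ is a congruence, that it contains the generators of $[\Eq(f_A),\Eq(\alpha_{\ttop})]$ from Definition \ref{DefinitionCommutator}, and that it is contained in $[\Eq(f_A),\Eq(\alpha_{\ttop})]$.

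First I would prove that $\sim_{\K{\langle f_A,\alpha_{\ttop}\rangle}}$ is a congruence on $A_{\ttop}$, which is the \emph{orbit congruence} alluded to in the statement (see Part I). By Lemma \ref{LemmaGroupCharacterization2}, the set $\K{\langle f_A,\alpha_{\ttop}\rangle}$ of symmetric paths is a \emph{normal} subgroup of the relevant path group $\Pth(A_{\ttop})$. Reflexivity, symmetry and transitivity of the orbit relation then follow from $\K{\langle f_A,\alpha_{\ttop}\rangle}$ being a subgroup, while compatibility with $\qndop$ follows from normality together with the identities $(x \qndop y)\cdot g = (x\cdot g)\qndop(y\cdot g)$ and $\gr{(x\cdot g)} = g^{-1}\gr{x}\,g$ recalled in the Introduction: if $y = x\cdot g$ with $g \in \K{\langle f_A,\alpha_{\ttop}\rangle}$, then for any $a \in A_{\ttop}$ one rewrites $x\qndop a$ and $a \qndop x$ through a conjugate of $g$, which again lies in $\K{\langle f_A,\alpha_{\ttop}\rangle}$ by normality. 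Hence $X_{\alpha} = \sim_{\K{\langle f_A,\alpha_{\ttop}\rangle}}$ is a congruence.

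Next I would close the sandwich using Proposition \ref{PropositionDoubleCoveringsRigidHorns}. Its forward implication exhibits, for every $x \in A_{\ttop}$ and every quadruple $(a,b,c,d) \in \Eq(f_A)\square\Eq(\alpha_{\ttop})$, an $\langle f_A,\alpha_{\ttop}\rangle$-horn whose opposite $f_A$-membrane $M_0$ closes and whose membrane $M_1$ has endpoints $x$ and $x\qndop a \qndiop b \qndop c \qndiop d$; since $X_{\alpha}$ is symmetric, this places every generator of $[\Eq(f_A),\Eq(\alpha_{\ttop})]$ from Definition \ref{DefinitionCommutator} into $X_{\alpha}$. As $X_{\alpha}$ is a congruence and $[\Eq(f_A),\Eq(\alpha_{\ttop})]$ is, by definition, the smallest congruence containing those generators, this gives $[\Eq(f_A),\Eq(\alpha_{\ttop})] \subseteq X_{\alpha}$. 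The reverse inclusion $X_{\alpha} \subseteq [\Eq(f_A),\Eq(\alpha_{\ttop})]$ is precisely the ``more generally'' computation in the converse part of the same proof, which shows that the endpoints $a_V$ and $b_V$ of the distinguished membrane of a horn $V$ whose opposite membrane closes are related by $[\Eq(f_A),\Eq(\alpha_{\ttop})]$. Combining the two inclusions yields the desired equality, and the identification with $\sim_{\K{\langle f_A,\alpha_{\ttop}\rangle}}$ comes directly from the preceding Lemma.

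The only genuinely delicate point is the congruence claim, namely the compatibility of the orbit relation with $\qndop$; this is exactly where normality of $\K{\langle f_A,\alpha_{\ttop}\rangle}$ (Lemma \ref{LemmaGroupCharacterization2}) is indispensable, since without it the conjugate path $g^{-1}\gr{x}\,g$ used when comparing $a\qndop x$ and $a\qndop x'$ need not remain symmetric. Everything else is a formal assembly of results already established: the identification $X_{\alpha} = \sim_{\K{\langle f_A,\alpha_{\ttop}\rangle}}$ and the two inclusions read off from Proposition \ref{PropositionDoubleCoveringsRigidHorns}.
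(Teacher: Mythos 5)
Your proposal is correct and takes essentially the same route as the paper, which assembles the corollary from exactly the three ingredients you name: the inclusion of the generators of $[\Eq(f_A),\Eq(\alpha_{\ttop})]$ in $X_{\alpha}$ and the reverse inclusion $X_{\alpha} \subseteq [\Eq(f_A),\Eq(\alpha_{\ttop})]$, both extracted from the proof of Proposition \ref{PropositionDoubleCoveringsRigidHorns}, together with the identification of $X_{\alpha}$ with the orbit relation $\sim_{\K{\langle f_A,\alpha_{\ttop}\rangle}}$ of the normal subgroup $\K{\langle f_A,\alpha_{\ttop}\rangle}$ from Lemma \ref{LemmaGroupCharacterization2}. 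The only (harmless) difference is that where you verify by hand the compatibility of the orbit relation with $\qndop$ via normality and the identities $(x\qndop y)\cdot g=(x\cdot g)\qndop(y\cdot g)$ and $\gr{(x\cdot g)}=g^{-1}\gr{x}\,g$, the paper delegates this to the notion of orbit congruence established in Part I.
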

 
\subsubsection{Describing symmetric paths differently ?}
Given a morphism $f$ in $\RCK$ (or $\QND$), $f$-symmetric paths are described as the elements in the kernel $\Ker(\vec{f})$ of $\vec{f}$ (which is our notation for $\Pth(f)$). It is unclear to us whether this result generalizes in higher dimensions. Our understanding is that the question should be: given a double extension $\alpha$, do the normal subgroups $\Ker(\vec{f_A}) \cap \Ker(\vec{\alpha_{\ttop}})$ and $\K{\langle f_A,\alpha_{\ttop}\rangle}$ coincide ? Whether the answer is negative or positive, this would help to specify more precisely how to understand these $\langle f_A,\, \alpha_{\ttop}\rangle$-symmetric paths algebraically. Following the strategy from Section 2.4.11 of Part I, we were able to show that:
\begin{lemma}\label{LemmaKernelOfFgFFgH}
Given two surjective functions $f\colon {A \to B}$ and $h \colon{A \to C}$ such that $\Eq(f) \circ \Eq(h) = \Eq(h) \circ \Eq(f)$, the intersection $\Ker(\Fg(f)) \cap \Ker(\Fg(h))$ of the kernels of the induced group homomorphisms $\Fg(f) \colon {\Fg(A) \to \Fg(B)}$ and $\Fg(h) \colon {\Fg(A) \to \Fg(C)}$ is $\K{\langle \Fg(f),\Fg(h)\rangle}$ (with respect to $A$) as in Definition \ref{DefinitionK2}.
\end{lemma}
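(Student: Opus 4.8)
The plan is to prove the two inclusions separately, the nontrivial content lying entirely in the inclusion $\Ker(\Fg(f))\cap\Ker(\Fg(h))\subseteq\K{\langle \Fg(f),\Fg(h)\rangle}$.

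First I would dispatch the easy inclusion $\K{\langle \Fg(f),\Fg(h)\rangle}\subseteq\Ker(\Fg(f))\cap\Ker(\Fg(h))$ by a direct computation that does \emph{not} use the permutability hypothesis. Given an $\langle f,h\rangle$-symmetric path $g=g_ag_b^{-1}g_cg_d^{-1}$ arising from squares $(a_i,b_i,c_i,d_i)$ and signs $\delta_i$ as in Definition \ref{DefinitionK2}, applying $\Fg(f)$ and using that the rows lie in $\Eq(f)$ (so $f(a_i)=f(b_i)$ and $f(c_i)=f(d_i)$) gives $\Fg(f)(g_a)=\Fg(f)(g_b)$ and $\Fg(f)(g_c)=\Fg(f)(g_d)$, whence $\Fg(f)(g)=e$; applying $\Fg(h)$ and using that the columns lie in $\Eq(h)$ (so $h(a_i)=h(d_i)$ and $h(b_i)=h(c_i)$) gives $\Fg(h)(g_a)=\Fg(h)(g_d)$ and $\Fg(h)(g_b)=\Fg(h)(g_c)$, whence $\Fg(h)(g)=e$.

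For the reverse inclusion I would first isolate the single-square symmetric paths $T:=\{ab^{-1}cd^{-1}\mid (a,b,c,d)\in A^4\cap(\Eq(f)\square\Eq(h))\}$. Taking $n=1$ in Definition \ref{DefinitionK2} shows $T\subseteq\K{\langle \Fg(f),\Fg(h)\rangle}$, and since $\K{\langle \Fg(f),\Fg(h)\rangle}$ is a normal subgroup by Lemma \ref{LemmaGroupCharacterization2}, its normal closure satisfies $\langle\langle T\rangle\rangle\subseteq\K{\langle \Fg(f),\Fg(h)\rangle}$. Combined with the easy inclusion this produces the chain $\langle\langle T\rangle\rangle\subseteq\K{\langle \Fg(f),\Fg(h)\rangle}\subseteq\Ker(\Fg(f))\cap\Ker(\Fg(h))$, so it suffices to prove $\Ker(\Fg(f))\cap\Ker(\Fg(h))\subseteq\langle\langle T\rangle\rangle$: this collapses all three sets and in particular identifies $\K{\langle \Fg(f),\Fg(h)\rangle}$ with the intersection of kernels. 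Equivalently, writing $Q:=\Fg(A)/\langle\langle T\rangle\rangle$ and noting (exactly as in the easy inclusion) that $\Fg(f)$ and $\Fg(h)$ kill $T$ and hence factor as $\overline{\Fg(f)},\overline{\Fg(h)}$ through $Q$, I must show that $(\overline{\Fg(f)},\overline{\Fg(h)})\colon Q\to\Fg(B)\times\Fg(C)$ is injective, since $\Ker(\Fg(f))\cap\Ker(\Fg(h))$ is precisely the kernel of $\Fg(A)\to\Fg(B)\times\Fg(C)$.

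This is where permutability enters, and the reduction I would carry out has two stages. In the first stage I pass to representatives: since $\Eq(f)\circ\Eq(h)=\Eq(h)\circ\Eq(f)$, the comparison map $\phi\colon A\to B\times_D C$ (with $D=A/(\Eq(f)\vee\Eq(h))$) sending $a\mapsto([a]_f,[a]_h)$ is surjective, so I may fix a transversal $A_0\subseteq A$ with $\phi|_{A_0}$ a bijection onto $B\times_D C$; moreover, whenever $a\sim_f a'$ and $a\sim_h a'$ the degenerate quadruple $(a,a',a,a)$ is a genuine square, giving $aa'^{-1}\in T$, so each generator is congruent modulo $\langle\langle T\rangle\rangle$ to its representative in $A_0$ and every element of $Q$ is represented by a word in the alphabet $A_0\cong B\times_D C$. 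The second stage is the combinatorial core and is the step I expect to be the main obstacle: an element of $\Ker(\Fg(f))\cap\Ker(\Fg(h))$ becomes such a word $W$ whose images in both $\Fg(B)$ and $\Fg(C)$ reduce to the empty word, and one must show $W\in\langle\langle T\rangle\rangle$. The argument mirrors the one-dimensional fact recalled from Part I, Section 2.4.11---that a word with trivial $f$-image is a product of conjugates of $f$-edges $ab^{-1}$---but it must now be run in both directions simultaneously, so that the two independent reductions of the $B$- and $C$-projections of $W$ can be reconciled. The square relations of $T$ are exactly what permits interchanging an $f$-cancellation with an $h$-cancellation, and the fourth corner needed to form each such square is supplied precisely by permutability via $\Eq(f)\circ\Eq(h)=\Eq(h)\circ\Eq(f)$. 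Carrying this simultaneous van Kampen/Dyck-type rewriting to completion---reducing $W$ to the empty word using only square relations and conjugation---is the delicate point, and permutability is exactly what guarantees that every rewriting step required is available.
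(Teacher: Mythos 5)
Your scaffolding is correct as far as it goes: the easy inclusion $\K{\langle \Fg(f),\Fg(h)\rangle}\subseteq\Ker(\Fg(f))\cap\Ker(\Fg(h))$ is a valid direct computation (rows killed by $\Fg(f)$, columns matched by $\Fg(h)$, no permutability needed); the reduction via the single-square relators $T$ and the chain $\langle\langle T\rangle\rangle\subseteq\K{\langle \Fg(f),\Fg(h)\rangle}\subseteq\Ker(\Fg(f))\cap\Ker(\Fg(h))$ is a clean way to collapse the problem; and the transversal step is sound, since permutability does make $A\to B\times_D C$ surjective and the degenerate squares $(a,a',a,a)$ do identify generators with their representatives modulo $\langle\langle T\rangle\rangle$. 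Note also that a line-by-line comparison with the paper is impossible here: the paper does not prove this lemma at all, saying only that the argument follows the strategy of Section 2.4.11 of Part I, is ``rather combinatorial'', and appears in the separate publication \cite{Ren2021}. Your reduction is consistent with that hint.

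The genuine gap is the one you flag yourself: the entire nontrivial content of the lemma is the final claim that a word $W$ over the alphabet $B\times_D C$ whose images in $\Fg(B)$ and in $\Fg(C)$ both freely reduce to the empty word lies in $\langle\langle T\rangle\rangle$, and your proposal asserts this (``permutability is exactly what guarantees that every rewriting step required is available'') rather than proves it. This step is not routine, and here is concretely why: the cancelling pairs in the two free reductions need not be aligned --- a letter of $W$ may cancel in the $\Fg(B)$-projection against one partner and in the $\Fg(C)$-projection against a different, far-away partner, and the two matchings can be nested and interleaved in incompatible ways. Disentangling them requires moving letters past each other, which is unavailable in a free group; the relators in $T$ only license specific local exchanges, and only in the sign pattern $ab^{-1}cd^{-1}$, so the mixed-sign configurations arising from $\delta_i=-1$ (which your sketch never addresses) need separate treatment, and one must exhibit a terminating rewriting or induction showing every obstruction can actually be resolved by a square whose fourth corner permutability supplies. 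That confluence-and-termination argument is precisely the combinatorial core that the author found substantial enough to publish separately in \cite{Ren2021}; without it, your text is an honest and well-aimed plan, but not a proof of the hard inclusion.
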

Our proof is rather combinatorial and can be found in a separate publication \cite{Ren2021}. Now given a double extension of racks (or quandles) $\alpha$, it is easy to obtain $\K{\langle f_A,\alpha_{\ttop}\rangle}\leq \Ker(\vec{f_A}) \cap \Ker(\vec{\alpha_{\ttop}})$ as the image of $\K{\langle \Fg(f_A),\Fg(\alpha_{\ttop})\rangle} = \Ker(\Fg(f_A)) \cap \Ker(\Fg(\alpha_{\ttop}))$ by $q_{A_{\ttop}}\colon{\Fg(\U(A_{\ttop})) \to \Pth(A_{\ttop})}$ (defined as in 
Paragraph \ref{ParagraphTightRelationshipWithGroups} above). Hence $\Ker(\vec{f_A}) \cap \Ker(\vec{\alpha_{\ttop}}) = \K{\langle \vec{f_A},\vec{\alpha_{\ttop}}\rangle}$ if and only if the induced morphism $\bar{q} \colon {\Ker(\Fg(f_A)) \cap \Ker(\Fg(\alpha_{\ttop})) \to \Ker(\vec{f_A}) \cap \Ker(\vec{\alpha_{\ttop}})}$ is a surjection. We were unfortunately not able to identify a reason why this should be true in general (see Observation \ref{ObservationDescribingSymmetricPathsDifferently} for alternative descriptions).

Besides, we note that even if the two groups do not coincide, it might still be that the action of $\K{\langle f_A,\alpha_{\ttop}\rangle}$ on $A_{\ttop}$ and the action of $\Ker(\vec{f_A}) \cap \Ker(\vec{\alpha_{\ttop}})$ on $A_{\ttop}$ define the same congruence in $\RCK$ (or $\QND$). Finally, we ask the ``even weaker'' question: is $\Ker(\vec{f_A}) \cap \Ker(\vec{\alpha_{\ttop}})$ in the center of $\Pth(A_{\ttop})$ ? This would imply that the image by $\Conj\Pth$ of a double covering is still a double covering (see Section \ref{SectionConjugationQuandles}).

\begin{observation}\label{ObservationDescribingSymmetricPathsDifferently}

More precisely, observe that a double extension of racks or quandles $\alpha$ is sent to a double extension $\alpha' = \Pth(\alpha)$ in groups since $\Pth$ preserves pushouts of surjections and $\GRP$ is a Mal'tsev category. Call $c'\colon{\Pth(A_{\ttop}) \twoheadrightarrow P'}$ the surjective comparison map of $\alpha'$. The double extension $\alpha$ is also sent to a double extension in $\SET$ by the forgetful functor, as pullbacks and surjections are preserved. This double extension in $\SET$ is then sent by $\Fg$ to a double extension $\alpha''$ in $\GRP$, since pushouts of surjections are preserved by left-adjoints. Write $c'' \colon{\Fg(A_{\ttop}) \twoheadrightarrow P''}$ for the comparison map of $\alpha''$. Finally $\alpha''$ is sent by $\Conj$ to a double extension in $\RCK$ again, which is sent to a double extension $\alpha'''$ by $\Pth$. Write $c''' \colon {\Fg(A_{\ttop} \times \Fg(A_{\ttop})) \twoheadrightarrow P'''}$ for the comparison map of $\alpha'''$. We have thus three layers $\alpha'''$, $\alpha''$ and $\alpha'$ of double extensions in $\GRP$ fitting into a fork $\alpha''' \rightrightarrows \alpha'' \to \alpha'$ of $3$-dimensional arrows, such that each arrow is a square of double extensions, and the top pair is a reflexive graph whose legs are $3$-fold extensions.

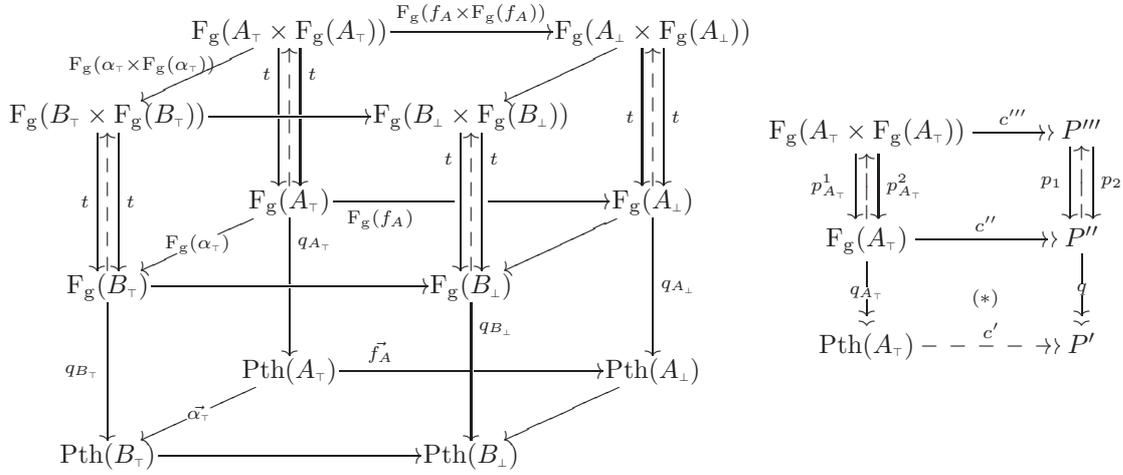
\begin{figure}[!h]
{\begin{center}
$\vcenter{\xymatrix@1@!0@R=32pt@C=68pt{
&\Fg(A_{\ttop} \times \Fg(A_{\ttop})) \ar[rr]^-{\Fg(f_A \times \Fg(f_A))} \ar@<+4pt>[dd]^(.25){t}|-{\hole \ \hole} \ar@<-4pt>[dd]_(.25){t}|-{\hole \ \hole} \ar@{{<}{--}{}}[dd]|-{\hole \ \hole} \ar[ld]_(.6){\Fg(\alpha_{\ttop} \times \Fg(\alpha_{\ttop}))} && \Fg(A_{\pperp} \times \Fg(A_{\pperp})) \ar@<+4pt>[dd]^-{t} \ar@<-4pt>[dd]_-{t} \ar@{{<}{--}{}}[dd] \ar[ld]|-{ } \\
\Fg(B_{\ttop} \times \Fg(B_{\ttop})) \ar@<+4pt>[dd]^-{t} \ar@<-4pt>[dd]_-{t} \ar@{{<}{--}{}}[dd]  \ar[rr]^(.75){} &&\Fg(B_{\pperp} \times \Fg(B_{\pperp})) \ar@<+4pt>[dd]^(.25){t} \ar@<-4pt>[dd]_(.25){t} \ar@{{<}{--}{}}[dd] \\
& \Fg(A_{\ttop})  \ar[rr]_(.25){\Fg(f_A)}|-{\hole\ \hole \ \hole} \ar[dd]^(.25){q_{A_{\ttop}}}|-{\hole} \ar[ld]|-{\Fg(\alpha_{\ttop})} && \Fg(A_{\pperp}) \ar[dd]^-{q_{A_{\pperp}}} \ar[ld]|-{} \\
\Fg(B_{\ttop}) \ar[rr]^(.75){} \ar[dd]_-{q_{B_{\ttop}}} && \Fg(B_{\pperp}) \ar[dd]^(.25){q_{B_{\pperp}}} \\
& \Pth(A_{\ttop}) \ar[ld]|-{\vec{\alpha_{\ttop}}} \ar[rr]^(.25){\vec{f_A}}|(.5){\hole} && \Pth(A_{\pperp}) \ar[ld]^-{} \\
\Pth(B_{\ttop}) \ar[rr]_-{} && \Pth(B_{\pperp})}} \  \vcenter{\xymatrix@C=30pt{\Fg(A_{\ttop} \times \Fg(A_{\ttop})) \ar@<+1ex>[d]^-{p_{A_{\ttop}}^2} \ar@{{<}{--}{}}[d]|-{}  \ar@<-1ex>[d]_-{p_{A_{\ttop}}^1} \ar@{{}{-}{>>}}[r]^-{c'''}  & P''' \ar@<+1ex>[d]^-{p_2} \ar@<-1ex>[d]_-{p_1} \ar@{{<}{--}{}}[d]|-{}  \\
\Fg(A_{\ttop}) \ar@{{}{}{}}[rd]|-{(*)} \ar@{{}{-}{>>}}[d]|-{q_{A_{\ttop}}} \ar@{{}{-}{>>}}[r]^-{c''} & P''  \ar@{{}{-}{>>}}[d]|-{q} \\
\Pth(A_{\ttop}) \ar@{{}{--}{>>}}[r]^-{c'} & P'}} $
\end{center}}\caption{The fork $\alpha''' \rightrightarrows \alpha'' \to \alpha'$}\label{FigureObservationSymmetricPaths}
\end{figure}

By the universal property of the pullbacks, $P'''$, $P''$ and $P'$, there is an induced reflexive graph $p_1,p_2 \colon{P''' \rightrightarrows P''}$, as well as a surjection $q\colon{P'' \to P'}$ which coequalises $p_1$ and $p_2$, such that the whole fork fits into the commutative diagrams of Figure \ref{FigureObservationSymmetricPaths}. By Lemma 1.2 in \cite{Bou2003}, $(*)$ is a double extension if and only if $q$ is the coequalizer of $p_1$ and $p_2$, which is also equivalent to the fork being a double extension. 
These three equivalent conditions are satisfied if and only if the aforementioned morphism $\bar{q}\colon{\Ker(\Fg(f_A)) \cap \Ker(\Fg(\alpha_{\ttop})) \to \Ker(\vec{f_A}) \cap \Ker(\vec{\alpha_{\ttop}})}$ is a surjection.
\end{observation}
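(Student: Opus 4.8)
The plan is to justify the observation by verifying, in order, that the three functorial images $\alpha'$, $\alpha''$, $\alpha'''$ are double extensions of groups, that they assemble into the fork of Figure~\ref{FigureObservationSymmetricPaths}, and that the chain of equivalences culminating in the surjectivity of $\bar q$ follows from Bourn's Lemma~1.2 together with the elementary kernel-description of double extensions in $\GRP$. First I would record the preservations. Since $\Pth$ is a left adjoint it preserves pushouts, and it preserves surjections, so $\alpha'=\Pth(\alpha)$ is a pushout square of surjective homomorphisms; as $\GRP$ is an exact Mal'tsev category \cite{CKP1993}, in which double extensions are exactly the pushout squares of surjections, $\alpha'$ is a double extension with surjective comparison map $c'\colon \Pth(A_{\ttop})\twoheadrightarrow P'=\Pth(B_{\ttop})\times_{\Pth(B_{\pperp})}\Pth(A_{\pperp})$. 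The forgetful functor $\U$ preserves limits and surjections, so the underlying square in $\SET$ is again a double extension; a short diagram chase shows that a double extension in $\SET$ (one with surjective comparison map) is automatically a pushout, whence applying the left adjoint $\Fg$ yields a double extension $\alpha''$ with comparison $c''\colon \Fg(A_{\ttop})\twoheadrightarrow P''=\Fg(B_{\ttop})\times_{\Fg(B_{\pperp})}\Fg(A_{\pperp})$. Applying $\Conj$ (which preserves pullbacks and surjections) and then $\Pth$ once more gives the double extension $\alpha'''$ with comparison $c'''\colon \Fg(A_{\ttop}\times\Fg(A_{\ttop}))\twoheadrightarrow P'''$. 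From the pullback descriptions one reads off $\Ker(c')=\Ker(\vec{f_A})\cap\Ker(\vec{\alpha_{\ttop}})$ and $\Ker(c'')=\Ker(\Fg(f_A))\cap\Ker(\Fg(\alpha_{\ttop}))$, since a pair in a pullback of groups is trivial precisely when both its components are.

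Next I would assemble the fork. The (co)units of the adjunctions organise the three layers into a reflexive graph of $3$-dimensional arrows $\alpha'''\rightrightarrows\alpha''\to\alpha'$ whose legs are $3$-fold extensions: each face is a double extension by the preservation properties above and the characterisation of double extensions in the exact Mal'tsev category $\GRP$. The left-hand column $p^1_{A_{\ttop}},p^2_{A_{\ttop}}\colon \Fg(A_{\ttop}\times\Fg(A_{\ttop}))\rightrightarrows\Fg(A_{\ttop})$ is the reflexive graph whose coequalizer is the quotient $q_{A_{\ttop}}\colon\Fg(A_{\ttop})\to\Pth(A_{\ttop})$. The universal properties of the pullbacks $P'$, $P''$, $P'''$ then induce the reflexive graph $p_1,p_2\colon P'''\rightrightarrows P''$ and the surjection $q\colon P''\to P'$ coequalising it, making the square $(*)$ and the whole of Figure~\ref{FigureObservationSymmetricPaths} commute. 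At this stage Lemma~1.2 of \cite{Bou2003} applies to the map of reflexive graphs $(c''',c'')$, whose left-hand coequalizer is $q_{A_{\ttop}}$, and yields the first two equivalences: $(*)$ is a double extension if and only if $q$ is the coequalizer of $p_1,p_2$, if and only if the fork is a double extension.

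It then remains to identify these conditions with the surjectivity of $\bar q$, and here I would use the elementary description of double extensions in $\GRP$: a commutative square of surjective homomorphisms with horizontal maps $u,u'$ and vertical maps $p,p'$ has surjective comparison map to its pullback if and only if the restriction $p|\colon \Ker(u)\to\Ker(u')$ is surjective (the comparison map being surjective is equivalent to every element of $\Ker(u')$ lifting, along $p$, to an element of $\Ker(u)$). Applied to $(*)$, whose horizontal maps are $c''$ and $c'$ and whose left vertical map is $q_{A_{\ttop}}$, this says precisely that $(*)$ is a double extension if and only if the restriction $q_{A_{\ttop}}|\colon \Ker(c'')\to\Ker(c')$ is surjective; by the kernel computation of the first paragraph this restriction is exactly $\bar q\colon \Ker(\Fg(f_A))\cap\Ker(\Fg(\alpha_{\ttop}))\to\Ker(\vec{f_A})\cap\Ker(\vec{\alpha_{\ttop}})$. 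Combining with the two Bourn equivalences gives the stated conclusion.

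The main obstacle I anticipate is not this final translation, which is routine once the diagram is available, but the careful construction of the fork $\alpha'''\rightrightarrows\alpha''\to\alpha'$ itself: one must track the interaction of the four functors $\Pth$, $\Conj$, $\Fg$, $\U$, check that each face of the three $3$-dimensional arrows is a double extension — noting in particular that $\Pth$ does \emph{not} preserve pullbacks, so that $c'$ and $c'''$ are the Mal'tsev comparison maps and not functorial images of the comparison map of $\alpha$ — and verify that the legs are genuinely $3$-fold extensions, which is what makes Bourn's lemma applicable. A secondary point requiring explicit justification is the fact that a double extension in $\SET$ is automatically a pushout, since this is what licenses the passage from the underlying set square to $\alpha''$ via $\Fg$.
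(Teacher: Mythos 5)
Your proposal is correct and follows essentially the same route as the paper's own argument: the same three layers $\alpha'$, $\alpha''$, $\alpha'''$ obtained from the preservation properties of $\Pth$, $\U$, $\Fg$ and $\Conj$ (using that double extensions in the base categories are pushouts of surjections and that $\GRP$ is an exact Mal'tsev category), the same fork with the induced reflexive graph $p_1,p_2\colon{P''' \rightrightarrows P''}$ and coequalising surjection $q$, and the same appeal to Lemma~1.2 of \cite{Bou2003} for the chain of equivalences. Your only addition is to make explicit the final step that the paper asserts without detail, namely the identifications $\Ker(c')=\Ker(\vec{f_A})\cap\Ker(\vec{\alpha_{\ttop}})$ and $\Ker(c'')=\Ker(\Fg(f_A))\cap\Ker(\Fg(\alpha_{\ttop}))$ together with the standard kernel-lifting criterion for a square of surjective group homomorphisms to be a double extension, which correctly translates the condition on $(*)$ into the surjectivity of $\bar{q}$.
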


\section{The $\Gamma^1$-coverings (or double central extensions of racks and quandles)}\label{SectionCharacterization}
In this section, we show that the concept of double covering of racks and quandles (or algebraically central double extension) and the concept of $\Gamma^1$-covering (or double central extension of racks and quandles) coincide. In order to do so, we first show that double coverings are reflected and preserved by pullbacks along double extensions. Since trivial $\Gamma^1$-coverings are double coverings, this implies that $\Gamma^1$-coverings are also double coverings.

\subsection{Double coverings are reflected and preserved by pullbacks}

We first show a general result about morphisms induced by $3$-fold extensions (see Definition \ref{DefinitionThreeFoldExtension}). Observe that given the hypothesis of Lemma \ref{LemmaBarrKock}, we deduce from \cite[Lemma 2.1]{Bou2003} that if the right hand square of Diagram \eqref{DiagramLemmaBarrKock} is a double extension, then $f$ is an extension even if $\C$ is merely regular (and not Barr-exact).

\begin{lemma}\label{LemmaSurjectionBetweenParallelisticDERel}
Consider a $3$-fold extension $(\sigma,\beta)\colon{\gamma \to \alpha}$ in a regular category $\C$.
The morphism $\square_{(\sigma,\beta)} \colon{\Eq(f_C) \square \Eq(\gamma_{\ttop}) \to \Eq(f_A) \square \Eq(\alpha_{\ttop})}$ induced by $(\sigma,\beta)$ between the parallelistic double equivalence relations is a regular epimorphism.
\end{lemma}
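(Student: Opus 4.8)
The plan is to recognise $\square_{(\sigma,\beta)}$ as a denormalisation-type comparison map and to reduce the statement, via the result recalled just before this lemma (the consequence of \cite[Lemma 2.1]{Bou2003} valid in any regular category), to the assertion that one single square is a double extension. First I would record the identification $\Eq(f_A)\square\Eq(\alpha_{\ttop})=\Eq(\bar{f}_A)$, where $\bar{f}_A\colon\Eq(\alpha_{\ttop})\to\Eq(\alpha_{\pperp})$ is the top component of the kernel pair of $\alpha$ in $\EXT\C$ (Notation \ref{NotationDoubleExtAndKernelPair}); this is the object-level instance of the correspondence in Remark \ref{RemarkVolumesAsMembranes}, a quadruple $(a,b,c,d)$ being the same datum as a pair $((a,d),(b,c))\in\Eq(\bar{f}_A)$. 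Likewise $\Eq(f_C)\square\Eq(\gamma_{\ttop})=\Eq(\bar{f}_C)$. Under these identifications, $\square_{(\sigma,\beta)}$ is exactly the factorisation between the horizontal kernel pairs of the commutative square
\[(\star)\qquad
\vcenter{\xymatrix@C=34pt@R=18pt{
\Eq(\gamma_{\ttop}) \ar[r]^-{\bar{f}_C} \ar[d]_-{\sigma^{\Eq}_{\ttop}} & \Eq(\gamma_{\pperp}) \ar[d]^-{\sigma^{\Eq}_{\pperp}}\\
\Eq(\alpha_{\ttop}) \ar[r]_-{\bar{f}_A} & \Eq(\alpha_{\pperp}),
}}\]
where $\sigma^{\Eq}_{\ttop}$ and $\sigma^{\Eq}_{\pperp}$ are the maps induced on kernel pairs by $(\sigma_{\ttop},\beta_{\ttop})$ and $(\sigma_{\pperp},\beta_{\pperp})$.

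By the cited consequence of \cite[Lemma 2.1]{Bou2003}, if $(\star)$ is a double extension then the factorisation $\square_{(\sigma,\beta)}$ between its horizontal kernel pairs is an extension; this is the only direction needed and it holds even though $\C$ is merely regular. So it suffices to prove that $(\star)$ is a double extension, i.e. that its four edges are extensions and that its comparison map is a regular epimorphism.

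For the edges I would first observe that all six faces of the $3$-fold extension cube \eqref{EquationThreeFoldExtension} are double extensions. The faces $\gamma$, $\alpha$, $\sigma$, $\beta$ are so by definition of a $3$-fold extension; the top face (on the $\ttop$ corners, with horizontal maps $\gamma_{\ttop},\alpha_{\ttop}$ and vertical maps $\sigma_{\ttop},\beta_{\ttop}$) has all edges regular epic, and its comparison map is exactly $\pi_{\ttop}\colon C_{\ttop}\to D_{\ttop}\times_{B_{\ttop}}A_{\ttop}$, a regular epimorphism since $\pi=(\pi_{\ttop},\pi_{\pperp})$ is a double extension and its codomain $f_P$ is computed component-wise (Lemma \ref{LemmaDoubleExtensionsCalculus}); symmetrically for the bottom face with $\pi_{\pperp}$. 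Applying the same denormalisation remark to these faces then yields that $\bar{f}_C$, $\bar{f}_A$ (from $\gamma$, $\alpha$) and $\sigma^{\Eq}_{\ttop}$, $\sigma^{\Eq}_{\pperp}$ (from the top and bottom faces) are all extensions.

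The main obstacle is the comparison map
\[
c_{\star}\colon \Eq(\gamma_{\ttop})\longrightarrow \Eq(\gamma_{\pperp})\times_{\Eq(\alpha_{\pperp})}\Eq(\alpha_{\ttop}),
\]
which I must show to be a regular epimorphism; here the failure of $\EXT\C$ to be regular forbids concluding this formally from the component maps, so the argument has to use the cube directly. I would proceed by lifting a generalised element $((u,v),(a,d))$ of the codomain after passing to covers, using two regular epimorphisms arising from the $3$-fold extension: the comparison map $p_{\sigma}\colon C_{\ttop}\to C_{\pperp}\times_{A_{\pperp}}A_{\ttop}$ of the back face $\sigma$, and the comparison map of $\pi$ itself. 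Concretely, $(u,a)$ lifts through $p_{\sigma}$ to some $p$ with $f_C(p)=u$ and $\sigma_{\ttop}(p)=a$; setting $t\defeq\gamma_{\ttop}(p)$ one checks that $(v,(t,d))$ lands in $C_{\pperp}\times_{P_{\pperp}}P_{\ttop}$ with $P_{\ttop}=D_{\ttop}\times_{B_{\ttop}}A_{\ttop}$ and $P_{\pperp}=D_{\pperp}\times_{B_{\pperp}}A_{\pperp}$, and this pullback is precisely the codomain of the comparison map of the double extension $\pi$, so a second lift produces $q$ with $f_C(q)=v$, $\sigma_{\ttop}(q)=d$ and $\gamma_{\ttop}(q)=t=\gamma_{\ttop}(p)$. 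Then $(p,q)\in\Eq(\gamma_{\ttop})$ and $c_{\star}(p,q)=((u,v),(a,d))$; since a composite of covers is a cover, $c_{\star}$ is a regular epimorphism. Hence $(\star)$ is a double extension and the reduction of the second paragraph gives the claim.
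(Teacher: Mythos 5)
Your proof is correct, and it takes a genuinely different route from the paper's at the decisive step. Both arguments share the same skeleton: realise $\square_{(\sigma,\beta)}$ as the map induced on the horizontal kernel pairs of a square of extensions, and then invoke the regular-category consequence of \cite[Lemma 2.1]{Bou2003} recalled just before the statement, so that everything reduces to showing that the chosen square is a double extension. The paper, however, transposes in the other direction: it works with the square $(\bar{\sigma},\bar{\beta})$ between $\bar{\gamma}\colon\Eq(f_C)\to\Eq(f_D)$ and $\bar{\alpha}\colon\Eq(f_A)\to\Eq(f_B)$, and this orientation costs two applications of the denormalised $3\times 3$ lemma (Lemma \ref{LemmaDenormalised3X3}): once to identify the parallelistic relation with $\Eq(\bar{\gamma})$ at all, and once more, via the fork of comparison maps \eqref{DiagramForkOfComparisonMaps}, to recognise the comparison map of $(\bar{\sigma},\bar{\beta})$ as the morphism $p$, which is then a cover because $(f_C,f_P)$ is a double extension. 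In your orientation the parallelistic relation is the kernel pair of $\bar{f}_C$ essentially by construction — though in a general regular $\C$ you should cite \cite[Proposition 2.1]{Bou2003} for this identification rather than Remark \ref{RemarkVolumesAsMembranes}, which is phrased only for racks and quandles — so you bypass Lemma \ref{LemmaDenormalised3X3} entirely; the price is the hands-on two-stage lifting for $c_{\star}$, through the comparison cover of the face $\sigma$ and then that of $\pi$, and your compatibility verifications ($\beta_{\ttop}(t)=\alpha_{\ttop}\sigma_{\ttop}(p)=\alpha_{\ttop}(a)=\alpha_{\ttop}(d)$, $f_D(t)=\gamma_{\pperp}(u)=\gamma_{\pperp}(v)$ and $\sigma_{\pperp}(v)=f_A(d)$) are exactly the ones needed, as is your extraction of the inputs from the $3$-fold extension (all four faces are double extensions, with $\pi_{\ttop}$ and $\pi_{\pperp}$ serving as the comparison maps of the top and bottom faces since $f_P$ is computed component-wise by Lemma \ref{LemmaDoubleExtensionsCalculus}). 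The one point to nail down is the status of the element chase in a bare regular category: either anchor it in the internal regular logic (Barr embedding), as your phrase about generalised elements and passing to covers suggests, or diagrammatise it directly — pull back the comparison cover of $\sigma$ along $W\defeq\Eq(\gamma_{\pperp})\times_{\Eq(\alpha_{\pperp})}\Eq(\alpha_{\ttop})\to C_{\pperp}\times_{A_{\pperp}}A_{\ttop}$, then pull back the comparison cover of $\pi$ along the induced map to $C_{\pperp}\times_{P_{\pperp}}P_{\ttop}$, and observe that the resulting composite cover $W_2\twoheadrightarrow W$ factors through $c_{\star}$, which is therefore a regular epimorphism, since in a regular category regular epimorphisms coincide with strong ones and are right-cancellable. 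Net, you trade the paper's $3\times 3$ machinery for a slightly longer but more elementary chase; both proofs consume the same essential hypothesis, namely that the comparison map $\pi$ of the $3$-fold extension is a double extension.
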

\begin{proof} First we recall how to build the double parallelistic relations of interest. By taking kernel pairs horizontally and then vertically, we build the Diagrams \eqref{Diagram3by3}, where the induced pairs $(p_1,p_2)\colon R_{\gamma} \rightrightarrows \Eq(f_C)$ and $(\pi_1,\pi_2)\colon R_{\alpha} \rightrightarrows \Eq(f_A)$ on the top rows, are the kernel pairs of $\bar{\gamma}$ and $\bar{\alpha}$ by a local version of the denormalised $3 \times 3$ Lemma (see \cite{Bou2003} and Lemma \ref{LemmaDenormalised3X3} below). As a consequence, all the rows and columns of Diagrams \eqref{Diagram3by3} are exact forks.
\begin{equation}\label{Diagram3by3}
\vcenter{\xymatrix @R=15pt @C=30pt {
\Eq(f_{\gamma}) \ar@<2pt>[d]^-{} \ar@<-2pt>[d]_-{} \ar@<2pt>[r]^-{p_1} \ar@<-2pt>[r]_-{p_2} & \Eq(f_C) \ar@<2pt>[d]^-{ } \ar@<-2pt>[d]_-{ } \ar[r]^-{\bar{\gamma}} & \Eq(f_D) \ar@<2pt>[d]^-{ } \ar@<-2pt>[d]_-{ } \\
\Eq(\gamma_{\ttop}) \ar[d]_-{f_{\gamma}} \ar@<2pt>[r]^-{} \ar@<-2pt>[r]_-{} & C_{\ttop} \ar[r]^-{\gamma_{\ttop}}   \ar[d]^-{f_C} & D_{\ttop} \ar[d]^-{f_D}\\
\Eq(\gamma_{\pperp}) \ar@<2pt>[r]^-{} \ar@<-2pt>[r]_-{} & C_{\pperp} \ar[r]_-{\gamma_{\pperp}}  &  \ D_{\pperp}
}} \qquad 
\vcenter{\xymatrix @R=15pt @C=30pt  {
\Eq(f_{\alpha}) \ar@<2pt>[d]^-{} \ar@<-2pt>[d]_-{} \ar@<2pt>[r]^-{\pi_1} \ar@<-2pt>[r]_-{\pi_2} & \Eq(f_A) \ar@<2pt>[d]^-{} \ar@<-2pt>[d]_-{} \ar[r]^-{\bar{\alpha}} & \Eq(f_B) \ar@<2pt>[d]^-{} \ar@<-2pt>[d]_-{} \\
\Eq(\alpha_{\ttop}) \ar[d]_-{f_{\alpha}} \ar@<2pt>[r]^-{} \ar@<-2pt>[r]_-{} & A_{\ttop} \ar[r]^-{\alpha_{\ttop}}   \ar[d]^-{f_A} & B_{\ttop} \ar[d]^-{f_B}\\
\Eq(\alpha_{\pperp}) \ar@<2pt>[r]^-{} \ar@<-2pt>[r]_-{} & A_{\pperp} \ar[r]_-{\alpha_{\pperp}}  &  \ B_{\pperp}
}} 
\end{equation}
Then by Proposition 2.1 from \cite{Bou2003}, $\Eq(f_{\gamma}) = \Eq(f_C) \square \Eq(\gamma_{\ttop})$ and $\Eq(f_{\alpha}) = \Eq(f_A) \square \Eq(\alpha_{\ttop})$ are the double parallelistic relations of interest. 

Now the $3$-fold extension $(\sigma, \beta)$ induces morphisms between the left-hand and right-hand Diagrams \eqref{Diagram3by3}, such that on the top row we have
\[ \xymatrix @R=15pt @C=30pt {
\Eq(f_C) \square \Eq(\gamma_{\ttop}) \ar[d]_-{\square_{(\sigma,\beta)}} \ar@<2pt>[r]^-{p_1} \ar@<-2pt>[r]_-{p_2} & \Eq(f_C) \ar[r]^-{\bar{\gamma}}   \ar[d]^-{\bar{\sigma}} & \Eq(f_D) \ar[d]^-{\bar{\beta}}\\
\Eq(f_A) \square \Eq(\alpha_{\ttop}) \ar@<2pt>[r]^-{\pi_1} \ar@<-2pt>[r]_-{\pi_2} & \Eq(f_A) \ar[r]_-{\bar{\alpha}}  &  \Eq(f_B).
}
\]
Hence, by \cite[Lemma 2.1]{Bou2003}, it suffices to prove that the right hand commutative square $(\bar{\sigma},\bar{\beta})$ is a double extension. This can be deduced from the fact that $(\sigma,\beta)$ is a $3$-fold extension. When $\C$ is Barr-exact category, we may use \cite[Lemma 3.2]{EvGoeVdl2012}. However, for a general regular category $\C$, we consider the ``fork of comparison maps'':
\begin{equation}\label{DiagramForkOfComparisonMaps}
\vcenter{\xymatrix@R=15pt @C=30pt {
\Eq(f_C)  \ar[d]_-{p} \ar@<2pt>[r]^-{} \ar@<-2pt>[r]_-{} & C_{\ttop} \ar[r]^-{f_C}   \ar[d]^-{} & C_{\pperp}  \ar[d]^-{}  \\
\Eq(f_A) \times_{\Eq(f_B)} \Eq(f_D) \ar@<2pt>[r]^-{} \ar@<-2pt>[r]_-{} & A_{\ttop} \times_{B_{\ttop}} D_{\ttop}  \ar[r]_-{f_P} & A_{\pperp} \times_{B_{\pperp}} D_{\pperp}.
}}
\end{equation} 
where the bottom row is exact by Lemma \ref{LemmaDenormalised3X3} (as for the top rows in Diagrams \eqref{Diagram3by3} above).
Moreover, the right hand square $(f_C,f_P)$ is a double extension since $(\sigma,\beta)$ is a $3$-fold extension, and thus the morphism $p$ is a regular epimorphism. Since $p$ is also the comparison map of $(\bar{\sigma},\bar{\beta})$, this concludes the proof. \qedhere
\end{proof}

Using the study of the denormalised $3 \times 3$ Lemma from \cite{Bou2003}, we obtain the following result, where, as usual, we locally use double extensions instead of working globally in a Mal'tsev category.

\begin{lemma}\label{LemmaDenormalised3X3}
Given a regular category $\C$ as well as a $3 \times 3$ diagram such as any of the two Diagrams \eqref{Diagram3by3}, where all columns are exact, the middle row and the bottom row are exact, and the bottom right-hand square is a double extension, then the top row is also exact.
\end{lemma}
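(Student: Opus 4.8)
The plan is to deduce this from Bourn's study of the denormalised $3\times 3$ Lemma \cite{Bou2003}, isolating the one place where his ambient Mal'tsev hypotheses get replaced by our explicit double-extension assumption on the bottom right-hand square. Working with the left-hand Diagram \eqref{Diagram3by3}, write $(p_1,p_2)\colon \Eq(f_\gamma) \rightrightarrows \Eq(f_C)$ and $\bar{\gamma}\colon \Eq(f_C) \to \Eq(f_D)$ for the top row. Asserting that this row is exact unpacks into two claims: that $\bar{\gamma}$ is a regular epimorphism, and that $(p_1,p_2)$ is its kernel pair. Since $\C$ is regular, a regular epimorphism is automatically the coequalizer of its kernel pair, so these two claims suffice. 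I would establish them separately.

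The kernel-pair claim is the formal (regularity-free) one. The exactness of the columns gives that $\Eq(f_C)$, $\Eq(f_D)$ and $\Eq(f_\gamma)$ are the kernel pairs of $f_C$, $f_D$ and $f_\gamma$, while exactness of the middle and bottom rows gives that $\Eq(\gamma_{\ttop})$ and $\Eq(\gamma_{\pperp})$ are the kernel pairs of $\gamma_{\ttop}$ and $\gamma_{\pperp}$. Both $\Eq(f_\gamma)$, the kernel pair of $f_\gamma$, and the kernel pair $\Eq(\bar{\gamma})$ of $\bar{\gamma}$ are then the iterated kernel pair of the pair of maps $f_C\colon C_{\ttop} \to C_{\pperp}$ and $\gamma_{\ttop}\colon C_{\ttop} \to D_{\ttop}$ taken in the two possible orders; since finite limits commute these coincide, and by \cite[Proposition 2.1]{Bou2003} this common object is precisely the double parallelistic relation $\Eq(f_C) \square \Eq(\gamma_{\ttop})$. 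Under this canonical identification $(p_1,p_2)$ becomes the pair of kernel-pair projections of $\bar{\gamma}$, which is what is wanted.

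The substantive point is that $\bar{\gamma}$ is a regular epimorphism, and this is where the double-extension hypothesis enters. I would transpose the bottom right-hand square, exchanging the roles of the vertical maps $f_C,f_D$ and the horizontal maps $\gamma_{\ttop},\gamma_{\pperp}$. The double-extension condition only concerns the four edges together with the comparison map to the pullback $C_{\pperp} \times_{D_{\pperp}} D_{\ttop}$, all of which are unchanged by transposition, so the transposed square is again a double extension. Applying the regular-category form of \cite[Lemma 2.1]{Bou2003} recalled in the observation preceding Lemma \ref{LemmaSurjectionBetweenParallelisticDERel} (the statement that a double extension as in Diagram \eqref{DiagramLemmaBarrKock} induces an extension $f$ between horizontal kernel pairs, even when $\C$ is merely regular), the induced map between the kernel pairs of the now-horizontal maps $f_C$ and $f_D$ is an extension; this induced map is exactly $\bar{\gamma}$. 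Combining the two points, $(p_1,p_2)$ is the kernel pair of the regular epimorphism $\bar{\gamma}$, and hence the top row is exact.

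The main obstacle I anticipate lies not in any single step but in checking that Bourn's $3 \times 3$ argument, which in his setting leans on the ambient Mal'tsev structure to guarantee both that the relevant comparison squares are double extensions and that a right-cancellation property holds, genuinely survives once that structure is weakened to the single local hypothesis on the bottom right-hand square. Concretely, one must make sure the transposition step and the invocation of the regular version of \cite[Lemma 2.1]{Bou2003} are legitimate in the merely regular $\C$ (where, as noted after Lemma \ref{LemmaBarrKock}, the full right-cancellation axiom fails), and that the exactness assumptions on the remaining rows and columns really do supply everything Bourn's proof would otherwise have extracted from the Mal'tsev context.
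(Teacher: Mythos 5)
Your proposal is correct and takes essentially the same route as the paper, whose proof runs: left-exactness of the top row by \cite[Theorem 2.2]{Bou2003}, regular-epimorphicity of $\bar{\gamma}$ by the regular-category form of \cite[Lemma 2.1]{Bou2003} (your explicit transposition of the bottom right-hand square is exactly what that application implicitly uses, and is legitimate since the double-extension condition is symmetric in the square), and the conclusion that regular epimorphisms are coequalizers of their kernel pairs. Your only deviation is cosmetic: you replace the citation of Theorem 2.2 by a direct commutation-of-limits identification of $\Eq(f_{\gamma})$ with the kernel pair of $\bar{\gamma}$ via the double parallelistic relation $\Eq(f_C) \square \Eq(\gamma_{\ttop})$ of \cite[Proposition 2.1]{Bou2003}, which is a valid elementary substitute.
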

\begin{proof}
The top row is left-exact by \cite[Theorem 2.2]{Bou2003}. Then the top right morphism is a regular epimorphism by \cite[Lemma 2.1]{Bou2003}. We conclude by the fact that in any category with pullbacks, regular epimorphisms are the coequalizers of their kernel pairs.
\end{proof}

Working in the categories $\RCK$ and $\QND$ again we obtain the following.
\begin{corollary}
Double coverings are stable by pullbacks along double extensions and reflected along $3$-fold extensions. In particular, $\Gamma^1$-coverings are double coverings.
\end{corollary}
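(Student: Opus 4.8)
The plan is to work entirely through the commutator characterization of Corollary~\ref{CorollaryEquivalentConditionsInDefinitionOfDoubleCovering}: a double extension $\alpha\colon f_A\to f_B$ is a double covering precisely when $[\Eq(f_A),\Eq(\alpha_{\ttop})]=\Delta_{A_{\ttop}}$, and since by Definition~\ref{DefinitionCommutator} this commutator is the congruence \emph{generated} by the pairs $(x\qndop a\qndiop b\qndop c\qndiop d,\,x)$ with $x\in A_{\ttop}$ and $(a,b,c,d)\in\Eq(f_A)\square\Eq(\alpha_{\ttop})$, the property ``$\alpha$ is a double covering'' is equivalent to the triviality of all these generators. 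So in each statement I would only need to check that every generator is a reflexive pair.

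For reflection along a $3$-fold extension $(\sigma,\beta)\colon\gamma\to\alpha$ (Definition~\ref{DefinitionThreeFoldExtension}), assume $\gamma$ is a double covering and take an arbitrary generator $(x\qndop a\qndiop b\qndop c\qndiop d,\,x)$ of $[\Eq(f_A),\Eq(\alpha_{\ttop})]$. Since $\sigma_{\ttop}$ is an extension, hence a surjection, and the induced map $\square_{(\sigma,\beta)}\colon\Eq(f_C)\square\Eq(\gamma_{\ttop})\to\Eq(f_A)\square\Eq(\alpha_{\ttop})$ is a regular epimorphism by Lemma~\ref{LemmaSurjectionBetweenParallelisticDERel}, I would lift $x$ to some $\tilde{x}\in C_{\ttop}$ and $(a,b,c,d)$ to some $(\tilde{a},\tilde{b},\tilde{c},\tilde{d})\in\Eq(f_C)\square\Eq(\gamma_{\ttop})$. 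Because $\gamma$ is a double covering, $\tilde{x}\qndop\tilde{a}\qndiop\tilde{b}\qndop\tilde{c}\qndiop\tilde{d}=\tilde{x}$; applying the rack morphism $\sigma_{\ttop}$, which preserves $\qndop$ and $\qndiop$, collapses this to $x\qndop a\qndiop b\qndop c\qndiop d=x$. Hence every generator is trivial and $\alpha$ is a double covering. The surjectivity of $\square_{(\sigma,\beta)}$ is the genuine input here and everything else is formal, which is why I regard Lemma~\ref{LemmaSurjectionBetweenParallelisticDERel} as the main (already discharged) obstacle.

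For stability under pullback, let $\alpha$ be a double covering and $\gamma=\beta^{*}\alpha\colon f_C\to f_D$ its pullback along a double extension $\beta\colon f_D\to f_B$, computed component-wise by Lemma~\ref{LemmaDoubleExtensionsCalculus}, so that $C_{\ttop}=A_{\ttop}\times_{B_{\ttop}}D_{\ttop}$ with projections $\sigma_{\ttop}$ and $\gamma_{\ttop}$. Given a generator $(x\qndop a\qndiop b\qndop c\qndiop d,\,x)$ with $(a,b,c,d)\in\Eq(f_C)\square\Eq(\gamma_{\ttop})$, I would compute its two projections separately. Along $\sigma_{\ttop}$ the quadruple maps into $\Eq(f_A)\square\Eq(\alpha_{\ttop})$, so the $A_{\ttop}$-component is trivial because $\alpha$ is a double covering. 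Along $\gamma_{\ttop}$ the columns of the quadruple lie in $\Eq(\gamma_{\ttop})$, that is $\gamma_{\ttop}(a)=\gamma_{\ttop}(d)$ and $\gamma_{\ttop}(b)=\gamma_{\ttop}(c)$, so the image word becomes $\gamma_{\ttop}(x)\qndop\gamma_{\ttop}(a)\qndiop\gamma_{\ttop}(b)\qndop\gamma_{\ttop}(b)\qndiop\gamma_{\ttop}(a)$, which reduces to $\gamma_{\ttop}(x)$ by two applications of axiom~\R1. Since $C_{\ttop}$ embeds jointly into $A_{\ttop}\times D_{\ttop}$, both projections agreeing forces $x\qndop a\qndiop b\qndop c\qndiop d=x$, so $\gamma$ is a double covering.

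Finally, the last assertion follows by combining these: if $\alpha$ is a $\Gamma^{1}$-covering it is split by some double extension $\sigma$, meaning the pullback $t$ of $\alpha$ along $\sigma$ is a trivial $\Gamma^{1}$-covering and hence a double covering by Example~\ref{ExampleTrivialDoubleExtensionsAreDoubleCoverings}. The pullback square exhibiting $t$ over $\alpha$ has an isomorphism as comparison map, so it is a $3$-fold extension (Lemma~\ref{LemmaDoubleExtensionsCalculus}); the reflection statement then upgrades $\alpha$ from ``split into a double covering'' to a double covering itself. In the write-up I would present the two component computations above as the technical core and simply cite Lemma~\ref{LemmaSurjectionBetweenParallelisticDERel} for the one non-elementary surjectivity.
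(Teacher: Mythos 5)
Your proof is correct and takes essentially the same route as the paper's: the reflection step lifts $x$ and the quadruple through the surjections $\sigma_{\ttop}$ and $\square_{(\sigma,\beta)}$ supplied by Lemma \ref{LemmaSurjectionBetweenParallelisticDERel}, the pullback step verifies $x \qndop a \qndiop b \qndop c \qndiop d = x$ in each component via the jointly monic projections of $C_{\ttop}$ (the $\gamma_{\ttop}$-component collapsing by \R1, the $\sigma_{\ttop}$-component by the double-covering hypothesis on $\alpha$), and the final assertion combines these with the fact that trivial $\Gamma^1$-coverings are double coverings. The only cosmetic difference is that you phrase the check in terms of generators of the commutator from Corollary \ref{CorollaryEquivalentConditionsInDefinitionOfDoubleCovering}, while the paper verifies condition $(i)$ of Definition \ref{DefinitionDoubleCoverings} directly; these amount to the same computation.
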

\begin{proof}
Consider a $3$-fold extension $(\sigma,\beta)\colon{\gamma \to \alpha}$ in $\RCK$ (or $\QND$) such as in Definition \ref{DefinitionThreeFoldExtension}. 

Assume that $\gamma$ is a double covering. Given $x \in A_{\ttop}$ and $(a,b,c,d) \in \Eq(f_A) \square \Eq(\alpha_{\ttop})$, the surjectivity of $\sigma_{\ttop}$ and $\square_{(\sigma,\beta)}$ (from Lemma \ref{LemmaSurjectionBetweenParallelisticDERel}) yields $x' \in C_{\ttop}$ and $(a',b',c',d') \in \Eq(f_C) \square \Eq(\gamma_{\ttop})$ such that $\sigma_{\ttop}(x')=x$, $\sigma_{\ttop}(a')=a$, $\sigma_{\ttop}(b')=b$, $\sigma_{\ttop}(c')=c$, and $\sigma_{\ttop}(d')=d$. Since $x' \qndop a' \qndiop b' \qndop c' \qndiop d' = x'$ in $C_{\ttop}$, the image of this equation by $\sigma_{\ttop}$ yields $x \qndop a \qndiop b \qndop c \qndiop d = x$ in $A_{\ttop}$. Hence $\alpha$ is a double covering.

Conversely assume that $\alpha$ is a double covering and suppose that $(\sigma,\beta)$ describes the pullback of $\alpha$ and $\beta$, i.e.~suppose that the comparison map $\pi$ of $(\sigma,\beta)$ is an isomorphism (see Definition \ref{DefinitionThreeFoldExtension}). Then we consider $x \in C_{\ttop}$ and $(a,b,c,d) \in \Eq(f_C) \square \Eq(\gamma_{\ttop})$, and we have to show that $y \defeq x \qndop a \qndiop b \qndop c \qndiop d$ is equal to $x$. It suffices to check the equality in both components of the pullback $C_{\ttop}$, via the projections $\gamma_{\ttop}$ and $\sigma_{\ttop}$. We have indeed $\gamma_{\ttop}(y) = \gamma_{\ttop}(x)$ and since $\alpha$ is a double covering, we have also $\sigma_{\ttop}(y) = \sigma_{\ttop}(x)$. Hence $\gamma$ is a double covering.
\end{proof}

\subsection{Double coverings are $\Gamma^1$-coverings}

As described in Section \ref{SectionTowardsHigherCoveringTheory}, given a double covering $\alpha \colon {f_A \to f_B}$, we build the canonical double projective presentation of its codomain $f_B$: \[ p_{f_B} \defeq (p_{f_B}^1,p_{f_B}^0) \colon {p_B \to f_B} \quad \text{(see Diagram \eqref{EquationProjectivePresentationD2})}.\]
We then consider the pullback of our double covering $\alpha$ along our projective presentation $p_{f_B}$. This yields a double covering $\gamma\colon {f_C \to p_B}$ with projective codomain $p_B \colon {\Fr(P) \to \Fr(B_{\pperp})}$ (or $p_B \colon {\Fq(P) \to \Fq(B_{\pperp})}$ if we work in $\QND$). We show that such double coverings are always trivial double coverings, which implies that the double covering $\alpha$ is a $\Gamma^1$-covering.

\begin{proposition}\label{PropositionCharacterizationDoubleCoverings}
If a double covering of racks $\gamma = (\gamma_{\ttop},\gamma_{\pperp})$ has a projective codomain of the form $p \colon {\Fr(P) \to \Fr(B_{\pperp})}$ for some sets $P$ and $B_{\pperp}$:
\[
\vcenter{\xymatrix@R=2pt @ C=17pt{
C_{\ttop} \ar[rrr]^{f_C} \ar[rd]_(0.6){\langle \gamma_{\ttop},f_C \rangle}  \ar[dddd]_-{\gamma_{\ttop}} & & & C_{\pperp}  \ar[dddd]^-{\gamma_{\pperp}} \\
 & Q \ar[rru]|-{\pi_2} \ar[lddd]|-{\pi_1}&   \\
\\
\\
\Fr(P)  \ar[rrr]_{p} & & & \Fr(B_{\pperp}),
}}
\]
where $Q \defeq \Fr(P)  \times_{\Fr(B_{\pperp})} C_{\pperp}$, then $\gamma$ is a trivial double covering. The result holds similarly in $\QND$, for a double covering $\gamma$ with codomain of the form $p \colon {\Fq(P) \to \Fq(B_{\pperp})}$.
\end{proposition}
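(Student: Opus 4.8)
The plan is to verify the symmetric-loop characterisation of trivial double coverings from Lemma~\ref{LemmaDoubleTrivialExtensions}, exploiting that the action of $\Pth(\Fr(P)) = \Fg(P)$ on the free rack $\Fr(P)$ is free. Following the proof of Lemma~\ref{LemmaDoubleTrivialExtensions}, since $(\gamma_{\ttop}, \FIi(\gamma_{\ttop}))$ is already a double extension it suffices to show $\Eq(\gamma_{\ttop})\cap\Ci(f_C) = \Delta_{C_{\ttop}}$, equivalently that $\gamma_{\ttop}$ reflects $f_C$-symmetric loops. So I would start from a pair $(x, x\cdot g)\in\Eq(\gamma_{\ttop})\cap\Ci(f_C)$, where, by the description of $\Ci(f_C) = [\Eq(f_C), C_{\ttop}\times C_{\ttop}]$ recalled from Part~I, the path $g$ is $f_C$-symmetric, and I would aim to prove $x\cdot g = x$.

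First I would push the relation up to the free codomain. Since $\gamma_{\ttop}(x\cdot g) = \gamma_{\ttop}(x)\cdot\vec{\gamma_{\ttop}}(g)$ and $\gamma_{\ttop}(x\cdot g) = \gamma_{\ttop}(x)$, the element $\vec{\gamma_{\ttop}}(g)\in\Pth(\Fr(P)) = \Fg(P)$ fixes $\gamma_{\ttop}(x)$; because $\Fg(P)$ acts \emph{freely} on $\Fr(P)$, this forces $\vec{\gamma_{\ttop}}(g) = e$, that is $g\in\Ker(\vec{\gamma_{\ttop}})$. Combined with the fact from Part~I that the $f_C$-symmetric paths are exactly the elements of $\Ker(\vec{f_C})$, this yields $g\in\Ker(\vec{f_C})\cap\Ker(\vec{\gamma_{\ttop}})$.

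It then remains to deduce that such a $g$ acts trivially on $C_{\ttop}$. By Corollary~\ref{CorollaryCharacterizingC2}, $[\Eq(f_C),\Eq(\gamma_{\ttop})] = {\sim_{\K{\langle f_C,\gamma_{\ttop}\rangle}}}$, which is the diagonal since $\gamma$ is a double covering (Proposition~\ref{PropositionDoubleCoveringsRigidHorns}); hence every $\langle f_C,\gamma_{\ttop}\rangle$-symmetric path acts trivially, and it is enough to prove $g\in\K{\langle f_C,\gamma_{\ttop}\rangle}$. This is where the freeness of the codomain must enter essentially. Because $\gamma$ is a double extension, its underlying diagram of sets is a double extension, so the surjections $\U(f_C)$ and $\U(\gamma_{\ttop})$ have permuting kernel pairs; Lemma~\ref{LemmaKernelOfFgFFgH} then identifies $\Ker(\Fg\U(f_C))\cap\Ker(\Fg\U(\gamma_{\ttop}))$ with $\K{\langle\Fg\U(f_C),\Fg\U(\gamma_{\ttop})\rangle}$, whose image under $q_{C_{\ttop}}$ is precisely $\K{\langle f_C,\gamma_{\ttop}\rangle}$. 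The task is thus to lift $g\in\Pth(C_{\ttop})$ to an element of $\Ker(\Fg\U(f_C))\cap\Ker(\Fg\U(\gamma_{\ttop}))$, adjusting a chosen membrane representative by path-relators; the freeness of $\Fr(P)$, which makes $\Pth(\Fr(P)) = \Fg(P)$ free and the kernel of $q_{\Fr(P)}$ tractable, is what permits this lifting to be performed compatibly on both the $f_C$- and the $\gamma_{\ttop}$-side. The argument for quandles is identical after replacing $\Pth$ and $\Fg$ by their quandle analogues $\Pth\degree$ and $\Fg\degree$, which still act freely on $\Fq(P)$.

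The main obstacle is exactly this last lifting step, i.e.\ upgrading $g\in\Ker(\vec{f_C})\cap\Ker(\vec{\gamma_{\ttop}})$ to $g\in\K{\langle f_C,\gamma_{\ttop}\rangle}$. As emphasised around Observation~\ref{ObservationDescribingSymmetricPathsDifferently}, for an \emph{arbitrary} codomain these two normal subgroups need not coincide (equivalently, the comparison map $\bar q$ need not be a surjection), so the passage cannot be purely formal; it is precisely the hypothesis that the codomain $p\colon{\Fr(P)\to\Fr(B_{\pperp})}$ is free that reduces the question to the free-group situation of Lemma~\ref{LemmaKernelOfFgFFgH} and makes $\bar q$ surjective in this case. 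I expect the bulk of the work, and the only genuinely delicate point, to be verifying this reduction.
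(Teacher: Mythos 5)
Your first half is sound and in fact coincides with the paper's opening moves: reducing via Lemma \ref{LemmaDoubleTrivialExtensions} to showing $\Eq(\gamma_{\ttop}) \cap \Ci(f_C) = \Delta_{C_{\ttop}}$, and using the freeness of the $\Pth(\Fr(P))$-action on $\Fr(P)$ (resp.\ $\Pth\degree(\Fq(P))$ on $\Fq(P)$) to conclude that $\vec{\gamma_{\ttop}}(g) = e$ \emph{as an element of the path group}, not merely that it fixes $\gamma_{\ttop}(x)$. The gap is your final step: upgrading $g \in \Ker(\vec{f_C}) \cap \Ker(\vec{\gamma_{\ttop}})$ to $g \in \K{\langle f_C,\gamma_{\ttop}\rangle}$. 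This is precisely the question the paper declares open around Observation \ref{ObservationDescribingSymmetricPathsDifferently} (surjectivity of $\bar{q}$), and you supply no argument for it beyond the expectation that freeness of the codomain should help. But the comparison map $\bar{q}$ lives entirely over the \emph{domain}: it compares subgroups of $\Fg(\U(C_{\ttop}))$ and of $\Pth(C_{\ttop})$, where $C_{\ttop}$ is an arbitrary rack; the hypothesis concerns the codomain $p\colon \Fr(P) \to \Fr(B_{\pperp})$, and nothing about it tames the relators defining $q_{C_{\ttop}}$. (Lemma \ref{LemmaKernelOfFgFFgH} itself applies, since a double extension of underlying sets does have permuting kernel pairs, but it only yields $\K{\langle f_C,\gamma_{\ttop}\rangle} \leq \Ker(\vec{f_C}) \cap \Ker(\vec{\gamma_{\ttop}})$ after applying $q_{C_{\ttop}}$; the reverse inclusion is the whole content.) So the sentence ``I expect the bulk of the work\ldots to be verifying this reduction'' concedes exactly the substance of the proposition, and there is no evident reason the reduction goes through as stated.

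The paper avoids this entirely by exploiting the free codomain through \emph{projectivity} rather than through a kernel computation. Since $p_B$ is projective with respect to double extensions, $\gamma$ admits a section $s = (s_{\ttop}, s_{\pperp})$ with $\gamma s = \id$. Writing the $f_C$-symmetric path as $g = g^M_a (g^M_b)^{-1}$ for a membrane $M = (x,(a_i,b_i,\delta_i)_{1\leq i\leq n})$, and setting $d_i \defeq s_{\ttop}\gamma_{\ttop}(a_i)$, $c_i \defeq s_{\ttop}\gamma_{\ttop}(b_i)$, one checks that $(a_i,b_i,c_i,d_i) \in \Eq(f_C)\,\square\,\Eq(\gamma_{\ttop})$ and that $\vec{s_{\ttop}}(h^{-1}) = \gr{c_1}^{\delta_1}\cdots\gr{c_n}^{\delta_n}\gr{d_n}^{-\delta_n}\cdots\gr{d_1}^{-\delta_1} = e$, where $h = \vec{\gamma_{\ttop}}(g) = e$ by your freeness argument. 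Hence $g = g \cdot \vec{s_{\ttop}}(h^{-1})$ is \emph{literally} of the form $g_a g_b^{-1} g_c g_d^{-1}$ for an $\langle f_C,\gamma_{\ttop}\rangle$-symmetric quadruple, and the double-covering hypothesis kills its action. In other words, the section manufactures the missing half of the symmetric quadruple by transporting the $(a,b)$-data along $s_{\ttop}\gamma_{\ttop}$, so one never needs to know whether $\Ker(\vec{f_C}) \cap \Ker(\vec{\gamma_{\ttop}})$ equals $\K{\langle f_C,\gamma_{\ttop}\rangle}$ in general. If you want to salvage your outline, replace the lifting step by this splitting argument; as written, your proof rests on an unproven claim that the paper itself flags as unresolved.
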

\begin{proof}
Consider a $f_C$-membrane $M = (x, (a_i,b_i,\delta_i)_{1\leq i \leq n})$ in $C_{\ttop}$ such that the image of $M$ by $\gamma_{\ttop}$ closes into a disk in $\Fr(P)$, i.e.~$\gamma_{\ttop}(x) \cdot \big( \vec{\gamma_{\ttop}}(\gr{a_1}^{\delta_1} \cdots\gr{a_n}^{\delta_n} \gr{b_n}^{-\delta_n} \cdots \gr{b_1}^{-\delta_1}) \big) = \gamma_{\ttop}(x)$.
\[
\vcenter{\xymatrix@C=2pt @R=6pt {& & & x  \ar@{{}{-}{}}[dddlll]|-{\dir{>}} _(0.2){\gr{a_1}^{\delta_1}}_(0.45){\iddots \ }_(0.7){\gr{a_n}^{\delta_n} }   \ar@{{}{-}{}}[dddrrr]|{\dir{>}}^(0.2){\ \gr{b_1}^{\delta_1}}^(0.45){\, \ddots}^(0.7){\ \gr{b_n}^{\delta_n}}     \\
& & \ar@{{}{-}{}}[rr]|-{f_C} & & & & \\
&  \ar@<0.7ex>@{{}{-}{}}[rrrr]|-{f_C} & &&& & \\
a^M \defeq x \cdot (g^M_a) \ar@<2.3ex>@{{}{-}{}}[rrrrrr]|-{f_C}  & & & & &  & b^M \defeq x \cdot (g^M_b)
}} \longmapsto \quad  \vcenter{\xymatrix@C=4pt @R=6pt {& & \gamma_1(x)  \ar@/_8ex/@{{}{-}{}}[ddd]|{\dir{>}} _(0.3){\vec{\gamma_1}(g^M_a)}    \ar@/^8ex/@{{}{-}{}}[ddd]|{\dir{>}}^(0.3){\vec{\gamma_{\ttop}}(g^M_b)}    \\
& \ar@<0.52ex>@{{}{-}{}}[rr]|-{f_C} & & &  \\
& \ar@<1.3ex>@{{}{-}{}}[rr]|-{f_C} \ar@<-0.95ex>@{{}{-}{}}[rr]|-{f_C} & & & \\
& & \gamma_{\ttop}(a^M)
}}
\]
Let us write $y_i \defeq \gamma_{\ttop}(a_i)$ and $x_i \defeq \gamma_{\ttop}(b_i)$ for each $1 \leq i \leq n$. Then 
\[h \defeq \vec{\gamma_{\ttop}}(\gr{a_1}^{\delta_1} \cdots\gr{a_3}^{\delta_n} \gr{b_n}^{-\delta_n} \cdots \gr{b_1}^{-\delta_1}) = \gr{y_1}^{\delta_1} \cdots\gr{y_n}^{\delta_n} \gr{x_n}^{-\delta_n} \cdots \gr{x_1}^{-\delta_1} \in \Pth(\Fr(C_{\ttop})), \]  
yields the neutral element $e$ since the action of $\Pth(\Fr(C_{\ttop}))$ on $\Fr(C_{\ttop})$ is free -- note that in the context of $\QND$, this path $h$ is in the group $\Pth\degree(\Fq(C_{\ttop}))$ which acts freely on $\Fq(C_{\ttop})$.

Since $p$ is projective (with respect to double extensions), there is a splitting $s \defeq (s_{\ttop},s_{\pperp})$ of $\gamma$ such that $\gamma s =(1_{\Fr(P)},1_{\Fr(B_{\pperp})})$ is the identity morphism. If we define $d_i \defeq s_{\ttop}(\gamma_{\ttop}(a_i))$ and $c_i \defeq s_{\ttop}(\gamma_{\ttop}(b_i))$ for each $1 \leq i \leq n$, then we have that
\begin{itemize}
\item $\gr{c_1}^{\delta_1} \cdots\gr{c_n}^{\delta_n} \gr{d_n}^{-\delta_n} \cdots \gr{d_1}^{-\delta_1} = \vec{s_{\ttop}}(h^{-1}) = e \in \Pth(C_{\ttop})$ is trivial;
\item moreover $\gamma_{\ttop}(d_i) = \gamma_{\ttop}(a_i)$ and $\gamma_{\ttop}(c_i) = \gamma_{\ttop}(b_i)$ for each $1\leq i \leq n$;
\item and thus the product $g^M_a(g^M_b)^{-1}$ defines an $\langle f_C,\gamma_{\ttop}\rangle$-symmetric path in $\Pth(C_{\ttop})$ since $g^M_a(g^M_b)^{-1}  = g^M_a(g^M_b)^{-1} \vec{s_{\ttop}}(h^{-1}) = \gr{a_1}^{\delta_1} \cdots\gr{a_n}^{\delta_n} \gr{b_n}^{-\delta_n} \cdots \gr{b_1}^{-\delta_1} \gr{c_1}^{\delta_1} \cdots\gr{c_n}^{\delta_n} \gr{d_n}^{-\delta_n} \cdots \gr{d_1}^{-\delta_1}$.
\end{itemize}
Since $\gamma$ is a double covering, we conclude that 
\[ x = x \cdot (g^M_a(g^M_b)^{-1})  = x \cdot (\gr{a_1}^{\delta_1} \cdots\gr{a_n}^{\delta_n} \gr{b_n}^{-\delta_n} \cdots \gr{b_1}^{-\delta_1}),\]
which shows that $\gamma$ is a trivial double covering.
\end{proof}

\begin{theorem}\label{TheoremCharacterizationDoubleCentralExtensions}
A double extension of racks (or quandles) is a $\Gamma^1$-covering (also called double central extension of racks and quandles), if and only if it is a double covering (also called algebraically central double extension of racks and quandles). The category of double coverings and the category of $\Gamma^1$-coverings above an extension of racks (or quandles) are isomorphic.
\end{theorem}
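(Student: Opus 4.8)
The plan is to prove the equivalence of the two classes of double extensions and then read off the isomorphism of categories. One implication is already recorded: the Corollary immediately preceding this theorem states that every $\Gamma^1$-covering is a double covering (a $\Gamma^1$-covering arises as the pullback of one of its trivial double coverings along a $3$-fold extension, a trivial double covering is a double covering by Example~\ref{ExampleTrivialDoubleExtensionsAreDoubleCoverings}, and double coverings are reflected along $3$-fold extensions). So the only work is the converse, that every double covering is a $\Gamma^1$-covering.

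First I would take a double covering $\alpha\colon{f_A \to f_B}$ and produce a double extension splitting it. The canonical double projective presentation $p_{f_B}\colon{p_B \to f_B}$ of the codomain (Diagram~\eqref{EquationProjectivePresentationD2}) is the natural choice: it is a double extension---hence of effective $\EE^1$-descent---whose domain $p_B\colon{\Fr(P) \to \Fr(B_{\pperp})}$ (or $\Fq(P) \to \Fq(B_{\pperp})$ in $\QND$) is projective and of exactly the shape demanded by Proposition~\ref{PropositionCharacterizationDoubleCoverings}. Pulling $\alpha$ back along $p_{f_B}$ yields a double extension $\gamma\colon{f_C \to p_B}$. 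Since double coverings are stable under pullbacks along double extensions, $\gamma$ is again a double covering, now with projective codomain $p_B$; Proposition~\ref{PropositionCharacterizationDoubleCoverings} then applies directly and gives that $\gamma$ is a trivial double covering. Thus $p_{f_B}$ splits $\alpha$ into a trivial double covering, so $\alpha$ is a $\Gamma^1$-covering. As recalled in Section~\ref{SectionTowardsHigherCoveringTheory}, it suffices to test splitting against the canonical projective presentation, so no other splitting extension need be sought.

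With the two classes of objects now identified, the isomorphism of categories is formal. Fixing an extension $f_B$, both the category of double coverings above $f_B$ and the category of $\Gamma^1$-coverings above $f_B$ are the full subcategories of the relevant slice of $\EXT\C$ spanned by the respective classes, with morphisms inherited identically from $\EXT\C$. As these spanning classes coincide, the two categories are literally equal, and in particular isomorphic.

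I expect the assembly above to be routine; the genuine content sits in Proposition~\ref{PropositionCharacterizationDoubleCoverings}, whose proof is the delicate step and is already in hand. There the freeness of the $\Pth(\Fr(C_{\ttop}))$-action (respectively the $\Pth\degree(\Fq(C_{\ttop}))$-action), together with a splitting $s$ of the projective presentation, is used to rewrite a closing $f_C$-membrane as an $\langle f_C,\gamma_{\ttop}\rangle$-symmetric path, and the double-covering hypothesis then collapses it. Everything else reduces to bookkeeping with pullback-stability, effective $\EE^1$-descent, and the splitting criterion defining $\Gamma^1$-coverings.
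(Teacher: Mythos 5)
Your proof is correct and follows the paper's own route exactly: the forward direction via the corollary that double coverings are preserved by quotients along $3$-fold extensions (applied to the splitting pullback, using Example~\ref{ExampleTrivialDoubleExtensionsAreDoubleCoverings}), and the converse by pulling back along the canonical double projective presentation and invoking pullback-stability together with Proposition~\ref{PropositionCharacterizationDoubleCoverings}. The concluding observation that the two full subcategories over a fixed extension coincide is likewise how the paper obtains the isomorphism of categories.
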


\subsection{Centralizing double extensions}\label{SectionCentralization}

Consider a double extension of racks (or quandles) $\alpha\colon{f_A \to f_B}$. We may universally \emph{centralize it} (i.e.~make it into a double covering) by a quotient of its \emph{initial object} $A_{\ttop}$. We studied the reflection of $\CEXT\RCK$ in $\EXT\RCK$.  Now in $\EXT^2\RCK$ (from Definition \ref{DefinitionThreeFoldExtension}), we may identify the full subcategory $\CEXT^2\RCK$ whose objects are the double coverings (or equivalently the $\Gamma^1$-coverings, also called double central extensions). The following result is the $2$-dimensional equivalent of Theorem 
3.4.1 from Part I. We define $\EE_2$ as the class of \emph{$3$-fold extensions} from Definition \ref{DefinitionThreeFoldExtension}.

\begin{theorem}\label{TheoremEpiReflectivityOfCExt}
The category $\CEXT^2\RCK$ is a ($\EE_2$)-reflective subcategory of the category $\EXT^2\RCK$ with left adjoint $\FIi$ and unit $\eta^2$ defined for an object $\alpha\colon{f_A \to f_B}$ in $\EXT^2\RCK$ by \[\eta^2_{\alpha} \defeq (\eta^2_{\alpha_{\ttop}}, \eta^2_{\alpha_{\pperp}}) \defeq ((\eta^2_{A_{\ttop}},\id_{A_{\pperp}}),( \id_{B_{\ttop}},\id_{B_{\pperp}}))\colon{ \alpha \longrightarrow \FIi(\alpha)},\] where $\eta^2_{A_{\ttop}}\colon{A_{\ttop} \to \FII(A_{\ttop})}$ is defined as the quotient of $A_{\ttop}$ by the \emph{centralizing relation} $\Cii(\alpha) \defeq [\Eq(f_A), \Eq(\alpha_{\ttop})]$, and its equivalent descriptions from Corollary \ref{CorollaryCharacterizingC2}.
Observing that $\Cii(\alpha) \leq \Eq(f_A) \cap \Eq(\alpha_{\ttop})$, the image $\FIi(\alpha) \defeq (\FIil(\alpha_{\ttop}),\alpha_{\pperp}) \colon{ \FIil(f_A) \to f_B}$ is defined via the unique factorization of the comparison map $p\colon{A_{\ttop} \to A_{\pperp} \times_{B_{\pperp}} B_{\ttop}}$ through the quotient $\eta^2_{A_{\ttop}}$:
\[\vcenter{\xymatrix@R=10pt@C=15pt{
A_{\ttop} \ar[rd]_-{\eta^2_{A_{\ttop}}} \ar[rr]^-{p} && A_{\pperp} \times_{B_{\pperp}} B_{\ttop}\\
& \FII(A_{\ttop}) \ar[ur]_-{p'}  &
}} \qquad \qquad \vcenter{\xymatrix@1@!0@=32pt{
& A_{\ttop} \ar[rr]^-{\red{\eta^2_{A_{\ttop}}}} \ar[dd]|(.25){\alpha_{\ttop}}|-{\hole} \ar[ld]_-{\db{f_A}} && \FII(A_{\ttop}) \ar[dd]^-{\FIil(\alpha_{\ttop}) \defeq \pi_2 p'} \ar[ld]|-{ \db{\FIil(f_A) \defeq \pi_1 p'} \quad \quad } \\
A_{\pperp} \ar@{{}{=}{}}[rr]^(.75){} \ar[dd]_-{\alpha_{\pperp}} && A_{\pperp} \ar[dd]^(.25){\alpha_{\pperp}} \\
& B_{\ttop} \ar[ld]_-{\db{f_B}} \ar@{{}{=}{}}[rr]^(.25){\red{}}|-{\hole} && B_{\ttop} \ar[ld]^-{\db{f_B}} \\
B_{\pperp} \ar@{{}{=}{}}[rr]_-{\red{}} && B_{\pperp}}}\]
where $\pi_1$ and $\pi_2$ are the projections of $A_{\pperp} \times_{B_{\pperp}} B_{\ttop}$, as in Equation \ref{EquationDoubleExtension}.

The image by $\FIi$ of a morphism $(\sigma,\beta)\colon{\gamma \to \alpha}$ is then given by the identity in all but the initial component: $\FIi(\sigma,\beta) = ((\FII(\sigma_{\ttop}),\sigma_{\pperp}), (\beta_{\ttop},\beta_{\pperp}))$, where $\FII(\sigma_{\ttop})$ is defined by the unique factorization of $\eta^2_{A_{\ttop}} \sigma_{\ttop}$ through $\eta^2_{C_{\ttop}}$, as displayed below, where $P \defeq \FII(C_{\ttop}) \times_{\FII(A_{\ttop})} A_{\ttop}$:
\begin{equation}\label{EquationReflectionSquareFIi}
\vcenter{\xymatrix @R=1pt @ C=11pt{
C_{\ttop} \ar[rrr]^{\sigma_{\ttop}} \ar[rd] |-{p}  \ar[dddd]_-{\eta^2_{C_{\ttop}}}  & & &  A_{\ttop}  \ar[dddd]^-{\eta^2_{A_{\ttop}}} \\
 & P  \ar[rru]|-{\pi_2} \ar[lddd]|-{\pi_1} \\
\\
\\
\FII(C_{\ttop})  \ar@{{}{--}{>}}[rrr]_{\FII(\sigma_{\ttop})} &  & & \FII(A_{\ttop})
}}
\end{equation}
\end{theorem}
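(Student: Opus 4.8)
The plan is to transcribe the one-dimensional reflection $\Fi \dashv \Ii$ of Theorem 3.4.1 of Part I to dimension two, with the role of the centralization congruence $\Ci f = [\Eq(f), A\times A]$ now played by the centralizing congruence $\Cii(\alpha) = [\Eq(f_A),\Eq(\alpha_{\ttop})]$ of Definition \ref{DefinitionCommutator}. Two facts drive the whole argument. First, $\Cii(\alpha) \leq \Eq(f_A)\cap\Eq(\alpha_{\ttop})$ by Corollary \ref{CorollaryTwoConditionsInOne}, so that collapsing $\Cii(\alpha)$ leaves the edges $f_A$ and $\alpha_{\ttop}$ intact; second, a double extension is a double covering exactly when its centralizing congruence is the diagonal, by Corollary \ref{CorollaryEquivalentConditionsInDefinitionOfDoubleCovering}.

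First I would check that $\FIi(\alpha)$ is a well-defined double extension. As $\Cii(\alpha) \leq \Eq(f_A)\cap\Eq(\alpha_{\ttop})$, the comparison map $p\colon A_{\ttop}\to A_{\pperp}\times_{B_{\pperp}}B_{\ttop}$ factors as $p = p'\,\eta^2_{A_{\ttop}}$, and I set $\FIil(f_A) \defeq \pi_1 p'$ and $\FIil(\alpha_{\ttop}) \defeq \pi_2 p'$, so that $\FIil(f_A)\,\eta^2_{A_{\ttop}} = f_A$ and $\FIil(\alpha_{\ttop})\,\eta^2_{A_{\ttop}} = \alpha_{\ttop}$. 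Since $p$ and $\eta^2_{A_{\ttop}}$ are regular epimorphisms, so is the comparison map $p'$ of $\FIi(\alpha)$; the edges $\FIil(f_A),\FIil(\alpha_{\ttop})$ are regular epimorphisms as quotients of $f_A,\alpha_{\ttop}$, so $\FIi(\alpha)$ is a double extension. To see it is a double covering I would verify condition $(i)$ of Definition \ref{DefinitionDoubleCoverings}: given $(\bar a,\bar b,\bar c,\bar d)\in\Eq(\FIil(f_A))\square\Eq(\FIil(\alpha_{\ttop}))$ and $\bar x\in\FII(A_{\ttop})$, pick arbitrary $\eta^2_{A_{\ttop}}$-preimages $a,b,c,d,x$; the identities $\FIil(f_A)\eta^2_{A_{\ttop}}=f_A$ and $\FIil(\alpha_{\ttop})\eta^2_{A_{\ttop}}=\alpha_{\ttop}$ force $(a,b,c,d)\in\Eq(f_A)\square\Eq(\alpha_{\ttop})$, whence $(x\qndop a\qndiop b\qndop c\qndiop d,\,x)$ is a generator of $\Cii(\alpha)$ by Definition \ref{DefinitionCommutator} and is therefore collapsed by $\eta^2_{A_{\ttop}}$; applying the homomorphism $\eta^2_{A_{\ttop}}$ gives $\bar x\qndop\bar a\qndiop\bar b\qndop\bar c\qndiop\bar d=\bar x$.

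For the universal property I would take a double covering $\delta\colon f_D\to f_E$ and a morphism $\phi\colon\alpha\to\delta$ in $\EXT^2\RCK$, and factor it through $\eta^2_{\alpha}$. As $\eta^2_{\alpha}$ is the identity in every component except the top-top map $\eta^2_{A_{\ttop}}$, it suffices to factor the top-top component $\phi_{\ttop\ttop}\colon A_{\ttop}\to D_{\ttop}$ through $\eta^2_{A_{\ttop}}$, i.e.\ to show $\phi_{\ttop\ttop}$ collapses $\Cii(\alpha)$. Since $\phi$ commutes with all structure maps it carries $\Eq(f_A)\square\Eq(\alpha_{\ttop})$ into $\Eq(f_D)\square\Eq(\delta_{\ttop})$, and as $\delta$ is a double covering every image generator is trivial; thus $\phi_{\ttop\ttop}$ sends the generators of $\Cii(\alpha)$ to the diagonal and the factorization $\bar\phi$ exists, uniquely because $\eta^2_{A_{\ttop}}$ is an epimorphism. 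The identical collapsing argument, applied to $\eta^2_{A_{\ttop}}\sigma_{\ttop}$, produces the factorization $\FII(\sigma_{\ttop})$ defining $\FIi$ on a morphism $(\sigma,\beta)$, and uniqueness of these factorizations yields functoriality of $\FIi$ together with the naturality of $\eta^2$ encoded by the reflection square \eqref{EquationReflectionSquareFIi}. Finally, $\EE_2$-reflectivity: because the bottom component $(\id_{B_{\ttop}},\id_{B_{\pperp}})$ of $\eta^2_{\alpha}$ is an isomorphism, the reflection square at $\alpha$ degenerates and its comparison map is $\sigma=(\eta^2_{A_{\ttop}},\id_{A_{\pperp}})$, a double extension by Lemma \ref{LemmaDoubleExtensionsCalculus}(1); hence $\eta^2_{\alpha}$ is a $3$-fold extension in the sense of Definition \ref{DefinitionThreeFoldExtension}.

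The substantive work has already been front-loaded into the commutator theory of Section \ref{SectionThinkingAboutCommutator} and the surjectivity of $\square_{(\sigma,\beta)}$ (Lemma \ref{LemmaSurjectionBetweenParallelisticDERel}); what remains here is essentially bookkeeping. The one point demanding genuine care is the two dual lifting/pushing steps --- that an arbitrary preimage of a parallelistic quadruple in $\FII(A_{\ttop})$ is again parallelistic in $A_{\ttop}$, and dually that $\phi$ and $\sigma_{\ttop}$ preserve the parallelistic relation --- both of which reduce to the edge identities $\FIil(f_A)\eta^2_{A_{\ttop}}=f_A$, $\FIil(\alpha_{\ttop})\eta^2_{A_{\ttop}}=\alpha_{\ttop}$ and to $\phi,\sigma$ being morphisms of double extensions. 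Once these are in place the adjunction $\FIi\dashv\Ii$ follows as a direct two-dimensional analogue of the one-dimensional reflection.
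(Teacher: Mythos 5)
Your proof is correct, and it reaches the adjunction by a genuinely leaner route than the paper's. The construction of $\FIi(\alpha)$ and the degenerate-unit argument for $\EE_2$-reflectivity are the same, but the two verifications differ. For the double-covering property of $\FIi(\alpha)$, the paper lifts a quadruple of $\Eq(\FIil(f_A)) \square \Eq(\FIil(\alpha_{\ttop}))$ through the $3$-fold extension $\eta^2_{\alpha}$ by invoking the surjectivity of $\square_{(\sigma,\beta)}$ (Lemma \ref{LemmaSurjectionBetweenParallelisticDERel}), whereas you observe that \emph{arbitrary} component-wise preimages already work, because membership in the double parallelistic relation of two kernel pairs is checked edge-by-edge and the edge identities $\FIil(f_A)\,\eta^2_{A_{\ttop}} = f_A$, $\FIil(\alpha_{\ttop})\,\eta^2_{A_{\ttop}} = \alpha_{\ttop}$ transport those edge conditions back; this bypasses the lemma entirely for this step. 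For the universal property, the paper proceeds in two stages: first over the fixed codomain $f_B$, using the symmetric-path description of $\Cii(\alpha)$ from Corollary \ref{CorollaryCharacterizingC2} (a general element $(x \cdot g, x)$ with $g$ an $\langle f_A,\alpha_{\ttop}\rangle$-symmetric path, pushed forward by $\vec{\theta}$), and then reduces an arbitrary morphism $(\tau,\iota)\colon \alpha \to \gamma$ to this case by pulling $\gamma$ back along $\iota$ and using pullback stability of double coverings. You instead handle an arbitrary codomain in one stroke by checking that the top component of the morphism sends the \emph{generators} of $\Cii(\alpha)$ to the diagonal -- legitimate, since a rack morphism collapses a generated congruence as soon as it collapses its generators, and the commutative cube carries $\Eq(f_A) \square \Eq(\alpha_{\ttop})$ into $\Eq(f_D) \square \Eq(\delta_{\ttop})$. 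What each buys: your argument is more elementary and self-contained, needing neither Corollary \ref{CorollaryCharacterizingC2} nor pullback stability nor Lemma \ref{LemmaWeakRightCancellation}; the paper's detour additionally records that the induced factorization over $f_B$ is itself a double extension (via Lemma \ref{LemmaWeakRightCancellation}), a stronger conclusion than the bare adjunction requires, and keeps the symmetric-path formalism in active use for the surrounding development.

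Two cosmetic points. The containment $\Cii(\alpha) \leq \Eq(f_A) \cap \Eq(\alpha_{\ttop})$ is the remark stated immediately after Definition \ref{DefinitionCommutator} ($[R,S] \leq R \cap S$), not Corollary \ref{CorollaryTwoConditionsInOne}, which asserts the different inequality $[R \cap S, A \times A] \leq [R,S]$; your use of the fact is fine, only the citation is off. Also, when you say functoriality ``follows from uniqueness,'' you are implicitly applying the collapsing argument to the composite $\eta^2_{A_{\ttop}}\sigma_{\ttop}$ with $\sigma_{\ttop}$ carrying $\Eq(f_C) \square \Eq(\gamma_{\ttop})$ into $\Eq(f_A) \square \Eq(\alpha_{\ttop})$ -- worth one explicit sentence, but the argument is as you describe.
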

\begin{proof}
First observe that the double extension $\FIi(\alpha)$ is indeed a double covering by construction. As was already mentioned in the proof of Theorem \ref{TheoremGaloisStructureD2}, $\eta^2$ is easily seen to be a $3$-fold extension since its bottom component is an isomorphism. Hence given $(a,b,c,d) \in \Eq(\FIil(f_A)) \square \Eq(\FIil(\alpha_{\ttop}))$, it is the quotient of some $(a',b',c',d') \in \Eq(f_A) \square \Eq(\alpha_{\ttop})$ by Lemma \ref{LemmaSurjectionBetweenParallelisticDERel}, and thus for any $x \in \FII(A_{\ttop})$, the elements $x$ and $x \cdot (\gr{a}\, \gr{b}^{-1}\, \gr{c}\, \gr{d}^{-1})$ have $\eta^2_{A_{\ttop}}$-pre-images $x'$ and $x' \cdot (\gr{a'}\, (\gr{b'})^{-1}\, \gr{c'}\, (\gr{d'})^{-1})$ which are in relation by $\Cii(\alpha)$. 

Then we show that $\eta^2_{\alpha}$ has the right universal property. We first show the universality of $\eta^2_{\alpha}$ in the subcategory $\EXT^2(f_B)$ of double extensions over $f_B$. Consider a double covering $\gamma\colon{f_C \to f_B}$ and a morphism $\tau \in \EXT^2(f_B)$ between $\alpha$ and $\gamma$, yielding the commutative diagram of plain arrows in $\EXT\RCK$ on the left, whose top and bottom components in $\C$ are given on the right.
\[\vcenter{\xymatrix@R=15pt@C=15pt{
\FIil(f_A) \ar@{{}{..}{>}}@/_2ex/@<-4ex>[dd]|-{\vartheta \defeq (\vartheta_{\ttop},\theta_{\pperp})\quad} \ar[drr]^-{\FIi(\alpha)}  & & \\
f_A \ar[u]^-{\eta^2_{\alpha_{\ttop}}} \ar[d]_-{\theta} \ar[rr]^-{\alpha} && f_B\\
f_C \ar[urr]_-{\gamma}  & &
}} \qquad \Bigg{|} \qquad \vcenter{\xymatrix@R=15pt@C=15pt{
\FII(A_{\ttop}) \ar@{{}{..}{>}}@/_2ex/@<-4ex>[dd]|-{\vartheta_{\ttop}} \ar[drr]^-{\FIil(\alpha_{\ttop})}  & & \\
A_{\ttop} \ar[u]^-{\eta^2_{A_{\ttop}}} \ar[d]_-{\theta_{\ttop}} \ar[rr]^-{\alpha_{\ttop}} && B_{\ttop}\\
C_{\ttop} \ar[urr]_-{\gamma_{\ttop}}  & &
}} \qquad \vcenter{\xymatrix@R=15pt@C=15pt{
A_{\pperp} \ar@{{}{..}{>}}@/_2ex/@<-3ex>[dd]|-{\theta_{\pperp}}
\ar[drr]^-{\alpha_{\pperp}}  & & \\
A_{\pperp} \ar@{{}{=}{}}[u]^-{} \ar[d]_-{\theta_{\pperp}} \ar[rr]^-{\alpha_{\pperp}} && B_{\pperp}\\
C_{\pperp} \ar[urr]_-{\gamma_{\pperp}}  & &
}}  \]
Given any pair $(x \cdot g,x) \in \Cii(\alpha)$, where $g$ is some $\langle f_A,\alpha_{\ttop}\rangle$-symmetric path, $\vec{\theta}(g)$ is a $\langle f_C,\gamma_{\ttop}\rangle$-symmetric path and thus $\theta(x) = \theta(x) \cdot \vec{\theta}(g)$ since $\gamma$ is a double covering. As a consequence, $\Eq(\theta_{\ttop}) \leq \Cii(\alpha)$ and thus there exists a unique factorization $\vartheta_{\ttop}$ of $\theta_{\ttop}$ through $\eta^2_{A_{\ttop}}$. Since $f_C \vartheta_{\ttop} \eta^2_{A_{\ttop}} = \theta_{\pperp} \FIil(f_A) \eta^2_{A_{\ttop}}$, and $\eta^2_{A_{\ttop}}$ is an epimorphism, we may define the morphism $\vartheta \defeq (\vartheta_{\ttop}, \theta_{\pperp}) \colon {\FIil(f_A) \to f_C}$, which is moreover a double extension by Lemma \ref{LemmaWeakRightCancellation}. This shows the existence of a factorization of $\theta$ through $\eta^2_{\alpha_{\ttop}}$. The uniqueness of $\vartheta$ is easily deduced from the uniqueness in each component. 

Now working in the category $\EXT^2\RCK$, we consider a double covering $\gamma\colon{f_C \to f_D}$ and a morphism $(\tau,\iota)\colon{\alpha \to \gamma}$ in $\EXT^2\RCK$. We compute the pullback $\rho$ of $\gamma$ along $\iota$ and the induced comparison map $\pi$ of the underlying square of $(\tau,\iota)$ in $\EXT\RCK$:
\[
\vcenter{\xymatrix@!0@C=35pt@R=18pt{
\FIil(f_A) \ar[rrrrd]^{\FIi(\alpha)}  \\
& f_A \ar[ul]^-{\eta^2_{\alpha_{\ttop}}} \ar[rrr]_-{\alpha} \ar[rd]|-{\pi}  \ar[ddd]_-{\tau} & & & f_B  \ar[ddd]^-{\iota} \\
& & f_C \times_{f_D} f_B \ar[rru]_-{\rho} \ar[ldd]^-{\varrho} &   \\
\\
& f_C  \ar[rrr]_{\gamma} & & & f_D,
}}
\]
Since $\rho$ is a double covering (by pullback-preservation), we obtain $\vartheta\colon{\FIil(f_A) \to f_C \times_{f_D} f_B}$ by the preceding discussion. Then the morphism $(\varrho \vartheta, \iota)$ is a factorization of $(\tau,\iota)$ through $\eta^2(\alpha)$ which is easily shown to be unique.
\end{proof}

Note that, as usual, the monadicity of $\I$ implies that $\CEXT^2\C$, is closed under limits computed in $\EXT^2\C$. Also since $\eta^2$ has regular epimorphic components, double coverings are closed under subobjects in $\EXT^2\C$ (see for instance \cite[Section 3.1]{JK1994}; note that the same comments hold for the adjunction $\Fi \dashv \I$). Closure by quotients along $3$-fold extensions was discussed in Remark \ref{RemarkDoubleCentralExtensionsAreClosedUnderQuotients}. We conclude the proof that $\FIi \dashv \I$ fits into a strongly Birkhoff Galois structure $\Gamma^2$ in the next article, \emph{Part III}, where we study higher coverings of arbitrary dimensions.

\section{Further developments}\label{SectionFurtherDevelopments}

Besides the following theoretical developments, more explicit examples of double coverings should be studied, for instance in the known contexts of application cited in Part I. Now from the perspective of categorical Galois theory, future developments should also include the description of a \emph{weakly universal double covering} above an extension, and subsequently the characterization of the \emph{fundamental double groupoid} of an extension (see for instance \cite{BroJan2004}). From there, the \emph{fundamental theorem of categorical Galois theory} should be applied in order to ``classify'' the double coverings above an extension.

Another obvious line of work concerns the commutator defined in Section \ref{SectionThinkingAboutCommutator}. A review of the links between commutators and Galois theory can be found in \cite{Jan2016}. For instance, it should be checked whether or not our commutator coincides with (or compares to) what was defined for general varieties in terms of internal pregroupoids \cite{JanPed2001}, or other theories such as the classical approach of \cite{FreMck1987} which was already applied in this context \cite{BonSta2019}. In the last paragraphs below, we suggest to apply the developments of \cite{Jan2008} to our context, with the objective to investigate the links between categorical Galois theory and \emph{homology} \cite{FeRoSa1995,CJKLS2003, DuEverGra2012} within racks and quandles (see also \cite[Section 9]{Eis2014}).

\subsection{Galois structure with abstract commutator}\label{SectionGaloisStructureWithAbstractCommutator}
 
As it was suggested in Part I, one of the important outcomes for the application of higher categorical Galois theory in groups was the elegant generalization of the \emph{higher Hopf formulae} from \cite{BroEll1988} to \emph{semi-abelian categories} \cite{JanMarTho2002,EvGrVd2008,Ever2010,DuEverGra2012}. In \cite{Jan2008}, G.~Janelidze shares his perspective on how to understand the mechanics behind the Hopf formulae from the perspective of categorical Galois theory, and in particular via the description and understanding of what an \emph{abstract Galois group} is. He introduces the definition of a \emph{Galois structure with (abstract) commutators}, which is suggested as another starting point (more general than that from \cite{EvGrVd2008}) for applying the methodology that he illustrates in the context of groups. In this section, we adapt this definition in order to include the covering theory of quandles as an example, in such a way which is compatible with the aims and developments from \cite{Jan2008}. Further details about the application of the ideas from \cite{Jan2008} to the covering theory of quandles is left for future work. 

Our definition of Galois structure with (abstract) commutators is not aimed at being the most general possible. Our main point is the use of higher extensions to clarify the conditions which are displayed in \cite{Jan2008}.

\begin{definition}\label{DefinitionGaloisWithCommutator}
A Galois structure with commutators is a system $\Gamma = (\C, \X, F, \I, \eta, \epsilon, \EE, [-,-])$, in which:
\begin{enumerate}
\item $\Gamma = (\C, \X, F, \I, \eta, \epsilon, \EE)$ is an \emph{admissible} Galois structure (see Convention \ref{ConventionGaloisStructure} as well as \cite{JK1994,BorJan1994} for \emph{admissibility});
\item\label{ItemRightCancelletion} If $f = pq$ in $\C$ and $f$ and $q$ are in $\EE$, then $p$ is in $\EE$;
\item $[-,-]$ is a family of binary operations $\ER_{\EE}(A) \times \ER_{\EE}(A) \to \ER_{\EE}(A) $defined for each $A$ in $\C$ and all written as $(S, T) \mapsto [S, T]$; here $\ER_{\EE}(A)$ denotes the class of $\EE$-congruences on $\C$, i.e. the class of subobjects of $A \times A$ that are kernel pairs of morphisms from $\EE$;
\item For $S$ and $T$ in $\ER_{\EE}(A)$, we always have $[S,T] \leq S\cap T$;
\item If $(\sigma,\beta)\colon{\gamma \to \alpha}$ is a morphism  between the double extensions $\gamma$ and $\beta$, then $(\sigma, \beta)$ induces a morphism $[\sigma_{\ttop}]\colon [\Eq(f_C), \Eq(\gamma_{\ttop})] \to [\Eq(f_A), \Eq(\alpha_{\ttop})]$; 
\item if $(\sigma,\beta)\colon{\gamma \to \alpha}$ above is a $3$-fold extension, then $[\sigma_{\ttop}]\colon [\Eq(f_C), \Eq(\gamma_{\ttop})] \to [\Eq(f_A), \Eq(\alpha_{\ttop})]$ is in $\EE$;
\item For each $A$ in $\C$, $F(A) = A / [A\times A,A\times A]$ and $\eta_A$ is the canonical morphism $A \to A / [A\times A,A\times A]$;
\item For a morphism $p\colon{E \to B}$ from $\EE$, $p$ is a $\Gamma$-covering if and only if $[E\times E, \Eq(p)] = \Delta_E$, i.e. $[E\times E, \Eq(p)]$ is the smallest congruence on $E$.
\end{enumerate}
\end{definition}

Observe that conditions $(5)$ and $(6)$ are the two conditions which differ from G.~Janelidze's presentation (see $4.4(i)$ and $(j)$ in \cite{Jan2008}). We explain how to translate from his context to ours. In order to avoid confusion, let us point out a small typo in \cite{Jan2008}: the conclusions of Condition (g) in Definitions 4.1 and Condition (f) in Definition 4.4 should be that $p$ is in $F$ (rather than in $C$ -- see Condition \eqref{ItemRightCancelletion} below). Also in Condition (i) of Definition 4.4 we should read $[E\times E, \Eq(p)]$ instead of $[E \times E, \Ker(p)]$.

Now using our notations, G.~Janelidze merely considers the data of $\sigma_{\ttop}\colon C_{\ttop} \to A_{\ttop}$, $S \defeq \Eq(f_C)$, $T \defeq \Eq(\gamma_{\ttop})$, $S' \defeq \Eq(f_A)$ and $T' \defeq \Eq(\alpha_{\ttop})$ as well as \emph{induced morphisms} $s\colon S \to S'$ and $t\colon T \to T'$. From this data, we easily build the entire morphism $(\sigma,\beta)$ with no further assumptions. The only difference is then the assumption that $\alpha$ and $\gamma$ are not merely pushout squares of extensions, but also double extensions. Whenever $\C$ is a Mal'tsev category, which is the case in the examples considered by G.~Janelidze and others \cite{Ever2007,EvGrVd2008,EverVdL2011,EverVdL2012,Ever2012}, $\alpha$ and $\gamma$ are automatically double extensions. In our context, this is the ``natural'' extra requirement to work with (we work locally with congruences which commute since in our context, commutativity of congruences does not hold everywhere). Now when $s$ and $t$ are further required to be extensions (such as in $4.4(j)$), by the same reasoning, the natural generalization from Mal'tsev categories consists in requesting $(\sigma,\beta)$ to be a square of double extensions. Finally observe that $4.4(j)$ was already challenged in Remark $4.6$ of \cite{Jan2008}. Observe that under the restrictions suggested by T.~Everaert or G.~Janelidze, our square of double extensions $(\sigma,\beta)$ becomes a $3$-fold extension. Hence our choice of presentation is arguably an adequate and elegant variation from \cite{Jan2008}, which is coherent with the example we are interested in, as well as the examples considered in \cite{Jan2008} and related works.

\begin{example}\label{ExampleGaloisStructureWithAbstractCommutator}
From the results of Section \ref{SectionThinkingAboutCommutator}, and Lemma \ref{LemmaSurjectionBetweenParallelisticDERel}, we deduce that the Galois structure from Theorem \ref{TheoremGaloisStructureD2} together with the operation $[-,-]$ from Definition \ref{DefinitionCommutator} satisfies the conditions of Definition \ref{DefinitionGaloisWithCommutator}.
\end{example}

Since compatibility with unions is understood as an important property for commutators, we show the following result which may be used to study Example \ref{ExampleGaloisStructureWithAbstractCommutator} from the perspective of \cite{Jan2008}. Note that our hypotheses might not be optimal; we deduce the \emph{modular law} locally from the less general permutability conditions on our congruences. These are arguably more suitable for this context in which (repeatedly) we have been using, locally, some properties which are globally satisfied in Mal'tsev categories.
\begin{lemma}
Let $A$ be a quandle, $R$, $S$ and $T$ congruences on $A$ such that $S \leq R$. If $R$, $S$ and $T$ commute two by two, and moreover $R\cap T$ commutes with $S$ (for instance when $S$ commutes with all congruences), then $[R,S \cup T] = [R,S] \cup [R, T] = [S, A \times A] \cup [R, T]$.
\end{lemma}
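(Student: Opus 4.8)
The plan is to treat the two equalities separately. The second equality, $[R,S]\cup[R,T] = [S, A\times A]\cup[R,T]$, reduces to the single claim $[R,S] = [S, A\times A]$. Since $S \leq R$, the corollary preceding Corollary \ref{CorollaryTwoConditionsInOne} (stating that $R'\leq S'$ implies $[R',S'] = [R', A\times A]$), applied with the two congruences interchanged, gives $[S,R] = [S, A\times A]$; symmetry of the commutator (Corollary \ref{CorollaryCommutatorGeneralDescription}) then turns this into $[R,S] = [S, A\times A]$. So this part is essentially free once $S \leq R$ is used.

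For the main equality $[R, S\cup T] = [R,S]\cup[R,T]$, the inclusion $[R,S]\cup[R,T] \leq [R, S\cup T]$ is immediate from monotonicity of the commutator (Lemma \ref{LemmaCommutatorCompatibleWithOrder}), since both $S$ and $T$ lie below $S\cup T$. The content is the reverse inclusion, which I would prove by a column-splitting argument on generators. Write $U \defeq [R,S]\cup[R,T]$; this is a congruence on $A$, hence stable under the action of any path $g\in\Pth(A)$. It then suffices to show that $U$ contains every generator of $[R, S\cup T]$, that is, every pair $(x\cdot(\gr a\,\gr b^{-1}\,\gr c\,\gr d^{-1}), x)$ with $x\in A$ and $(a,b,c,d)\in R\square(S\cup T)$.

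Here is the splitting. Since $S$ and $T$ commute, $S\cup T = S\circ T$, so the left column $a\,(S\cup T)\,d$ factors as $a\,S\,e$ and $e\,T\,d$ for some $e$, and the right column $b\,(S\cup T)\,c$ factors as $b\,S\,f$ and $f\,T\,c$ for some $f$. From $e\,S\,a$, $a\,R\,b$, $b\,S\,f$ together with $S\leq R$ and transitivity of $R$ we obtain $e\,R\,f$. Thus the quadruples $(a,b,f,e)$ and $(e,f,c,d)$ lie in $R\square S$ and $R\square T$ respectively, and by Definition \ref{DefinitionCommutator} the pairs $(x\cdot g_1, x)\in[R,S]$ and $(x\cdot g_2, x)\in[R,T]$ hold for all $x$, where $g_1 \defeq \gr a\,\gr b^{-1}\,\gr f\,\gr e^{-1}$ and $g_2 \defeq \gr e\,\gr f^{-1}\,\gr c\,\gr d^{-1}$. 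A direct cancellation gives $g_1 g_2 = \gr a\,\gr b^{-1}\,\gr c\,\gr d^{-1}$. Applying the action of $g_2$ to $(x\cdot g_1, x)\in U$ yields $(x\cdot g_1 g_2, x\cdot g_2)\in U$, and transitivity with $(x\cdot g_2, x)\in U$ gives $(x\cdot(g_1 g_2), x)\in U$, which is exactly the generator we wanted.

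The main obstacle is not the path bookkeeping, which telescopes cleanly, but justifying that the intermediate quadruples $(a,b,f,e)$ and $(e,f,c,d)$ genuinely are elements of the double parallelistic relations $R\square S$ and $R\square T$, together with the identity $S\cup T = S\circ T$. This is precisely where the permutability hypotheses enter: the pairwise commutativity of $R$, $S$, $T$ (and the auxiliary commutativity of $R\cap T$ with $S$) is what guarantees, locally, the full ``matrix'' description of these double parallelistic relations --- the Mal'tsev-type behaviour invoked repeatedly in the paper --- so that any matrix with rows in the first congruence and columns in the second is a legitimate element, and so that the column factorization above is available. I would verify these passages (a local instance of the modular law $R\cap(S\cup T) = S\cup(R\cap T)$ derived from the permutability assumptions, as flagged before the statement) carefully; the remainder of the argument is formal.
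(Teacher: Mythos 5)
Your proof is correct, and it takes a genuinely different route from the paper's. The paper handles the inclusion $[R,S\cup T]\leq[R,S]\cup[R,T]$ by building a chain \emph{inside} a single generating square of $R\square(S\cup T)$: commutation of $S$ and $T$ produces a midpoint $b_0$ on the right column, commutation of $R$ with $S$ and with $T$ produces $a_0$ and $d_0$, and then a local modular law (this is exactly where the hypothesis that $R\cap T$ commutes with $S$ enters) produces $a_1$ with $(a_1,d_0)\in R\cap T$; the square is thereby decomposed into two squares in $R\square S$ and one in $R\square T$, glued horizontally, and the conclusion follows from one long telescoping word. You instead split both \emph{columns} through $S\circ T=S\cup T$, use $S\leq R$ plus transitivity to join the midpoints $e,f$ by $R$, and obtain just two squares $(a,b,f,e)\in R\square S$ and $(e,f,c,d)\in R\square T$, composing the corresponding generators via the path action ($g_1g_2=\gr a\,\gr b^{-1}\gr c\,\gr d^{-1}$ after cancellation, and congruences are stable under the action of $\Pth(A)$). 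A noteworthy consequence of your route: it uses \emph{only} the commutation of $S$ with $T$ and the hypothesis $S\leq R$ — you never invoke the commutation of $R$ with $S$ or $T$, nor the modular law — so your argument actually establishes the lemma under strictly weaker hypotheses than those stated (and used) in the paper. The second equality is handled the same way in both cases, via $[S,R]=[S,A\times A]$ (the corollary preceding Corollary \ref{CorollaryTwoConditionsInOne}) and symmetry of the commutator (Corollary \ref{CorollaryCommutatorGeneralDescription}).

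One remark on your closing caveat: it is more cautious than necessary. Membership in $R\square S$ needs no permutability at all — by the paper's description of the double parallelistic relation (the discussion preceding Definition \ref{DefinitionCommutator}), $R\square S$ consists of \emph{all} $2\times 2$ matrices whose rows lie in $R$ and whose columns lie in $S$, so your quadruples $(a,b,f,e)$ and $(e,f,c,d)$ qualify as soon as the four displayed relations hold. Likewise, the modular law you flag for verification plays no role in your argument; the only permutability fact you actually use is $S\cup T=S\circ T$, which is immediate from $S$ and $T$ commuting.
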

\begin{proof}
First observe that $ [R,S] \cup [R, T] \leq [R,S \cup T]$ is an easy consequence of Lemma \ref{LemmaCommutatorCompatibleWithOrder}. Then consider a generator
\[  (x \qndop a \qndiop b \qndop c \qndiop d , x) \ \in \  [R,S \cup T] \quad  \text{ for some } \quad  x\in A \  \text{ and } \  \vcenter{\xymatrix @C= 15pt@R=10pt {
a \ar@{-}[r]|-{R}  \ar@{..}[d]|-{S \cup T}  & b  \ar@{..}[d]|-{S \cup T} \\
d \ar@{-}[r]|-{R}  & c
}} \ \in\  R\square (S \cup T).\]
Since $S$ and $T$ commute, there is $b_0 \in A$ such that $(b,b_0) \in S$ and $(b_0,c) \in T$. Moreover since $R$ commutes with $S$ and $T$, there are $a_0$ and respectively $d_0$ such that $(a,a_0) \in S$, $(a_0,b_0) \in R$, $(d,d_0) \in T$ and $(d_0,b_0) \in R$. Hence $(a_0,d_0) \in R \cap (S \cup T)$. Using the modular law $(a_0,d_0) \in S \cup (R \cap T)$ and thus there is $a_1 \in A$ such that $(a_0,a_1) \in S$ and $(a_1,d_0) \in (R \cap T)$. From there observe that $(a_1,b_0) \in R$ and $(a_1,d) \in T$ such that we obtain:
\[ \vcenter{\xymatrix @C= 15pt@R=10pt {
a \ar@{-}[d]|-{R}  \ar@{..}[r]|-{S} & a_0 \ar@{-}[d]|-{R}  \ar@{..}[r]|-{S}   & a_1 \ar@{-}[d]|-{R}  \ar@{..}[r]|-{T} & d \ar@{-}[d]|-{R}  \\
 b  \ar@{..}[r]|-{S}  & b_0  \ar@{..}[r]|-{S} & b_0  \ar@{..}[r]|-{T}  & c
}}\]
Considering each of these three squares separately, by definition of $[R,S]$ and $[R, T]$ we derive that $x$ is in relation by $[R,S] \cup [R, T]$ with the element 
\[x \qndop a \qndiop b \qndop b_0 \qndiop a_0 \qndop a_0 \qndiop b_0 \qndop b_0 \qndiop a_1 \qndop a_1 \qndiop b_0 \qndop c \qndiop d, \]
which reduces to $x \qndop a \qndiop b \qndop c \qndiop d$.
\end{proof}

\section*{Aknowledgments}
The main aims of this paper were reached during a research visit at the \emph{University of Cape Town}, which was funded by the \emph{Concours des bourses de voyage 2018} awarded by the \emph{Fédération Wallonie-Bruxelles}. I am very grateful to George Janelidze for our weekly conversations and lunches on the \emph{UCT} campus. His request for the definition of a commutator and his encouragements and advice for the completion of the main results of my Ph.D. project during my stay in South-Africa had a significant influence on the outcomes of my research. Let me add many thanks to my supervisors Tim Van der Linden and Marino Gran for their support, advice, contributions and careful proofreading of this work.

\bibliography{Part2.bib}
\bibliographystyle{amsplain-nodash}

\end{document}